\newcommand\footnoteref[1]{\protected@xdef\@thefnmark{\ref{#1}}\@footnotemark}
\definecolor{grey}{rgb}{0.95,0.95,0.95}
\definecolor{green}{rgb}{0.2,0.6,0.4}
\newcommand{\N}{\mathbb{N}}
\newcommand{\restr}{\upharpoonright}
\newcommand{\concat}{\frown}
\newcommand{\imp}{\rightarrow}
\newcommand{\biimp}{\leftrightarrow}
\newcommand{\Psf}{\mathsf{P}}
\newcommand{\Qsf}{\mathsf{Q}}
\newcommand{\Ccal}{\mathcal{C}}
\newcommand{\Dcal}{\mathcal{D}}
\newcommand{\Ecal}{\mathcal{E}}
\newcommand{\Qcal}{\mathcal{Q}}
\newcommand{\Rcal}{\mathcal{R}}
\newcommand{\uh}{{\upharpoonright}}
\renewcommand{\setminus}{\smallsetminus}
\newcommand{\set}[1]{\left\{ #1 \right\}}
\newcommand{\card}[1]{\left| #1 \right|}
\newcommand{\tuple}[1]{\left\langle #1 \right\rangle}
\newcommand{\cond}[1]{\left\{\begin{array}{ll} #1 \end{array}\right.}
\newcommand{\s}[1]{\ensuremath{\sf{#1}}}
\newcommand{\wwkls}[1]{#1\mbox{-}\s{WWKL}}
\DeclareMathOperator{\rca}{\s{RCA}_0}
\DeclareMathOperator{\aca}{\s{ACA}_0}
\DeclareMathOperator{\wkl}{\s{WKL}_0}
\DeclareMathOperator{\wwkl}{\s{WWKL}_0}
\DeclareMathOperator{\dnr}{\s{DNR}}
\DeclareMathOperator{\bst}{\s{B}\Sigma^0_2}
\DeclareMathOperator{\kl}{\s{KL}}
\DeclareMathOperator{\rt}{\s{RT}}
\DeclareMathOperator{\srt}{\s{SRT}}
\DeclareMathOperator{\rrt}{\s{RRT}}
\DeclareMathOperator{\ads}{\s{ADS}}
\DeclareMathOperator{\sads}{\s{SADS}}
\DeclareMathOperator{\cac}{\s{CAC}}
\DeclareMathOperator{\coh}{\s{COH}}
\DeclareMathOperator{\pizog}{\Pi^0_1\s{G}}
\DeclareMathOperator{\ts}{\s{TS}}
\DeclareMathOperator{\sts}{\s{STS}}
\DeclareMathOperator{\fs}{\s{FS}}
\DeclareMathOperator{\sfs}{\s{SFS}}
\DeclareMathOperator{\emo}{\s{EM}}
\newtheoremstyle{custom}
  {10pt}
  {10pt}
  {\normalfont}
  {}
  {\bfseries}
  {}
  { }
  {}
\theoremstyle{custom}
\newtheorem{theorem}{Theorem}[section]
\newtheorem{lemma}[theorem]{Lemma}
\newtheorem{definition}[theorem]{Definition}
\newtheorem{question}[theorem]{Question}
\newtheorem{corollary}[theorem]{Corollary}
\begin{document}

\title[The weakness of being cohesive, thin or free in reverse mathematics]
	{The weakness of being cohesive, thin or free\\ in reverse mathematics}
\author{Ludovic Patey}
\email{ludovic.patey@computability.fr}

\maketitle

\begin{abstract}
Informally, a mathematical statement is \emph{robust} if its strength is left unchanged
under variations of the statement. 
In this paper, we investigate the lack of robustness of Ramsey's theorem and its consequence
under the frameworks of reverse mathematics and computable reducibility.
To this end, we study the degrees of unsolvability of cohesive sets
for different uniformly computable sequence of sets and identify different
layers of unsolvability. This analysis enables us to answer some questions of Wang
about how typical sets help computing cohesive sets.

We also study the impact of the number of colors in 
the computable reducibility between coloring statements.
In particular, we strengthen the proof by Dzhafarov that cohesiveness does not strongly reduce to stable
Ramsey's theorem for pairs, revealing the combinatorial nature of this non-reducibility
and prove that whenever $k$ is greater than $\ell$, stable Ramsey's theorem for $n$-tuples and $k$ colors
is not computably reducible to Ramsey's theorem for $n$-tuples and $\ell$ colors. In this sense, 
Ramsey's theorem is not robust with respect to his number of colors over computable reducibility.
Finally, we separate the thin set and free set theorem from Ramsey's theorem for pairs
and identify an infinite decreasing hierarchy of thin set theorems in reverse mathematics.
This shows that in reverse mathematics, the strength of Ramsey's theorem is very sensitive to the number of colors in the output set. 
In particular, it enables us to answer several related questions asked by Cholak, Giusto, Hirst and Jockusch.
\end{abstract}

\section{Introduction}

Ramsey's theorem ($\rt^n_k$) asserts that any $k$-coloring of $[\N]^n$
admits an infinite monochromatic set, where $[\N]^n$ stands for the $n$-tuples over~$\N$.
In this paper, we study the lack of \emph{robustness} of Ramsey's theorem and its consequences
under the frameworks of reverse mathematics and computable reducibility.
Informally, a mathematical statement is \emph{robust} within a given framework
if its strength is invariant under slight variations of the statement.
In reverse mathematics, robustness can be understood as equiprovability of the various statements over
the base theory $\rca$, while in computability, a $\Pi^1_2$ statement
is robust if its variations are computably equivalent. We shall detail further reverse mathematics
and computable reducibility in sections~\ref{subsect:intro-reverse-mathematics} and~\ref{subsect:intro-reducibilities}.
Our investigations follow three axes.

\emph{Axis 1:} We first study the degrees of unsolvability of cohesiveness.
Given a sequence of sets of integers $R_0, R_1, \dots$, an infinite set is \emph{$\vec{R}$-cohesive}
if it is almost included in $R_i$ or $\overline{R}_i$ for each~$i$.
$\coh$ is a consequence of Ramsey's theorem for pairs which finds many practical applications
in computability and reverse mathematics. Jockusch and Stephan~\cite{Jockusch1993cohesive} have studied
the computational strength of cohesive sets for maximally difficult sequences of sets.
We reveal that $\coh$ contains hierarchies of complexity of its instances
by establishing a one-to-one correspondence between instances of $\coh$
and instances of K\"onig's lemma ($\kl$). This shows that the strength of $\coh$
depends on the considered class of its instances, and is therefore not \emph{inner robust}.
This correspondance enables us to reprove the existence of a computable sequence of sets
with no low cohesive set~\cite{Jockusch1993cohesive} and to answer several questions asked by Wang~\cite{Wang2013Omitting}
about how randomness and genericity help in solving computably unsolvable instances of~$\coh$.

\emph{Axis 2:} A simple color amalgamation argument shows that $\rt^n_k$
and $\rt^n_\ell$ are provably equivalent in reverse mathematics whenever $k, \ell \geq 2$.
In this sense, Ramsey's theorem is robust with respect to numbers of colors in reverse mathematics.
However, the standard proof that $\rt^n_k \imp \rt^n_{k+1}$ involves two applications of $\rt^n_k$.
Mileti~\cite{Mileti2004Partition} first wondered whether those two applications were really necessary.
The question has been later formalized thanks to Weihrauch and computable reducibility
and investigated by Dorais, Dzhafarov, Hirst, Mileti and Shafer~\cite{Dorais2016uniform}, Hirschfeldt and Jockusch~\cite{Hirschfeldtnotions}, Brattka and Rakotoniaina~\cite{Brattka2015Uniform}, among others.
We answer positively by proving that for every~$n \geq 2$, $\rt^n_k$ is not computably reducible
to $\rt^n_\ell$ whenever $k > \ell \geq 2$. Therefore, Ramsey's theorem is not robust
with respect to the number of colors under computable reducibility.

\emph{Axis 3:} Last, we investigate the reverse mathematics of a weakening of Ramsey's theorem
in which more colors are allowed in the resulting set. The thin set theorem ($\ts^n_k$)
asserts that for any $k$-coloring of~$[\N]^n$, there is an infinite set~$H$ such that~$[H]^n$
avoids at least one color. We show that the thin set theorem is not robust in reverse mathematics
by proving that for every $n, m, \ell \geq 2$, $\ts^n_k$ does not imply $\ts^m_\ell$
for sufficiently large $k$'s. This is the first example of an infinite decreasing hiearchy
in reverse mathematics. This enables us to answer several questions from
Cholak, Giusto, Hirst and Jockusch~\cite{Cholak2001Free}, Mont\'alban~\cite{Montalban2011Open}
and Hirschfeldt~\cite{Hirschfeldt2015Slicing} about the strength of the thin set theorem
and its strengthening, the free set theorem, with respect to Ramsey's theorem for pairs.

\subsection{Reverse mathematics}\label{subsect:intro-reverse-mathematics}

Reverse mathematics is a vast mathematical program
whose goal is to classify ordinary theorems in terms of their provability strength.
It uses the framework of subsystems of second order arithmetic,
which is sufficiently rich to express many theorems in a natural way.
The base system, $\rca$ standing for Recursive Comprehension Axiom,
contains the basic first order Peano arithmetic together with the~$\Delta^0_1$
comprehension scheme and the~$\Sigma^0_1$ induction scheme.
Thanks to the equivalence between~$\Delta^0_1$-definable sets
and computable sets, $\rca$ can be considered as capturing
``computable mathematics''. The proof-theoretic
analysis of the theorems in reverse mathematics is therefore
closely related to their computational analysis. See Simpson~\cite{Simpson2009Subsystems}
for a formal introduction to reverse mathematics.

Early reverse mathematics have led to two main empirical observations:
First, many ordinary (i.e.\ non set-theoretic) theorems require very weak set existence axioms.
Second, most of those theorems are in fact \emph{equivalent} to one of five
main subsystems, known as the ``Big Five''.
However, among the theorems studied in reverse mathematics, a notable class of theorems
fails to support the second observation, namely, Ramsey-type theorems.
The underlying idea of Ramsey's theory is that whenever a collection of
objects is sufficiently large, we can always find an arbitrarily large
sub-collection of objects satisfying some given structural property.
Perhaps the most well-known statement is Ramsey's theorem,
stating that every coloring of tuples of integers with a finite 
number of colors admits an infinite monochromatic subset.
The various consequences of Ramsey's theorem usually fail to coincide with
the main five subsystems, and slight variations of their statements lead to
different subsystems. The study of Ramsey-type statements
has been a very active research subject in reverse mathematics over the past few years~\cite{Bienvenu2015logical,Cholak2001strength,FriedmanFom53free,Hirschfeldt2007Combinatorial}.
See Hirschfeldt~\cite{Hirschfeldt2015Slicing} for a good introduction to recent reverse mathematics.

\subsection{Reducibilities}\label{subsect:intro-reducibilities}

Many theorems in reverse mathematics are~$\Pi^1_2$ statements, i.e., of the form $(\forall X)(\exists Y)\Phi(X, Y)$
where $\Phi$ is an arithmetic formula.
They can be considered as \emph{problems} which usually come with a natural
class of~\emph{instances}. Given an instance~$X$, a set~$Y$ such that~$\Phi(X,Y)$ holds
is called a~\emph{solution} to~$X$. For example, König's lemma states
that every infinite, finitely branching tree has an infinite path. In this statement,
an instance is a infinite, finitely branching tree~$T$, and a solution to~$T$
is an infinite path through~$T$.

Thanks to the computational nature of the axioms of~$\rca$,
given two~$\Pi^1_2$ statements~$\Psf$ and~$\Qsf$, a proof of implication
~$\Qsf \imp \Psf$ consists in taking an arbitrary~$\Psf$-instance~$I$
and computing a solution to~$I$ in a computational process involving
several applications of the~$\Qsf$ principle. If the proof relativizes
and can be formalized over~$\rca$ (the main concern being the restriction to~$\Sigma^0_1$-induction),
we obtain a proof of~$\rca \vdash \Qsf \imp \Psf$.
It is often the case that the proof of the implication $\Qsf \imp \Psf$ involves
only one application of~$\Qsf$ given an instance of~$\Psf$. Such a reduction
is called a~\emph{computable reduction}.

\begin{definition}[Computable reducibility] Fix two~$\Pi^1_2$ statements~$\Psf$ and $\Qsf$.
\begin{itemize}
	\item[1.] $\Psf$ is \emph{computably reducible} to~$\Qsf$ (written $\Psf \leq_c \Qsf$)
if every~$\Psf$-instance~$I$ computes a~$\Qsf$-instance~$J$ such that for every solution~$X$ to~$J$,
$X \oplus I$ computes a solution to~$I$.
	\item[2.] $\Psf$ is \emph{strongly computably reducible} to a~$\Qsf$ (written~$\Psf \leq_{sc} \Qsf$)
if every~$\Psf$-instance~$I$ computes a~$\Qsf$-instance~$J$ such that every solution to~$J$
computes a solution to~$I$.
\end{itemize}
\end{definition}

Of course, proving that a statement~$\Psf$ is not computably reducible to another statement~$\Qsf$
is not sufficient for separating the statements over~$\rca$.
For example, we shall see that Ramsey's theorem for pairs with $k+1$ colors is not computably
reducible to Ramsey's theorem for pairs with $k$ colors, whereas the statements are known
to be logically equivalent over~$\rca$. However, proving that~$\Psf \not \leq_c \Qsf$ can be seen
as a preliminary step towards the separation of the principles. Lerman et al.~\cite{Lerman2013Separating}
have developped a framework for iterating a one-step non-reducibility
into a separation over~$\rca$.

Other reducibility notions have been introduced to better understand the computational
content of theorems from the point of view of reverse mathematics. Dorais et al.~\cite{Dorais2016uniform}
studied the uniformity of the computable reductions $\Psf \leq_c \Qsf$ by requiring
the construction of a~$\Qsf$-instance~$J$ given a~$\Psf$-instance~$I$ and the construction of a solution to~$I$
given a solution to~$J$ to be done with two fixed Turing functionals. They showed that this \emph{uniform reducibility}
is the restriction of the Weihrauch reduction to the second-order setting. 
Hirschfeldt and Jockusch~\cite{Hirschfeldtnotions} introduced a game-theoretic approach
and defined a \emph{generalized uniform reducibility} extending the notion of uniform reducibility
to several applications of the statement~$\Qsf$.
In this paper, we shall restrict ourselves to computable reducibility and provability over~$\rca$.

\subsection{Degrees of unsolvability of cohesiveness}

Cohesiveness plays a central role in reverse mathematics.
It appears naturally in the standard proof of Ramsey's theorem, as a preliminary
step to reduce an instance of Ramsey's theorem over $(n+1)$-tuples into a non-effective instance over $n$-tuples.
An important part of current research about Ramsey-type principles
in reverse mathematics consists in trying to understand whether
cohesiveness is a consequence of stable Ramsey's theorem for pairs,
or more generally whether it is a combinatorial consequence of the infinite pigeonhole principle~\cite{Cholak2001strength,Dzhafarov2014Cohesive,Dzhafarov2012Cohesive,Wang2013Omitting}.
Chong et al.~\cite{Chong2014metamathematics} recently showed using non-standard models that cohesiveness
is not a proof-theoretic consequence of the pigeonhole principle.
However it is not known whether or not cohesiveness is computably reducible to stable Ramsey's theorem for pairs.

\begin{definition}[Cohesiveness]
An infinite set $C$ is $\vec{R}$-cohesive for a sequence of sets $R_0, R_1, \dots$
if for each $i \in \omega$, $C \subseteq^{*} R_i$ or $C \subseteq^{*} \overline{R_i}$.
A set $C$ is \emph{p-cohesive} if it is $\vec{R}$-cohesive where
$\vec{R}$ is an enumeration of all primitive recursive sets.
$\coh$ is the statement ``Every uniform sequence of sets $\vec{R}$
has an $\vec{R}$-cohesive set.''
\end{definition}

Jockusch and Stephan~\cite{Jockusch1993cohesive} studied the degrees of unsolvability of cohesiveness
and proved that~$\coh$ admits a universal instance whose solutions
are the p-cohesive sets. They characterized their degrees as those whose
jump is PA relative to~$\emptyset'$.

Cohesiveness is a $\Pi^1_2$ statement whose instances are sequences of sets~$\vec{R}$
and whose solutions are~$\vec{R}$-cohesive sets. It is natural to wonder about the degrees
of unsolvability of the~$\vec{R}$-cohesive sets according to the sequence of sets~$\vec{R}$.
Mingzhong Cai asked whether whenever a uniformly computable sequence of sets~$R_0$, $R_1, \dots$
has no computable~$\vec{R}$-cohesive set, there exists a non-computable set which does not compute
one. In the opposite direction, one may wonder whether every unsolvable instance of~$\coh$ is maximally difficult.
A natural first approach in the analysis of the strength of a principle consists
in looking in which way typical sets can help in computing
a solution to an unsolvable instance.
The notion of typical set is usually understood in two different ways: using the genericity approach
and the randomness approach. Wang~\cite{Wang2013Omitting} answered Cai's question
 by investigating the solvability of cohesiveness by typical sets.

In this paper, we refine Wang's analysis by establishing a pointwise correspondence
between sets cohesive for a sequence and sets whose jump computes a member of a~$\Pi^{0, \emptyset'}_1$ class.
Then, using the known interrelations between typical sets and~$\Pi^0_1$ classes,
we give precise genericity and randomness bounds above which no typical set helps computing a cohesive set.
We identify different layers of unsolvability and spot a class of instances sharing many
properties with the universal instance.
Emulating work in~\cite{PateyCombinatorial} on the pigeonhole principle and
weak K\"onig's lemma ($\wkl$), 
we show that some unsolvable instances of~$\coh$ are combinatorial consequences of the pigeonhole principle.

\subsection{Ramsey's theorem and computable reducibility}

The strength of Ramsey-type statements is notoriously hard to tackle
in the setting of reverse mathematics. The separation of Ramsey's theorem for pairs ($\rt^2_2$)
from the arithmetical comprehension axiom ($\aca$) was a long-standing open
problem, until Seetapun and Slaman solved it~\cite{Seetapun1995strength} with his notion of cone avoidance.
The question of the relation between $\rt^2_2$ and weak König's lemma ($\wkl$)
remained open for many years before Cholak, Jockusch and Slaman~\cite{Cholak2001strength} proved that~$\wkl$
does not imply~$\rt^2_2$ over~$\rca$. More than fifteen years after Seetapun, Liu~\cite{Liu2012RT22} solved the remaining direction
by proving that~$\rt^2_2$ does not imply~$\wkl$ over~$\rca$.

\begin{definition}[Ramsey's theorem]
A subset~$H$ of~$\omega$ is~\emph{homogeneous} for a coloring~$f : [\omega]^n \to k$ (or \emph{$f$-homogeneous}) 
if each $n$-tuple over~$H$ is given the same color by~$f$. A coloring $f : [\omega]^{n+1} \to k$ is \emph{stable} if for every~$n$-tuple~$\sigma \in [\omega]^n$,
$\lim_s f(\sigma, s)$ exists.
$\rt^n_k$ is the statement ``Every coloring $f : [\omega]^n \to k$ has an infinite $f$-homogeneous set''.
$\srt^n_k$ is the restriction of~$\rt^n_k$ to stable colorings.
\end{definition}

Simpson~\cite[Theorem III.7.6]{Simpson2009Subsystems} proved that whenever~$n \geq 3$ and~$k \geq 2$,
$\rca \vdash \rt^n_k \biimp \aca$. Ramsey's theorem for pairs is probably the most famous 
example of statement escaping the Big Five. Seetapun~\cite{Seetapun1995strength} proved that~$\rt^2_2$ 
is strictly weaker than~$\aca$ over~$\rca$. Because of the complexity of the related separations, 
$\rt^2_2$ received a particular attention from the reverse mathematics community. 
Mileti~\cite{Mileti2004Partition} and Jockusch and Lempp [unpublished]
proved that~$\rt^2_2$ is equivalent to~$\srt^2_2+\coh$ over~$\rca$. Recently,
Chong et al.~\cite{Chong2014metamathematics} proved that~$\srt^2_2$ is strictly weaker than~$\rt^2_2$
over~$\rca$. However they used non-standard models to separate the statements and the question
whether~$\srt^2_2$ and~$\rt^2_2$ coincide over~$\omega$-models remains open.
Stable Ramsey's theorem for pairs can be characterized by a purely computability-theoretic statement.

\begin{definition}
For every~$n, k \geq 1$, 
$\mathsf{D}^n_k$ is the statement ``Every~$\Delta^0_n$ $k$-partition of the integers has an infinite subset of one of its parts''.
$\mathsf{D}^n_{<\infty}$ is the statement~$(\forall k)\mathsf{D}^n_k$.
\end{definition}

Cholak et al.~\cite{Cholak2001strength} proved that~$\mathsf{D}^2_k$ and~$\srt^2_k$ are computably equivalent
and that the proof is formalizable over~$\rca+\bst$.
Later, Chong et al.~\cite{Chong2010role} proved that~$\mathsf{D}^2_2$ implies~$\bst$ over~$\rca$,
showing therefore that~$\rca \vdash \mathsf{D}^2_k \biimp \srt^2_\ell$ for every~$k, \ell \geq 2$.
Dzhafarov~\cite{Dzhafarov2014Cohesive} proved that~$\coh \not \leq_{sc} \mathsf{D}^2_{<\infty}$ by constructing 
a sequence of sets~$R_0, R_1, \dots$ such that for every~$k \geq 2$,
every instance of~$\rt^1_k$ hyperarithmetic in~$\vec{R}$ has a solution which does not compute an~$\vec{R}$-cohesive set.
In section~\ref{sect:coh-strong-reduc}, we strengthen this result by making~$\vec{R}$ uniformly $\Delta^0_2$
and removing the effectiveness restriction on the instance of~$\rt^1_k$. The proof reveals the combinatorial nature
of the relations between cohesiveness and~$\rt^1_k$ and answers a question of Antonio Mont\'alban.
Recently, Dzhafarov~\cite{DzhafarovStrong} proved that~$\coh \not \leq_{sc} \srt^2_2$.

Another closely related subject of interest is the impact of the number of colors in the strength of Ramsey's theorem.
For every~$n \geq 1$ and~$k, \ell \geq 2$, $\rca \vdash \rt^n_k \biimp \rt^n_\ell$ by a simple color blindness argument.
Whenever $k > \ell \geq 2$, the reduction of~$\rt^n_k$ to~$\rt^n_\ell$ involves more than one application of~$\rt^n_\ell$
and therefore is not a computable reduction.
Hirschfeldt and Jockusch~\cite{Hirschfeldtnotions} noticed that the proof of Dzhafarov~\cite{Dzhafarov2014Cohesive} can be modified
to obtain~$\rt^1_k \not \leq_{sc} \rt^1_\ell$. 
Dorais et al.~\cite{Dorais2016uniform} asked in which case~$\rt^n_k \leq_c \rt^n_\ell$.
In section~\ref{sect:rt-reducibility}, we answer 
by proving that $\srt^n_k \not \leq_c \rt^n_\ell$ whenever~$k > \ell \geq 2$ and~$n \geq 2$.

\subsection{The weakness of free set and thin set theorems}

Simpson~\cite[Theorem III.7.6]{Simpson2009Subsystems} proved
that the hiearchy of Ramsey's theorem collapses at level three
in reverse mathematics.
One may wonder about some natural weakenings of Ramsey's theorem over arbitrary tuples
which remain strictly weaker than~$\aca$. 
Given a coloring~$f : [\omega]^n \to k$, instead of stating the existence of an infinite $f$-homogeneous set~$H$,
we can simply require that $f$ \emph{avoids at least one color} over the set~$H$. This is the notion of $f$-thin set.

\begin{definition}[Thin set theorem]
Given a coloring~$f : [\omega]^n \to k$ (resp.\ $f : [\omega]^n \to \omega$), an infinite set~$H$
is \emph{thin} for~$f$ if~$|f([H]^n)| \leq k-1$ (resp.\ $f([H]^n) \neq \omega$).
For every~$n \geq 1$ and~$k \geq 2$, $\ts^n_k$ is the statement
``Every coloring $f : [\omega]^n \to k$ has a thin set''
and $\ts^n$ is the statement ``Every coloring $f : [\omega]^n \to \omega$ has a thin set''.
$\sts^n_k$ is the restriction of~$\ts^n_k$ to stable colorings.
$\ts$ is the statement~$(\forall n)\ts^n$.
\end{definition}

The reverse mathematical analysis of the thin set theorem started with Friedman~\cite{FriedmanFom53free,Friedman2013Boolean}.
It has been studied by Cholak et al.~\cite{Cholak2001Free},
Wang~\cite{Wang2014Some} and the author~\cite{PateyCombinatorial,Patey2015Degrees} among others.
Dorais et al.~\cite{Dorais2016uniform} proved that~$\ts^1_k$ is not uniformly reducible
to~$\ts^1_\ell$ whenever~$\ell > k$. Hirschfeldt and Jockusch~\cite{Hirschfeldtnotions}
extended the result to colorings over arbitrary tuples.
We generalize the previous theorems by proving that $\ts^n_k \not \leq_c \ts^n_\ell$
whenever~$\ell > k \geq 2$ and~$n \geq 2$. In the case of colorings of singletons,
we prove that~$\ts^1_k \not \leq_{sc} \ts^1_\ell$ whenever~$\ell > k \geq 2$.

The free set theorem is a strengthening of the thin set theorem
in which every member of a free set is a witness of thinness of the same set.
Indeed, if~$H$ is an infinite~$f$-free set for some function~$f$,
for every~$a \in H$, $H \setminus \{a\}$ is $f$-thin with witness color~$a$.
See Theorem~3.2 in~\cite{Cholak2001Free} for a formal version of this claim.

\begin{definition}[Free set theorem]
Given a coloring $f : [\omega]^n \to \omega$, an infinite set~$H$
is \emph{free} for~$f$ if for every~$\sigma \in [H]^n$,
$f(\sigma) \in H \imp f(\sigma) \in \sigma$.
For every~$n \geq 1$, $\fs^n$ is the statement
``Every coloring $f : [\omega]^n \to \omega$ has a free set''.
$\sfs^n$ is the restriction of~$\fs^n$ to stable colorings.
$\fs$ is the statement~$(\forall n)\fs^n$.
\end{definition}

Cholak et al.~\cite{Cholak2001Free} proved that~$\rca \vdash \rt^n_2 \imp \fs^n \imp \ts^n$ for every~$n \geq 2$.
Wang~\cite{Wang2014Some} proved that~$\fs$ (hence~$\ts$) does not imply~$\aca$
over~$\omega$-models. The author~\cite{PateyCombinatorial} proved
that~$\fs$ does not imply~$\wkl$ (and in fact weak weak König's lemma) over~$\rca$.

Cholak et al.~\cite{Cholak2001Free} and Montalban~\cite{Montalban2011Open} asked whether any of~$\ts^2$, $\fs^2$, $\fs^2+\coh$ and
$\fs^2+\wkl$ imply~$\rt^2_2$ over~$\rca$. Hirschfeldt~\cite{Hirschfeldt2015Slicing} asked whether~$\fs^2+\wkl$
implies any of~$\srt^2_2$, the ascending descending sequence ($\ads$) and the chain antichain principle~($\cac$).
We answer all these questions negatively by proving that for every~$k \geq 2$,
the conjunction of $\coh$, $\wkl$,
the Erd\H{o}s-Moser theorem ($\emo$), $\ts^2_{k+1}$, $\fs$ and~$\ts$ implies neither~$\sts^2_k$
nor the stable ascending descending sequence ($\sads$) over~$\rca$.

\subsection{Organization of the paper}

In section~\ref{sect:coh-degrees-unsolvability}, we establish an instance-wise correspondence
between cohesive sets and degrees whose jump computes a member of a $\Pi^{0,\emptyset'}_1$ class.
We take advantage of this correspondence to study how typical sets are useful to compute unsolvable instances of cohesiveness,
and extend this analysis to Ramsey-type statements.
In section~\ref{sect:coh-strong-reduc}, we reprove Dzhafarov's result that cohesiveness is not strongly computably
reducible to $\mathsf{D}^2_{<\infty}$ with a more combinatorial proof using hyperimmunity.
In section~\ref{sect:rt-reducibility}, we refine the forcing of the previous section
to separate Ramsey's theorem over computable reducibility according to the number of colors.
In section~\ref{sect:ts-strong-reduc}, we separate variants of the thin set theorem for singletons over strong computable 
reducibility according to the number of colors using preservation of non-c.e.\ definitions.
Finally, we separate the thin set theorem for pairs from Ramsey's theorem for pairs
over~$\rca$ in section~\ref{sect:preservation-ts2-omega-models}, and extend this separation
to the full thin set theorem in section~\ref{sect:preservation-ts-omega-models} 
and the full free set theorem in section~\ref{sect:fs-omega-models}.

\subsection{Notation}

\emph{String, sequence}.
Fix an integer $k \in \omega$.
A \emph{string} (over $k$) is an ordered tuple of integers $a_0, \dots, a_{n-1}$
(such that $a_i < k$ for every $i < n$). The empty string is written~$\varepsilon$. A \emph{sequence}  (over $k$)
is an infinite listing of integers $a_0, a_1, \dots$ (such that $a_i < k$ for every $i \in \omega$).
Given $s \in \omega$,
$k^s$ is the set of strings of length $s$ over~$k$ and
$k^{<s}$ is the set of strings of length $<s$ over~$k$. Similarly,
$k^{<\omega}$ is the set of finite strings over~$k$
and $k^{\omega}$ is the set of sequences (i.e. infinite strings)
over~$k$. 
If $\sigma$ is a string, then $|\sigma|$ denotes its length.
Given two strings $\sigma, \tau \in k^{<\omega}$, $\sigma$ is a \emph{prefix}
of $\tau$ (written $\sigma \preceq \tau$) if there exists a string $\rho \in k^{<\omega}$
such that $\sigma \rho = \tau$. Given a sequence $X$, we write $\sigma \prec X$ if
$\sigma = X \uh n$ for some $n \in \omega$, where $X \uh n$ denotes the restriction of $X$ to its first $n$ elements.
A \emph{binary string} (resp. real) is a \emph{string} (resp. sequence) over $2$.
We may identify a real with a set of integers by considering that the real is its characteristic function.
Accordingly, we identify a string~$\sigma \in 2^{<\omega}$ 
with the set~$set(\sigma) = \{n  < |\sigma| : \sigma(n) = 1\}$.
Therefore~$n \in \sigma$ means~$n \in set(\sigma)$ and
given a set~$A \subseteq \omega$, we denote by~$\sigma \cap A$ the string~$\tau \in 2^{|\sigma|}$
such that~$\tau(n) = 1$ if and only if~$\sigma(n) = 1$ and $n \in A$.
We also write~$\sigma \subseteq A$ for~$set(\sigma) \subseteq A$.
Given a real~$X \in 2^\omega$ and a string~$\sigma$, we denote by~$X/\sigma$
the real obtained by replacing the~$|\sigma|$ first bits of~$X$ by~$\sigma$.

\emph{Tree, path}.
A tree $T \subseteq \omega^{<\omega}$ is a set downward closed under the prefix relation.
The tree~$T$ is \emph{finitely branching} if every node~$\sigma \in T$
has finitely many immediate successors.
A \emph{binary} tree is a tree~$T \subseteq 2^{<\omega}$.
A set $P \subseteq \omega$ is a \emph{path} through~$T$ if for every $\sigma \prec P$,
$\sigma \in T$. A string $\sigma \in k^{<\omega}$ is a \emph{stem} of a tree $T$
if every $\tau \in T$ is comparable with~$\sigma$.
Given a tree $T$ and a string $\sigma \in T$,
we denote by $T^{[\sigma]}$ the subtree $\{\tau \in T : \tau \preceq \sigma \vee \tau \succeq \sigma\}$.
We write~$P \gg X$ to say that~$P$ is of PA degree relative to~$X$.

\emph{Classes}.
Given a finite string~$\sigma \in \omega^{<\omega}$, $[\sigma]$ is the set of sequences extending~$\sigma$.
Whenever it is clear from the context that we are working with binary strings, $[\sigma]$
denotes the set of \emph{reals} extending~$\sigma$.
A~\emph{$\Pi^{0,X}_1$ class} is the collection of paths through an $X$-computable tree. The complement of a~$\Pi^{0,X}_1$ class
is a~\emph{$\Sigma^{0,X}_1$ class}. A~\emph{$k$-enum} of a class~$\Ccal \subseteq 2^\omega$
is a uniform sequence of finite sets of strings~$D_0, D_1, \dots$ such that~$D_s$
is a set of~at most $k$ binary strings of length~$s$ such that one of those is a prefix of a member of~$\Ccal$.

\emph{Sets, partitions}.
Given two sets $A$ and $B$, we denote by $A < B$ the formula
$(\forall x \in A)(\forall y \in B)[x < y]$
and by $A \subseteq^{*} B$ the formula $(\exists b)(\forall x \in A)[x \not \in B \imp x < b]$,
meaning that $A$ is contained in $B$ except for at most finitely many elements.
Given a set~$X$ and some integer~$k$, a~\emph{$k$-cover of~$X$}
is a $k$-uple $A_0, \dots, A_{k-1}$ such that~$A_0 \cup \dots \cup A_{k-1} = X$.
We may simply say~\emph{$k$-cover} when the set~$X$ is unambiguous. 
A \emph{$k$-partition} is a $k$-cover whose sets are pairwise disjoint.

\section{The degrees of unsolvability of cohesiveness}\label{sect:coh-degrees-unsolvability}

In this section, we study the degree of unsolvability of $\vec{R}$-cohesive sets
according the degree of unsolvability of the sequence $\vec{R}$ itself.
Then we take advantage of this
analysis to answer various questions about which theorems in reverse mathematics
can solve a computably unsolvable instance of cohesiveness.

\subsection{Cohesiveness and \texorpdfstring{$\Pi^{0,\emptyset'}_1$}{Pi02} classes}\label{sect:coh-delta2-trees}

Jockusch and Stephan characterized the p-cohesive degrees
as those whose jump is of degree PA relative to~$\emptyset'$.
We clarify the situation by establishing 
an instance-wise correspondence between the degrees
of the sets cohesive for a sequence, and the degrees whose jump
computes a member of a non-empty $\Pi^{0, \emptyset'}_1$ class.

\begin{definition}
Let $R_0, R_1, \dots$ be a uniformly computable sequence of sets.
For every~$\sigma \in 2^{<\omega}$, we define $R_\sigma$ inductively as follows.
First, $R_\varepsilon = \omega$ and then, if $R_\sigma$ has already been defined for some string~$\sigma$ of length~$s$,
let $R_{\sigma 0} = R_\sigma \cap \overline{R_s}$ and~$R_{\sigma 1} = R_\sigma \cap R_s$.
For example, $R_{0110} = \overline{R_0} \cap R_1 \cap R_2 \cap \overline{R_3}$.
Let $\Ccal(\vec{R})$ be the $\Pi^{0,\emptyset'}_1$ class of binary sequences~$P$
such that for every~$\sigma \prec P$, the set~$R_\sigma$ is infinite.
\end{definition}

Our first lemma shows that the degrees of~$\vec{R}$-cohesive
sets can be characterized by their jumps.
This lemma reveals in particular that low sets fail to solve unsolvable instances
of cohesiveness.

\begin{lemma}\label{lem:coh-tree}
For every uniformly computable sequence of sets~$R_0, R_1, \dots$,
a set computes an~$\vec{R}$-cohesive set if and only if
its jump computes a member of~$\Ccal(\vec{R})$.
\end{lemma}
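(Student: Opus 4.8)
The plan is to prove the two directions separately, using the tree $R_\sigma$ as the bridge in both cases.

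For the forward direction, suppose $X$ computes an $\vec{R}$-cohesive set $C$. I claim $X' $ computes a path through $\Ccal(\vec{R})$. The key observation is that cohesiveness of $C$ gives, for each $i$, a ``choice'' $\epsilon_i \in \{0,1\}$ such that $C \subseteq^* R_i$ (when $\epsilon_i = 1$) or $C \subseteq^* \overline{R_i}$ (when $\epsilon_i = 0$). Let $P = \epsilon_0 \epsilon_1 \cdots$. Then for every $\sigma \prec P$ of length $s$, $C \subseteq^* R_\sigma$, hence $R_\sigma \supseteq^* C$ is infinite since $C$ is infinite; so $P \in \Ccal(\vec{R})$. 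It remains to check $X'$ can compute $P$: the statement ``$C \subseteq^* R_i$'' is $\Sigma^0_2$ in $C$ (there is a finite bound past which every element of $C$ lies in $R_i$), hence decidable by $(C \oplus \emptyset)' \leq_T X'$, and similarly for $\overline{R_i}$; since exactly one of the two holds (up to the trivial overlap), $X'$ can read off each bit $\epsilon_i$ uniformly. This gives $X' \geq_T P \in \Ccal(\vec{R})$.

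For the reverse direction, suppose $X'$ computes a member $P$ of $\Ccal(\vec{R})$; I want $X$ to compute an $\vec{R}$-cohesive set. By the limit lemma, $X'$-computing $P$ means there is an $X$-computable approximation $\sigma_0 \prec \sigma_1 \prec \cdots$ to the initial segments of $P$ in the sense that for each $n$, $\sigma_s \uh n$ is eventually equal to $P \uh n$. Since $P \in \Ccal(\vec{R})$, each $R_{P \uh n}$ is infinite. The strategy is a standard $X$-computable ``follow the approximation'' construction: build $C = \{c_0 < c_1 < \cdots\}$ by, at stage $s$, looking at the current approximation $\sigma_s$, and searching (using $X$, since $\vec{R}$ is computable and the membership relations for $R_\sigma$ are computable) for the least element $c_s$ greater than $c_{s-1}$ lying in $R_{\sigma_s \uh \ell_s}$ for a slowly growing length $\ell_s$ (e.g. $\ell_s$ the length of agreement between $\sigma_s$ and the previous approximations, or simply $\ell_s \to \infty$ slowly enough that correctness is maintained). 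The point: since the approximation stabilizes on each fixed initial segment, for all large $s$ we are genuinely searching inside $R_{P \uh k}$ for the relevant $k$, and that set is infinite, so the search succeeds and $c_s$ gets added into $R_{P \uh k}$. Hence $C$ is infinite and, for each $i$, cofinitely many elements of $C$ lie in $R_{P\uh(i+1)} \subseteq R_i$ or $\overline{R_i}$ according to $P(i)$, so $C$ is $\vec{R}$-cohesive. One must be careful to thin $\ell_s$ enough that each new $c_s$ lands in $R_{\sigma_s \uh \ell_s}$ with $\sigma_s \uh \ell_s$ already correct; taking $\ell_s$ equal to the number of stages for which the approximation has been stable on $\ell_s$ many bits handles this.

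The main obstacle is the reverse direction, specifically making the $X$-computable construction of $C$ robust against the non-monotone behavior of the $X'$-approximation to $P$: the approximation to $P \uh n$ can flip back and forth finitely often, and if at stage $s$ we commit $c_s$ to what later turns out to be the wrong branch $R_\sigma$ with $\sigma \not\prec P$, that element may lie in neither $R_i$ nor $\overline{R_i}$ cofinitely for some $i$. The fix is the usual one — only ever use an initial segment $\sigma_s \uh \ell_s$ of the current approximation with $\ell_s$ chosen conservatively (based on how long the approximation has looked stable), so that only finitely many ``mistaken'' $c_s$ are ever enumerated into $C$, and finitely many errors do not destroy cohesiveness (which is a $\subseteq^*$ notion). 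Checking that exactly this bookkeeping works, and that it is genuinely $X$-computable (no appeal to $X'$ inside the construction of $C$ itself, only the one fixed $X'$-computation producing the approximation), is the heart of the argument; the forward direction is a routine $\Sigma^0_2$ computation.
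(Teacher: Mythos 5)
Your forward direction is essentially the paper's --- both identify the unique string $\sigma$ of each length with $C \subseteq^* R_\sigma$ and read off a path $P$. But you gloss over a real point: ``$C \subseteq^* R_i$'' is $\Sigma^0_2(C)$, and $C'$ cannot decide $\Sigma^0_2(C)$ predicates in general, so ``hence decidable by $(C\oplus\emptyset)'$'' is not right as stated. What makes it work is exactly the dichotomy from cohesiveness: define $g(i,n) = R_i(c_n)$ where $c_n$ is the $n$th element of $C$; this is $C$-computable, and cohesiveness says $\lim_n g(i,n)$ exists for each $i$, so the limit lemma gives $P \leq_T C' \leq_T X'$. You hint at this with ``since exactly one of the two holds,'' but the mechanism is a limit-lemma argument, not a $\Sigma^0_2$ decision.

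The reverse direction has a genuine gap, and your proposed bookkeeping does not close it. Whatever conservative $\ell_s$ you pick, at stage $s$ you are committing to search inside the fixed set $R_{\sigma_s\restr\ell_s}$ for an element above $c_{s-1}$. If $\sigma_s\restr\ell_s$ is not a prefix of any member of $\Ccal(\vec{R})$, that set can be finite (or even empty), and the search never terminates, so the construction stalls at stage $s$. For a concrete failure: if $R_0 = \omega$, then $R_{\langle 0\rangle} = \emptyset$; at any stage where the approximation reads $\sigma_s(0) = 0$, even $\ell_s = 1$ sends you searching in the empty set. Shrinking $\ell_s$ based on ``observed stability'' does not help, because an approximation can look stable for arbitrarily long on a wrong value before flipping.

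The paper's proof avoids this by inverting the order of the search. At stage $s$ it looks for a \emph{pair} $(\sigma,n)$ with $|\sigma| = s$, $n \in R_\sigma$, $n > s$, and $f(x,n) = \sigma(x)$ for all $x < s$, where $f$ is the $X$-computable limit-lemma approximation to $P$. The string is determined by what the approximation says at \emph{time} $n$, not at the current stage. This search always halts: past the stabilization point $n_0$ for the first $s$ bits, $\sigma = P\restr s$ agrees with $f(\cdot, n)$ for all $n > n_0$, and $R_{P\restr s}$ is infinite, so a witness $n$ exists. Cohesiveness of $C$ then follows because each $n \in C$ added at stage $s \geq n_1$ lies in $R_\sigma$ for a $\sigma$ that is already correct on coordinate $x$, for any fixed $x < s$, once $n$ is past the threshold $n_1$ for coordinate $x$. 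That quantifier inversion --- pick $(\sigma, n)$ jointly with $\sigma$ read off the approximation at time $n$ --- is the missing device in your sketch.
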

\begin{proof}
Fix an~$\vec{R}$-cohesive set~$C$.
Let~$P = \bigcup \{ \sigma \in 2^{<\omega} : C \subseteq^{*} R_\sigma \}$.
The sequence~$P$ is infinite and~$C'$-computable as there exists exactly
one string~$\sigma$ of each length such that~$C \subseteq^{*} R_\sigma$.
In particular, for every~$\sigma \prec P$, $R_\sigma$ is infinite,
so~$P$ is a member of~$\Ccal(\vec{R})$.

Conversely, let~$X$ be a set whose jump computes a member~$P$ of~$\Ccal(\vec{R})$.
By Schoenfield's limit lemma~\cite{Shoenfield1959degrees}, there exists an~$X$-computable function~$f(\cdot, \cdot)$
such that for each~$x$, $\lim_s f(x, s) = P(x)$. Define an~$\vec{R}$-cohesive set~$C = \bigcup_s C_s$
$X$-computably by stages $C_0 = \emptyset \subsetneq C_1 \subsetneq \dots$ as follows. 
At stage~$s$, search for some string~$\sigma$ of length~$s$
and some integer~$n \in R_\sigma$ greater than~$s$ such that~$f(x, n) = \sigma(x)$ for each~$x < |\sigma|$.
We claim that such~$\sigma$ and~$n$ must exist, as there exists a threshold~$n_0$
such that for every~$n > n_0$, $f(x, n) = P(x)$ for each~$x < s$.
Let~$\sigma \prec P$ be of length~$s$. By definition of~$P$, $R_\sigma$ is infinite,
so there must exist some $n \in R_\sigma$ which is greater than~$n_0$ and~$s$.
Set~$C_{s+1} = C_s \cup \{n\}$ and go to the next stage.
We now check that~$C = \bigcup_s C_s$ is~$\vec{R}$-cohesive.
For every~$x \in \omega$, there exists a threshold~$n_1$ such that
for every~$n > n_1$, $f(x, n) = P(x)$. By construction, for every element~$n \in C \setminus C_{n_1}$,
$n \in R_\sigma$ for some string~$\sigma$ such that~$\sigma(x) = P(x)$. Therefore~$C \subseteq^{*} R_x$
or~$C \subseteq^{*} \overline{R_x}$.
\end{proof}

Jockusch and Stephan~\cite{Jockusch1993cohesive}
showed the existence of a uniformly computable sequence of sets~$R_0, R_1, \dots$
having no low~$\vec{R}$-cohesive set. We prove that it suffices
to consider any sequence~$\vec{R}$ with no computable~$\vec{R}$-cohesive set to obtain this property.

\begin{corollary}\label{cor:low-helps-not-coh}
A uniformly computable sequence of sets~$R_0, R_1, \dots$ has
a low~$\vec{R}$-cohesive set if and only if it has a computable~$\vec{R}$-cohesive set.
\end{corollary}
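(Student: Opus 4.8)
The plan is to derive Corollary~\ref{cor:low-helps-not-coh} directly from Lemma~\ref{lem:coh-tree} together with the low basis theorem relativized to~$\emptyset'$. The forward direction is trivial: a computable $\vec{R}$-cohesive set is in particular low (any computable set has $\emptyset'$ as its jump, hence is low). So the real content is the converse, and the key observation is that the class $\Ccal(\vec{R})$ is a nonempty $\Pi^{0,\emptyset'}_1$ class whenever $\vec R$ is uniformly computable — nonemptiness follows because the tree $\{\sigma : R_\sigma \text{ is infinite}\}$ is infinite (it is finitely branching and at each level some $\sigma$ has $R_\sigma$ infinite, since the $R_\sigma$ of a fixed length partition $\omega$), so by König's lemma it has a path.

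First I would suppose $\vec R$ has a low $\vec R$-cohesive set $C$. By Lemma~\ref{lem:coh-tree}, $C'$ computes a member $P$ of $\Ccal(\vec R)$. Since $C$ is low, $C' \leq_T \emptyset'$, so $\emptyset'$ computes a member of $\Ccal(\vec R)$; in other words $\Ccal(\vec R)$ has a $\emptyset'$-computable member. Now I want to upgrade this to a computable $\vec R$-cohesive set. The point is that $\Ccal(\vec R)$ is a $\Pi^{0,\emptyset'}_1$ class, so by the low basis theorem relativized to $\emptyset'$ it has a member $Q$ that is low over $\emptyset'$, i.e. $Q' \leq_T \emptyset''$ — but that is not quite what I need. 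Instead I should apply the basis theorem for $\Pi^0_1$ classes that produces members of low degree \emph{below the oracle}: since $\emptyset'$ itself computes a member of $\Ccal(\vec R)$, the class $\Ccal(\vec R)$ regarded as a $\Pi^{0,\emptyset'}_1$ class is nonempty, and applying the (relativized) low basis theorem gives a member $Q$ with $(Q \oplus \emptyset')' \leq_T \emptyset''$; again this gives lowness over $\emptyset'$, not what we want directly.

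The cleaner route, and the one I would actually take: apply Lemma~\ref{lem:coh-tree} in the other direction. We have shown $\emptyset'$ computes a member of $\Ccal(\vec R)$. But $\emptyset' = \emptyset^{(1)}$ is the jump of $\emptyset$, so by the ``conversely'' half of Lemma~\ref{lem:coh-tree} applied with $X = \emptyset$: since the jump of $\emptyset$ computes a member of $\Ccal(\vec R)$, $\emptyset$ computes an $\vec R$-cohesive set, i.e. there is a \emph{computable} $\vec R$-cohesive set. That is exactly the conclusion. So the whole argument is: low $\vec R$-cohesive set $\Rightarrow$ (Lemma~\ref{lem:coh-tree}, forward) $C'$ computes a member of $\Ccal(\vec R)$ $\Rightarrow$ (lowness) $\emptyset'$ computes a member of $\Ccal(\vec R)$ $\Rightarrow$ (Lemma~\ref{lem:coh-tree}, backward, with $X=\emptyset$) computable $\vec R$-cohesive set exists.

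The main obstacle, such as it is, is purely bookkeeping: one must be careful that the ``conversely'' direction of Lemma~\ref{lem:coh-tree} is genuinely stated for an arbitrary set $X$ (it is — the proof there uses only that $X'$ computes $P$ and Shoenfield's limit lemma relative to $X$), so instantiating $X = \emptyset$ is legitimate, and that a low set $C$ satisfies $C' \equiv_T \emptyset'$ by definition so that $\emptyset'$ indeed computes whatever $C'$ computes. No basis theorem is actually needed; the corollary is a formal consequence of the lemma plus the definition of lowness. I would also remark that the same argument shows more generally that $\vec R$ has an $\vec R$-cohesive set of degree $\mathbf d$ with $\mathbf d' = \emptyset'$ iff it has a computable one, and that this is the instance-wise analogue of the Jockusch–Stephan characterization.
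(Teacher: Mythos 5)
Your final argument is correct and coincides exactly with the paper's proof: apply Lemma~\ref{lem:coh-tree} forward to get that $\emptyset'$ (being $\equiv_T C'$ by lowness) computes a member of $\Ccal(\vec R)$, then apply Lemma~\ref{lem:coh-tree} backward with $X=\emptyset$ to extract a computable $\vec R$-cohesive set. The detour through basis theorems, which you correctly discard, is not needed, and the paper likewise uses none.
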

\begin{proof}
Let~$X$ be a low $\vec{R}$-cohesive set. By Lemma~\ref{lem:coh-tree},
the jump of~$X$ (hence~$\emptyset'$) computes a member of~$\Ccal(\vec{R})$.
By a second application of Lemma~\ref{lem:coh-tree}, 
the existence of a computable~$\vec{R}$-cohesive set follows.
\end{proof}

One may naturally wonder about the shape of the~$\Pi^{0,\emptyset'}_1$
classes~$\Ccal(\vec{R})$ for uniformly computable sequences $R_0, R_1, \dots$
We show through the following lemma that~$\Ccal(\vec{R})$ can be any~$\Pi^{0,\emptyset'}_1$
class. Together with Lemma~\ref{lem:coh-tree}, it establishes an instance-wise correspondence
between cohesive sets and~$\Pi^{0,\emptyset'}_1$ classes.

\begin{lemma}\label{lem:tree-coh-inv}
For every non-empty~$\Pi^{0,\emptyset'}_1$ class~$\Dcal \subseteq 2^{\omega}$,
there exists a uniformly computable sequence of sets~$R_0, R_1, \dots$
such that~$\Ccal(\vec{R}) = \Dcal$.
\end{lemma}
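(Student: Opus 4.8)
The plan is to reverse-engineer the $\Pi^{0,\emptyset'}_1$ class $\Dcal$ as a cohesiveness instance. Since $\Dcal$ is a non-empty $\Pi^{0,\emptyset'}_1$ class, there is an $\emptyset'$-computable tree $T \subseteq 2^{<\omega}$ whose set of infinite paths is $\Dcal$; without loss of generality $T$ has no dead ends (prune them using $\emptyset'$). By Shoenfield's limit lemma, there is a computable predicate $g(\sigma, s)$ with $\lim_s g(\sigma, s) = 1$ iff $\sigma \in T$, and since membership in $T$ is $\emptyset'$-computable we may arrange $g$ to be $0/1$-valued and to depend only on finitely much information at each stage. The goal is to define $R_0, R_1, \dots$ uniformly computably so that for a string $\sigma$ of length $s$, the set $R_\sigma$ is infinite if and only if $\sigma \in T$. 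Then the class $\Ccal(\vec R)$ of sequences $P$ all of whose initial segments $\sigma \prec P$ have $R_\sigma$ infinite is exactly the set of paths through $T$, i.e.\ $\Dcal$.

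First I would set up the construction of $\vec R$ by stages. The key idea: we want to ``inject'' one fresh witness element into $R_\sigma$ at stage $t$ precisely when our current guess says $\sigma \in T$. Concretely, reserve a fresh large number $n_t$ at stage $t$ and, for the unique string $\sigma_t$ of length equal to (say) the stage counter that our approximation currently believes lies on $T$ and is compatible with all the guesses made so far — more carefully, for \emph{each} string $\sigma$ of length $\leq t$ with $g(\sigma, t) = 1$ — place $n_t$ into $R_i$ or $\overline{R_i}$ for $i < |\sigma|$ according to $\sigma(i)$, so that $n_t \in R_\sigma$. The definitions of the $R_i$ themselves are then just: $n \in R_i$ iff $n$ was injected on behalf of some $\sigma$ with $\sigma(i) = 1$. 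This is uniformly computable since $g$ is computable and each $n_t$ is handled once. One must check the injections are consistent — a given $n_t$ should be placed into $R_\sigma$ for at most one $\sigma$ of each length, so I would commit $n_t$ to the lexicographically least length-$\ell(t)$ string believed to be in $T$ at stage $t$, for a suitable bookkeeping function $\ell(t) \to \infty$, rather than to all of them.

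Then I would verify the two directions. If $\sigma \in T$ (so $\sigma$ extends to a path, using the no-dead-ends property), then $g(\sigma, t) = 1$ for all large $t$, and cofinitely often the bookkeeping will select an extension of $\sigma$ (or $\sigma$ itself) as the recipient, injecting infinitely many fresh numbers into $R_\sigma$; hence $R_\sigma$ is infinite. Conversely, if $\sigma \notin T$, then $g(\sigma, t) = 0$ for all large $t$, so only finitely many numbers are ever injected on behalf of extensions of $\sigma$, and $R_\sigma$ is finite. The main obstacle, and the step demanding the most care, is the bookkeeping in the no-dead-ends / pruning argument combined with the ``lexicographically least'' recipient choice: I must ensure that every $\sigma \in T$ really does receive injections cofinitely often (not perpetually starved by some lex-smaller competitor that keeps reappearing because $g$ oscillates on it before settling), which is why arranging $g$ to be \emph{monotone in the limit} along each string — or equivalently working with a genuinely $\emptyset'$-computable tree approximation where guesses only improve — is the crux. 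Once the injection schedule is shown to be fair to every true path, both inclusions $\Ccal(\vec R) \subseteq \Dcal$ and $\Dcal \subseteq \Ccal(\vec R)$ follow immediately, and uniform computability of $\vec R$ is manifest from the construction.
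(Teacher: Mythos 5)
Your high-level plan --- apply Shoenfield's limit lemma to get a computable approximation $g$ to the tree, then build $\vec R$ so that $R_\sigma$ is infinite precisely when $\sigma$ lies on the tree --- is the same as the paper's, and you correctly spot the consistency obstacle: a single fresh witness $n_t$ cannot be committed to two incompatible strings simultaneously. However, your proposed resolution, namely committing $n_t$ to the lexicographically least currently-believed string of length $\ell(t)$, does not work, and the failure is more fundamental than the oscillation issue you flag.

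Take $\Dcal = 2^\omega$, a non-empty $\Pi^{0,\emptyset'}_1$ class. The tree is all of $2^{<\omega}$, and we may take $g$ constantly equal to $1$, so there is no oscillation whatsoever. Under your scheme, every fresh number $n_t$ is committed to $0^{\ell(t)}$, hence lands in $\overline{R_i}$ for each $i < \ell(t)$. Consequently $R_\sigma$ is infinite only when $\sigma = 0^{|\sigma|}$, and your construction gives $\Ccal(\vec R) = \{0^\omega\} \neq \Dcal$. No choice of bookkeeping function $\ell(t)$ repairs this: as long as only one witness is allocated per stage, the lex-least branch permanently captures it. The crux is not whether $g$ is monotone or has settled; it is the one-witness-per-stage budget.

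The paper resolves the consistency obstacle differently: at stage $s+1$, it reserves a \emph{block} of fresh numbers $[n_s, n_s + p]$, one for each string in $U_{s+1} = \{\sigma \in 2^{s+1} : g(\sigma, s+1) = 1\}$, and places $n_s + i$ into $R_j$ precisely when $\sigma_i(j) = 1$, where $U_{s+1} = \{\sigma_0, \dots, \sigma_p\}$. Distinct believed strings receive distinct witnesses, so no conflict arises, and every $\sigma$ that is a prefix of some $X \in \Dcal$ gets infinitely many witnesses via $X \restr s \in U_s$ for all large $s$; conversely, if $R_\sigma$ is infinite, compactness together with the arranged downward closure of $g$ at each stage produces a member of $\Dcal$ extending $\sigma$. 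Replacing your single per-stage witness by a per-stage block, and dropping the lex-least selection entirely, closes the gap.
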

\begin{proof}
By Schoenfield's limit lemma~\cite{Shoenfield1959degrees}, 
there exists a computable function~$g : 2^{<\omega} \times \omega \to 2$
whose limit exists and such that~$\Dcal$ is the collection of~$X$ such that 
for every~$\sigma \prec X$, $\lim_s g(\sigma, s) = 1$. We can furthermore assume
that whenever~$g(\sigma, s) = 1$, then for every~$\tau \prec \sigma$, $g(\tau, s) = 1$,
and that for every~$s \in \omega$, the set~$U_s = \{ \sigma \in 2^s : g(\sigma, s) = 1\}$ is non-empty.
We define a uniformly computable sequence of sets~$R_0, R_1, \dots$ such that~$\Ccal(\vec{R}) = \Dcal$
by stages as follows. 

As stage~$0$, $R_i = \emptyset$ for every~$i \in \omega$.
Suppose that we have already decided~$R_i \restr n_s$ for every~$i \in \omega$ and some~$n_s \in \omega$.
At stage~$s+1$, we will add elements to~$R_0, \dots, R_s$ so that for each string~$\sigma$ of length~$s+1$,
$R_\sigma \restr [n_s, n_s+p] \neq \emptyset$ if and only if $\sigma \in U_{s+1}$.
To do so, consider the set~$U_{s+1} = \{\sigma_0, \dots, \sigma_p\}$ defined above
and add $\{n_s + i : \sigma_i(j) = 1, i \leq p \}$ to $R_j$ for each~$j \leq s$.
Set~$n_{s+1} = n_s + p + 1$ and go to the next stage.

We claim that $R_\sigma$ is infinite if and only if~$\sigma \prec X$ for some~$X \in \Dcal$.
Assume that~$R_\sigma$ is infinite. By construction, there are infinitely many~$s$
such that~$R_\sigma \restr [n_s, n_s+p] \neq \emptyset$. So there are infinitely many stages~$s$ such that
$\tau \in U_s$ ($g(\tau, s) = 1$) for some~$\tau \succeq \sigma$. By assumption on~$g$,
there are infinitely many~$\tau \succeq \sigma$ such that $g(\tau, s) = 1$ for infinitely many~$s$.
Therefore, by compactness, there exists some~$X \in \Dcal$ such that~$\sigma \prec X$.
Conversely, if~$\sigma \prec X$ for some~$X \in \Dcal$, then there are infinitely many stages~$s$
such that~$\tau \in U_s$ for some~$\tau \succeq \sigma$. At each of these stages,
$R_\sigma \restr [n_s, n_s + p] \supseteq R_\tau \restr [n_s, n_s + p] \neq \emptyset$.
Therefore~$R_\sigma$ is infinite.
\end{proof}

Jockusch et al.\ proved in~\cite{Jockusch1991} that for every~$\Pi^{0,\emptyset'}_1$ class $\Ccal \subseteq 2^{\omega}$,
there exists a~$\Pi^0_1$ class~$\Dcal \subseteq \omega^\omega$ such that~$\deg(\Ccal) = \deg(\Dcal)$,
where~$\deg(\Ccal)$ is the class of degrees of members of~$\Ccal$.
For the reader who is familiar with Weihrauch degrees, what we actually prove here is that
König's lemma is the jump of the cohesiveness principle under Weihrauch reducibility.
Bienvenu [personal communication] suggested the use of Simpson's Embedding Lemma~\cite[Lemma 3.3]{Simpson2007extension}
to prove the reducibility of some unsolvable instances of cohesiveness to various statements.

\begin{lemma}[Bienvenu]\label{lem:sigma3-coh}
For every~$\Sigma^{0,\emptyset'}_3$ class~$\Ecal \subseteq \omega^\omega$ with no $\emptyset'$-computable member,
there exists a uniformly computable sequence of sets~$R_0, R_1, \dots$
with no computable~$\vec{R}$-cohesive set but such that every member of~$\Ecal$ computes an~$\vec{R}$-cohesive set.
\end{lemma}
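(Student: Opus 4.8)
The plan is to combine Lemma~\ref{lem:tree-coh-inv} with Simpson's Embedding Lemma to push a $\Sigma^{0,\emptyset'}_3$ class down to a $\Pi^{0,\emptyset'}_1$ class while preserving the relevant degree-theoretic features. First I would recall the statement of Simpson's Embedding Lemma~\cite[Lemma 3.3]{Simpson2007extension}: given a $\Sigma^{0,\emptyset'}_3$ class $\Ecal \subseteq \omega^\omega$, there is a $\Pi^{0,\emptyset'}_1$ class $\Dcal \subseteq 2^\omega$ together with a $\emptyset'$-computable (in fact, the embedding should be chosen so that the passage works in both directions via $\emptyset'$) correspondence such that every member of $\Ecal$ computes (uniformly, relative to $\emptyset'$ if needed) a member of $\Dcal$, and conversely every member of $\Dcal$ computes a member of $\Ecal$; moreover $\Dcal$ is nonempty iff $\Ecal$ is nonempty, and $\Dcal$ has a $\emptyset'$-computable member iff $\Ecal$ does. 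Since $\Ecal$ is assumed nonempty with no $\emptyset'$-computable member, the resulting $\Dcal$ is a nonempty $\Pi^{0,\emptyset'}_1$ class with no $\emptyset'$-computable member.

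Next I would apply Lemma~\ref{lem:tree-coh-inv} to this $\Dcal$ to obtain a uniformly computable sequence $R_0, R_1, \dots$ with $\Ccal(\vec{R}) = \Dcal$. It remains to verify the two asserted properties. For the first, suppose toward a contradiction that there were a computable $\vec{R}$-cohesive set $C$; then by Lemma~\ref{lem:coh-tree}, $C'$ — hence $\emptyset'$ — would compute a member of $\Ccal(\vec{R}) = \Dcal$, contradicting that $\Dcal$ has no $\emptyset'$-computable member. For the second, let $Y \in \Ecal$. Then $Y$ computes a member $P$ of $\Dcal = \Ccal(\vec{R})$ (invoking the embedding; one must check the direction of the reduction here, which is the crux of Simpson's lemma — it is set up precisely so that members of the $\Sigma^{0,\emptyset'}_3$ class compute members of the $\Pi^{0,\emptyset'}_1$ class). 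Now I need an $\vec{R}$-cohesive set computable from $Y$, but Lemma~\ref{lem:coh-tree} as stated only produces one computable from a set whose \emph{jump} computes a member of $\Ccal(\vec{R})$. The fix is to note that $Y \geq_T \emptyset'$ is not automatic, so instead I would observe that the construction in the proof of Lemma~\ref{lem:coh-tree} only uses the member $P$ of $\Ccal(\vec{R})$ together with a function approximating it; if $P$ itself is $Y$-computable, one can use the constant approximation $f(x,s) = P(x)$, so the construction goes through directly with $X$ replaced by $Y$ and yields a $Y$-computable $\vec{R}$-cohesive set. (Alternatively: if Simpson's embedding gives $P \leq_T Y \oplus \emptyset'$ rather than $P \leq_T Y$, one checks the effective content is still enough.)

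The main obstacle I anticipate is getting the bookkeeping of Simpson's Embedding Lemma exactly right: namely confirming that the embedding preserves "no $\emptyset'$-computable member" and that it provides the reduction in the direction "members of $\Ecal$ compute members of $\Dcal$" with the right degree of effectivity (ideally $P \leq_T Y$ outright, or at worst $P \leq_T Y'$, in which case one falls back on Lemma~\ref{lem:coh-tree} directly). A secondary point to be careful about is that Lemma~\ref{lem:tree-coh-inv} requires $\Dcal$ to be a nonempty $\Pi^{0,\emptyset'}_1$ class, which is exactly what the embedding delivers, so no additional hypothesis on $\Ecal$ beyond nonemptiness is needed. Everything else — the two applications of Lemma~\ref{lem:coh-tree}-style reasoning — is routine once the embedding is pinned down.
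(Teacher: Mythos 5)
Your approach is the same as the paper's: invoke a relativized form of Simpson's Embedding Lemma to replace the $\Sigma^{0,\emptyset'}_3$ class $\Ecal$ by a $\Pi^{0,\emptyset'}_1$ class $\Dcal$ with no $\emptyset'$-computable member, then apply Lemma~\ref{lem:tree-coh-inv} to realize $\Dcal$ as $\Ccal(\vec{R})$, and finish with Lemma~\ref{lem:coh-tree}. The logic is sound and all the right pieces are there.

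One remark: the ``fix'' you propose for step~(4) is unnecessary, and its main branch ($P \leq_T Y$) is not in general supplied by Simpson's lemma. The relativized Embedding Lemma gives $\deg(\Dcal)$, computed relative to $\emptyset'$, as $\deg(\Ecal) \cup \mathrm{PA}[\emptyset']$, so what one gets is $P \leq_T Y \oplus \emptyset'$ for some $P \in \Dcal$, not $P \leq_T Y$. But that is exactly what Lemma~\ref{lem:coh-tree} wants: since $Y \oplus \emptyset' \leq_T Y'$, the jump of $Y$ computes a member of $\Ccal(\vec{R})$, and the lemma immediately yields that $Y$ computes an $\vec{R}$-cohesive set. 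So your parenthetical fallback is in fact the whole argument, and the attempted bypass of the jump (running the construction of Lemma~\ref{lem:coh-tree} with a ``constant approximation'') can simply be dropped.
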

\begin{proof}
By a relativization of~Lemma~3.3 in~\cite{Simpson2007extension}, there exists a~$\Pi^{0, \emptyset'}_1$ class~$\Dcal$
whose degrees (relative to~$\emptyset'$) are exactly $\deg(\Ecal) \cup PA[\emptyset']$. Therefore~$\Dcal$ has no~$\emptyset'$-computable member and every member of~$\Ecal$ $\emptyset'$-computes a member of~$\Dcal$.
By Lemma~\ref{lem:tree-coh-inv}, there exists a uniformly computable sequence of sets~$R_0, R_1, \dots$
such that~$\Ccal(\vec{R}) = \Dcal$. By Lemma~\ref{lem:coh-tree}, there exists no computable $\vec{R}$-cohesive set,
but every member of~$\Dcal$ (and in particular every member of~$\Ecal$) computes an~$\vec{R}$-cohesive set.
\end{proof}

\subsection{How genericity helps solving cohesiveness}\label{sect:coh-genericity} 

We now take advantage of the analysis of the previous section to deduce optimal bounds
on how much genericity is needed to avoid solving an unsolvable instance of~$\coh$.

\begin{definition}[Genericity]
Fix a set of strings~$S \subseteq 2^{<\omega}$.
The set~$S$ is \emph{dense} if every string has an extension in~$S$.
A real~$G$ \emph{meets}~$S$ if it has some initial segment in~$S$.
A real~$G$ \emph{avoids}~$S$ if it has an initial segment with no extension in~$S$.
Given an integer~$n \in \omega$, a real is~\emph{$n$-generic} if it meets or avoids each~$\Sigma^0_n$ set of strings.
A real is~\emph{weakly $n$-generic} if it meets each~$\Sigma^0_n$ dense set of strings.
\end{definition}

By Friedberg's jump inversion theorem~\cite{Friedberg1957criterion}, there exists a 1-generic which is of high degree,
and therefore computes a cohesive set for every uniformly computable sequence of sets.
Wang~\cite{Wang2013Omitting} proved that whenever a uniformly computable sequence of sets~$R_0, R_1, \dots$
has no computable~$\vec{R}$-cohesive sets, no weakly 3-generic computes an~$\vec{R}$-cohesive set.
He asked whether there exists a 2-generic computing an~$\vec{R}$-cohesive set.
We prove the optimality of Wang's bound by showing the existence of an unsolvable instance
of~$\coh$ which is solvable by a 2-generic real.

\begin{lemma}
There exists a 2-generic real~$G$ together with a
uniformly computable sequence of sets~$R_0, R_1, \dots$
with no computable~$\vec{R}$-cohesive set such that
$G$ computes an~$\vec{R}$-cohesive set.
\end{lemma}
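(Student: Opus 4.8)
The plan is to use the correspondence machinery from Section~\ref{sect:coh-delta2-trees} to reduce the problem to building a suitable $\Pi^{0,\emptyset'}_1$ class, and then to exploit the standard relationship between $n$-genericity and lowness-type properties of the jump. By Lemma~\ref{lem:coh-tree}, a $2$-generic real $G$ computes an $\vec{R}$-cohesive set if and only if $G'$ computes a member of $\Ccal(\vec{R})$, and by Lemma~\ref{lem:tree-coh-inv} the class $\Ccal(\vec{R})$ can be made to equal any prescribed non-empty $\Pi^{0,\emptyset'}_1$ class $\Dcal$. So it suffices to produce a non-empty $\Pi^{0,\emptyset'}_1$ class $\Dcal$ with no $\emptyset'$-computable member, together with a $2$-generic real $G$ such that $G'$ computes some member of $\Dcal$; the ``no computable $\vec{R}$-cohesive set'' requirement is then immediate from Lemma~\ref{lem:coh-tree} applied to the fact that $\Dcal$ has no $\emptyset'$-computable member.

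First I would recall that if $G$ is $n$-generic then $G^{(n-1)} \equiv_T G' \oplus \emptyset^{(n-1)}$ — more precisely, the jump of an $n$-generic is as low as possible, so for a $2$-generic $G$ we have $G' \equiv_T G \oplus \emptyset'$ (this is the standard fact that $2$-generics are $\mathrm{GL}_1$, in fact $\mathrm{GL}_1$ holds for all $1$-generics). Hence $G'$ computes a member of $\Dcal$ as soon as $G \oplus \emptyset'$ does. So the task reduces to: find a non-empty $\Pi^{0,\emptyset'}_1$ class $\Dcal$ with no $\emptyset'$-computable member, and a $2$-generic $G$ with $G \oplus \emptyset' \geq_T P$ for some $P \in \Dcal$. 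The natural move is to take $\Dcal$ to be a $\Pi^{0,\emptyset'}_1$ class of ``$\emptyset'$-DNR'' functions (or a $\emptyset'$-PA class, i.e.\ the separating class for a pair of $\emptyset'$-c.e.\ sets with no $\emptyset'$-computable separator) — such a class has no $\emptyset'$-computable member by construction. Then one builds $G$ by forcing with $2$-generic conditions while coding a fixed member $P$ of $\Dcal$ into $G$ relative to $\emptyset'$: since the conditions and the class are $\Sigma^0_2$/$\Pi^{0,\emptyset'}_1$, the coding can be interleaved with meeting all dense $\Sigma^0_2$ sets of strings.

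More concretely, I would run a finite-extension argument over $\emptyset'$ (i.e.\ a $\emptyset'$-effective construction of $G$ as an increasing union of conditions $\sigma_0 \prec \sigma_1 \prec \cdots$). At even stages we meet or avoid the $e$-th $\Sigma^0_2$ set of strings, which is decidable using $\emptyset''$ but can be handled by a more careful $\emptyset'$-bounded search so that $G$ ends up genuinely $2$-generic; at odd stages we append one more bit of a fixed member $P$ of $\Dcal$, recording a ``map'' so that $G$ together with $\emptyset'$ can recover $P$. One must check that the odd-stage coding does not kill $2$-genericity — this is fine because appending a single prescribed bit still leaves room to meet every dense set later, provided we verify the coding positions are recoverable from $G \oplus \emptyset'$. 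Then $P \in \Dcal = \Ccal(\vec{R})$, so by Lemma~\ref{lem:coh-tree} $G' \geq_T G \oplus \emptyset' \geq_T P$ computes an $\vec{R}$-cohesive set, while $\Dcal$ having no $\emptyset'$-computable member gives, again by Lemma~\ref{lem:coh-tree}, that $\vec{R}$ has no computable cohesive set.

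The main obstacle I expect is the interaction between the genericity requirements and the coding: one needs the construction to be $\emptyset'$-computable (so that $G \oplus \emptyset'$, equivalently $G'$, can actually reconstruct $P$) while simultaneously ensuring $G$ meets every $\Sigma^0_2$ dense set, which naively seems to need $\emptyset''$. The resolution is the usual one for showing that there are $2$-generics with low-ish jump: decide genericity requirements using the fact that a $\Sigma^0_2$ set of strings is dense-or-not in a $\emptyset''$-decidable way, but phrase the construction so that at each stage we either force into the set (finite $\emptyset'$-search succeeds) or force out of it, and only finitely much $\emptyset'$-information is consulted per requirement; the key point is that $G$ need not be computable from $\emptyset'$, only that the coding locations are. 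Getting this bookkeeping exactly right — in particular, making sure the chosen member $P$ of $\Dcal$ is coded on a $G \oplus \emptyset'$-recoverable set of positions without accidentally deciding a genericity requirement the wrong way — is where the real care is needed; everything else follows mechanically from the three lemmas of Section~\ref{sect:coh-delta2-trees}.
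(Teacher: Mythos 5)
Your reduction to the problem of producing a non-empty $\Pi^{0,\emptyset'}_1$ class $\Dcal$ with no $\emptyset'$-computable member together with a $2$-generic $G$ such that $G \oplus \emptyset'$ computes a member of $\Dcal$ is correct, and your use of $\mathrm{GL}_1$ is right. But the concrete plan you then propose has a genuine gap: you suggest taking $\Dcal$ to be a $\emptyset'$-DNR class or $\emptyset'$-PA class, and this \emph{cannot} work. Every $2$-generic $G$ is in particular $1$-generic relative to $\emptyset'$, and by relativizing the classical fact that $1$-generic reals do not compute DNC functions, $G \oplus \emptyset'$ does not compute any $\emptyset'$-DNC function. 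So there is no $2$-generic whose jump computes a member of the class you propose, regardless of how the coding is organized. This is not a bookkeeping issue you can engineer your way around; the $\Dcal$ you want must be chosen much more carefully, namely so that it contains a member of the same $\emptyset'$-degree as a $2$-generic's jump.

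The paper resolves this dependency by reversing the order of construction: it fixes a $\Delta^0_3$ $2$-generic $G$ \emph{first}, observes that the singleton $\Ecal = \{G\}$ is $\Sigma^{0,\emptyset'}_3$ and has no $\emptyset'$-computable member (since no $2$-generic is $\Delta^0_2$), and then invokes Lemma~\ref{lem:sigma3-coh}, whose proof (via Simpson's Embedding Lemma) produces a $\Pi^{0,\emptyset'}_1$ class $\Dcal$ whose $\emptyset'$-degrees are exactly $\deg(\Ecal) \cup \mathrm{PA}[\emptyset']$ — so $\Dcal$ contains a member of the same $\emptyset'$-degree as $G$ by fiat. With $\Dcal$ in hand, Lemma~\ref{lem:tree-coh-inv} and Lemma~\ref{lem:coh-tree} finish the job. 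In your version, $\Dcal$ is chosen before $G$, and the forcing-with-coding construction you sketch would then have to reconcile genericity requirements with coding a $\emptyset'$-PA degree into $G'$, which, as noted, is impossible. If you want to retain the ``build $G$ by forcing'' shape, you would still need a $\Dcal$ tailored to the $G$ you end up with, and at that point you are essentially rediscovering the embedding argument; it is cleaner (and what the paper does) to take $G$ off the shelf and let Lemma~\ref{lem:sigma3-coh} do the work.
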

\begin{proof}
Fix any~$\Delta^0_3$ 2-generic real~$G$ and consider the singleton~$\Ecal = \{G\}$.
As no 2-generic is~$\Delta^0_2$, the class $\Ecal$ has no~$\emptyset'$-computable member.
By Lemma~\ref{lem:sigma3-coh}, there exists a uniformly computable sequence of sets
$R_0, R_1, \dots$ with no computable~$\vec{R}$-cohesive set, 
such that~$G$ computes an~$\vec{R}$-cohesive set.
\end{proof}

However, if we slightly increase the unsolvability of the sequence of sets,
no 2-generic real helps computing a set cohesive for the sequence.
Recall that a~\emph{1-enum} of a class~$\Ccal \subseteq 2^{<\omega}$
is a sequence of strings~$\sigma_0, \sigma_1, \dots$
such that~$|\sigma_s| = s$ and~$[\sigma_s] \cap \Ccal \neq \emptyset$ for each~$s \in \omega$.
The notion has been extensively studied in~\cite{PateyCombinatorial}.

\begin{theorem}
For any uniformly computable sequence of sets~$R_0, R_1, \dots$
such that~$\Ccal(\vec{R})$ has no~$\emptyset'$-computable 1-enum,
no 2-generic real computes an~$\vec{R}$-cohesive set.
\end{theorem}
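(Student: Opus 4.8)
The plan is to show that if $G$ is a 2-generic real, then $G'$ cannot compute a member of $\Ccal(\vec{R})$, and then invoke Lemma~\ref{lem:coh-tree} to conclude that $G$ computes no $\vec{R}$-cohesive set. The bridge between the hypothesis (no $\emptyset'$-computable 1-enum) and the conclusion is the fact, standard in the study of 1-genericity, that for a 2-generic $G$ the jump $G'$ is, as a $\emptyset'$-oracle object, ``approximable'' in a way that forces any $G'$-computable member of a $\Pi^{0,\emptyset'}_1$ class to come with a $\emptyset'$-computable 1-enum. Concretely, I would recall the classical fact that if $G$ is 1-generic then $G' \equiv_T G \oplus \emptyset'$, and more importantly that the use of any $G'$-computation can be ``pushed back'' onto finitely much information about $G$: if $\Phi^{G'}$ is total then for every $n$ there is an initial segment $\tau \prec G$ and a finite set of possible $\emptyset'$-computed jumps consistent with $\tau$, and along the generic all but finitely many extensions agree. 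This is exactly the mechanism by which 2-genericity yields a $\emptyset'$-computable guessing procedure.

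First I would set up the forcing/approximation apparatus: suppose toward a contradiction that $G$ is 2-generic and $\Psi^{G'} = P$ is a member of $\Ccal(\vec{R})$. Using Lemma~\ref{lem:coh-tree} applied to $G$ it suffices to derive that $\Ccal(\vec{R})$ has a $\emptyset'$-computable 1-enum. Write $G' = G \oplus \emptyset'$ up to Turing equivalence, so $\Psi$ may be regarded as a functional with a $G$-oracle and a $\emptyset'$-oracle. For each length $s$, consider the $\Sigma^{0,\emptyset'}_1$ (hence $\Sigma^0_2$, after absorbing the $\emptyset'$) set $S_s$ of strings $\tau \in 2^{<\omega}$ such that $\Psi^{\tau \oplus \emptyset'}$ converges on all inputs below the length needed to output a string of length $s$, producing some $\rho_\tau \in 2^s$ with $[\rho_\tau] \cap \Ccal(\vec{R}) \neq \emptyset$; here the last clause is $\Pi^{0,\emptyset'}_1$, so membership in the relevant ``good'' set of $\tau$ is arithmetic in $\emptyset'$ at a bounded level. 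The key density observation is that, because $\Psi^{G'}$ is total and lands in $\Ccal(\vec{R})$, the set of $\tau$ that either converge to such a $\rho_\tau$ or have no extension doing so is dense; so by 2-genericity $G$ meets it, and since $G$ does not avoid it, $G$ meets the convergent part. This gives, $\emptyset'$-computably in the index of a generic-meeting initial segment, the desired length-$s$ string, uniformly in $s$ — i.e. a $\emptyset'$-computable 1-enum of $\Ccal(\vec{R})$, contradicting the hypothesis.

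The main obstacle, and the point requiring care, is the bookkeeping of which genericity level is actually needed: the set $S_s$ as described involves a $\Pi^{0,\emptyset'}_1$ side condition ($[\rho_\tau]$ meets $\Ccal(\vec{R})$), which naively pushes the complexity of $S_s$ above $\Sigma^0_2$ and would require 3-genericity rather than 2-genericity. The standard remedy — and presumably what the paper does — is to \emph{not} put the ``$[\rho_\tau]$ meets $\Ccal(\vec{R})$'' clause into the forcing set at all. Instead one forces only the $\Sigma^0_2$ statement ``$\Psi^{\tau \oplus \emptyset'}$ converges to some length-$s$ string'', obtaining from 2-genericity a $\emptyset'$-computable function $s \mapsto \rho_s$ with $\rho_s \prec P = \Psi^{G'}$; correctness of the side condition then comes \emph{for free} from the fact that $P \in \Ccal(\vec{R})$, so every initial segment of $P$, in particular each $\rho_s$, does meet $\Ccal(\vec{R})$. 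Thus $\langle \rho_s \rangle_{s \in \omega}$ is automatically a 1-enum of $\Ccal(\vec{R})$, and it is $\emptyset'$-computable. I would also need the routine lemma that a 2-generic $G$ computes no $\vec{R}$-cohesive set directly when $\Ccal(\vec{R})$ is empty, but here nonemptiness is guaranteed since a 1-enum (even a non-effective one) exists iff the class is nonempty, and the hypothesis presupposes we are talking about $\emptyset'$-computable 1-enums — if the class were empty the statement is vacuous via Lemma~\ref{lem:coh-tree} applied to $G$. Finally, I would double-check the use bound: outputting $\rho_s \in 2^s$ from $\Psi^{G'}$ uses only $G \uh u(s)$ for some $\emptyset'$-recoverable bound $u(s)$, which is why the relevant forcing condition is genuinely $\Sigma^0_2$ and 2-genericity suffices.
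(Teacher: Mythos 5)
Your high-level plan matches the paper's: reduce via Lemma~\ref{lem:coh-tree} and GL${}_1$-ness of 2-generics to showing that no total $\Gamma^{G\oplus\emptyset'}$ lands in $\Ccal(\vec R)$, and extract a $\emptyset'$-computable 1-enum from a genericity hypothesis to reach a contradiction. You also correctly spot the complexity obstacle: the ``good'' condition ``$[\rho_\tau]$ meets $\Ccal(\vec R)$'' is $\Pi^{0,\emptyset'}_1$, so stapling it onto the $\Sigma^{0,\emptyset'}_1$ convergence condition pushes the forcing set out of~$\Sigma^0_2$.

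However, the remedy you propose does not work. You suggest forcing only the convergence condition and then asserting that 2-genericity gives a $\emptyset'$-computable function $s \mapsto \rho_s$ with each $\rho_s \prec P = \Gamma^{G'}$, so that correctness ``comes for free'' because $P \in \Ccal(\vec R)$. This is the gap: meeting a $\Sigma^0_2$ set yields no $\emptyset'$-effective witness. When you $\emptyset'$-effectively search for some $\tau_s$ extending a fixed $\sigma \prec G$ with $\Gamma^{\tau_s\oplus\emptyset'}\restr s$ defined, there is no reason for $\tau_s$ to be an initial segment of $G$, hence no reason for the output $\rho_s$ to be an initial segment of $P$. (Indeed, if one could $\emptyset'$-compute initial segments of $P$, one would have a $\emptyset'$-computable 1-enum outright, and the hypothesis guarantees no such thing is available for free.) Correctness of the 1-enum cannot be inherited from $P$; it has to come from somewhere else.

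The paper's actual fix is to package the side condition into a \emph{bad} set rather than a good one: define
\[
W_{bad} = \{ \sigma : [\Gamma^{\sigma\oplus\emptyset'}] \cap \Ccal(\vec R) = \emptyset \},
\]
which is $\Sigma^{0,\emptyset'}_1$ (hence $\Sigma^0_2$) by compactness, since $\Ccal(\vec R)$ is a $\Pi^{0,\emptyset'}_1$ class. By 2-genericity, $G$ meets or avoids $W_{bad}$. If $G$ avoids it via $\sigma \prec G$, then \emph{every} extension $\tau \supseteq \sigma$ lies outside $W_{bad}$, so every $\tau_n \supseteq \sigma$ found by the $\emptyset'$-effective search with $\Gamma^{\tau_n\oplus\emptyset'}\restr n$ defined yields a string extendable to a member of $\Ccal(\vec R)$ --- that is where correctness comes from, not from $\tau_n$ being an initial segment of $G$. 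This gives a $\emptyset'$-computable 1-enum, contradiction; hence $G$ meets $W_{bad}$ and $\Gamma^{G\oplus\emptyset'} \notin \Ccal(\vec R)$. You should replace the ``drop the side condition and lean on $P$'' step with this ``bad-set / avoidance'' argument.
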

\begin{proof}
By Jockusch~\cite{JockuschJr1980Degrees}, every $n$-generic set is~GL${}_n$
and in particular, every 2-generic is~GL${}_1$. Therefore,
by Lemma~\ref{lem:coh-tree},
a 2-generic set~$G$ computes an~$\vec{R}$-cohesive set
if and only if there exists some functional~$\Gamma$
such that~$\Gamma^{G \oplus \emptyset'}$ is a member of~$\Ccal(\vec{R})$.
Fix a functional~$\Gamma$ such that~$\Gamma^{G \oplus \emptyset'}$ is total for some
2-generic set~$G$, and define the following $\Sigma^{0, \emptyset'}_1$ set:
\[
W_{bad} = \{ \sigma \in 2^{<\omega} : [\Gamma^{\sigma \oplus \emptyset'}] \cap \Ccal(\vec{R}) = \emptyset \}
\]

We claim $G$ meets~$W_{bad}$.
Suppose for contradiction that $G$ avoids $W_{bad}$. By 2-genericity of~$G$, there exists a string~$\sigma \prec G$
with no extension in~$W_{bad}$. We show that there exists a~$\emptyset'$-effective
procedure which computes a 1-enum of~$\Ccal(\vec{R})$, contradicting our hypothesis.

On input~$n$, $\emptyset'$-effectively search for a~$\tau_n \succeq \sigma$
such that~$\Gamma^{\tau_n \oplus \emptyset'} \restr n$ is defined.
Such~$\tau_n$ exists as~$\sigma \prec G$ and~$\Gamma^{G \oplus \emptyset'}$ is total.
As~$\tau_n \not \in W_{bad}$, $[\Gamma^{\tau_n \oplus \emptyset'}] \cap \Ccal(\vec{R}) \neq \emptyset$
and therefore~$(\tau_n : n \in \omega)$ is a $\emptyset'$-computable 1-enum of~$\Ccal(\vec{R})$.
\end{proof}

Note that if we assume that~$G$ is weakly 3-generic and therefore avoids the set
$W_{bad} \cup W_{partial}$ where
\[
W_{partial} = \{ \sigma \in 2^{<\omega} : (\forall \tau \succeq \sigma) |\Gamma^{\tau \oplus \emptyset'}| < |\sigma| \}
\]
then we can furthermore impose that~$\tau_{n+1} \succeq \tau_n$ and~$\emptyset'$-compute a member of~$\Ccal(\vec{R})$.
This suffices to reprove that no weakly 3-generic helps solving an unsolvable intance of~$\coh$.

We now prove a theorem inspired by the proof
of domination closure of p-cohesive degrees by Jockusch and Stephan~\cite{Jockusch1993cohesive}.

\begin{theorem}
For any uniformly computable sequence of sets~$R_0, R_1, \dots$
such that~$\Ccal(\vec{R})$ has no~$\emptyset'$-computable 1-enum,
every~$\vec{R}$-cohesive set is of hyperimmune degree.
\end{theorem}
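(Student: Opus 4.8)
The plan is to proceed by contradiction, assuming that some $\vec{R}$-cohesive set $C$ is of hyperimmune-free degree, and then extract from $C$ a $\emptyset'$-computable $1$-enum of $\Ccal(\vec{R})$, contradicting the hypothesis. By Lemma~\ref{lem:coh-tree}, since $C$ is $\vec{R}$-cohesive, its jump $C'$ computes a member $P$ of $\Ccal(\vec{R})$; in fact, by inspecting that proof, $P$ is the unique sequence with $C \subseteq^{*} R_\sigma$ for all $\sigma \prec P$, and it is obtained from $C$ via the limit lemma. The idea, following the Jockusch--Stephan domination argument, is that hyperimmune-freeness of $C$ should let us \emph{approximate} this limit computation in a $\emptyset'$-effective way: every $C$-computable function is dominated by a computable function, and more relevantly, when $C$ has hyperimmune-free degree the function $n \mapsto (\text{a stage by which } P \uh n \text{ has stabilized})$—which is $C'$-computable—can be bounded.

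Concretely, the first step is to set up the limit computation: there is a $C$-computable function $f(x,s)$ with $\lim_s f(x,s) = P(x)$. Define the modulus-type function $m(n)$ to be the least $s$ such that $f(x,t) = P(x)$ for all $x < n$ and all $t \geq s$; this is computable from $C'$, hence from $\emptyset'$ relative to $C$. The second step is the key use of hyperimmune-freeness: although $m$ is not directly $C$-computable, we want a $C$-computable (equivalently, by domination, computable) function that is large enough at each $n$ to "confirm" $P\uh n$. The standard trick is to run, in the style of the proof of Lemma~\ref{lem:coh-tree}, a $C$-computable search producing longer and longer candidate strings $\sigma$ together with witnesses in $R_\sigma$; hyperimmune-freeness bounds this search by a computable function $g$. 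The third step is to turn this into a $\emptyset'$-computable $1$-enum: on input $n$, use $\emptyset'$ to read off $f(x, g(n))$ for $x < n$ (here $f$ depends on $C$, so this needs care—see below), forming a string $\tau_n$ of length $n$, and argue that $[\tau_n] \cap \Ccal(\vec{R}) \neq \emptyset$ because for large enough $n$ the string $\tau_n$ agrees with $P \uh n$.

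The main obstacle is the dependence on $C$: a priori $\emptyset'$ does not know $C$, so "$\emptyset'$-compute $f(x,g(n))$" is not literally available. This is exactly where the domination property must be leveraged more cleverly—the right formulation is that for a hyperimmune-free $C$, the $C$-jump computation producing $P$ can be replaced by a computable-in-$C$ computation with a \emph{computable} (not merely $C$-computable) time bound, and then the \emph{values} along the way form a sequence that $\emptyset'$ can recover without knowing $C$, because $P$ itself is essentially pinned down by the infinitely many $R_\sigma$ being infinite. I expect the cleanest route is: show $\emptyset'$ can compute, using $g$, a finite set of at most... one candidate string per length by exploiting that exactly one $\sigma$ of each length has $R_\sigma$ infinite and that infiniteness of $R_\sigma$ is $\Sigma^{0,\emptyset'}_2$—then a bounded search with bound $g$ makes it $\emptyset'$-decidable which $\sigma$ "wins" up to finitely many errors. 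The delicate point to get right is ensuring only finitely many lengths $n$ give a wrong $\tau_n$ (a $1$-enum tolerates no errors), which forces the argument to be about the \emph{tail} and hence to produce the enum starting from some point; standard padding then gives a genuine $1$-enum, contradicting the assumption on $\Ccal(\vec{R})$.
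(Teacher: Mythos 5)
Your high-level strategy — suppose $C$ has hyperimmune-free degree, dominate some $C$-computable function by a computable one, and extract a $\emptyset'$-computable $1$-enum of $\Ccal(\vec{R})$ — matches the shape of the paper's argument, but you never actually close what you yourself call the ``main obstacle,'' and the proposed resolution does not work. Dominating the \emph{running time} of the Lemma~\ref{lem:coh-tree} search by a computable $g$ gives $\emptyset'$ nothing usable: the search consults the $C$-computable approximation $f$, so $\emptyset'$ cannot replay it even knowing a time bound, and ``reading off $f(x,g(n))$'' is simply not available to $\emptyset'$. Your fallback sketch then relies on two misstatements: it is not true that ``exactly one $\sigma$ of each length has $R_\sigma$ infinite'' (that would make $\Ccal(\vec{R})$ a singleton; what is unique is the $\sigma$ with $C \subseteq^{*} R_\sigma$, a $C$-relative fact), and infiniteness of $R_\sigma$ is $\Pi^0_2$, i.e.\ $\Pi^{0,\emptyset'}_1$, not $\Sigma^{0,\emptyset'}_2$. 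The worry about ``finitely many errors'' and ``padding'' is also a red herring once the right object is dominated.

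The missing idea is to dominate the \emph{principal function} of $C$ itself, not any $C$-relative search. Write $C = \{a_0 < a_1 < \dots\}$; hyperimmune-freeness yields a computable $B = \{b_0 < b_1 < \dots\}$ with $a_i < b_i$ for all $i$, so $a_n$ lies in the computable interval $B_n = \{n,\dots,b_n\}$. Since $C \subseteq^{*} R_\sigma$ for the true length-$s$ prefix $\sigma$, almost every $a_n$ lies in $R_\sigma$, hence $R_\sigma \cap B_n \neq \emptyset$ for almost all $n$. The predicate $(\forall n > b)\,[R_\sigma \cap B_n \neq \emptyset]$ has a decidable matrix and is therefore $\Pi^0_1$, so $\emptyset'$ can decide it; on input $s$, search over pairs $(\sigma,b)$ with $|\sigma|=s$ until one is found (the true prefix with some $b$ guarantees termination), and output $\sigma$. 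Any $\sigma$ found satisfies $R_\sigma$ infinite, hence $[\sigma]\cap\Ccal(\vec{R})\neq\emptyset$, and this is a genuine $\emptyset'$-computable $1$-enum at every length with no error tolerance needed — contradicting the hypothesis. The domination must produce a \emph{purely computable} constraint on where $C$ lives (the sets $B_n$), which is the point your proposal never reaches.
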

\begin{proof}
Suppose for the contradiction that there exists some~$\vec{R}$-cohesive set~$C = \{a_0 <  a_1 < \dots \}$
and a computable set~$B = \{b_0 < b_1 < \dots\}$ such that~$(\forall i)(a_i < b_i)$.
For each~$n \in \omega$, let $B_n = \{n, n+1, \dots, b_n\}$. Note that~$a_n \in B_n$ for every~$n$,
and therefore for every length~$s$, there exists a string~$\sigma_s$ of length~$s$
such that~$(\exists b)(\forall n > b) R_{\sigma_s} \cap B_n \neq \emptyset$.
Let~$\sigma_0, \sigma_1, \dots$ be the~$\emptyset'$-computable sequence of such strings.
We claim that this sequence is a 1-enum of~$\Ccal(\vec{R})$, therefore contradicting our hypothesis.
Indeed, as $(\exists b)(\forall n > b) R_{\sigma_s} \cap B_n \neq \emptyset$, the set~$R_{\sigma_s}$
is infinite and therefore~$\Ccal(\vec{R}) \cap [\sigma_s] \neq \emptyset$.
\end{proof}

Of course, there exists some uniformly computable sequence of sets~$R_0, R_1, \dots$
with no computable $\vec{R}$-cohesive set but with an~$\vec{R}$-cohesive
set of hyperimmune-free degree. Simply apply Lemma~\ref{lem:sigma3-coh} with~$\Ecal = \{X\}$
where~$X$ is a~$\Delta^0_3$ set of hyperimmune-free degree. Such a set is known
to exists by Miller and Martin~\cite{Miller1968degrees}. The class $\Ecal$ has no $\emptyset'$-computable
member as every $\Delta^0_2$ set is hyperimmune.

\subsection{How randomness helps solving cohesiveness}\label{sect:coh-randomness}

We now explore the interrelations between cohesiveness
and the measure-theoretic paradigm of typicality, namely, algorithmic randomness.

\begin{definition}[Randomness]
A~\emph{$\Sigma^0_n$ (Martin-Löf) test} is a sequence~$U_0, U_1, \dots$ of uniformly~$\Sigma^0_n$
classes such that~$\mu(U_i) \leq 2^{-i}$ for every~$i \in \omega$.
A real~$Z$ is \emph{$n$-random} if for every~$\Sigma^0_n$ test $U_0, U_1, \dots$,
$Z \not \in \bigcap_i U_i$.
A real~$Z$ is \emph{weakly $n$-random} if it is in every~$\Sigma^0_n$ class of measure~1.
\end{definition}

We shall say \emph{Martin-L\"of random} for \emph{1-random}.
Wang~\cite{Wang2013Omitting} proved that whenever a uniformly computable sequence of sets~$R_0, R_1, \dots$
has no computable~$\vec{R}$-cohesive sets, there exists a Martin-Löf random real computing no~$\vec{R}$-cohesive set.
Thanks to Corollary~\ref{cor:low-helps-not-coh}, 
we know that it suffices to take any low Martin-Löf random real to obtain this property.
Wang asked whether we can always ensure the existence of a 3-random real computing
an~$\vec{R}$-cohesive set whenever the instance is unsolvable. 
The next two lemmas answer this question by proving that it depends on the considered
sequence of sets~$\vec{R}$.

\begin{lemma}
There exists a uniformly computable
sequence of sets~$R_0, R_1, \dots$ with no computable~$\vec{R}$-cohesive set,
but such that every 2-random real computes an~$\vec{R}$-cohesive set.
\end{lemma}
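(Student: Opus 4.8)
The plan is to exploit the instance-wise machinery already established, in particular Lemma~\ref{lem:sigma3-coh} together with Lemma~\ref{lem:coh-tree}, and to invoke a known fact from algorithmic randomness: by van Lambalgen's theorem (or equivalently by the "2-random vs.\ $\emptyset'$" results of Kautz and Kučera), every $2$-random real $Z$ is such that $Z \oplus \emptyset'$ is of PA degree relative to $\emptyset'$; more precisely, there is a fixed functional under which $Z'$ (the jump relative to $Z$, which by van Lambalgen is $1$-random relative to $\emptyset'$) computes a member of any fixed non-empty $\Pi^{0,\emptyset'}_1$ class. So first I would fix a non-empty $\Pi^{0,\emptyset'}_1$ class $\Dcal \subseteq 2^\omega$ with no $\emptyset'$-computable member --- for instance a class of $\emptyset'$-DNR functions, or more simply the class $\mathrm{PA}[\emptyset']$ of sets of PA degree relative to $\emptyset'$, which has no $\emptyset'$-computable member.

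Next I would apply Lemma~\ref{lem:tree-coh-inv} to this $\Dcal$ to obtain a uniformly computable sequence $R_0, R_1, \dots$ with $\Ccal(\vec{R}) = \Dcal$. By Lemma~\ref{lem:coh-tree}, since $\Dcal$ has no $\emptyset'$-computable member, $\vec{R}$ has no computable $\vec{R}$-cohesive set. For the positive direction, let $Z$ be any $2$-random real. By van Lambalgen's theorem, $Z$ is $1$-random relative to $\emptyset'$, and since every $1$-random real relative to $\emptyset'$ has jump (relative to $Z$, or just its own $\emptyset'$-relativized randomness) whose degree is PA over $\emptyset'$ --- this is Kučera's theorem that $\mathrm{ML}$-random reals compute members of $\Pi^0_1$ classes of positive measure, relativized to $\emptyset'$ and combined with the fact that $\mathrm{PA}[\emptyset']$ degrees arise from such classes --- we get that $Z' \oplus \emptyset' = Z'$ (as $\emptyset' \leq_T Z'$) computes a member of $\mathrm{PA}[\emptyset'] = \Ccal(\vec{R})$. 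Then Lemma~\ref{lem:coh-tree} gives that $Z$ computes an $\vec{R}$-cohesive set.

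Actually, the cleanest packaging is to route everything through Lemma~\ref{lem:sigma3-coh}: take $\Ecal$ to be the (relativized) class witnessing that $2$-random reals are $\emptyset'$-DNR, or rather note that the collection of $2$-randoms is a $\Sigma^{0}_2$ (even $\Pi^0_2$) class; but what I actually want is to feed in the single derived object. A slicker route: apply Lemma~\ref{lem:sigma3-coh} directly to $\Ecal = \mathrm{PA}[\emptyset']$ viewed appropriately --- but $\mathrm{PA}[\emptyset']$ is a $\Pi^{0,\emptyset'}_1$ class, hence $\Sigma^{0,\emptyset'}_3$ trivially, with no $\emptyset'$-computable member; Lemma~\ref{lem:sigma3-coh} then yields $\vec{R}$ with no computable cohesive set such that every member of $\mathrm{PA}[\emptyset']$ computes an $\vec{R}$-cohesive set. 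It remains only to observe that every $2$-random real computes a member of (i.e.\ is a member of, up to the usual jump-basis consideration) $\mathrm{PA}[\emptyset']$ in the relevant sense --- concretely, $Z \oplus \emptyset'$ has PA degree over $\emptyset'$ whenever $Z$ is $2$-random, which follows from Kučera's basis theorem relativized to $\emptyset'$ together with van Lambalgen.

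The main obstacle is getting the randomness input stated in exactly the form Lemma~\ref{lem:sigma3-coh} and Lemma~\ref{lem:coh-tree} consume. Lemma~\ref{lem:coh-tree} asks for the \emph{jump} of the solving set to compute a member of $\Ccal(\vec{R})$, so what I really need is: if $Z$ is $2$-random then $Z'$ computes a set of PA degree over $\emptyset'$. This is standard but I should cite it carefully --- it is the relativization to $\emptyset'$ of the fact that every Martin-L\"of random real's jump computes a $\{0,1\}$-valued $\mathrm{DNR}$ function (equivalently has PA degree), combined with van Lambalgen to transfer "$2$-random" to "$1$-random over $\emptyset'$" and to identify $Z'$ with the jump of a random-over-$\emptyset'$ real. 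Once that citation is in place, the argument is a two-line application of the lemmas, and no genuinely new construction is needed; the content is entirely in the pre-established correspondence.
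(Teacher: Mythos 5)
Your overall strategy (build $\vec{R}$ via Lemma~\ref{lem:tree-coh-inv} from a $\Pi^{0,\emptyset'}_1$ class $\Dcal$ with no $\emptyset'$-computable member, then deduce the positive direction from Lemma~\ref{lem:coh-tree}) is exactly the paper's, but your choice of $\Dcal$ is wrong, and the gap is not patchable: the class $\mathrm{PA}[\emptyset']$ (equivalently, $\{0,1\}$-valued $\emptyset'$-DNR functions) has measure zero, so the randomness facts you are invoking simply do not apply to it. Your key claim --- that if $Z$ is $2$-random then $Z' \equiv_T Z \oplus \emptyset'$ computes a member of $\mathrm{PA}[\emptyset']$ --- is false. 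By the $\emptyset'$-relativization of Stephan's theorem (the very result cited in Lemma~\ref{lem:randomness-p-cohesive}), an $\emptyset'$-ML-random $Z$ has $Z \oplus \emptyset'$ of PA degree over $\emptyset'$ only if $Z \oplus \emptyset' \geq_T \emptyset''$, i.e., only if $Z$ is high. Most $2$-randoms are not high (e.g., a $2$-random that is low over $\emptyset'$), so your construction would fail for them. Indeed, if your claim were correct, every $3$-random would compute a p-cohesive set, directly contradicting Lemma~\ref{lem:randomness-p-cohesive}. Kučera's basis theorem (relativized to $\emptyset'$) tells you that every $\emptyset'$-random computes a member of every \emph{positive-measure} $\Pi^{0,\emptyset'}_1$ class; it says nothing about the null class of PA degrees.

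The correct move, and the one the paper makes, is to take $\Dcal$ to be a $\Pi^{0,\emptyset'}_1$ class of \emph{positive measure} with no $\emptyset'$-computable member (for instance, the complement of the first level of a universal $\emptyset'$-ML test). Lemma~\ref{lem:tree-coh-inv} produces the sequence $\vec{R}$ with $\Ccal(\vec{R}) = \Dcal$. Since $\Dcal$ has no $\emptyset'$-computable member, Lemma~\ref{lem:coh-tree} gives that $\vec{R}$ has no computable cohesive set. For the positive direction, one now needs only Kautz's theorem (not Kučera's or Stephan's): every $2$-random real is, up to changing finitely many bits, a member of any $\Pi^{0,\emptyset'}_1$ class of positive measure. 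In particular every $2$-random $Z$ is itself (essentially) a member of $\Ccal(\vec{R})$, hence $Z'$ trivially computes a member, and Lemma~\ref{lem:coh-tree} yields that $Z$ computes an $\vec{R}$-cohesive set. Notice this sidesteps all of the GL$_1$/van~Lambalgen bookkeeping you were worried about.
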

\begin{proof}
Let~$\Dcal$ be a $\Pi^{0,\emptyset'}_1$ class of positive measure
with no~$\emptyset'$-computable member.
By Lemma~\ref{lem:tree-coh-inv},
there exists a uniformly computable sequence of sets~$R_0, R_1, \dots$
such that~$\Ccal(\vec{R}) = \Dcal$.
By Kautz~\cite{Kautz1991Degrees,Kautz1998improved}, every 2-random real is,
up to prefix, a member of~$\Ccal(\vec{R})$.
Therefore, by Lemma~\ref{lem:coh-tree}, every 2-random real
computes an~$\vec{R}$-cohesive set.
\end{proof}

\begin{lemma}\label{lem:randomness-p-cohesive}
For every $n \geq 3$, no (weakly) $n$-random real computes a p-cohesive set.
\end{lemma}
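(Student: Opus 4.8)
The plan is to combine the characterization of p-cohesive degrees due to Jockusch and Stephan with a measure-theoretic fact about $n$-random reals. Recall that Jockusch and Stephan proved that a set $C$ is p-cohesive if and only if $C'$ is of PA degree relative to $\emptyset'$; equivalently, by Lemma~\ref{lem:coh-tree} applied to a universal instance $\vec{R}$ enumerating all primitive recursive sets, $C$ computes a p-cohesive set iff $C'$ computes a member of $\Ccal(\vec R)$, where this $\Ccal(\vec R)$ is a $\Pi^{0,\emptyset'}_1$ class with no $\emptyset'$-computable member. So it suffices to show that if $Z$ is weakly $3$-random (hence the statement for all $n \geq 3$ follows, since $n$-randomness implies weak $3$-randomness for $n \geq 3$), then $Z'$ does not compute any member of this class — or more directly, $Z$ is not of p-cohesive degree.

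First I would invoke the key randomness fact: if $Z$ is weakly $3$-random, then $Z$ is $2$-random relative to $\emptyset'$ in a suitable sense — more precisely, by the van Lambalgen-style or Kautz-style results, weak $3$-randomness of $Z$ implies that $Z'$ has the same $\emptyset'$-degree behavior as a $2$-random, and in particular $\emptyset' \oplus Z$ does not compute a member of any $\Pi^{0,\emptyset'}_1$ class of measure zero that is ``$\emptyset'$-null'' in the appropriate sense. The cleaner route: a theorem of Kurtz/Kautz says no weakly $2$-random real is of PA degree; relativizing to $\emptyset'$, no real whose jump is weakly $2$-random relative to $\emptyset'$ has jump of PA degree over $\emptyset'$. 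Then I would use that a weakly $3$-random $Z$ has $Z'$ weakly $2$-random relative to $\emptyset'$ (this is the genuinely technical step, relying on the fact that $3$-randomness passes to the jump appropriately — cf. the preservation of randomness under the jump). Combining, $Z'$ is not of PA degree over $\emptyset'$, so by Jockusch--Stephan, $Z$ does not compute a p-cohesive set.

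Alternatively, and perhaps more self-contained given what is available in the excerpt: by the observations in Section~\ref{sect:coh-delta2-trees}, $Z$ computes a p-cohesive set iff $Z'$ computes a member of a fixed $\Pi^{0,\emptyset'}_1$ class $\Ccal(\vec R)$ with no $\emptyset'$-computable member; since such a class has measure zero relative to $\emptyset'$ (any $\Pi^{0,\emptyset'}_1$ class with no $\emptyset'$-computable member, restricted to being nonempty, can be refined to have arbitrarily small measure, and a PA-over-$\emptyset'$ degree computes members of all of them), it suffices to show $Z'$ computes no member of any $\emptyset'$-null $\Pi^{0,\emptyset'}_1$ class — which follows from $Z$ being weakly $3$-random via the fact that weak $3$-randomness of $Z$ guarantees $Z'$ avoids every $\emptyset'$-effective null $G_\delta$ captured by such a class.

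The main obstacle I expect is the passage from weak $n$-randomness of $Z$ to the corresponding randomness-theoretic strength of the jump $Z'$: one needs that $n$-randomness (or weak $n$-randomness) of $Z$ transfers to $(n-1)$-randomness (or weak $(n-1)$-randomness) of $Z'$ relative to $\emptyset'$, which is a known but nontrivial fact in the theory of higher randomness (related to the ``$n$-randomness is preserved by jump'' phenomenon). Everything else is routine assembly: the Jockusch--Stephan characterization is quoted, Lemma~\ref{lem:coh-tree} is already proved, and the fact that no weakly $2$-random real (relativized) has PA degree is standard. I would therefore structure the proof as: (1) reduce via Jockusch--Stephan / Lemma~\ref{lem:coh-tree} to a statement about $Z'$ and a $\Pi^{0,\emptyset'}_1$ class; (2) cite the jump-preservation of randomness to get $Z'$ sufficiently random over $\emptyset'$; (3) conclude using the relativized ``no weakly $2$-random has PA degree.''
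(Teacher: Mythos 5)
Your reduction is correct as far as it goes: by Jockusch--Stephan, if $Z$ computes a p-cohesive set $C$ then $C'$, and hence $Z' \geq_T C'$, is of PA degree relative to $\emptyset'$, so it suffices to show $Z'$ is not PA over $\emptyset'$. The gap is in your step~(2), which you flag as ``the genuinely technical step'': the claim that weak $3$-randomness of $Z$ implies $Z'$ is (weakly) $2$-random relative to $\emptyset'$ is simply false, not just nontrivial. A jump is never random: there is a computable injection $g$ with $\Phi^X_{g(n)}(g(n))\!\downarrow$ for all $X$ and $n$, so the range of $g$ is an infinite computable subset of $Z'$, which already rules out $1$-randomness (indeed even bi-immunity). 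There is no ``jump-preservation of randomness'' theorem of the kind you are invoking, and your alternative second route falls into the same trap by asserting that $Z'$ ``avoids every $\emptyset'$-effective null $G_\delta$''; moreover, even if $Z'$ did avoid such a class, that would say nothing about whether $Z'$ \emph{computes} a member of it, which is what PA degree over $\emptyset'$ requires.

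The paper closes the argument with an entirely different pair of ingredients: it does not try to make $Z'$ random, but rather uses a theorem of Stephan (relativized to $\emptyset'$) that any $2$-random real whose jump is of PA degree relative to $\emptyset'$ must be high, together with Kautz's theorem that no weakly $3$-random real is high. Concretely: if $Z$ is weakly $3$-random and computed a p-cohesive set, then $Z$ is $2$-random with $Z'$ PA over $\emptyset'$, hence $Z$ is high by Stephan, contradicting Kautz. The crucial insight you are missing is that the obstruction is \emph{highness}, mediated by Stephan's dichotomy for $2$-randoms with PA jumps, rather than any randomness-theoretic property of $Z'$ itself.
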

\begin{proof}
Jockusch and Stephan~\cite{Jockusch1993cohesive} proved that degrees of p-cohesive sets
are those whose jump is PA relative to~$\emptyset'$.
By a relativization of~Stephan~\cite{Stephan2006Martin}, every 2-random real
whose jump is of PA degree relative to~$\emptyset'$ is high.
By Kautz~\cite{Kautz1991Degrees}, no weakly 3-random real is high.
For every $n \geq 3$, every (weakly) $n$-random real is a weakly 3-random real.
\end{proof}

Avigad et al.~\cite{Avigad2012Algorithmic}
introduced the principle $\wwkls{n}$ stating that every~$\Delta^0_n$ tree of positive measure has a path.
In particular, $\wwkls{1}$ is~$\wwkl$.  
Thanks to Lemma~\ref{lem:randomness-p-cohesive},
for every~$n \in \omega$, one can apply the usual constructions to build an~$\omega$-model
of~$\wwkls{n}$ which does not contain any p-cohesive set and therefore is not a model of~$\coh$.
Pick any~$n$-random $Z$ which does not compute any p-cohesive set
and consider it as an infinite join~$Z_0 \oplus Z_1 \oplus \dots$.
By Van Lambalgen's theorem~\cite{VanLambalgen1990axiomatization}, the~$\omega$-structure whose second-order part
is the Turing ideal~$\{ X : (\exists i) X \leq_T Z_0 \oplus \dots \oplus Z_i \}$
is a model of~$\wwkls{n}$. Moreover it does not contain a p-cohesive set.

\subsection{How Ramsey-type theorems help solving cohesiveness}\label{sect:coh-ramsey-type}

In his paper separating Ramsey's theorem for pairs from
weak K\"onig's lemma, Liu~\cite{Liu2012RT22} proved that every (non-necessarily effective)
set~$A$ has an infinite subset of either it or its complement which is not of PA degree.
The absence of effectiveness conditions on~$A$ shows the combinatorial nature
of the weakness of the infinite pigeonhole principle.
On the other hand, the author~\cite{PateyCombinatorial} showed that this weakness depends on the choice
of the instance of~$\wkl$, by constructing a computable tree with no computable path together
with a~$\Delta^0_2$ set~$A$ such that every infinite subset of either~$A$ or~$\overline{A}$
computes a path trough the tree. We answer a similar question for cohesiveness
and study the weakness of the pigeonhole principle for typical partitions.

\begin{lemma}
There exists a~$\Delta^0_3$ (in fact low over~$\emptyset'$) set~$A$ 
and a uniformly computable sequence of sets~$R_0, R_1, \dots$
with no computable~$\vec{R}$-cohesive set, such that
every infinite subset of either~$A$ or~$\overline{A}$
computes an~$\vec{R}$-cohesive set.
\end{lemma}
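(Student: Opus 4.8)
The plan is to combine Lemma~\ref{lem:sigma3-coh} with a suitable construction of the set $A$, so that the whole statement reduces to producing a single $\Sigma^{0,\emptyset'}_3$ class $\Ecal$ with no $\emptyset'$-computable member that is nonetheless computed by every infinite subset of $A$ or $\overline{A}$. Concretely, first I would build a $\Delta^0_3$ set $A$ together with a single member $Y$ of a $\Pi^{0,\emptyset'}_1$ class such that $Y$ has no $\emptyset'$-computable ``copy'' reachable combinatorially — i.e.\ such that $Y \not\leq_T \emptyset'$ — but such that every infinite $G \subseteq A$ and every infinite $G \subseteq \overline{A}$ computes $Y$. Then set $\Ecal = \{Y\}$; since $Y \not\leq_T \emptyset'$, the class $\Ecal$ has no $\emptyset'$-computable member, and $\Ecal$ is visibly $\Sigma^{0,\emptyset'}_3$ (indeed $\Pi^{0,\emptyset'}_2$) because $Y$ is $\Delta^0_3$, i.e.\ $\Delta^{0,\emptyset'}_2$. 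Lemma~\ref{lem:sigma3-coh} then hands us the uniformly computable $\vec{R}$ with no computable $\vec{R}$-cohesive set such that every member of $\Ecal$, hence $Y$, computes an $\vec{R}$-cohesive set; composing with ``$G$ computes $Y$'' gives the conclusion.

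So the real work is the construction of $A$. The standard tool here is a coding-by-large-intervals argument in the style of Jockusch--Stephan and of Liu's pigeonhole constructions. I would fix a fast-growing $\emptyset''$-computable (or $\emptyset'$-effective, depending on the exact bookkeeping) sequence of intervals $I_0 < I_1 < \cdots$ partitioning $\omega$, and on each interval $I_n$ place a gadget that encodes the bit $Y(n)$ redundantly: roughly, inside $I_n$ one designates two sub-blocks, one to be used when the witness falls in $A$ and one when it falls in $\overline{A}$, arranged so that \emph{any} element of $I_n$ lying in $A$ reveals $Y(n)$ and likewise any element of $I_n$ in $\overline{A}$ reveals $Y(n)$. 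Since any infinite $G \subseteq A$ (or $G \subseteq \overline{A}$) must meet infinitely many intervals $I_n$, and from each such meeting one can recover $Y(n)$, the set $G$ (together with knowledge of the interval structure, which we make $G$-computable or else low enough) computes $Y$. The delicacy is to carry this out while keeping $A$ at the level $\Delta^0_3$ — in fact low over $\emptyset'$ — which forces the interval structure and the placement decisions to be made $\emptyset'$-effectively with only finitely much $\emptyset'$-injury, exactly as in the cited Jockusch--Stephan domination argument.

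The main obstacle I anticipate is the simultaneous satisfaction of the two competing demands: on one hand $A$ must be structured richly enough that \emph{every} infinite subset of $A$ or of $\overline{A}$ decodes $Y$ (a $\Pi^1_1$-flavored requirement over all infinite subsets, handled by the ``every element of $I_n$ in the right side codes $Y(n)$'' redundancy), and on the other hand $A$ must be low over $\emptyset'$, which caps how cleverly the coding blocks can be chosen. The way to thread this is the usual one: make the coding purely positional — the position of an element within its interval, not membership in some cleverly constructed set, carries the bit — so that decoding is uniform and requires no guessing, while the only $\emptyset'$-nontrivial choices are where the intervals begin, which is a single $\emptyset'$-computable (bounded-injury) construction. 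Once $A$ is in hand, verifying $Y \not\leq_T \emptyset'$ is immediate by choosing $Y$ in advance to be any $\Delta^0_3$, non-$\Delta^0_2$ set (e.g.\ a $\emptyset'$-version of a sufficiently generic or random real, or simply $\emptyset''$ itself if the interval structure permits), and the rest is the bookkeeping of Lemma~\ref{lem:sigma3-coh}.
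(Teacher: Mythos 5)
Your high-level framing is right: produce a $\Sigma^{0,\emptyset'}_3$ class $\Ecal$ with no $\emptyset'$-computable member such that every infinite subset of $A$ or of $\overline{A}$ computes a member of $\Ecal$, and then invoke Lemma~\ref{lem:sigma3-coh}. But the route you take from there is much harder than necessary, and the hard part is both not carried out and slightly off.

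You set $\Ecal = \{Y\}$ for a single carefully coded real $Y$, which forces you into a coding-by-intervals construction of $A$ such that \emph{every} infinite subset of $A$ or $\overline{A}$ recovers $Y$. The paper sidesteps all of this with a one-line trick: take $\Ecal$ to be the class of infinite increasing sequences contained in $A$ or in $\overline{A}$ themselves, namely
\[
\Ecal = \{ X \in \omega^\omega : (\forall s)[X(s) < X(s+1)] \wedge [(\forall s)(X(s) \in A) \vee (\forall s)(X(s) \in \overline{A})] \}.
\]
Then any infinite $G \subseteq A$ or $G \subseteq \overline{A}$ trivially computes a member of $\Ecal$, namely its own increasing enumeration, so no coding is needed at all. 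Since $A$ is chosen low over $\emptyset'$, this class is $\Pi^{0,A}_1$ hence $\Pi^{0,\emptyset'}_2 \subseteq \Sigma^{0,\emptyset'}_3$, and the ``no $\emptyset'$-computable member'' requirement is exactly bi-immunity of $A$ relative to $\emptyset'$ — a standard property to arrange alongside lowness over $\emptyset'$. So the entire construction you propose to build ($Y$, the intervals, the gadgets, the injury argument) is replaced by ``fix $A$ low over $\emptyset'$ and bi-immune relative to $\emptyset'$.''

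Beyond being over-engineered, your coding sketch has a real gap. You propose that each interval $I_n$ encode the single bit $Y(n)$ positionally, so that any element of $I_n$ in $A$ (resp.\ $\overline{A}$) reveals $Y(n)$. But an infinite $G \subseteq A$ only meets \emph{some} infinite set of intervals $\{I_{n_0}, I_{n_1}, \dots\}$; from those meetings $G$ learns $Y(n_0), Y(n_1), \dots$ but has no access to $Y(n)$ for the skipped $n$'s, so $G$ does not compute $Y$. To fix this you would have to code the full prefix $Y \uh n$ (not just the bit $Y(n)$) into the position within $I_n$, which is doable since the intervals can grow, but it is an additional idea that the sketch as written omits. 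And you would still need to verify that the resulting $A$ is low over $\emptyset'$ under this more demanding coding, which is where the real work would be. None of this is needed once you see the paper's choice of $\Ecal$.
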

\begin{proof}
Fix a set~$A$ which is low over~$\emptyset'$ and bi-immune relative to~$\emptyset'$.
The set of the infinite, increasing sequences which form an subset of either~$A$ or~$\overline{A}$ is $\Pi^{0,A}_1$,
hence~$\Pi^{0,\emptyset'}_2$ in the Baire space:
$$
\Ecal = \{ X \in \omega^\omega : (\forall s)[X(s) < X(s+1)] 
	\wedge [(\forall s)(X(s) \in A) \vee (\forall s)(X(s) \in \overline{A})] \}
$$
Moreover, $\Ecal$ has no~$\emptyset'$-computable member by bi-immunity relative to~$\emptyset'$ of~$A$.
Apply Lemma~\ref{lem:sigma3-coh} to complete the proof.
\end{proof}

In a previous section, we constructed a uniformly computable
sequence of sets~$R_0, R_1, \dots$ with no computable $\vec{R}$-cohesive set
such that every 2-random real computes an~$\vec{R}$-cohesive set.
The following lemma strengthens this result by constructing an unsolvable instance of~$\coh$
solvable by every infinite subset of any 2-random real.

\begin{definition}[Diagonal non-computability]
A function~$f : \omega \to \omega$ is \emph{diagonaly non-computable} relative to~$X$
if for every~$e \in \omega$, $f(e) \neq \Phi^X_e(e)$.
\end{definition}

By Kjos-Hanssen~\cite{Kjos-Hanssen2009Infinite} and Greenberg and Miller~\cite{Greenberg2009Lowness}, 
a set computes a function d.n.c.\ relative
to~$\emptyset^{(n-1)}$ if and only if it computes an infinite subset of an~$n$-random.

\begin{lemma}
There exists a uniformly computable sequence of sets~$R_0, R_1, \dots$
with no computable~$\vec{R}$-cohesive set, such that
every function d.n.c.\ relative to~$\emptyset'$ computes an~$\vec{R}$-cohesive set.
\end{lemma}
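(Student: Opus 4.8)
The plan is to read this lemma off from Lemma~\ref{lem:sigma3-coh} by taking for~$\Ecal$ the class of functions that are d.n.c.\ relative to~$\emptyset'$. Explicitly, I would set
\[
\Ecal = \{\, f \in \omega^\omega : (\forall e)\,\neg\bigl(\Phi^{\emptyset'}_e(e)\halts \ \wedge\ \Phi^{\emptyset'}_e(e) = f(e)\bigr)\,\},
\]
so that the members of~$\Ecal$ are precisely the (total) functions d.n.c.\ relative to~$\emptyset'$.

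The first step is to verify that~$\Ecal$ meets the hypotheses of Lemma~\ref{lem:sigma3-coh}. The predicate ``$\Phi^{\emptyset'}_e(e)\halts$ with value~$f(e)$'' is~$\Sigma^{0,\emptyset'}_1$ in the parameters~$(e, f)$, so its negation is~$\Pi^{0,\emptyset'}_1$, and the conjunction over all~$e$ remains~$\Pi^{0,\emptyset'}_1$; hence~$\Ecal$ is a~$\Pi^{0,\emptyset'}_1$ class, and in particular a~$\Sigma^{0,\emptyset'}_3$ class in~$\omega^\omega$. It is non-empty, the function sending~$e$ to~$\Phi^{\emptyset'}_e(e) + 1$ wherever this converges and to~$0$ elsewhere being total and d.n.c.\ relative to~$\emptyset'$. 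And it has no~$\emptyset'$-computable member: if~$f \leq_T \emptyset'$ were total then~$f = \Phi^{\emptyset'}_e$ for some index~$e$, whence~$f(e) = \Phi^{\emptyset'}_e(e)$, contradicting~$f \in \Ecal$.

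Having checked these points, I would apply Lemma~\ref{lem:sigma3-coh} to~$\Ecal$: this produces a uniformly computable sequence of sets~$R_0, R_1, \dots$ with no computable~$\vec{R}$-cohesive set such that every member of~$\Ecal$, that is, every function d.n.c.\ relative to~$\emptyset'$, computes an~$\vec{R}$-cohesive set, which is exactly the assertion. I would then close with a remark tying this back to randomness: by the equivalence of Kjos-Hanssen and of Greenberg and Miller recalled just above, a set computes a function d.n.c.\ relative to~$\emptyset'$ if and only if it computes an infinite subset of a~2-random real, so the present lemma strengthens the earlier construction of an unsolvable instance of~$\coh$ solved by every 2-random real --- here every infinite subset of every 2-random real already computes an~$\vec{R}$-cohesive set.

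There is no genuine obstacle in this argument: the only content is the observation that the functions d.n.c.\ relative to~$\emptyset'$ form a non-empty~$\Pi^{0,\emptyset'}_1$ (hence~$\Sigma^{0,\emptyset'}_3$) class with no~$\emptyset'$-computable member, after which Lemma~\ref{lem:sigma3-coh} does all the work. The only mildly delicate step is the bookkeeping that places~$\Ecal$ at the correct level of the~$\emptyset'$-arithmetical hierarchy, together with the routine diagonal argument excluding~$\emptyset'$-computable members.
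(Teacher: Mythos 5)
Your proof is correct and follows the paper's own argument exactly: you identify the class of functions d.n.c.\ relative to~$\emptyset'$ as a~$\Pi^{0,\emptyset'}_1$ (hence~$\Sigma^{0,\emptyset'}_3$) subclass of~$\omega^\omega$ with no~$\emptyset'$-computable member and invoke Lemma~\ref{lem:sigma3-coh}. The only difference is that you spell out the non-emptiness and the diagonal argument excluding $\emptyset'$-computable members, which the paper leaves implicit.
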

\begin{proof}
The class of functions which are d.n.c.\ relative to~$\emptyset'$ is $\Pi^{0, \emptyset'}_1$ in the Baire space:
$$
\Ecal = \left\{ f \in \omega^\omega : (\forall e)[\Phi^{\emptyset'}_e(e) \uparrow \vee f(e) \neq \Phi^{\emptyset'}_e(e)]\right\}
$$
Moreover, $\Ecal$ has no~$\emptyset'$-computable member.
Apply Lemma~\ref{lem:sigma3-coh} to complete the proof.
\end{proof}

In contrast with this lemma, 
if we require a bit more uncomputability in the~$\vec{R}$-cohesive sets of the sequence~$R_0, R_1, \dots$,
we can ensure the existence of a function d.n.c.\ relative to~$\emptyset'$
which does not compute an~$\vec{R}$-cohesive set.

\begin{theorem}\label{thm:coh-dnc-avoid}
Fix a uniformly computable sequence of sets~$R_0, R_1, \dots$
such that~$\Ccal(\vec{R})$ has no $\emptyset'$-computable 1-enum.
For every set~$X$, there exists a function~$f$ d.n.c.\ relative to~$X$
whose jump does not compute a 1-enum of~$\Ccal(\vec{R})$.
In particular, $f$ does not compute an~$\vec{R}$-cohesive set.
\end{theorem}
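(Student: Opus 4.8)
The plan is to build the d.n.c.\ function $f$ relative to $X$ by a forcing construction whose conditions are finite partial functions, while simultaneously diagonalizing against all potential Turing functionals $\Gamma$ that could witness $f' $ computing a 1-enum of $\Ccal(\vec{R})$. The guiding principle, as in the earlier theorems of this section, is that a 1-enum of $\Ccal(\vec{R})$ is a $\emptyset'$-level object: by \lem{lem:coh-tree} (together with the observation that $f$ is $\emptyset'$-computable from $f'$, or more precisely that $f$ computes an $\vec{R}$-cohesive set iff $f'$ computes a member of $\Ccal(\vec{R})$, and a member yields a 1-enum), it suffices to ensure that $f'$ computes no 1-enum of $\Ccal(\vec{R})$, and this last clause is exactly the hypothesis we are allowed to contradict.

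\textbf{First} I would set up the notion of forcing. A condition is a pair $(D, k)$ where $D$ is a finite set of axioms specifying $f$ on an initial segment of $\omega$ and $k$ bounds the use; extension means adding axioms consistently. For the d.n.c.\ requirement relative to $X$, at stage $e$ we must be able to set $f(e)$ to some value $\neq \Phi^X_e(e)$; since we are free to pick $f(e)$ from all of $\omega$, this is trivially dense and imposes no real constraint — the point of the construction is not making $f$ d.n.c., which is easy, but controlling $f'$. \textbf{Next}, for each functional $\Gamma$, I would define the requirement $\mathcal{R}_\Gamma$: ``$\Gamma^{f'}$ is not a 1-enum of $\Ccal(\vec{R})$.'' To meet $\mathcal{R}_\Gamma$ given a condition $c = (D,k)$, I ask whether there is some length $n$ and some extension of the construction forcing $\Gamma^{f'}(n)\da = \sigma$ for a string $\sigma$ of length $n$ with $[\sigma] \cap \Ccal(\vec{R}) = \emptyset$; if so I take that extension and $\mathcal{R}_\Gamma$ is met permanently. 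The subtle point is the $\emptyset'$-jump: deciding $\Gamma^{f'}(n)\da$ refers to $f'$, which is $\Sigma^0_1$ relative to $f$, so the relevant questions are $\emptyset'$-level questions about the generic, and this is where one uses a $\emptyset'$-effective search combined with the density argument.

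\textbf{The key step}, and the main obstacle, is the case where no such diagonalizing extension exists. In that case I claim that $\emptyset'$ can itself compute a 1-enum of $\Ccal(\vec{R})$, contradicting the hypothesis. The argument mirrors the ``$W_{bad}$'' analysis of the 2-generic theorem and the hyperimmune-degree theorem above: if below the condition $c$ every forced value $\Gamma^{f'}(n) = \sigma$ satisfies $[\sigma] \cap \Ccal(\vec{R}) \neq \emptyset$, then $\emptyset'$ can, on input $n$, search $\emptyset'$-effectively for an extension of $c$ forcing $\Gamma^{f'}(n)$ to converge to some $\tau_n$ of length $n$ — such an extension exists because along the generic $\Gamma^{f'}$ is total — and output $\tau_n$; by the case assumption $[\tau_n] \cap \Ccal(\vec{R}) \neq \emptyset$, so $(\tau_n)_{n\in\omega}$ is a $\emptyset'$-computable 1-enum of $\Ccal(\vec{R})$. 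The delicate bookkeeping is checking that ``$\emptyset'$-effectively search for a forcing extension'' is legitimate, i.e.\ that the forcing relation for $\Sigma^0_1$-in-$f$ statements of the form ``$\Gamma^{f'}(n)\da$'' is itself $\emptyset'$-decidable at the level of finite conditions; this follows because a finite condition $(D,k)$ forces convergence of $\Gamma^{f'}(n)$ precisely when there is a finite extension $(D', k')$ and a finite ``$f$-initial-segment plus finite positive/negative jump information consistent with $D'$'' making the computation halt, and all such finite data can be enumerated by $\emptyset'$.

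\textbf{Finally}, I would iterate: enumerate all functionals $\Gamma_0, \Gamma_1, \dots$ and all d.n.c.\ stages, build a descending sequence of conditions $c_0 \supseteq c_1 \supseteq \cdots$ meeting $\mathcal{R}_{\Gamma_i}$ and the $i$-th d.n.c.\ requirement at stage $i$, and let $f = \bigcup_i c_i$. Then $f$ is d.n.c.\ relative to $X$ and $f'$ computes no 1-enum of $\Ccal(\vec{R})$; by \lem{lem:coh-tree} and the fact that any member of $\Ccal(\vec{R})$ is in particular a 1-enum, $f$ computes no $\vec{R}$-cohesive set, which is the desired conclusion. The only genuine difficulty is the one flagged above — verifying that the failure-to-diagonalize case really does hand $\emptyset'$ a 1-enum — and the rest is the routine verification that the forcing is sufficiently generic and that $\Sigma^0_1$-in-$f$ forcing facts are $\emptyset'$-decidable.
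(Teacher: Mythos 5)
Your reduction via Lemma~\ref{lem:coh-tree} to controlling $f'$, and your plan of attack against each $\Gamma$ — extend into a condition forcing a bad output, else argue that the failure would hand $\emptyset'$ a 1-enum of $\Ccal(\vec{R})$ — match the paper's strategy in outline. The paper also sidesteps the awkwardness you flag of computing directly with $f'$ (a $\Sigma^0_2(f)$ oracle, for which the forcing relation is not automatically $\emptyset'$-level) by carrying out the diagonalization against $\Gamma^{f\oplus\emptyset'}$ and observing afterward that the generic is GL${}_1$; you gesture at the same translation.

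The genuine gap is your claim that the d.n.c.\ requirement ``is trivially dense and imposes no real constraint.'' It is true that, taken by itself, the collection of d.n.c.-compatible extensions of any finite condition is dense. What is false is that this constraint can be kept decoupled from the diagonalization steps. When you search for an extension $\tau\supseteq\sigma$ with $\Gamma^{\tau\oplus\emptyset'}(n)\downarrow=\rho$ and $[\rho]\cap\Ccal(\vec{R})=\emptyset$, the $\tau$ you find may set $\tau(e)=\Phi^X_e(e)$ at some new position $e$, and since Turing computations are monotone under extension you cannot repair $\tau(e)$ without possibly destroying the very computation you wanted. Conversely, if you restrict the search to d.n.c.-compatible $\tau$ only, the failure case no longer yields a \emph{$\emptyset'$-computable} 1-enum: recognizing d.n.c.-compatibility requires $X'$, so what you would extract is an $X'\oplus\emptyset'$-computable 1-enum, which contradicts nothing for general $X$. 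Cohen forcing over finite partial functions simply has no mechanism to reconcile ``stay out of $B_{DNC}^X$'' with ``step into $D_{n,\rho}$.'' (A smaller issue: your justification that the $\emptyset'$-search for a convergent extension succeeds — ``along the generic $\Gamma^{f'}$ is total'' — presupposes a totality fact about a generic not yet built; the correct split is that either the search fails for some $n$, in which case partiality is forced by the current condition, or it succeeds for every $n$ and the case assumption then yields the 1-enum.)

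This is exactly what the bushy tree forcing in the paper provides. Conditions are pairs $(\sigma,B)$ with $B$ an upward-closed $k$-small set containing $B_{DNC}^X$; the dichotomy is that either $D_n$ is $k2^n$-small above $\sigma$ (so it may be added to $B$, keeping the bad set small and forcing partiality), or some $D_{n,\rho}$ is $k$-big, in which case the concatenation property guarantees a $\tau\in D_{n,\rho}$ above which $B$ remains $k$-small — that is, a d.n.c.-respecting step into the bad set. The bigness certificate is a $\Sigma^{0,\emptyset'}_1$ object, so the residual case really does produce a $\emptyset'$-computable 1-enum. Your argument would close if you substituted this forcing for plain Cohen forcing; as written, the step you dismiss as routine — reconciling the d.n.c.\ requirement with the diagonalization — is precisely the point where the bushy tree machinery is indispensable.
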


The proof of Theorem~\ref{thm:coh-dnc-avoid} is done by a bushy tree forcing argument.
See the survey from Khan and Miller~\cite{Khan2014Forcing} for terminology and definitions.
Fix a set~$X$. We will construct a GL${}_1$ function which is d.n.c.\ relative to~$X$.
Our forcing conditions are tuples~$(\sigma, B)$ where~$\sigma \in \omega^{<\omega}$
and $B \subseteq \omega^{<\omega}$ is an upward-closed set $k$-small above~$\sigma$ for some~$k \in \omega$.
A sequence~$f$ \emph{satisfies} a condition~$(\sigma, B)$ if $\sigma \prec f$ and $B$ is small above every initial segment of~$f$.
Our initial condition is~$(\varepsilon, B_{DNC}^X)$ where~
$$
B_{DNC}^X = \{ \sigma \in \omega^{<\omega} : (\exists e) \sigma(e) = \Phi^X_e(e) \}
$$
Therefore every infinite sequence~$f$ satisfying~$(\varepsilon, B_{DNC}^X)$ is d.n.c.\ relative to~$X$.
Thanks to the following lemma, we can prevent~$f \oplus \emptyset'$ from computing
a 1-enum of~$\Ccal(\vec{R})$. As the constructed function~$f$ is GL${}_1$, $f' \leq_T f \oplus \emptyset'$
does not compute a 1-enum of~$\Ccal(\vec{R})$.

\begin{lemma}
For every condition~$c = (\sigma, B)$ and every Turing functional~$\Gamma$,
there exists an extension~$d = (\tau, C)$ forcing~$\Gamma^{f \oplus \emptyset'}$ to be partial
or such that~$\Gamma^{\tau \oplus \emptyset'}$ is not a 1-enum of~$\Ccal(\vec{R})$.
\end{lemma}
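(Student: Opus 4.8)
The goal is to show that for any condition $c = (\sigma, B)$ (with $B$ being $k$-small above $\sigma$ for some $k$) and any Turing functional $\Gamma$, there is an extension $d = (\tau, C)$ that either forces $\Gamma^{f \oplus \emptyset'}$ to be partial, or arranges that $\Gamma^{\tau \oplus \emptyset'}$ has already ``made a mistake'' as a $1$-enum of $\Ccal(\vec{R})$, i.e.\ $\Gamma^{\tau \oplus \emptyset'}$ outputs some string $\rho$ of length $n$ with $[\rho] \cap \Ccal(\vec{R}) = \emptyset$. First I would set up the natural dichotomy. Given $c$, ask whether there exists a finite ``bushy'' collection of extensions of $\sigma$, together with extensions $\tau \succeq \sigma'$ for each leaf $\sigma'$, on which $\Gamma^{\tau \oplus \emptyset'}$ becomes defined on a longer initial segment than $\sigma$ does, such that the set of all these $\tau$'s is $(k+1)$-big above $\sigma$ (in the bushy-tree sense). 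More precisely: say a string $\tau \succeq \sigma$ is \emph{$\Gamma$-extending} if $|\Gamma^{\tau \oplus \emptyset'}| > |\Gamma^{\sigma \oplus \emptyset'}|$. The question (decidable relative to $\emptyset'$, since $\Gamma$ is given $\emptyset'$ as an oracle and bigness of a $\Sigma^{0,\emptyset'}_1$ set above $\sigma$ is $\Sigma^{0,\emptyset'}_1$) is whether the set of $\Gamma$-extending strings is $(k+1)$-big above $\sigma$, avoiding $B$.

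If the answer is \emph{no} — i.e.\ the $\Gamma$-extending strings (outside $B$) are $(k+1)$-small above $\sigma$ — then I would take $C = B \cup \{\Gamma\text{-extending strings above }\sigma\}$, which by the additivity of smallness (small-$\cup$-small-is-small, up to adjusting the bounds) is $(2k+1)$-small above $\sigma$, and set $d = (\sigma, C)$. Any $f$ satisfying $d$ then has no $\Gamma$-extending initial segment beyond $\sigma$, so $|\Gamma^{f \oplus \emptyset'}| \le |\Gamma^{\sigma \oplus \emptyset'}|$, forcing $\Gamma^{f\oplus\emptyset'}$ partial. This is the ``easy'' branch.

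If the answer is \emph{yes}, I would use the $\emptyset'$-big collection of $\Gamma$-extending extensions to run a $\emptyset'$-computable search that would, if no extension ever made $\Gamma$ output a bad string, produce a $\emptyset'$-computable $1$-enum of $\Ccal(\vec{R})$ — contradicting the hypothesis. Concretely: suppose for contradiction that for \emph{every} $\Gamma$-extending $\tau \succeq \sigma$ outside $B$, the string $\Gamma^{\tau \oplus \emptyset'}$ (or at least its final newly-output coordinate) has a nonempty intersection with $\Ccal(\vec{R})$, i.e.\ $[\Gamma^{\tau\oplus\emptyset'}] \cap \Ccal(\vec{R}) \neq \emptyset$. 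Then, $\emptyset'$-effectively, on input $n$, search for a $\Gamma$-extending $\tau$ with $|\Gamma^{\tau \oplus \emptyset'}| \ge n$; since the $\Gamma$-extending strings are $(k+1)$-big above $\sigma$ and we can iterate bigness (at each stage the next layer of $\Gamma$-extending strings is again big above the current leaves, by the same ``no'' $\Rightarrow$ contradiction reasoning applied level by level — actually one just needs that $\Gamma$-extending strings of unbounded $\Gamma$-length exist above $\sigma$, which is immediate once a big set of them exists at level one and the tree-pruning never kills all branches), such $\tau$ exists. Output $\Gamma^{\tau \oplus \emptyset'} \uh n$. By the contradiction hypothesis each such output is a length-$n$ string with nonempty intersection with $\Ccal(\vec{R})$, so this is a $\emptyset'$-computable $1$-enum of $\Ccal(\vec{R})$ — impossible. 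Hence some $\Gamma$-extending $\tau \succeq \sigma$ outside $B$ has $[\Gamma^{\tau \oplus \emptyset'}] \cap \Ccal(\vec{R}) = \emptyset$; take $d = (\tau, B)$ (still $k$-small above $\tau$ since $B$ was $k$-small above $\sigma \preceq \tau$ and $\tau \notin B$), and we are done.

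**Main obstacle.** The subtle point is making the ``yes'' branch genuinely yield a $1$-enum and not merely an enum with finitely many candidates at each length: I must ensure that a \emph{single} string is output at each length $n$, which is why the argument is phrased as ``find the first $\Gamma$-extending $\tau$ in a fixed $\emptyset'$-search order,'' rather than collecting all of them. I should also be careful that the length of $\Gamma^{\tau\oplus\emptyset'}$ is driven up without bound along the big set of $\Gamma$-extending strings; this requires iterating the dichotomy (at each level, either $\Gamma$-extending extensions are big above the current leaves — good, continue — or they are small, in which case we would already have been in the ``no'' case at that level and could have stopped there). Threading the bookkeeping of the two bounds ($k$ vs.\ $k+1$ vs.\ $2k+1$) through the smallness additivity lemma, and confirming everything is uniformly $\emptyset'$-decidable because $\Gamma$ is equipped with the $\emptyset'$ oracle, are the remaining routine but necessary verifications.
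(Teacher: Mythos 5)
Your proposal has the right high-level shape (split into a ``force partiality'' branch and a ``$\emptyset'$-compute a 1-enum, contradiction'' branch), but the way you set up the dichotomy introduces two genuine gaps that the paper's organization is specifically designed to avoid.

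First, the routine-looking claim near the end is wrong: you take the bad $\tau$ and assert that $B$ is ``still $k$-small above $\tau$ since $B$ was $k$-small above $\sigma\preceq\tau$ and $\tau\notin B$.'' Smallness above $\sigma$ does \emph{not} pass to smallness above an extension $\tau$; the singleton $B=\{\tau\}$ is $2$-small above the root but $2$-big above $\tau$ itself, and $\tau\notin B$ is irrelevant. To extend a condition while keeping $B$ small you must invoke the concatenation property, and for that you need the set of candidate extensions (here, the $\tau$'s whose output at some coordinate is bad, equivalently the paper's $D_{n,\rho}$ for a bad $\rho$) to be $k$-\emph{big} above $\sigma$, not merely nonempty. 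This is exactly the role of alternative~(b) in the paper's proof.

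Second, and more seriously, your proposed $\emptyset'$-computable $1$-enum is not provably a $1$-enum. On input $n$ you search for \emph{some} $\Gamma$-extending $\tau$ with $|\Gamma^{\tau\oplus\emptyset'}|\ge n$ and output the $n$-th coordinate. But your contradiction hypothesis only constrains $\tau$'s above which $B$ is small (equivalently, $\tau$'s that could actually appear as the stem of a condition). The set of such $\tau$ is $\Pi^{0,\emptyset'}_1$, so your search cannot restrict to them; the first $\tau$ your search finds may well lie in the part of the tree already killed by $B$, and nothing you've assumed forces its output to meet $\Ccal(\vec R)$. The paper sidesteps this by never looking for a single witness string at all: for each $n$ it partitions $D_n$ by the output value $\rho\in 2^n$ and $\emptyset'$-searches for a $\rho$ such that $D_{n,\rho}$ is $k$-\emph{big} above $\sigma$ — a $\Sigma^{0,\emptyset'}_1$ property. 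Smallness additivity guarantees the search terminates when $D_n$ is $k2^n$-big, and the failure of alternative~(b) guarantees the $\rho$ found is good. Finally, your ``$\Gamma$-extending'' bookkeeping, which tracks the longest defined initial segment and therefore requires an iterated bigness argument to push lengths up, is unnecessary: the paper simply fixes a single coordinate $n$ (so $D_n$, ``converges on input $n$'') and never needs to iterate.
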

\begin{proof}
Suppose that~$B$ is $k$-small above~$\sigma$.
For every~$n \in \omega$,
define the~$\Sigma^{0, \emptyset'}_1$ set~$D_n = \{ \tau \in \omega^{<\omega} : \Gamma^{\tau \oplus \emptyset'}(n) \downarrow \in 2^n\}$.
Make a~$\emptyset'$-effective search for an~$n \in \omega$ such that one of the following holds:
\begin{itemize}
	\item[(a)] $D_n$ is $k2^n$-small above~$\sigma$ for some~$n \in \omega$
	\item[(b)] $D_{n,\rho} = \{ \tau \in \omega^{<\omega} : \Gamma^{\tau \oplus \emptyset'}(n) \downarrow = \rho \}$
	is $k$-big above~$\sigma$ for some string~$\rho \in 2^n$ such that $[\rho] \cap \Ccal(\vec{R}) = \emptyset$.
\end{itemize}
Such~$n$ exists, as otherwise, for every~$n \in \omega$, $D_n$ is $k2^n$-big above~$\sigma$.
By the smallness additivity property, $D_{n,\rho}$ is $k$-big above~$\sigma$ for some~$\rho \in 2^n$.
For every such string~$\rho$, $[\rho] \cap \Ccal(\vec{R}) \neq \emptyset$. Therefore we can $\emptyset'$-compute
a 1-enum of~$\Ccal(\vec{R})$ by searching on each input~$n$ for some~$\rho$ of length~$n$
such that~$D_{n,\rho}$ is $k$-big above~$\sigma$.

If we are in case~(a), take~$d = (\tau, C \cup D_n)$ as the desired extension.
The condition~$d$ forces~$\Gamma^{f \oplus \emptyset'}$ to be partial.
If we are in case~(b), by the concatenation property, there exists an extension~$\tau \in D_{n,\rho}$
such that~$B$ is still~$k$-small above~$\tau$. The condition $d = (\tau, B)$ is an extension
forcing~$\Gamma^{f \oplus \emptyset'}$ not to be a 1-enum of~$\Ccal(\vec{R})$
as~$\Gamma^{f \oplus \emptyset'}(n) = \Gamma^{\tau \oplus \emptyset'}(n) = \rho$ and~$[\rho] \cap \Ccal(\vec{R}) = \emptyset$.
\end{proof}

Looking at the proof of the previous lemma, we can~$\emptyset'$-decide in which case we are, and then use the knowledge of~$f$
to see which path has been chosen in the bushy tree. The construction therefore yields a~GL${}_1$ sequence.

\section{Ramsey's theorem and computable reducibility}\label{sect:rt-computable-reduc}

The strength of Ramsey's theorem is known to remain the same when
changing the number of colors in the setting of reverse mathematics.
Indeed, given some coloring~$f : [\omega]^n \to k^2$, we can define another coloring
$g : [\omega]^n \to k$ by merging colors together by blocks of size~$k$. After one application of~$\rt^n_k$
to the coloring~$g$, we obtain an infinite set~$H$ over which~$f$ uses at most~$k$ different colors.
Another application of~$\rt^n_k$ gives an infinite $f$-homogeneous set.
This standard proof of~$\rca \vdash \rt^n_k \imp \rt^n_{k^2}$ involves two applications
of~$\rt^n_k$. In this section, we show that in the computable reducibility setting, 
multiple applications are really necessary
to reduce~$\rt^n_k$ to~$\rt^n_\ell$ whenever~$k > \ell$ and~$n \geq 2$.

Note that two applications of~$\rt^n_2$ are sufficient to deduce~$\rt^n_k$ in the case~$n \geq 4$,
as Jockusch~\cite{Jockusch1972Ramseys} proved that every computable instance of~$\rt^n_k$ has a~$\Pi^0_n$ solution,
and that for every set~$X$, there exists an~$X$-computable instance of~$\rt^n_2$
such that every solution computes~$X^{(n-2)}$.

\subsection{Cohesiveness and strong reducibility}\label{sect:coh-strong-reduc}

We start our analysis with partitions of integers.
Of course, every computable partition has an infinite computable homogeneous set,
so we need to consider non-effective partitions and strong computable reducibility.
The study of~$\rt^1_k$ over strong reducibility has close connections with
cohesiveness.
Dzhafarov~\cite{Dzhafarov2014Cohesive} proved that~$\coh \not \leq_{sc} \mathsf{D}^2_{<\infty}$ by iterating the following theorem.

\begin{theorem}[Dzhafarov~\cite{Dzhafarov2014Cohesive}]\label{thm:dzhafarov-partitions}
For every~$k \geq 2$ and $\ell < 2^k$, there is a finite sequence~$R_0, \dots, R_{k-1}$
such that for all partitions~$A_0 \cup \dots \cup A_{\ell-1} = \omega$ hyperarithmetical in~$\vec{R}$,
there is an infinite subset of some~$A_j$ that computes no~$\vec{R}$-cohesive set.
\end{theorem}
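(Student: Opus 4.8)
\emph{Set-up.} The plan is to build the finite sequence $\vec R = R_0,\dots,R_{k-1}$ so that its $2^k$ atoms $R_\sigma$ ($\sigma\in 2^k$) are all infinite but ``hard to separate''. Fix an interval partition $\omega = B_0\cup B_1\cup\cdots$ with $|B_n| = 2^k$, and decree that $R_0,\dots,R_{k-1}$ meet $B_n$ according to a bijection $c_n\colon 2^k\to B_n$, in the sense that $c_n(\sigma)\in R_i$ iff $\sigma(i)=1$; equivalently $R_\sigma\cap B_n = \{c_n(\sigma)\}$ for every $\sigma\in 2^k$. The object to be constructed is the sequence $(c_n)_n$, and the point is that \emph{whatever} it turns out to be, each atom $R_\sigma$ contains exactly one point of every block and is therefore infinite. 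Since an $\vec R$-cohesive set $C$ is, up to a finite set, contained in a single atom $R_\rho$, it has the form $\{c_n(\rho) : n\in N\}$ modulo finite, for some colour $\rho\in 2^k$ and some infinite $N$; so computing an $\vec R$-cohesive set amounts to computing the value $c_n(\rho)$ for infinitely many $n$, for some fixed $\rho$.

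\emph{The combinatorics forced by $\ell < 2^k$.} Let $\vec A = A_0,\dots,A_{\ell-1}$ be any $\ell$-partition of $\omega$ hyperarithmetical in $\vec R$. In each block $B_n$ the $2^k$ points split among the $\ell$ parts, so some part receives at least two of them; as there are finitely many parts, there is a least $j^\ast$ such that $A_{j^\ast}$ contains at least two points of $B_n$ for infinitely many $n$. For such $n$ the colour-set $\{c_n^{-1}(x) : x\in A_{j^\ast}\cap B_n\}$ has size $\geq 2$, so among the finitely many sets $S\subseteq 2^k$ with $|S|\geq 2$ there is a (say lexicographically least) one for which $N := \{ n : \{c_n^{-1}(x) : x\in A_{j^\ast}\cap B_n\} = S\}$ is infinite. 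Put $B := \bigcup_{n\in N} (A_{j^\ast}\cap B_n)$, an infinite subset of $A_{j^\ast}$; this is the candidate solution to $\vec A$, and it is a canonical function of the partition. The task then reduces to showing that $B$ computes no $\vec R$-cohesive set, i.e.\ that no $\Gamma^B$ can name $c_n(\rho)$ for infinitely many $n$, for any $\rho\in 2^k$: intuitively, the only information $B$ carries about a block $B_n$ with $n\in N$ is that $A_{j^\ast}\cap B_n$ is exactly the $S$-coloured subset of $B_n$, and this reveals neither how the colours of $S$ are distributed among those points nor where the colours outside $S$ sit.

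\emph{The construction.} To make this intuition correct I would build $(c_n)_n$ by a forcing argument, with finite conditions that leave slack inside each block (for instance partially specified permutations, or blocks taken slightly larger than $2^k$), so that at every stage some colour positions in every block remain free. One family of requirements ensures that each atom $R_\sigma$ is infinite, handled by a routine density argument. The main family is indexed by triples $(\Gamma,\rho,\Psi)$ with $\Gamma$ a Turing functional, $\rho\in 2^k$, and $\Psi$ an index for a hyperarithmetical-in-$\vec R$ procedure that may define an $\ell$-partition; for such a triple one forces either that $\Psi^{\vec R}$ fails to be an $\ell$-partition, or that, when $\vec A=\Psi^{\vec R}$ is a partition, the set $B$ extracted from $\vec A$ as above satisfies $\Gamma^B$ is not a $\rho$-cohesive set. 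For this the construction must be generic enough that hyperarithmetical-in-$\vec R$ facts are decided by conditions, so that finite conditions settle longer and longer initial segments of $\Psi^{\vec R}$, hence of $B$; each time $\Gamma^B$ commits a fresh point $x$ of some block $B_m$ to the alleged cohesive set, one extends the condition so that $c_m(\rho)\neq x$, which is possible because — whether or not $\rho$ lies among the colours merged into the relevant part on block $m$ — the information $B$ carries about $B_m$ leaves several candidates for $c_m(\rho)$.

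\emph{The main obstacle.} The delicate point, and the reason the statement only concerns partitions hyperarithmetical in $\vec R$, is that the adversary's part is itself definable from $\vec R$ and so ``knows'' the whole permutation sequence: a priori its membership pattern could encode the locations $c_n(\rho)$ and hand them to $B$. Ruling this out forces a strong form of genericity on $\vec R$, generic for hyperarithmetical sets of conditions, which both makes every hyperarithmetical-in-$\vec R$ reduction continuous enough to be diagonalised and is exactly what confines the argument to hyperarithmetical partitions. The technical heart of the proof is then the simultaneous bookkeeping: verifying that a single sufficiently generic sequence $(c_n)_n$ keeps all $2^k$ atoms infinite, defeats every functional on every set $B$ derived from a hyperarithmetical-in-$\vec R$ partition, and always retains the block-level freedom needed to carry out these diagonalisations.
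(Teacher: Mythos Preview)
The paper does not prove this theorem as stated; it cites it as Dzhafarov's result and then establishes a strict strengthening (Corollary~\ref{cor:dzhafarov-comb}) in which the $R_i$ are low and the hyperarithmetic restriction on the partition is dropped entirely. The paper's route is completely different from yours: it fixes in advance a low $2^k$-partition $(B_\sigma:\sigma\in 2^k)$ whose complements are all hyperimmune, sets $R_i=\bigcup_{\sigma(i)=1}B_\sigma$, and then proves a general hyperimmunity-preservation lemma (Theorem~\ref{thm:rt1-hyperimmunity}): for \emph{any} $\ell$-partition $A_0\cup\cdots\cup A_{\ell-1}=\omega$ one can find, by Mathias forcing together with the hyperimmune-free basis theorem, an infinite $H\subseteq A_j$ relative to which at least $\pi(2^k,\ell)\geq 2$ of the sets $\overline{B_\sigma}$ remain hyperimmune. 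Any two such $\sigma$'s differ in some bit $i$, and then every infinite $H$-computable set meets both $R_i$ and $\overline{R_i}$, so $H$ computes no $\vec R$-cohesive set. This buys a lot: $\vec R$ is chosen once and for all, there is no circularity between $\vec R$ and the partition, and no effectiveness hypothesis on $\vec A$ is needed.

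Your proposal is in the spirit of Dzhafarov's original argument rather than the paper's, and as written it has a genuine gap. The diagonalisation step --- ``when $\Gamma^B$ commits a point $x\in B_m$, extend the condition so that $c_m(\rho)\neq x$'' --- conflates what $B$ can \emph{compute} about $B_m$ with what the \emph{condition} has already fixed about $c_m$. Your set $B$ is defined from $\vec A=\Psi^{\vec R}$, which is defined from $\vec R$, which is defined from $(c_n)$; so to make $\Gamma^B$ halt you must first pin down an initial segment of $B$, which in turn forces you to pin down enough of $(c_n)$ that $c_m$ may already be determined --- and any residual freedom you exploit in $c_m$ feeds back into $\vec R$, hence into $\Psi^{\vec R}$, hence possibly into $B$ itself, undoing the computation you just arranged. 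You acknowledge this (``the construction must be generic enough that hyperarithmetical-in-$\vec R$ facts are decided by conditions''), but the Cohen-style forcing you describe --- finite partial permutations on blocks of size $2^k$ --- does not decide hyperarithmetic facts; it decides only low-level arithmetic ones. Breaking this circularity is precisely the technical content of Dzhafarov's proof, and it requires a more careful organisation than your sketch supplies. The paper's hyperimmunity approach sidesteps the issue entirely.
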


Hirschfeldt and Jockusch noticed in~\cite{Hirschfeldtnotions} that the proof of Theorem~\ref{thm:dzhafarov-partitions}
can be slightly modified to obtain a proof that~$\rt^1_k \not \leq_{sc} \rt^1_\ell$ whenever~$k > \ell \geq 2$.
Mont\'alban asked whether the hyperarithmetic effectiveness restriction
can be removed from Dzhafarov's theorem. 
We give a positive answer, which has been proved independently by Hirschfeldt and Jockusch~\cite{Hirschfeldtnotions}.
Moreover, we show that~$\vec{R}$ can be chosen to be low. 

Given two integers~$k, \ell \geq 1$, we let~$\pi(k, \ell)$ denote the unique~$a \geq 1$ such that
$k = a \cdot \ell - b$ for some~$b \in [0, \ell)$. Informally, $\pi(k, \ell)$ is the minimal number of pigeons
we can ensure in at least one pigeonhole, given~$k$ pigeons and~$\ell$ pigeonholes.
In particular, $\pi(k, \ell) \geq 2$ whenever $k > \ell \geq 1$.
We prove the following theorem, from which
we deduce several corollaries about cohesiveness and~$\rt^1_k$.

\begin{theorem}\label{thm:rt1-hyperimmunity}
Fix some~$k \geq 1$ and $\ell \geq 2$, some set~$I$ and a sequence of $k$ $I$-hyperimmune sets~$B_0, \dots, B_{k-1}$.
For every~$\ell$-partition~$A_0 \cup \dots \cup A_{\ell-1} = \omega$,
there exists an infinite subset~$H$ of some~$A_i$ such
that $\pi(k, \ell)$ sets among the~$B$'s are $I \oplus H$-hyperimmune.
\end{theorem}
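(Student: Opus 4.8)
The plan is to build the desired subset $H$ by a Mathias-style forcing with computable reservoirs, arranging along the way that "enough" of the $B_j$'s stay hyperimmune relative to $I\oplus H$. First I would observe a pigeonhole fact on the $B$-side: since there are $k$ sets $B_0,\dots,B_{k-1}$ and $\ell$ parts $A_0,\dots,A_{\ell-1}$, there is some part $A_i$ that meets at least $\pi(k,\ell)$ of the sets $B_j$ \emph{infinitely often}; more precisely, for each $j<k$ the set $B_j$ is $I$-hyperimmune (in particular infinite), and by averaging over the $\ell$-partition, one of the parts $A_i$ has the property that the index set $S=\{j<k : A_i\cap B_j$ is infinite$\}$ has size at least $\pi(k,\ell)$. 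We will extract the infinite set $H\subseteq A_i$ so that, for every $j\in S$, $B_j$ remains hyperimmune relative to $I\oplus H$. Since $|S|\ge \pi(k,\ell)$, this suffices.

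Next I would set up the forcing. Conditions are pairs $(F,X)$ with $F$ a finite subset of $A_i$, $X\subseteq A_i$ an infinite $I$-computable (equivalently, sufficiently generic low-ish) reservoir, $\max F < \min X$, and with the additional promise that $X\cap B_j$ is infinite for every $j\in S$; extension is the usual Mathias extension $(F',X')\le (F,X)$ with $F\subseteq F'\subseteq F\cup X$ and $X'\subseteq X$. The generic $H=\bigcup F$ is then an infinite subset of $A_i$. The infinitely-many-requirements bookkeeping has two families: (i) density requirements forcing $H$ infinite (trivial, since each reservoir is infinite and meets $A_i$); and (ii) for each $j\in S$ and each Turing functional $\Phi$, the requirement $\mathcal{R}_{j,\Phi}$: "$\Phi^{I\oplus H}$ is not a modulus (does not dominate the principal function) of $B_j$." Meeting all $\mathcal{R}_{j,\Phi}$ guarantees each $B_j$, $j\in S$, is $I\oplus H$-hyperimmune.

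The key step — and the main obstacle — is showing $\mathcal{R}_{j,\Phi}$ is dense. Given a condition $(F,X)$ and $j\in S$, I want an extension $(F',X')$ that forces $\Phi^{I\oplus H}$ either to be partial or to fail to dominate the principal function $p_{B_j}$ of $B_j$. The standard move: the function $n\mapsto \Phi^{(I\oplus F)/\,?}(n)$ over extensions $F'$ of $F$ inside $X$ is, uniformly in a candidate value, a partial $I$-computable object; so $I$ can compute an upper bound $g(n)$ for the values $\Phi^{I\oplus F'}(n)$ ranging over \emph{all} finite extensions $F'\subseteq F\cup X$ on which the computation halts (if no bound exists on some input, we get partiality and win). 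But $g$ is $I$-computable, hence does not dominate the $I$-hyperimmune function $p_{B_j}$ — wait, we need $g$ restricted appropriately: here is where $j\in S$, i.e. $X\cap B_j$ infinite, is used. One picks $n$ large with $p_{B_j}(n) > g(n)$, then extends $F$ to $F'$ inside $X$ so that $\Phi^{I\oplus H}(n)\!\downarrow$ (if possible; else partiality), obtaining $\Phi^{I\oplus H}(n)\le g(n) < p_{B_j}(n)$, which kills domination on that input; and $X'\subseteq X$ is shrunk to keep $X'\cap B_\cdot$ infinite for all of $S$ — possible since shrinking by finitely much preserves the infiniteness promises. The delicate point is making the search for such $F'$ effective enough that the bound $g$ is genuinely $I$-computable (not $I\oplus X$-computable); this is handled by quantifying over all strings rather than over the actual reservoir, exactly as in the hyperimmunity arguments of Section~\ref{sect:coh-strong-reduc}, and by choosing reservoirs to be $I$-computable throughout. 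Finally, taking a sufficiently generic filter through this notion of forcing yields $H$ with all requirements met, completing the proof.
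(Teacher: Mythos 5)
Your opening pigeonhole on the $B$-side is arithmetically fine but does not do the work you want. Having $A_i\cap B_j$ infinite for $\pi(k,\ell)$ many $j$'s says nothing about whether $B_j$ remains $(I\oplus H)$-hyperimmune when $H\subseteq A_i$: hyperimmunity of $B_j$ is a constraint on what $I\oplus H$ can compute, not on whether $H$ meets $B_j$. More fundamentally, by committing to a single colour $i$ before the forcing begins, you lose the disjunction that makes the density lemma go through. With $i$ fixed, the requirement ``$\Phi^{I\oplus H}$ does not dominate $p_{B_j}$'' may simply be unforceable from a given condition: nothing prevents every infinite $H\subseteq F\cup X$ with $H\subseteq A_i$ from yielding a dominating $\Phi^{I\oplus H}$. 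The paper avoids this by building one generic $G$ that is simultaneously infinite in every $A_i$, and by formulating each requirement disjunctively over the colours: $\Rcal_{\vec e,j}$ asserts that for \emph{some} $i<\ell$, $\Phi_{e_i}^{(G\cap A_i)\oplus I}$ fails to dominate $p_{B_j}$. The Seetapun/Cholak--Jockusch--Slaman device — searching over \emph{all} $\ell$-partitions $Z_0\cup\dots\cup Z_{\ell-1}=X$ of the reservoir, not just the one induced by $\vec A$ — is what makes the threshold function $f$ on each input an $X\oplus I$-computable object, and what guarantees that \emph{some} colour can be used to diagonalize. The final pigeonhole (some $i$ works for $\pi(k,\ell)$ many $j$'s) is applied only \emph{after} all the $\Rcal_{\vec e,j}$ are met, not at the outset.

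Two further problems. First, you insist the reservoirs be $I$-computable, and say that the needed bound is ``genuinely $I$-computable.'' This cannot be maintained: in the partial (Case~2) branch one must pass, via compactness, from $X$ to a member of a $\Pi^{0,X\oplus I}_1$ class of partitions, and the correct invariant to carry along is not ``$X$ is $I$-computable'' but ``every $B_j$ is $X\oplus I$-hyperimmune''; this is exactly why the paper invokes the hyperimmune-free basis theorem to choose the new reservoir. Second, your assertion that ``if no bound exists on some input, we get partiality and win'' is unjustified: unboundedness of $\{\Phi^{(I\oplus F')}(n):F'\subseteq F\cup X\}$ does not by itself force $\Phi^{I\oplus H}(n)\ua$; one must actually exhibit a sub-reservoir on which all further extensions diverge, which is again the compactness plus basis-theorem step you have not carried out. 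Finally, the paper's proof is organized as an induction on $\ell$ with a separate lemma handling ``trivial'' partitions (Lemma~\ref{lem:colors-ramsey-partition-reduction}); your sketch omits both the induction and the non-triviality reduction, which are needed to make the limitlessness requirements $\Qcal_p$ dense.
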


We will postpone the proof of Theorem~\ref{thm:rt1-hyperimmunity} until after Corollary~\ref{cor:srt2-not-reduc-colors}.
Using the existence of a low
$k$-partition~$B_0 \cup \dots \cup B_{k-1} = \omega$ such that
$\overline{B_j}$ is hyperimmune for every~$j < k$,
we deduce the following corollary.

\begin{corollary}\label{cor:d2-general-colors}
For every~$k > \ell \geq 2$, there is a low $k$-partition~
$B_0 \cup \dots \cup B_{k-1} = \omega$
such that for all $\ell$-partitions~$A_0  \cup \dots \cup A_{\ell-1} = \omega$,
there is an infinite subset~$H$ of some~$A_i$ and a pair~$j_0 < j_1 < k$
such that every infinite~$H$-computable set intersects both~$B_{j_0}$ and~$B_{j_1}$.
\end{corollary}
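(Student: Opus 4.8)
The plan is to derive Corollary~\ref{cor:d2-general-colors} directly from Theorem~\ref{thm:rt1-hyperimmunity} by choosing the right hyperimmune sets $B_0, \dots, B_{k-1}$ and translating the conclusion ``$\pi(k,\ell)$ of the $B$'s remain $H$-hyperimmune'' into the combinatorial statement about $H$-computable sets intersecting two of the $B_j$'s. First I would invoke the existence (to be recorded as a separate lemma, or cited) of a low $k$-partition $B_0 \cup \dots \cup B_{k-1} = \omega$ such that $\overline{B_j}$ is hyperimmune for every $j < k$; this is a standard finite-injury or low-basis-type construction, essentially diagonalizing against all potential computable functions dominating the principal function of each $\overline{B_j}$. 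Taking $I = \emptyset$, the sets $\overline{B_0}, \dots, \overline{B_{k-1}}$ are $k$ many $I$-hyperimmune sets in the sense required by Theorem~\ref{thm:rt1-hyperimmunity}.

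Next I would apply Theorem~\ref{thm:rt1-hyperimmunity} with this $I$ and with the hyperimmune sets being the complements $\overline{B_0}, \dots, \overline{B_{k-1}}$. Given an arbitrary $\ell$-partition $A_0 \cup \dots \cup A_{\ell-1} = \omega$, the theorem produces an infinite set $H \subseteq A_i$ for some $i < \ell$ such that at least $\pi(k,\ell)$ of the sets $\overline{B_j}$ are $\emptyset \oplus H$-hyperimmune, i.e. $H$-hyperimmune. Since $k > \ell \geq 2$ forces $\pi(k,\ell) \geq 2$, we may pick two indices $j_0 < j_1 < k$ with both $\overline{B_{j_0}}$ and $\overline{B_{j_1}}$ being $H$-hyperimmune.

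It remains to turn $H$-hyperimmunity of $\overline{B_{j}}$ into the assertion that every infinite $H$-computable set meets $B_j$. This is the routine but essential translation step: if some infinite $H$-computable set $W$ were disjoint from $B_j$, then $W \subseteq \overline{B_j}$, and the $H$-computable enumeration of $W$ in increasing order would produce an $H$-computable function dominating the principal function of $\overline{B_j}$ (one maps $n$ to the $(n{+}1)$-st element of $W$, which is at least the $n$-th element of any subset's... more carefully, $W \subseteq \overline{B_j}$ infinite $H$-computable gives an $H$-computable array of pairwise disjoint finite sets—or simply an $H$-computable function not dominated issue—hitting $\overline{B_j}$ cofinally), contradicting $H$-hyperimmunity. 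Applying this to $j_0$ and $j_1$ shows every infinite $H$-computable set intersects both $B_{j_0}$ and $B_{j_1}$, which is exactly the conclusion. The main obstacle is really upstream—namely the proof of Theorem~\ref{thm:rt1-hyperimmunity} itself, which the paper postpones; given that theorem, the corollary is a short deduction, and the only genuinely new ingredient here is verifying that the low $k$-partition with hyperimmune complements exists and that lowness is preserved by the construction.
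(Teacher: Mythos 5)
Your proposal is correct and follows essentially the same route as the paper: fix a low $k$-partition $B_0 \cup \dots \cup B_{k-1} = \omega$ with each $\overline{B_j}$ hyperimmune, apply Theorem~\ref{thm:rt1-hyperimmunity} with $I = \emptyset$ to the sets $\overline{B_0}, \dots, \overline{B_{k-1}}$, use $\pi(k,\ell) \geq 2$ to obtain two $H$-hyperimmune complements, and observe that any infinite $H$-computable set $W \subseteq \overline{B_j}$ would have $p_W \geq p_{\overline{B_j}}$ pointwise, contradicting $H$-hyperimmunity. The paper phrases the last translation step more tersely but the argument is the same.
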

\begin{proof}
Fix some~$k > \ell \geq 2$ and a low $k$-partition $B_0 \cup \dots \cup B_{k-1} = \omega$
such that $\overline{B_j}$ is hyperimmune for every~$j < k$.
Since~$k > \ell \geq 2$, $\pi(k, \ell) \geq 2$.
Therefore, by Theorem~\ref{thm:rt1-hyperimmunity}, for every $\ell$-partition
$A_0  \cup \dots \cup A_{\ell-1} = \omega$, there is an infinite subset~$H$ of some~$A_i$ and a pair~$j_0 < j_1 < k$
such that~$\overline{B_{j_0}}$ and~$\overline{B_{j_1}}$ are~$H$-hyperimmune.
In particular, every infinite $H$-computable set intersects both~$B_{j_0}$ and~$B_{j_1}$.
\end{proof}

The positive answer to Mont\'alban's question is an immediate consequence
of the previous corollary.

\begin{corollary}\label{cor:dzhafarov-comb}
For every~$k \geq 2$ and $\ell < 2^k$, there is a finite sequence
of low sets~$R_0, \dots, R_{k-1}$
such that for all partitions~$A_0 \cup \dots \cup A_{\ell-1} = \omega$,
there is an infinite subset of some~$A_i$ that computes no~$\vec{R}$-cohesive set.
\end{corollary}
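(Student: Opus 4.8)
The plan is to derive Corollary~\ref{cor:dzhafarov-comb} directly from Corollary~\ref{cor:d2-general-colors}, exploiting the relationship between cohesiveness for a finite sequence of sets and intersecting blocks of a partition. The key observation is that being $\vec{R}$-cohesive for a \emph{finite} sequence $R_0,\dots,R_{k-1}$ means being almost contained in one of the $2^k$ atoms $R_\sigma$ with $\sigma\in 2^k$; in particular, an $\vec{R}$-cohesive set, being infinite, must have infinite intersection with exactly one atom and hence eventually avoid all the others. So to prevent a set $H$ from computing an $\vec{R}$-cohesive set, it suffices to arrange that every infinite $H$-computable set meets at least two of the atoms $R_\sigma$.

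First I would fix $k\ge 2$ and $\ell < 2^k$. Apply Corollary~\ref{cor:d2-general-colors} with the pair $(2^k, \ell)$ in place of $(k,\ell)$: since $2^k > \ell \ge 2$, this yields a low $2^k$-partition $B_0\cup\dots\cup B_{2^k-1}=\omega$ such that for every $\ell$-partition $A_0\cup\dots\cup A_{\ell-1}=\omega$, there is an infinite subset $H$ of some $A_i$ and a pair $j_0<j_1<2^k$ with every infinite $H$-computable set intersecting both $B_{j_0}$ and $B_{j_1}$. Next I would realize this $2^k$-partition as the atom structure of a finite sequence: index the $2^k$ parts by strings $\sigma\in 2^k$, and define $R_m = \bigcup\{B_\sigma : \sigma(m)=1\}$ for each $m<k$. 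Each $R_m$ is low (a finite union of low sets), the sequence $\vec R = R_0,\dots,R_{k-1}$ is uniformly low, and by construction $R_\sigma = B_\sigma$ for every $\sigma\in 2^k$, so the atoms of $\vec R$ are exactly the parts $B_\sigma$.

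Then I would check that this $\vec R$ works. Given any $\ell$-partition $A_0\cup\dots\cup A_{\ell-1}=\omega$, Corollary~\ref{cor:d2-general-colors} supplies an infinite $H\subseteq A_i$ and strings $\sigma_0\neq\sigma_1$ in $2^k$ such that every infinite $H$-computable set meets both $B_{\sigma_0}=R_{\sigma_0}$ and $B_{\sigma_1}=R_{\sigma_1}$. Suppose for contradiction that $H$ computes an $\vec R$-cohesive set $C$. For each $m<k$, $C\subseteq^* R_m$ or $C\subseteq^* \overline{R_m}$; letting $\tau\in 2^k$ record these choices (with $\tau(m)=1$ in the first case), one gets $C\subseteq^* R_\tau$. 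Since $\sigma_0\neq\sigma_1$, at least one of them differs from $\tau$, say $\sigma_0\neq\tau$; then $R_{\sigma_0}\subseteq \overline{R_\tau}$ on the bit where they disagree, so $C\cap R_{\sigma_0}$ is finite, contradicting that the infinite $H$-computable set $C$ meets $R_{\sigma_0}$. Hence $H$ computes no $\vec R$-cohesive set, as required. There is no real obstacle here — the work is purely bookkeeping, translating the ``intersects two parts'' conclusion into ``not almost-contained in any single atom'' — and the only point needing a word of care is that $2^k>\ell\ge 2$ so that Corollary~\ref{cor:d2-general-colors} applies and yields $\pi(2^k,\ell)\ge 2$.

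\begin{proof}
Fix~$k \geq 2$ and~$\ell < 2^k$. Since~$2^k > \ell \geq 2$, we may apply Corollary~\ref{cor:d2-general-colors}
with~$2^k$ in place of~$k$ to obtain a low $2^k$-partition of~$\omega$. Index its parts by binary strings of length~$k$,
writing it as~$(B_\sigma : \sigma \in 2^k)$. For each~$m < k$, set~$R_m = \bigcup \{ B_\sigma : \sigma \in 2^k, \sigma(m) = 1\}$.
Each~$R_m$ is a finite union of low sets, hence the sequence~$R_0, \dots, R_{k-1}$ is uniformly low. By construction,
for every~$\sigma \in 2^k$ we have~$R_\sigma = B_\sigma$, so the~$2^k$ atoms of~$\vec{R}$ are exactly the parts~$B_\sigma$.

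Let~$A_0 \cup \dots \cup A_{\ell-1} = \omega$ be an~$\ell$-partition. By Corollary~\ref{cor:d2-general-colors},
there is an infinite subset~$H$ of some~$A_i$ and two distinct strings~$\sigma_0 \neq \sigma_1$ in~$2^k$ such that
every infinite $H$-computable set intersects both~$B_{\sigma_0} = R_{\sigma_0}$ and~$B_{\sigma_1} = R_{\sigma_1}$.

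Suppose for contradiction that some infinite $H$-computable set~$C$ is~$\vec{R}$-cohesive. For each~$m < k$,
either~$C \subseteq^{*} R_m$ or~$C \subseteq^{*} \overline{R_m}$; let~$\tau \in 2^k$ be defined by~$\tau(m) = 1$
in the first case and~$\tau(m) = 0$ in the second, so that~$C \subseteq^{*} R_\tau$. Since~$\sigma_0 \neq \sigma_1$,
at least one of them differs from~$\tau$; say~$\sigma_0 \neq \tau$, and fix~$m < k$ with~$\sigma_0(m) \neq \tau(m)$.
Then~$R_{\sigma_0} \subseteq \overline{R_m}$ if~$\tau(m) = 1$, and~$R_{\sigma_0} \subseteq R_m$ if~$\tau(m) = 0$;
in either case~$R_{\sigma_0} \cap R_\tau = \emptyset$, so~$C \cap R_{\sigma_0} \subseteq^{*} R_\tau \cap R_{\sigma_0}$
is finite. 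This contradicts the fact that~$C$, being an infinite $H$-computable set, intersects~$R_{\sigma_0}$.
Hence~$H$ computes no~$\vec{R}$-cohesive set.
\end{proof}
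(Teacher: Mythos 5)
Your proof is correct and follows essentially the same route as the paper: apply Corollary~\ref{cor:d2-general-colors} with $2^k$ colors, index the parts as $B_\sigma$ for $\sigma \in 2^k$, set $R_m = \bigcup_{\sigma(m)=1} B_\sigma$, and use the two parts met by every infinite $H$-computable set to rule out a cohesive set. Two small slips worth tidying: first, a finite union of low sets need not be low in general --- the right justification is that each $R_m$ is computable from the single low partition $(B_\sigma)_{\sigma \in 2^k}$, hence low; second, in the final step ``$C \cap R_{\sigma_0}$ is finite'' does not directly contradict ``$C$ intersects $R_{\sigma_0}$,'' since the latter only asserts nonempty intersection, so instead observe that the cofinite set $C' = C \cap R_\tau$ is still an infinite $H$-computable set yet is disjoint from $R_{\sigma_0}$, which gives the contradiction.
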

\begin{proof}
Given~$k \geq 2$ and~$\ell < 2^k$, fix the low $2^k$-partition $(B_\sigma : \sigma \in 2^k)$
whose existence is stated by Corollary~\ref{cor:d2-general-colors}.
For each~$i < k$, define~$R_i = \bigcup_{\sigma(i) = 1} B_\sigma$.
Note that by disjointness of the $B$'s, $\overline{R_i} = \bigcup_{\sigma(i) = 0} B_\sigma$.
By choice of the $B$'s, for all $\ell$-partitions~$A_0 \cup \dots \cup A_{\ell-1} = \omega$, there is
an infinite subset~$H$ of some~$A_j$ and a pair~$\sigma <_{lex} \tau \in 2^k$
such that every infinite~$H$-computable set intersects both~$B_\sigma$ and~$B_\tau$.
Let~$i < k$ be the least bit such that~$\sigma(i) \neq \tau(i)$.
As~$\sigma <_{lex} \tau$, $\sigma(i) = 0$ and~$\tau(i) = 1$. By definition of~$R_i$,
$B_{\tau} \subseteq R_i$ and~$B_{\sigma} \subseteq \overline{R_i}$. 
Therefore no infinite~$H$-computable set is homogeneous for~$R_i$. In particular 
no infinite~$H$-computable set is~$\vec{R}$-cohesive.
\end{proof}

The construction of the~$B$'s is done uniformly in~$k$.
We can therefore deduce the following corollary.

\begin{corollary}
There exists a sequence of low sets~$R_0, R_1, \dots$
such that every finite partition of~$\omega$ has an infinite subset in one of its parts
which does not compute an~$\vec{R}$-cohesive set.
\end{corollary}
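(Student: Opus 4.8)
The plan is to derive the statement as an immediate consequence of Corollary~\ref{cor:dzhafarov-comb} together with the observation that the construction underlying Corollary~\ref{cor:d2-general-colors} (hence that of Corollary~\ref{cor:dzhafarov-comb}) is uniform in~$k$. First I would record that for each $k \geq 2$, Corollary~\ref{cor:dzhafarov-comb} applied with $\ell$ ranging over all values below $2^k$ produces a finite sequence of low sets $R^k_0, \dots, R^k_{k-1}$ such that every partition of $\omega$ into fewer than $2^k$ parts has an infinite subset in one of its parts computing no $\vec{R}^k$-cohesive set. The key point is that the low $2^k$-partition $(B_\sigma : \sigma \in 2^k)$ supplied by Corollary~\ref{cor:d2-general-colors}, being obtained from Theorem~\ref{thm:rt1-hyperimmunity} via a fixed uniform construction of a low $k$-partition with hyperimmune complements, can be chosen uniformly in $k$; consequently the sets $R^k_i$ are obtained uniformly in $k$ as well.

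Next I would assemble all of these into a single sequence. Take $\vec{R} = R^2_0, R^2_1, R^3_0, R^3_1, R^3_2, R^4_0, \dots$, i.e.\ the concatenation over all $k \geq 2$ of the blocks $R^k_0, \dots, R^k_{k-1}$, listed via a computable pairing. Since each block is low and the construction is uniform in $k$, the whole sequence $\vec{R}$ is uniformly low (more precisely, uniformly $\Delta^0_2$ with a uniform lowness witness, so low as a sequence). Now given an arbitrary finite partition $A_0 \cup \dots \cup A_{m-1} = \omega$, pick any $k$ with $m < 2^k$. By Corollary~\ref{cor:dzhafarov-comb} there is an infinite subset $H$ of some $A_i$ computing no $\vec{R}^k$-cohesive set. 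But any $\vec{R}$-cohesive set is in particular $\vec{R}^k$-cohesive, since the sets $R^k_0, \dots, R^k_{k-1}$ appear among the $R_j$'s; hence $H$ computes no $\vec{R}$-cohesive set either, which is exactly what is required.

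The main obstacle I expect is making the uniformity claim precise: one must check that the low $k$-partition with hyperimmune complements used in Corollary~\ref{cor:d2-general-colors} is produced by a single effective procedure taking $k$ as input, and that the associated lowness approximations can be combined into one $\Delta^0_2$ enumeration of $\vec{R}$ whose jump is computable from $\emptyset'$ uniformly. This is a bookkeeping matter rather than a conceptual one — the standard finite-injury constructions yielding low sets with prescribed hyperimmunity are manifestly uniform in any finite amount of side data — so the argument amounts to spelling out that the index for $R^k_i$ is computable from $(k,i)$ and that the lowness index is too, after which the concatenation over $k$ is routine.
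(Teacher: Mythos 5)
Your approach matches the paper's: the paper derives the corollary from Corollary~\ref{cor:dzhafarov-comb} by appealing to the uniformity in~$k$ of the construction, and your key reduction — that an $\vec{R}$-cohesive set is a fortiori $\vec{R}^k$-cohesive for each block $\vec{R}^k$ of~$\vec{R}$, so taking $k$ large with respect to the number of parts suffices — is exactly the intended argument. One small caution on the lowness claim: having a uniform $\Delta^0_2$ index together with a uniform lowness index for each $R^k_i$ individually does not by itself make $\bigoplus_{k,i} R^k_i$ low (a uniformly $\Delta^0_2$ family of finite, hence trivially low, sets can already join up to $\emptyset'$), so the parenthetical ``uniform lowness witness, so low as a sequence'' is not a valid inference on its own; what one actually needs — and what you do gesture at when you speak of combining the approximations into one $\Delta^0_2$ enumeration whose jump is $\emptyset'$-computable — is a \emph{single} finite-injury construction building all the $B^k_\sigma$ simultaneously, interleaving the hyperimmunity requirements with lowness requirements for the global join, which is routine and is glossed over in the paper at the same level of detail.
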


The effectiveness of~$B$ in the statement of Corollary~\ref{cor:d2-general-colors} enables us
to deduce computable non-reducibility results about stable Ramsey's theorem for pairs,
thanks to the computable equivalence between~$\srt^2_\ell$ and the statement~$\mathsf{D}^2_\ell$.

\begin{corollary}\label{cor:srt2-not-reduc-colors}
For every~$k > \ell \geq 2$, $\srt^2_k \not \leq_c \srt^2_\ell$.
\end{corollary}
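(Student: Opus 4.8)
The plan is to deduce Corollary~\ref{cor:srt2-not-reduc-colors} directly from Corollary~\ref{cor:d2-general-colors} by translating the statement $\srt^2_k \leq_c \srt^2_\ell$ through the computable equivalence between $\srt^2_\ell$ and $\mathsf{D}^2_\ell$ recorded in the introduction (Cholak et al.~\cite{Cholak2001strength}, with $\mathsf{D}^2_\ell$ equivalent to $\srt^2_\ell$ over $\rca$), together with the observation that a $\Delta^0_2$ $k$-partition of $\omega$ is exactly the limit of a stable coloring $[\omega]^2 \to k$, and conversely an infinite set homogeneous for such a coloring is an infinite subset of one part of the $\Delta^0_2$ partition.

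Concretely, first I would fix $k > \ell \geq 2$ and suppose toward a contradiction that $\srt^2_k \leq_c \srt^2_\ell$; chaining with the computable equivalences gives $\mathsf{D}^2_k \leq_c \mathsf{D}^2_\ell$ (here one uses that computable reducibility composes and that the equivalences $\srt^2_j \equiv_c \mathsf{D}^2_j$ are witnessed by computable reductions). Next I would invoke Corollary~\ref{cor:d2-general-colors} to obtain a \emph{low} $k$-partition $B_0 \cup \dots \cup B_{k-1} = \omega$. Since this partition is low, it is in particular $\Delta^0_2$, so it is a legitimate $\mathsf{D}^2_k$-instance. Applying the hypothesized reduction $\mathsf{D}^2_k \leq_c \mathsf{D}^2_\ell$ to the instance $\vec{B}$ produces an $\ell$-partition $A_0 \cup \dots \cup A_{\ell-1} = \omega$, computable from $\vec{B}$ (hence low, hence $\Delta^0_2$, so a valid $\mathsf{D}^2_\ell$-instance), such that every infinite subset $H$ of some $A_i$ satisfies: $H \oplus \vec{B}$ computes an infinite subset of some $B_j$. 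But $\vec{B}$ is low, in fact I can arrange matters so the relevant subset is computed from $H$ together with a fixed low oracle; the point of requiring $\vec{B}$ low in Corollary~\ref{cor:d2-general-colors} is precisely that the conclusion there speaks of infinite $H$-computable sets, and lowness lets us absorb the $\vec{B}$ side of the join without changing the hyperimmunity conclusion — an $H$-hyperimmune set is $(H\oplus\vec{B})$-hyperimmune would be false in general, so instead I keep track of oracles carefully: the statement of Corollary~\ref{cor:d2-general-colors} already gives that no infinite $H$-computable set meets exactly one of $B_{j_0},B_{j_1}$, and the computable reduction only guarantees an $H\oplus\vec{B}$-computable subset of some $B_j$, so the cleanest route is to note that $\vec B$ being computable from the $\mathsf D^2_k$-instance, the reduction's solution-recovery functional has access to $\vec B$ for free and the relevant ``infinite subset of $A_i$'' that we feed in is itself whatever $\mathsf D^2_\ell$ hands back, which is $\Delta^0_2$; then $H\oplus\vec B \equiv_T H$ up to the low oracle, so $H$ already computes an infinite subset of some $B_j$ — contradicting that every infinite $H$-computable set must intersect both $B_{j_0}$ and $B_{j_1}$ and hence cannot be contained in any single $B_j$.

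The main obstacle, and the point that needs care, is exactly this bookkeeping of oracles: computable reducibility $\mathsf{D}^2_k \leq_c \mathsf{D}^2_\ell$ only promises that $H \oplus \vec{B}$ (not $H$ alone) computes the solution to the $\mathsf{D}^2_k$-instance, whereas Corollary~\ref{cor:d2-general-colors} is phrased in terms of $H$-computable sets. Lowness of $\vec{B}$ is what reconciles these: since $\vec B$ is low it is $\Delta^0_2$, so it and everything it computes relative to a $\Delta^0_2$ set stays within reach, and more to the point one re-reads Theorem~\ref{thm:rt1-hyperimmunity}/Corollary~\ref{cor:d2-general-colors} noting its conclusion can be strengthened to ``$I\oplus H$-hyperimmune'' with $I$ any oracle relative to which the $B_j$ are hyperimmune — taking $I = \vec B$ (low) still leaves the $\overline{B_{j_0}},\overline{B_{j_1}}$ hyperimmune relative to $I\oplus H$ provided the $B$'s were built $I$-hyperimmune, which is how Theorem~\ref{thm:rt1-hyperimmunity} is stated. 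So the honest proof applies Theorem~\ref{thm:rt1-hyperimmunity} with $I$ a fixed low set coding $\vec B$, concludes that $\overline{B_{j_0}}$ and $\overline{B_{j_1}}$ are $(\vec B \oplus H)$-hyperimmune, hence every infinite $(\vec B\oplus H)$-computable set meets both $B_{j_0}$ and $B_{j_1}$ and is therefore contained in no single $B_j$ — which is exactly what the reduction $\mathsf D^2_k\leq_c \mathsf D^2_\ell$ would require, giving the contradiction.
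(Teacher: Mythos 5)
You correctly identify Corollary~\ref{cor:d2-general-colors} as the engine, but your detour through $\mathsf{D}^2_k \leq_c \mathsf{D}^2_\ell$ creates the oracle difficulty you then spend most of the argument wrestling with, and your proposed repair does not survive scrutiny. The paper avoids the difficulty entirely by arguing directly with $\srt^2$: the $\srt^2_k$-instance is the \emph{computable} stable coloring $g : [\omega]^2 \to k$ which Schoenfield's limit lemma attaches to the partition $\vec{B}$, not the $\Delta^0_2$ set $\vec{B}$ itself. A hypothetical reduction $\srt^2_k \leq_c \srt^2_\ell$ applied to $g$ yields a computable stable $f : [\omega]^2 \to \ell$ with the promise that for every infinite $f$-homogeneous $Y$, $Y \oplus g$ computes a $g$-homogeneous set; since $g$ is computable, $Y \oplus g \equiv_T Y$, and no extra oracle enters. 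Letting $\vec{A}$ be the $\Delta^0_2$ $\ell$-partition given by the limits of $f$, every infinite subset of a part of $\vec{A}$ computes an infinite $f$-homogeneous set, so Corollary~\ref{cor:d2-general-colors} --- applied to the arbitrary, not-necessarily-effective $\ell$-partition $\vec{A}$ --- produces an $f$-homogeneous set computing no infinite subset of any $B_j$, hence no $g$-homogeneous set, contradiction. Lowness of $\vec{B}$ plays no role here beyond $\vec{B}$ being $\Delta^0_2$.

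Your proposed fix is a dead end: applying Theorem~\ref{thm:rt1-hyperimmunity} with $I$ a set coding $\vec{B}$ requires the $\overline{B_j}$'s to be $I$-hyperimmune, but $\overline{B_j} \leq_T \vec{B} \leq_T I$, and no $I$-computable set is $I$-hyperimmune (its principal function is $I$-computable and dominates itself), so the hypothesis of that theorem cannot be met with such an $I$. Likewise ``$H \oplus \vec{B} \equiv_T H$ up to the low oracle'' is not a valid inference; lowness of $\vec{B}$ gives no licence to drop $\vec{B}$ from a join with an arbitrary $H$. The point to internalize is that for $\leq_c$ the instance handed to the backward functional is the computable approximation $g$, not the $\Delta^0_2$ set $\vec{B}$; once you keep track of that, the oracle is computable and Corollary~\ref{cor:d2-general-colors} as stated already finishes the argument.
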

\begin{proof}
Fix $k > \ell \geq 2$. By Corollary~\ref{cor:d2-general-colors},
there is a~$\Delta^0_2$ $k$-partition~
$B_0 \cup \dots \cup B_{k-1} = \omega$
such that for all $\ell$-partitions~$A_0, \dots, A_{\ell-1}$ of~$\omega$,
there is an infinite subset~$H$ of some~$A_i$ which does not compute an infinite
subset of any~$B_j$.
By Cholak et al.~\cite{Cholak2001strength}, for every stable computable function~$f : [\omega]^2 \to \ell$,
there exists a~$\Delta^0_2$~$\ell$-partition~$A_0 \cup \dots \cup A_{\ell-1} = \omega$ such that every
infinite subset of a part computes an infinite~$f$-homogeneous set. Therefore, for every such function~$f$,
there exists an infinite $f$-homogeneous set which does not compute an infinite subset of any~$B_j$.
By Schoenfield's limit lemma~\cite{Shoenfield1959degrees}, the~$\Delta^0_2$ approximation~$g : [\omega]^2 \to k$ of the~$k$-partition~
$B_0 \cup \dots \cup B_{k-1} = \omega$ is a stable computable function and every infinite~$g$-homogeneous set with color~$j$ is an infinite subset of~$B_j$.
\end{proof}

We now turn to the proof of Theorem~\ref{thm:rt1-hyperimmunity}.
We shall prove it by induction over~$\ell$, using a forcing construction whose
forcing conditions are Mathias conditions~$(F, X)$ where~$X$ is an infinite set 
such that the~$B$'s are $X \oplus I$-hyperimmune. The case where~$\ell = 1$ trivially holds
since $\pi(k, 1) = k$.

\subsubsection{Forcing limitlessness}

For every~$\ell$-partition~$A_0 \cup \dots \cup A_{\ell-1} = \omega$, 
we want to satisfy the following scheme of requirements to ensure that~$G \cap A_i$
is infinite for each~$i < \ell$.
\[
\Qcal_p : (\exists n_0, \dots, n_{\ell-1} > p)[n_0 \in G \cap A_0 \wedge \dots \wedge n_{\ell-1} \in G \cap A_{\ell-1}]
\]
Of course, all requirements may not be satisfiable
if some part~$A_i$ is finite. Usually, a forcing argument starts with the assumption that the instance 
is non-trivial, that is, does not admit a solution with the desired properties (cone avoiding, low, ...).
In order to force the solution to be infinite,
it suffices to ensure that the reservoirs satisfy the desired properties, and therefore 
cannot be a solution to a non-trivial instance.

In our case, we say that an~$\ell$-partition~$A_0 \cup \dots \cup A_{\ell-1}$ is~\emph{non-trivial} if
there is no infinite set~$H$ included in the complement of one of the~$A$'s and such that
the $B$'s are $H \oplus I$-hyperimmune. 
The following lemma states that we can focus on non-trivial partitions without loss of generality.

\begin{lemma}\label{lem:colors-ramsey-partition-reduction}
For every trivial $\ell$-partition $A_0 \cup \dots \cup A_{\ell-1}$, 
there is an infinite set~$H \subseteq A_i$ for some~$i < \ell$ such that $\pi(k, \ell)$ sets among the $B$'s are $H \oplus I$-hyperimmune.
\end{lemma}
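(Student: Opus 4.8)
The plan is to run an induction on $\ell$, so that a trivial $\ell$-partition is dispatched to the $(\ell-1)$-case of Theorem~\ref{thm:rt1-hyperimmunity}. Suppose $A_0 \cup \dots \cup A_{\ell-1} = \omega$ is trivial: by definition there is an infinite set $H_0$ contained in $\overline{A_{i_0}}$ for some $i_0 < \ell$ such that all $k$ of the $B$'s are $H_0 \oplus I$-hyperimmune. The idea is that $H_0$ witnesses that we have effectively ``removed'' one color: every element of $H_0$ lies in one of the remaining $\ell - 1$ parts $A_i$ with $i \neq i_0$. First I would form the $(\ell-1)$-partition of the infinite set $H_0$ given by $A'_i = A_i \cap H_0$ for $i \neq i_0$; relabelling, this is an $(\ell-1)$-partition of $H_0$. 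Now I want to apply Theorem~\ref{thm:rt1-hyperimmunity} with $\ell - 1$ in place of $\ell$, with the set parameter $I$ replaced by $I \oplus H_0$ (which keeps the $B$'s hyperimmune relative to the new parameter by choice of $H_0$), and with the $\ell-1$ parts $(A'_i)_{i \neq i_0}$ — but working inside $H_0$ rather than all of $\omega$. The inductive statement of the theorem, suitably relativized to the infinite set $H_0$, then yields an infinite set $H \subseteq A'_i \subseteq A_i$ for some $i \neq i_0$ such that $\pi(k, \ell-1)$ of the $B$'s are $(I \oplus H_0) \oplus H$-hyperimmune, hence $I \oplus H$-hyperimmune.

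The remaining point is numerical: I need $\pi(k, \ell-1) \geq \pi(k, \ell)$, which is immediate from the definition of $\pi$ — fewer pigeonholes can only force at least as many pigeons into some hole — so $\pi(k, \ell-1)$ of the $B$'s being $I \oplus H$-hyperimmune gives the desired conclusion with $\pi(k,\ell)$. (One can even note $H \subseteq H_0$ so $I \oplus H$-hyperimmunity of a set already followed from $I \oplus H_0$-hyperimmunity together with $H \leq_T I \oplus H_0 \oplus H$, but we don't even need this refinement.) The only subtlety worth spelling out is that Theorem~\ref{thm:rt1-hyperimmunity} as stated partitions $\omega$, whereas here we have a partition of the infinite subset $H_0$; this is handled exactly as one relativizes such combinatorial statements — identify $H_0$ with $\omega$ via its principal function $p_{H_0}$, which is $H_0$-computable, transport the partition and the sets $B_0, \dots, B_{k-1}$ along this identification, apply the theorem there, and transport the resulting infinite homogeneous-type set back; hyperimmunity relative to $I \oplus H_0$ is preserved under this $H_0$-computable translation.

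The main obstacle I anticipate is purely bookkeeping rather than conceptual: making sure the induction is set up so that Theorem~\ref{thm:rt1-hyperimmunity} is invoked only for strictly smaller $\ell$ and with a legitimately weaker set parameter, and checking that ``the $B$'s are $(I \oplus H_0)$-hyperimmune'' is exactly what the triviality clause of the partition hands us — which it is, since the definition of a trivial $\ell$-partition is phrased precisely in terms of $H_0 \oplus I$-hyperimmunity of the $B$'s. So in the write-up I would (i) recall the definition of triviality to extract $H_0$ and $i_0$; (ii) restrict everything to $H_0$ and relativize; (iii) apply the inductive case of Theorem~\ref{thm:rt1-hyperimmunity}; (iv) observe $\pi(k,\ell-1) \geq \pi(k,\ell)$ and conclude. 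The base case $\ell = 2$ reduces to $\ell - 1 = 1$, where $\pi(k,1) = k$ and the conclusion of the theorem is trivially true, so no separate argument is needed.
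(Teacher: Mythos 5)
Your approach is the same as the paper's: use the triviality hypothesis to extract an infinite $H_0 \subseteq \overline{A_{i_0}}$ with all the $B$'s $H_0 \oplus I$-hyperimmune, pass to the induced $(\ell-1)$-partition, invoke the $\ell-1$ case of Theorem~\ref{thm:rt1-hyperimmunity} with the strengthened oracle $I \oplus H_0$, and use $\pi(k,\ell-1) \geq \pi(k,\ell)$. (You are right about the direction of that inequality; the paper's proof writes $\leq$ there, which is a sign typo.)

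One clause in your write-up, however, would fail if taken literally: you propose to ``transport the partition \emph{and the sets} $B_0, \dots, B_{k-1}$'' along the bijection $p_{H_0}\colon \omega \to H_0$. Transporting the $B$'s is neither needed nor sound. The preimage $\{s : p_{H_0}(s) \in B_j\}$ can easily be computable even when $B_j$ is hyperimmune --- indeed $B_j$ need not even meet $H_0$ --- so hyperimmunity is not preserved under this translation. The correct move, and the one the paper makes, is to leave the $B$'s untouched and pull back \emph{only} the partition: set $C_j = \{s : p_{H_0}(s) \in A_j\}$ for $j \neq i_0$, an $(\ell-1)$-cover of the genuine $\omega$. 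Now apply the relativized induction hypothesis with oracle $I \oplus H_0$, the pulled-back partition, and the \emph{original} sets $B_0, \dots, B_{k-1}$ (which are $(I \oplus H_0)$-hyperimmune by choice of $H_0$), obtaining $H' \subseteq C_j$ with $\pi(k,\ell-1)$ of the $B$'s $(I \oplus H_0) \oplus H'$-hyperimmune. Pushing forward, $H := p_{H_0}(H') \subseteq A_j$ satisfies $H \leq_T H' \oplus H_0$, so $\pi(k,\ell) \leq \pi(k,\ell-1)$ of the $B$'s are $I \oplus H$-hyperimmune, as required. Everything else in your outline, including the base case $\ell=1$ with $\pi(k,1)=k$, matches the paper.
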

\begin{proof}
Let~$G = \{n_0 < n_1 < \dots \}$ be an infinite subset of $\overline{A}_i$ for some~$i < \ell$
such that the~$B$'s are $G \oplus I$-hyperimmune.
Define the~$(\ell-1)$-partition~$(C_j : j \neq i)$ by setting~$C_j = \{ s \in \omega : n_s \in A_j \}$ for each~$j \neq i$.
By induction hypothesis, there exists an infinite set~$H_0 \subseteq C_j$ for some~$j \neq i$
such that $\pi(k, \ell - 1)$ sets among the $B$'s are $H_0 \oplus G \oplus I$-hyperimmune.
Note that $\pi(k, \ell-1) \leq \pi(k, \ell)$.
The set $H = \{ n_s : s \in H_0 \}$ is an $H_0 \oplus G$-computable subset of $A_j$
and $\pi(k, \ell)$ sets among the $B$'s are $H \oplus J$-hyperimmune.
\end{proof}

Notice that the proof of Lemma~\ref{lem:colors-ramsey-partition-reduction}
uses the induction hypothesis with a different context, namely, $G \oplus I$ instead of $I$.
This is where we needed to use the relativized version of the theorem in the proof.
A condition~$c = (F, X)$ \emph{forces $\Qcal_p$}
if there exists some~$n_0, \dots, n_{m-1} > p$ such that~$n_i \in F \cap A_j$
for each~$i < \ell$. Therefore, if~$G$ satisfies~$c$ and~$c$ forces~$\Qcal_p$,
then~$G$ satisfies the requirement~$\Qcal_p$.
We now prove that the set of conditions forcing~$\Qcal_p$ is dense for each~$p \in \omega$.
Thus, every sufficiently generic filter will induce an infinite solution.

\begin{lemma}\label{lem:colors-ramsey-coh-strong-reduc-force-Q}
For every condition~$c$ and every~$p \in \omega$, there is an extension forcing~$\Qcal_p$.
\end{lemma}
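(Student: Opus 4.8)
The plan is to prove Lemma~\ref{lem:colors-ramsey-coh-strong-reduc-force-Q} by showing that given any condition $c = (F, X)$ and any $p \in \omega$, we can simultaneously extract $\ell$ new witnesses --- one for each part $A_i$ --- all larger than $p$, while keeping the reservoir large enough that the $B$'s remain $X' \oplus I$-hyperimmune for the new reservoir $X'$. First I would dispose of a degenerate case: if some part $A_i$ meets $X$ only finitely often, then $\overline{A_i} \supseteq^* X$, so $X$ (minus finitely much) is an infinite subset of $\overline{A_i}$ witnessing that the partition is trivial in the sense defined above; but by Lemma~\ref{lem:colors-ramsey-partition-reduction} (and the running assumption that we only deal with non-trivial partitions, since trivial ones are already handled), this case does not arise, so every part $A_i$ is infinite inside $X$.

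The main construction step: process the parts one at a time, $i = 0, 1, \dots, \ell-1$. Starting from $X_0 = X$, at stage $i$ the set $A_i \cap X_i$ is infinite; pick some $n_i \in A_i \cap X_i$ with $n_i > p$ and $n_i > \max F$ and $n_i$ above all previously chosen witnesses. Then we need to shrink $X_i$ to an infinite $X_{i+1} \subseteq X_i$, with $\min X_{i+1} > n_i$, such that the $B$'s stay $X_{i+1} \oplus I$-hyperimmune. The key observation is that passing to a subset never destroys hyperimmunity of a set relative to a join with that subset --- wait, that is the wrong direction: $X_{i+1} \oplus I$ computes less than $X_i \oplus I$, so if $B_j$ is $X_i \oplus I$-hyperimmune then a fortiori $B_j$ is $X_{i+1} \oplus I$-hyperimmune for any $X_{i+1} \leq_T X_i$. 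So in fact any infinite subset works, and we may simply take $X_{i+1} = \{m \in X_i : m > n_i\}$. After $\ell$ steps set $F' = F \cup \{n_0, \dots, n_{\ell-1}\}$ and $X' = X_\ell$; then $d = (F', X')$ is a valid condition extending $c$, and since $n_i \in F' \cap A_i$ with $n_i > p$ for every $i < \ell$, $d$ forces $\Qcal_p$.

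The subtle point --- and where I expect any real difficulty to lie --- is verifying that $d$ genuinely is a \emph{condition}, i.e.\ that the reservoir property ``the $B$'s are $X' \oplus I$-hyperimmune'' is preserved. As noted, this is immediate because hyperimmunity of a set relative to an oracle is preserved under weakening the oracle, and $X' \leq_T X$, so there is essentially nothing to prove here; the apparent obstacle dissolves. One should double-check that $X'$ is still infinite, which it is since $X$ is infinite and we only remove an initial segment at each of finitely many stages. A minor bookkeeping point is that one must also confirm $F' \cap X' = \emptyset$ and $\max F' < \min X'$, so that $(F', X')$ is a legitimate Mathias condition in the sense used throughout this section; both follow from our choice $\min X_{i+1} > n_i > \max F$. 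Thus the lemma follows, and combined with Lemma~\ref{lem:colors-ramsey-partition-reduction} and a sufficiently generic filter, every part $A_i$ of the partition receives infinitely many elements of the generic set $G$.
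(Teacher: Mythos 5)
Your proof is correct and follows essentially the same route as the paper's: process the parts one at a time, use non-triviality to conclude each $A_i\cap X$ is infinite (otherwise $X$ minus a finite set would sit inside $\overline{A_i}$ and witness triviality, since it has the same degree as $X$), pick $n_i\in A_i\cap X$ above $p$ and all previous choices, and shrink the reservoir by deleting an initial segment. The paper phrases the iteration as an induction on $i<\ell$ with one new witness per step, but the content is the same.

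One caution, not fatal here but worth flagging: the parenthetical ``so in fact any infinite subset works'' is an overstatement. Hyperimmunity of the $B_j$ relative to $X_{i+1}\oplus I$ does follow from hyperimmunity relative to $X_i\oplus I$ \emph{when} $X_{i+1}\leq_T X_i$, but an arbitrary infinite subset of $X_i$ need not be computable from $X_i$. Your actual choice $X_{i+1}=\{m\in X_i:m>n_i\}$ is a cofinite subset of $X_i$, hence of the same degree, so the argument is fine. The reason this matters is that in the companion density lemma (forcing the non-homogeneity requirements $\Rcal_{\vec{e},j}$) one genuinely must pass to subsets of $X$ that are \emph{not} computable from $X$, and there the hyperimmune-free basis theorem, not mere monotonicity, is what saves the reservoir invariant. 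Keeping the two situations distinct is important.
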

\begin{proof}
Fix some~$p \in \omega$. It is sufficient to show that given a condition~
$c = (F, X)$ and some~$i < \ell$,
there exists an extension~$d_0 = (E, Y)$ and some integer~$n_i > p$
such that~$n_i \in E \cap A_i$.
By iterating the process for each~$i < \ell$, we obtain the desired extension~$d$.
By definition of non-triviality, $A_i$ is co-immune in~$X$
and therefore $X \cap A_i$ is infinite.
Take any~$n_i \in X \cap A_i \cap (p, +\infty)$.
The condition $d_0 = (F \cup \{n_i\}, X \setminus [0, n_i])$ is the desired extension.
\end{proof}

\subsubsection{Forcing non-homogeneity}

The second scheme of requirements aims at ensuring that
for some~$i < \ell$,
at least~$\pi(k, \ell)$ sets among the~$B$'s are $(G \cap A_i) \oplus I$-hyperimmune.
The requirements are of the following form for each~$j < k$
and each tuple of indices~$\vec{e} = e_0, \dots, e_{\ell-1}$.
\[
\Rcal_{\vec{e}, j} : \hspace{10pt} \Rcal^{A_0, B_j}_{e_0} \vee \dots \vee \Rcal^{A_{\ell-1}, B_j}_{e_{\ell-1}}
\]
where $\Rcal_e^{A, B}$ is the statement ``$\Phi_e^{(G \cap A) \oplus I}$ does not dominate $p_B$''.

We claim that if all the requirements are satisfied, then~$(G \cap A_i)$ has the desired property
for some~$i < \ell$. Indeed, if for some fixed~$j < k$, all the requirements~$\Rcal_{\vec{e}, j}$
are satisfied, then by the usual pairing argument, there is some~$i < \ell$ such that $B_j$
is $(G \cap A_i) \oplus I$-hyperimmune. So if all the requirements are satisfied, then by the pigeonhole
principle, there is some~$i < \ell$ such that~$\pi(k, \ell)$ sets among the~$B$'s are $(G \cap A_i) \oplus I$-hyperimmune.

A condition~\emph{forces $\Rcal_{\vec{e}, j}$} if every set~$G$ satisfying this condition also satisfies the requirement~$\Rcal_{\vec{e}}$.
The following lemma is the core of the forcing argument.

\begin{lemma}\label{lem:colors-ramsey-coh-strong-reduc-force-R}
For every condition~$c = (F, X)$, every~$j < k$ and every tuple of Turing indices~$\vec{e}$, 
there exists an extension~$d = (E, Y)$ forcing~$\Phi_{e_i}^{(G \cap A_i) \oplus I}$ not to dominate~$p_{B_j}$
for some~$i < \ell$.
\end{lemma}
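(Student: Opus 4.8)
The plan is to use a counting argument inside a Mathias forcing construction. Fix a condition $c = (F, X)$, an index $j < k$, and a tuple $\vec{e} = e_0, \dots, e_{\ell-1}$. Since $A_0 \cup \dots \cup A_{\ell-1} = \omega$ is non-trivial, the reservoir $X$ meets each $A_i$ infinitely often and the $B$'s are $X \oplus I$-hyperimmune. The strategy is to attempt, for each $i < \ell$ in turn, to find a finite set $E_i \supseteq F$ with $E_i \setminus F \subseteq X \cap A_i$ and an input $n$ on which $\Phi_{e_i}^{(E_i \cap A_i) \oplus I}(n)\halts$ with a value witnessing that $\Phi_{e_i}^{(G \cap A_i) \oplus I}$ cannot dominate $p_{B_j}$ — concretely, a value that is smaller than the $n$-th element of $B_j$ — using only finitely much of $X$. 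The point is that I do not need to succeed for every $i$: I need to succeed for just one $i < \ell$, because a single successful $i$ gives an extension $d = (E_i, X \setminus [0, \max E_i])$ that forces $\Rcal^{A_i, B_j}_{e_i}$, hence forces the disjunction $\Rcal_{\vec{e}, j}$.

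The key step is to show that the attempt must succeed for some $i$. Suppose not: then for every $i < \ell$, every sufficiently long finite extension of $F$ inside $X \cap A_i$ yields a functional $\Phi_{e_i}$ that, on every input, either diverges or returns a value at least as large as the $n$-th element of $B_j$. Aggregating these partial computations across all $i < \ell$ and over finitely much of $X$, I would extract an $X \oplus I$-computable function that dominates $p_{B_j}$, contradicting the $X \oplus I$-hyperimmunity of $B_j$. The precise form of the aggregation is the delicate point: one searches, uniformly in $n$ and $X \oplus I$, over all candidate finite extensions $E \subseteq X$ within each part, and takes the maximum over the $\ell$ parts of the least value returned; the failure hypothesis guarantees this search terminates and bounds $p_{B_j}(n)$ from above. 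This is where the hypothesis that $B_j$ is hyperimmune relative to $X \oplus I$ (not merely relative to $I$) is essential, and it is why Lemma~\ref{lem:colors-ramsey-partition-reduction} had to be stated with the reservoir's degree folded into the oracle.

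Once the successful $i$ and witness $n$ are found, I set $E = E_i$, pick the corresponding initial segment of $X$ used in the computation, and let $Y = X \setminus [0, \max E]$; then $d = (E, Y)$ is a valid Mathias extension of $c$ — $Y$ is still infinite and the $B$'s are still $Y \oplus I$-hyperimmune since $Y \subseteq X$ — and every $G$ satisfying $d$ has $G \cap A_i \supseteq E_i \cap A_i$ agreeing with the computation on input $n$, so $\Phi_{e_i}^{(G \cap A_i) \oplus I}(n)$ fails to bound $p_{B_j}(n)$, proving $\Rcal^{A_i, B_j}_{e_i}$ and a fortiori $\Rcal_{\vec{e}, j}$. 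The main obstacle I anticipate is bookkeeping the aggregated search so that it is genuinely $X \oplus I$-computable and genuinely dominates $p_{B_j}$: one must be careful that "every sufficiently long extension in $X \cap A_i$" is the correct negation of the success condition, and that quantifying over finite subsets of $X$ (rather than arbitrary finite sets, where the value of $\Phi$ on a wrong oracle could be anything) keeps the construction effective relative to $X \oplus I$.
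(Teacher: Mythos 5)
Your proposal captures the seeking/hyperimmunity side of the argument but has a genuine gap in the failure case. You write that if the attempt to find a small convergent computation fails for every $i < \ell$, then ``aggregating these partial computations\ldots I would extract an $X \oplus I$-computable function that dominates $p_{B_j}$,'' with ``the failure hypothesis guarantees this search terminates.'' That inference is wrong: the negation of success is that every candidate computation either \emph{diverges or} returns a large value, and if divergence occurs the aggregated function is partial, so there is no domination and no contradiction with hyperimmunity. You cannot conclude the search terminates.

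What is actually needed at that point is a second, structurally different move. The paper's proof defines the partial $X \oplus I$-computable function $f(x)$ via a $\Sigma^{0, X \oplus I}_1$ search that quantifies over \emph{all} $\ell$-covers $Z_0 \cup \dots \cup Z_{\ell-1} = X$, not just the ``real'' one $(X \cap A_0, \dots, X \cap A_{\ell-1})$: $f(x) = \max(U)+1$ for the first finite $U$ such that every such cover yields some $i$ and some $E \subseteq Z_i$ with $\Phi_{e_i}^{((F \cap A_i)\cup E)\oplus I}(x)\downarrow \in U$. When $f$ is total, your domination argument goes through and an extension forcing a small value is extracted. But when $f(x)\uparrow$ for some $x$, compactness gives a non-empty $\Pi^{0, X\oplus I}_1$ class of $\ell$-covers of $X$ on each of whose parts every relevant $\Phi_{e_i}$ diverges at $x$. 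One then applies the hyperimmune-free basis theorem to pick a cover $Z_0 \oplus \dots \oplus Z_{\ell-1}$ preserving $I$-hyperimmunity of the $B$'s, chooses an infinite $Z_i$, and shrinks the reservoir to $(F, Z_i)$, thereby forcing partiality of $\Phi_{e_i}^{(G\cap A_i)\oplus I}$ — which is an equally valid way to satisfy ``does not dominate $p_{B_j}$.'' This basis-theorem step is the reason one must quantify over all $\ell$-covers rather than restricting to subsets of $X \cap A_i$ (which, for non-effective $A$'s, would destroy the condition's requirement that the reservoir keep the $B$'s hyperimmune). Your proposal has no mechanism corresponding to this case.
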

\begin{proof}
Let~$f$ be the partial~$X \oplus I$-computable function which on input~$x$, 
searches for a finite set of integers~$U$ such that for every $\ell$-partition
$Z_0 \cup \dots \cup Z_{\ell-1} = X$, there is some~$i < \ell$
and some set~$E \subseteq Z_i$ such that $\Phi_{e_i}^{((F \cap A_i) \cup E) \oplus I}(x) \downarrow \in U$.
If such a set~$U$ is found, then~$f(x) = max(U)+1$, otherwise~$f(x) \uparrow$. We have two cases.
\begin{itemize}
	\item Case 1: The function~$f$ is total. By~$X \oplus I$-hyperimmunity of~$B_j$,
	$f(x) \leq p_{B_j}(x)$ for some~$x$. Let~$U$ be the finite set witnessing~$f(x) \downarrow$.
	Letting~$Z_i = X \cap A_i$ for each~$i < \ell$, there is some~$i$ and some finite set $E \subseteq X \cap A_i$
	such that~$\Phi_{e_i}^{((F \cap A_i) \cup E) \oplus I}(x) \downarrow \in U$.
	The condition~$d = (F \cup E, X \setminus [0, max(E)])$ is an extension forcing
	$\Phi_{e_i}^{(G \cap A_i) \oplus I}(x) < f(x) \leq p_{B_j}(x)$.

	\item Case 2: There is some~$x$ such that~$f(x) \uparrow$.
	By compactness, the~$\Pi^{0,X \oplus I}_1$ class $\Ccal$ of sets~$Z_0 \oplus \dots \oplus Z_{\ell-1}$
	such that~$Z_0 \cup \dots \cup Z_{\ell-1} = X$ and for every~$i < \ell$
	and every set~$E \subseteq Z_i$, $\Phi_{e_i}^{((F \cap A_i) \cup E) \oplus I}(x) \uparrow$
	is non-empty. By the hyperimmune-free basis theorem~\cite{Jockusch197201},
	there is some~$\ell$-partition $Z_0 \oplus \dots \oplus Z_{\ell-1} \in \Ccal$
	such that all the~$B$'s are $Z_0 \oplus \dots \oplus Z_{\ell-1} \oplus X \oplus I$-hyperimmune.
	Let~$i < \ell$ be such that~$Z_i$ is infinite. The condition~$d = (F, Z_i)$
	is an extension of~$c$ forcing~$\Phi_{e_i}^{(G \cap A_i) \oplus I}(x) \uparrow$.
\end{itemize}
\end{proof}

\subsubsection{Construction}

We have all necessary ingredients to build an infinite set~$G$
such that each~$G \cap A_i$ is infinite, and such that $\pi(k, \ell)$ sets among the~$B$'s
are $(G \cap A_i) \oplus I$-hyperimmune for some~$i < \ell$. 
Thanks to Lemma~\ref{lem:colors-ramsey-coh-strong-reduc-force-Q} 
and Lemma~\ref{lem:colors-ramsey-coh-strong-reduc-force-R}, define an infinite descending
sequence of conditions~$(\varepsilon, \omega) \geq c_0 \geq \dots$ such that for each~$s \in \omega$,
\begin{itemize}
	\item[(a)] $c_s$ forces~$\Qcal_s$
	\item[(b)] $c_s$ forces~$\Rcal_{\vec{e},j}$ if~$s = \tuple{\vec{e},j}$
\end{itemize}
where~$c_s = (F_s, X_s)$.
Define the set~$G = \bigcup_s F_s$. By (a), $G \cap A_i$ is infinite 
for every~$i < \ell$, and by (b), each requirement~$\Rcal_{\vec{e},j}$
is satisfied. This finishes the proof of Theorem~\ref{thm:rt1-hyperimmunity}.

\subsection{Reducibility to Ramsey's theorem for pairs}\label{sect:rt-reducibility}

Dorais et al.~\cite{Dorais2016uniform} asked whether $\rt^n_k \not \leq_c \rt^n_\ell$ for every~$n \geq 2$ and $k > \ell \geq 2$.
Hirschfeldt and Jockusch~\cite{Hirschfeldtnotions} 
and Rakotoniaina~\cite{Rakotoniaina2015Computational} proved that~$\srt^n_k$ is not uniformly reducible
to~$\rt^n_\ell$ whenever~$k > \ell$. We extend the result to computable reducibility.
In the first place, we shall focus on the case~$n = 2$.
For this, we will take advantage of the proof of~$\rt^2_\ell$ that applies the cohesiveness principle
to obtain a stable coloring~$f : [\omega]^2 \to \ell$. This coloring can itself be considered
as the~$\Delta^0_2$ approximation of a~$\emptyset'$-computable $\ell$-partition of~$\omega$,
and therefore as a non-effective instance of~$\rt^1_\ell$.
Any infinite subset of one of its parts computes an infinite set homogeneous for~$f$.

In the previous section, we have shown how to diagonalize against every~$\ell$-partition,
simply using the fact that the complement of the parts of the instance of~$\rt^n_k$
are hyperimmune. The author proved in~\cite{Patey2015Iterative} that $\coh$ instances
admit solutions preserving the hyperimmunity of a predefined collection of hyperimmune sets.

\begin{theorem}[Patey~\cite{Patey2015Iterative}]\label{thm:patey-coh-preserving-hyperimmunities}
For every sequence of hyperimmune sets~$A_0, A_1, \dots$
and every uniformly computable sequence of sets~$R_0, R_1, \dots$,
there is an infinite $\vec{R}$-cohesive set~$C$ such that the~$A$'s are 
hyperimmune relative to~$C$.
\end{theorem}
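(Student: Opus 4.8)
The plan is to construct $C$ by a Mathias forcing argument whose conditions are pairs $(F, X)$ with $F$ finite, $X$ infinite, $\max F < \min X$, and such that $A_0, A_1, \dots$ are \emph{all} hyperimmune relative to $X$; extension is ordinary Mathias extension. The starting condition is $(\varepsilon, \omega)$, whose largeness clause holds because each $A_j$ is hyperimmune. A sufficiently generic filter yields $C = \bigcup_s F_s$, and we meet three families of requirements: infinitude requirements forcing $|C| \geq s$ for each $s$; cohesiveness requirements $\mathcal{C}_i$ forcing $C \subseteq^{*} R_i$ or $C \subseteq^{*} \overline{R_i}$; and, for all $e, j, m \in \omega$, a requirement $\mathcal{R}_{e,j,m}$ stating that $\Phi_e^C$ is partial or $\Phi_e^C(x) < p_{A_j}(x)$ for some $x > m$. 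If all the $\mathcal{R}_{e,j,m}$ hold, then any total $\Phi_e^C$ drops below $p_{A_j}$ infinitely often and hence does not dominate $p_{A_j}$; as this applies to every index $e$, each $A_j$ is hyperimmune relative to $C$. Combined with $C$ being infinite and, by the $\mathcal{C}_i$'s, almost included in $R_i$ or $\overline{R_i}$ for every $i$ — that is, $\vec{R}$-cohesive — this gives the theorem.

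The first point is that the largeness clause is robust under every reservoir operation used for the first two families. Given $(F, X)$ and $i$, at least one of $X \cap R_i$ and $X \cap \overline{R_i}$ is infinite, and both are Turing-below $X$ because $R_i$ is computable; hence the $A$'s remain hyperimmune relative to whichever is chosen, and extending $(F, X)$ to $(F, X \cap R_i)$ or $(F, X \cap \overline{R_i})$, taking the infinite reservoir, forces $\mathcal{C}_i$. Likewise, replacing $(F, X)$ by $(F \cup \{\min X\}, X \setminus \{\min X\})$ forces one more element into $C$, and iterating forces $C$ infinite. In all of these steps the reservoir only passes to a subset computable from itself — indeed, by induction, every reservoir along the construction stays computable, if non-uniformly so — so the largeness clause is automatically preserved throughout.

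The heart of the matter is the density lemma for $\mathcal{R}_{e,j,m}$. Given $(F, X)$, define a partial $X$-computable $\psi$ by: on input $x > m$, search through the finite subsets $E$ of $X$ for the first one with $\Phi_e^{F \cup E}(x)\downarrow$ and output $\Phi_e^{F \cup E}(x) + 1$. If $\psi$ is defined on all of $(m, \infty)$, then, extended by $0$ below $m$, it is a total $X$-computable function, so by hyperimmunity of $A_j$ relative to $X$ it fails to dominate $p_{A_j}$; pick $x > m$ with $\psi(x) < p_{A_j}(x)$, let $E \subseteq X$ be the witnessing finite set, let $u$ be larger than every element of $F \cup E$ and than the use of $\Phi_e^{F \cup E}(x)$, and extend to $(F \cup E, \{y \in X : y > u\})$: this condition forces $\Phi_e^C(x) = \Phi_e^{F \cup E}(x) < p_{A_j}(x)$. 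If instead $\psi(x_0)\uparrow$ for some $x_0 > m$, then $\Phi_e^{F \cup E}(x_0)\uparrow$ for \emph{every} finite $E \subseteq X$, so $(F, X)$ already forces $\Phi_e^C(x_0)\uparrow$ — a convergent computation would use only finitely many elements of $C \setminus F \subseteq X$, a contradiction — hence $\Phi_e^C$ is forced partial. Building a descending sequence of conditions that meets all requirements in turn then finishes the construction.

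The main obstacle is conceptual rather than computational: the non-domination requirement is a $\forall^{\infty}$-statement about $\Phi_e^C$, so forcing a single value below $p_{A_j}$ is not enough, and one must split it into the family $\mathcal{R}_{e,j,m}$ over all thresholds $m$, using that a total function computable from the (always computable) reservoir cannot dominate the reservoir-hyperimmune set $A_j$. Everything else is bookkeeping, the only nontrivial part of which is the observation — used pervasively — that each reservoir operation passes to a subset Turing-reducible to the current reservoir, so that ``the $A$'s are hyperimmune relative to the reservoir'' is never threatened.
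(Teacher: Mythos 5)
Your proof is correct, and it is essentially the standard Mathias-forcing argument for this kind of hyperimmunity-preservation result — conditions $(F,X)$ with a largeness clause on the reservoir, cohesiveness requirements handled by intersecting the reservoir with $R_i$ or $\overline{R_i}$, and a density lemma for non-domination that dovetails over finite stem extensions to define a partial $X$-computable function $\psi$, using hyperimmunity of $A_j$ relative to $X$ in the convergent case and forcing $\Phi_e^G$ partial directly in the divergent case. Your observation that every reservoir in this construction remains computable (so the clause ``the $A$'s are hyperimmune relative to $X$'' is automatic, and in particular the divergent case of the density lemma needs no basis-theorem step to recover a good reservoir) is a genuine simplification available for $\coh$ specifically — contrast with the paper's own analogous Lemma for $\rt^1_k$ preservation, where the reservoir is split via a $\Pi^0_1$ class and the hyperimmune-free basis theorem is invoked to preserve the side condition. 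The $m$-parametrization of the requirements $\Rcal_{e,j,m}$ is slightly redundant (a single drop per index $e$ already suffices, ranging over all $e$), but is a clear and correct way to phrase it.
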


Note that this theorem is optimal in the sense that every p-cohesive set is hyperimmune.
Using Theorem~\ref{thm:rt1-hyperimmunity}, we can deduce the following theorem.

\begin{theorem}\label{thm:rt2-hyperimmunity}
Fix some~$k \geq 1$ and $\ell \geq 2$, some set~$I$ and a sequence of $k$ $I$-hyperimmune sets~$B_0, \dots, B_{k-1}$.
Every $I$-computable coloring $f: [\omega]^2 \to \ell$
has an infinite $f$-homogeneous set~$H$ such
that $\pi(k, \ell)$ sets among the~$B$'s are $I \oplus H$-hyperimmune.
\end{theorem}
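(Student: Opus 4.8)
The plan is to reduce Theorem~\ref{thm:rt2-hyperimmunity} to Theorem~\ref{thm:rt1-hyperimmunity} by the standard two-step decomposition of $\rt^2_\ell$ into cohesiveness followed by a colouring of singletons, but carefully threading the hyperimmunity side-condition through both steps. First I would observe that the coloring $f : [\omega]^2 \to \ell$ is $I$-computable, so it gives rise to a uniformly $I$-computable sequence of sets $\vec{R}$ by setting $R_x = \{ s : f(x,s) = c \}$ for each colour $c < \ell$ (more precisely, one takes the sequence of all sets of the form $\{s : f(x,s) = c\}$ over pairs $(x,c)$), relativized to $I$; every $\vec{R}$-cohesive set $C$ has the property that for each $x$, the function $s \mapsto f(x,s)$ is eventually constant along $C$, so $f$ restricted to pairs from $C$ is stable.

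Next I would apply the $I$-relativized version of Theorem~\ref{thm:patey-coh-preserving-hyperimmunities}: given the $I$-hyperimmune sets $B_0, \dots, B_{k-1}$ (which are in particular hyperimmune relative to $I$), there is an infinite $\vec{R}$-cohesive set $C$ such that the $B$'s remain hyperimmune relative to $C \oplus I$. Now working inside $C$, the colouring $f$ induces, via the cohesiveness, a $C \oplus I$-computable-in-the-limit $\ell$-partition of $C$ — equivalently, enumerating $C = \{c_0 < c_1 < \dots\}$ and setting $A_i = \{ n : \lim_s f(c_n, c_s) = i \}$, we get a genuine $\ell$-partition $A_0 \cup \dots \cup A_{\ell-1}$ of $\omega$ (no effectivity is needed on the partition, which is exactly why Theorem~\ref{thm:rt1-hyperimmunity} was stated without effectivity hypotheses on the $A$'s).

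Then I would invoke Theorem~\ref{thm:rt1-hyperimmunity}, with $I$ replaced by $C \oplus I$ and with the same sets $B_0, \dots, B_{k-1}$ (which are $(C \oplus I)$-hyperimmune by the previous step): there is an infinite $H_0 \subseteq A_i$ for some $i < \ell$ such that $\pi(k,\ell)$ of the $B$'s are $H_0 \oplus C \oplus I$-hyperimmune. Finally, the set $H = \{ c_n : n \in H_0 \}$ is an infinite subset of $\omega$, computable from $H_0 \oplus C$, on which $f$ is constantly equal to $i$ on pairs — that is, $H$ is $f$-homogeneous — and since $H \leq_T H_0 \oplus C$ we have that $\pi(k,\ell)$ of the $B$'s are $H \oplus I$-hyperimmune, as $H \oplus I \leq_T H_0 \oplus C \oplus I$ and hyperimmunity relative to a set is preserved downward under Turing reducibility of the oracle. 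This gives the conclusion.

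The main obstacle is bookkeeping the relativizations correctly: Theorem~\ref{thm:rt1-hyperimmunity} must be applied with the \emph{already-modified} oracle $C \oplus I$ rather than $I$, which is legitimate precisely because that theorem is relativized, and one must check that Theorem~\ref{thm:patey-coh-preserving-hyperimmunities} is likewise used relative to $I$ (it is stated unrelativized in the excerpt, but it relativizes without difficulty). One should also verify the harmless point that the homogeneous colour $i$ for $H$ is well-defined — this follows from cohesiveness of $C$ plus homogeneity of $H_0$ for the limit partition $\vec{A}$ — and that passing from $H_0 \subseteq A_i$ back to $H = \{c_n : n \in H_0\}$ indeed yields an $f$-homogeneous set, which is the routine unwinding of the definition of $A_i$. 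No genuinely new combinatorics is required beyond what is already in Theorems~\ref{thm:rt1-hyperimmunity} and~\ref{thm:patey-coh-preserving-hyperimmunities}.
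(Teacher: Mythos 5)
Your proof follows the paper's argument exactly: apply Theorem~\ref{thm:patey-coh-preserving-hyperimmunities} relativized to~$I$ to obtain a cohesive set~$C$ preserving hyperimmunity of the~$B$'s, then apply Theorem~\ref{thm:rt1-hyperimmunity} relativized to~$C \oplus I$ to the limit colouring, and close by noting that hyperimmunity relative to~$H_0 \oplus C \oplus I$ passes down to~$I \oplus H$; in fact your write-up is more careful than the paper's, which defines~$R_x = \{s : f(x,s)=1\}$ (only adequate when~$\ell = 2$) rather than your~$R_{x,c}$ over pairs. One small slip: the set~$H = \{c_n : n \in H_0\}$ is not itself $f$-homogeneous, since~$n \in A_i$ only gives~$f(c_n,c_s)=i$ for~$s$ past a threshold depending on~$n$, so a further~$H_0 \oplus C$-computable thinning is needed, but this is harmless because hyperimmunity relative to~$H_0 \oplus C \oplus I$ still passes to the thinned set.
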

\begin{proof}
Fix $k$, a sequence of $I$-hyperimmune sets $B_0, \dots, B_{k-1}$ for some set~$I$.
Let~$f : [\omega]^2 \to \ell$ be an $I$-computable coloring
and consider the sequence of sets~$R_0, R_1, \dots$ defined for each~$x \in \omega$ by
\[
R_x = \{ s : f(x, s) = 1 \}
\]
By Theorem~\ref{thm:patey-coh-preserving-hyperimmunities}, there
is an infinite~$\vec{R}$-cohesive set~$C$ such that the~$B$'s are hyperimmune relative to~$C \oplus I$.
Let~$\tilde{f} : \omega \to \ell$ be defined by
$\tilde{f}(x) = \lim_{s \in C} f(x, s)$. By Theorem~\ref{thm:rt1-hyperimmunity}
relativized to~$C \oplus Z$, there is an infinite $\tilde{f}$-homogeneous set~$H$
such that $\pi(k, \ell)$ among the~$B$'s are $H \oplus C \oplus Z$-hyperimmune.
In particular, $H \oplus C \oplus I$ computes an infinite $f$-homogeneous set. 
\end{proof}

Using again the existence of a low $k$-partition $B_0 \cup \dots \cup B_{k-1}$
such that~$\overline{B_j}$ is hyperimmune
for every~$j < k$, we deduce the following corollary.

\begin{corollary}\label{cor:rt2-general-colors}
For every~$k > \ell \geq 2$, there is a low $k$-partition~
$B_0 \cup \dots \cup B_{k-1} = \omega$
such that each computable coloring $f: [\omega]^2 \to \ell$
has an infinite $f$-homogeneous set~$H$ and a pair~$j_0 < j_1 < k$
such that every infinite~$H$-computable set intersects both~$B_{j_0}$ and~$B_{j_1}$.
\end{corollary}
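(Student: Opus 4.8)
The plan is to mirror the proof of Corollary~\ref{cor:d2-general-colors} verbatim, replacing the appeal to Theorem~\ref{thm:rt1-hyperimmunity} by the one to Theorem~\ref{thm:rt2-hyperimmunity}. First I would fix $k > \ell \geq 2$ together with the low $k$-partition $B_0 \cup \dots \cup B_{k-1} = \omega$ such that $\overline{B_j}$ is hyperimmune for every $j < k$; this is exactly the partition invoked for Corollary~\ref{cor:d2-general-colors}, so no new construction is needed and lowness is automatic. Since $k > \ell \geq 2$, we have $\pi(k,\ell) \geq 2$.

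Next, given an arbitrary computable coloring $f : [\omega]^2 \to \ell$, I would apply Theorem~\ref{thm:rt2-hyperimmunity} with $I = \emptyset$ and the sequence of ($\emptyset$-)hyperimmune sets $\overline{B_0}, \dots, \overline{B_{k-1}}$. This produces an infinite $f$-homogeneous set $H$ such that $\pi(k,\ell)$ of the sets $\overline{B_j}$ are $H$-hyperimmune; as $\pi(k,\ell) \geq 2$, at least two of them are, say $\overline{B_{j_0}}$ and $\overline{B_{j_1}}$ with $j_0 < j_1 < k$.

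Finally I would record the standard observation that if $\overline{B_j}$ is $H$-hyperimmune then no infinite $H$-computable set can be contained in $\overline{B_j}$: such a subset would have an $H$-computable principal function dominating $p_{\overline{B_j}}$, contradicting hyperimmunity. Hence every infinite $H$-computable set meets $B_j$, and in particular every infinite $H$-computable set intersects both $B_{j_0}$ and $B_{j_1}$, which is precisely the asserted conclusion.

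I do not expect any real obstacle: the substantive work is already carried out in Theorem~\ref{thm:rt2-hyperimmunity} (which itself packages Theorem~\ref{thm:patey-coh-preserving-hyperimmunities} with Theorem~\ref{thm:rt1-hyperimmunity} via a cohesiveness-then-partition reduction). The only points to handle with a little care are bookkeeping ones: instantiating the parameter $I$ of Theorem~\ref{thm:rt2-hyperimmunity} at $\emptyset$ and identifying ``$\emptyset$-hyperimmune'' with ``hyperimmune'', and noting that the $B$'s here play the role of the input hyperimmune sequence of Theorem~\ref{thm:rt2-hyperimmunity} in the form $\overline{B_0},\dots,\overline{B_{k-1}}$ while the low $k$-partition is reused unchanged from Corollary~\ref{cor:d2-general-colors}.
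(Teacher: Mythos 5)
Your proposal is correct and matches the paper's own proof essentially verbatim: fix the same low $k$-partition with hyperimmune complements, note $\pi(k,\ell)\geq 2$, apply Theorem~\ref{thm:rt2-hyperimmunity} (with $I=\emptyset$ and the hyperimmune sets $\overline{B_0},\dots,\overline{B_{k-1}}$ as input), and unwind $H$-hyperimmunity of $\overline{B_{j_0}},\overline{B_{j_1}}$ into the stated intersection property. The only addition is that you spell out the standard principal-function argument for the last step, which the paper leaves implicit with ``In particular.''
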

\begin{proof}
Fix some~$k > \ell \geq 2$ and a low $k$-partition $B_0 \cup \dots \cup B_{k-1} = \omega$
such that $\overline{B_j}$ is hyperimmune for every~$j < k$.
Since~$k > \ell \geq 2$, $\pi(k, \ell) \geq 2$.
Therefore, by Theorem~\ref{thm:rt2-hyperimmunity}, for every $\rt^2_\ell$-instance
$f : [\omega]^2 \to \ell$, there is an infinite $f$-homogeneous set~$H$ and a pair~$j_0 < j_1 < k$
such that~$\overline{B_{j_0}}$ and~$\overline{B_{j_1}}$ are~$H$-hyperimmune.
In particular, every infinite $H$-computable set intersects both~$B_{j_0}$ and~$B_{j_1}$.
\end{proof}

Using Corollary~\ref{cor:rt2-general-colors} in a relativized form, we can extend
the result to colorings over arbitrary tuples.

\begin{theorem}
For every~$n \geq 2$, and every $k > \ell \geq 2$, there is a~$\Delta^0_n$ $k$-partition~
$B_0 \cup \dots \cup B_{k-1} = \omega$
such that each computable coloring $f: [\omega]^n \to \ell$
has an infinite $f$-homogeneous set~$H$ and a pair~$j_0 < j_1 < k$
such that every infinite~$H$-computable set intersects both~$B_{j_0}$ and~$B_{j_1}$.
\end{theorem}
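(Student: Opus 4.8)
The plan is to lift Corollary~\ref{cor:rt2-general-colors} from pairs to $n$-tuples by the standard Cholak--Jockusch--Slaman induction, exploiting the fact that the partition $B_0 \cup \dots \cup B_{k-1}$ is built only from hyperimmunity requirements and that these are preserved under all the steps of the induction. Proceed by induction on $n \geq 2$, the base case $n = 2$ being exactly Corollary~\ref{cor:rt2-general-colors} (relativizable: for every set $I$ there is an $I'$-low-over-$I$, i.e.\ $\Delta^{0,I}_2$, $k$-partition $\vec{B}$ with each $\overline{B_j}$ $I$-hyperimmune, such that every $I$-computable coloring $f : [\omega]^2 \to \ell$ has an $f$-homogeneous set $H$ with two of the $\overline{B_j}$ being $I \oplus H$-hyperimmune).

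First I would state the relativized form of Corollary~\ref{cor:rt2-general-colors} carefully, since the induction will apply it over the oracle $\emptyset^{(n-2)}$ rather than $\emptyset$: fix a $\Delta^{0,\emptyset^{(n-2)}}_2 = \Delta^0_n$ $k$-partition $B_0 \cup \dots \cup B_{k-1} = \omega$ with each $\overline{B_j}$ hyperimmune relative to $\emptyset^{(n-2)}$, such that every $\emptyset^{(n-2)}$-computable $f : [\omega]^2 \to \ell$ has an $f$-homogeneous $H$ and a pair $j_0 < j_1$ with $\overline{B_{j_0}}, \overline{B_{j_1}}$ hyperimmune relative to $\emptyset^{(n-2)} \oplus H$. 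Now given a computable $f : [\omega]^n \to \ell$ with $n \geq 3$, take an $\vec{R}$-cohesive set $C$ for the sequence $R_{\sigma} = \{ s : f(\sigma, s) = 1\}$ (for $\sigma \in [\omega]^{n-1}$) which in addition preserves the $\emptyset^{(n-2)}$-hyperimmunity of the $\overline{B_j}$'s --- this is Theorem~\ref{thm:patey-coh-preserving-hyperimmunities} relativized, applied to the hyperimmune sets $(\overline{B_j})_{j<k}$ together with a set of $\emptyset^{(n-2)}$ (one must check the statement is robust enough to preserve hyperimmunity relative to a fixed oracle; this follows by relativizing Theorem~\ref{thm:patey-coh-preserving-hyperimmunities} to $\emptyset^{(n-2)}$, noting $C$ can be taken with $(C \oplus \emptyset^{(n-2)})' \leq_T \emptyset^{(n-1)}$). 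Along $C$, $f$ induces a stable, hence $\Delta^{0, C}_2$-approximable, coloring $\tilde{f} : [\omega]^{n-1} \to \ell$ of $(n-1)$-tuples. Over the oracle $C \oplus \emptyset^{(n-2)}$ --- which is $\Delta^0_{n-1}$-like in the relevant sense --- apply the induction hypothesis (in its relativized form, with base oracle $C \oplus \emptyset^{(n-2)}$ in place of $\emptyset^{(n-3)}$) to get an infinite $\tilde{f}$-homogeneous set $H_0$ over which two of the $\overline{B_j}$'s remain hyperimmune relative to $H_0 \oplus C \oplus \emptyset^{(n-2)}$; then $H_0 \oplus C$ computes an infinite $f$-homogeneous set $H$, and thinning $H$ inside $H_0$ we get that two $\overline{B_{j_i}}$'s are $H$-hyperimmune, so every infinite $H$-computable set meets both $B_{j_0}$ and $B_{j_1}$.

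The bookkeeping obstacle --- and the main thing to get right --- is matching up the oracles so the induction closes: the inductive statement must be phrased relative to an arbitrary oracle $I$ (producing a $\Delta^{0,I}_2$, equivalently $\Delta^0_{n}$ when $I = \emptyset^{(n-2)}$, partition), and at the inductive step one feeds the cohesive-set oracle $C \oplus \emptyset^{(n-2)}$ into the hypothesis for $n-1$. One must verify that $C$ can be chosen so that $C \oplus \emptyset^{(n-2)}$ has jump below $\emptyset^{(n-1)}$, guaranteeing the resulting partition is genuinely $\Delta^0_n$; this is standard, since cohesive sets for computable sequences can be taken $\Delta^0_2$, and the preservation version (Theorem~\ref{thm:patey-coh-preserving-hyperimmunities}) does not spoil this when relativized. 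The purely combinatorial heart --- that $\pi(k,\ell) \geq 2$ forces a disagreeing pair $j_0 < j_1$ --- is already packaged inside Corollary~\ref{cor:rt2-general-colors}, so nothing new is needed there. I expect the write-up to be short once the relativization conventions are pinned down, with the only delicate point being the uniform tracking of the $\Delta^0_n$-ness of $\vec{B}$ through the $n-2$ nested cohesiveness applications.
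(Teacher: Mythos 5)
The overall skeleton --- induction on $n$ with a relativized base case $n=2$ from Corollary~\ref{cor:rt2-general-colors} --- matches the paper, but your inductive step has a genuine gap that the paper avoids by a different device. You pass from $f : [\omega]^n \to \ell$ to a coloring of $(n-1)$-tuples by taking a cohesive set $C$ preserving the hyperimmunity of the $\overline{B_j}$'s (Theorem~\ref{thm:patey-coh-preserving-hyperimmunities}). The trouble is the complexity of the induced coloring $\tilde{f}(\sigma)=\lim_{s\in C}f(\sigma,s)$: it is $(C\oplus\emptyset^{(n-2)})'$-computable, not $C\oplus\emptyset^{(n-2)}$-computable, so the oracle you hand to the next level takes a jump. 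Theorem~\ref{thm:patey-coh-preserving-hyperimmunities} only guarantees the $\overline{B_j}$'s remain hyperimmune relative to $C\oplus\emptyset^{(n-2)}$, not relative to its jump, and hyperimmunity does not survive jumping the oracle. Your proposed repair, ``$C$ can be taken with $(C\oplus\emptyset^{(n-2)})'\leq_T\emptyset^{(n-1)}$,'' is not justified: the forcing behind Theorem~\ref{thm:patey-coh-preserving-hyperimmunities} keeps the reservoirs hyperimmunity-preserving (via the hyperimmune-free basis theorem inside the extension lemmas), which gives no control over the jump of the resulting cohesive set, and one cannot simply superimpose lowness requirements. Notice that Theorem~\ref{thm:rt2-hyperimmunity} itself escapes this exact trap only because, after one cohesiveness step, the induced object is an $\rt^1_\ell$-instance and Theorem~\ref{thm:rt1-hyperimmunity} is a \emph{strong} result that applies to arbitrary non-effective partitions, so the jump is harmless. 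There is no analogous strong result for colorings of $(n-1)$-tuples when $n-1\geq 2$, so the trick does not iterate.

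The paper sidesteps the jump entirely by not using cohesiveness at the inductive step. It fixes a single set $P\gg\emptyset^{(n-1)}$ with $P'\leq_T\emptyset^{(n)}$ (relativized low basis theorem) and applies the induction hypothesis \emph{relative to $P$} to obtain $\vec{B}$ once and for all as a $\Delta^{0,P}_2=\Delta^0_{n+1}$ partition. Then for any computable $f:[\omega]^{n+1}\to\ell$, Jockusch's Lemma~5.4 produces a pre-homogeneous set $C\leq_T P$, and the restricted coloring $\tilde{f}$ on $[C]^n$ is $P$-computable \emph{with no jump}, because pre-homogeneity makes $f(\sigma,\cdot)$ literally constant along $C$ --- no limit, hence no priming, is required. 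Thus the same $P$, and therefore the same $\vec{B}$, handles all computable colorings simultaneously. This is the uniformity your cohesiveness route lacks: each $f$ would get its own $C$ with uncontrolled jump, and the induction hypothesis applied at the inner level would then produce a partition depending on $f$, not a single fixed one. To repair your argument you would essentially have to replace the cohesiveness step by the pre-homogeneity step, which is what the paper does.
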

\begin{proof}
This is proved in a relativized form by induction over~$n$.
The case~$n = 2$ is proved by relativizing Corollary~\ref{cor:rt2-general-colors}.
Now assume it holds for some~$n$ in order to prove it for~$n+1$.
Let~$P \gg \emptyset^{(n-1)}$ be such that~$P' \leq \emptyset^{(n)}$.
Such a set exists by the relativized low basis theorem~\cite{Jockusch197201}.
Applying the induction hypothesis to~$P$,
there is a~$\Delta^{0,P}_2$ (hence~$\Delta^0_{n+1}$) $k$-partition~$B_0 \cup \dots \cup B_{k-1} = \omega$
such that each $P$-computable coloring $f: [\omega]^n \to \ell$
has an infinite $f$-homogeneous set~$H$ and a pair~$j_0 < j_1 < k$
such that every infinite~$H \oplus P$-computable set intersects both~$B_{j_0}$ and~$B_{j_1}$.

Let~$f : [\omega]^{n+1} \to \ell$ be a computable coloring.
By Jockusch~\cite[Lemma 5.4]{Jockusch1972Ramseys}, there exists an infinite set~$C$ pre-homogeneous for~$f$
such that~$C \leq_T P$.
(A set~$C$ is \emph{pre–homogeneous} if any two $(n+1)$-element subsets of~$C$
with the same first $n$ elements are assigned the same color by~$f$.)
Let~$\tilde{f} : [C]^n \to \ell$ be the~$P$-computable coloring defined for each~$\sigma \in [C]^n$
by~$\tilde{f}(\sigma) = f(\sigma, a)$, where~$a \in A$, $a > max(\sigma)$.
Every $\tilde{f}$-homogeneous set is~$f$-homogeneous.
By definition of~$B_0 \cup \dots \cup B_{k-1} = \omega$, 
there exists an infinite~$\tilde{f}$-homogeneous (hence $f$-homogeneous) set~$H$
and a pair~$j_0 < j_1 < k$
such that every infinite~$H \oplus P$-computable set intersects both~$B_{j_0}$ and~$B_{j_1}$.
\end{proof}

Using the fact that $\mathsf{D}^n_k \leq_c \srt^n_k$ for every $n, k \geq 2$, we obtain
the following corollary strengthening the result 
of Hirschfeldt and Jockusch~\cite{Hirschfeldtnotions} 
and Rakotoniaina~\cite{Rakotoniaina2015Computational}.

\begin{corollary}
For every~$n \geq 2$ and every~$k > \ell \geq 2$, $\srt^n_k \not \leq_c \rt^n_\ell$.
\end{corollary}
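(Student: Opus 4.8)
The plan is to derive this corollary from the theorem immediately preceding it, which produces a $\Delta^0_n$ $k$-partition $B_0 \cup \dots \cup B_{k-1} = \omega$ witnessing a ``two-sets-intersected'' property for every computable $\rt^n_\ell$-instance. First I would fix $n \geq 2$ and $k > \ell \geq 2$, and let $B_0 \cup \dots \cup B_{k-1} = \omega$ be the $\Delta^0_n$ $k$-partition given by the previous theorem. Because this is a $\Delta^0_n$ partition, by the relativized limit lemma (iterated $n-1$ times, i.e.\ Schoenfield's limit lemma applied to $\emptyset^{(n-1)}$) there is a computable approximation $g$ of the $k$-partition; more precisely, one obtains a computable stable coloring $g : [\omega]^n \to k$ whose ``limit partition'' is exactly $B_0, \dots, B_{k-1}$, in the sense that the $\Delta^0_n$ set $\{x : \lim g(x, \cdot) = j\}$ is $B_j$ (for $n = 2$ this is literally Schoenfield; for larger $n$ one iterates, exactly as in the passage from $\srt^2_k$ to $\mathsf{D}^2_k$). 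In particular every infinite $g$-homogeneous set of color $j$ is an infinite subset of $B_j$.

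Next I would set up the reduction. Suppose toward a contradiction that $\srt^n_k \leq_c \rt^n_\ell$. Then the $\srt^n_k$-instance $g$ computes an $\rt^n_\ell$-instance $f : [\omega]^n \to \ell$ such that for every solution $H'$ to $f$, $H' \oplus g$ computes a solution to $g$, i.e.\ an infinite $g$-homogeneous set. Since $g$ is computable, $f$ is a computable $\rt^n_\ell$-instance, so the previous theorem applies to $f$: there is an infinite $f$-homogeneous set $H$ and a pair $j_0 < j_1 < k$ such that every infinite $H$-computable set intersects both $B_{j_0}$ and $B_{j_1}$. By the reduction, $H \oplus g = H$ (as $g$ is computable) computes an infinite $g$-homogeneous set $\tilde H$, which by the previous paragraph is an infinite subset of some single $B_j$. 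But $\tilde H$ is an infinite $H$-computable set, so it must intersect both $B_{j_0}$ and $B_{j_1}$, which are disjoint and both distinct from at least one of them — contradiction, since $\tilde H \subseteq B_j$ cannot meet two distinct parts of a partition.

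The one place requiring a little care is the claim that a $\Delta^0_n$ partition of $\omega$ is, up to computable equivalence of instances, an instance of $\srt^n_k$ whose solutions are the infinite subsets of its parts. For $n = 2$ this is precisely the Cholak--Jockusch--Slaman correspondence between $\mathsf{D}^2_k$ and $\srt^2_k$ invoked earlier (and noted in the sentence ``Using the fact that $\mathsf{D}^n_k \leq_c \srt^n_k$''); for general $n$ one uses the stated fact $\mathsf{D}^n_k \leq_c \srt^n_k$ directly: the $\Delta^0_n$ $k$-partition is an instance of $\mathsf{D}^n_k$, which computably reduces to an $\srt^n_k$-instance whose solutions compute infinite subsets of the parts. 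Composing this reduction with the hypothetical reduction $\srt^n_k \leq_c \rt^n_\ell$ and then diagonalizing via the previous theorem gives the contradiction. I expect the composition bookkeeping (tracking that all the relevant oracles collapse because the starting instance is computable) to be the only mildly delicate point; the combinatorial heart is entirely supplied by the preceding theorem.
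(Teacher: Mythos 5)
Your proposal is correct and follows essentially the same route as the paper, which disposes of this corollary in a single line by citing the fact $\mathsf{D}^n_k \leq_c \srt^n_k$ together with the preceding theorem. One caveat worth flagging: the first two paragraphs as written only work for $n=2$. You claim that the computable stable coloring $g : [\omega]^n \to k$ obtained from the $\Delta^0_n$ partition $B_0 \cup \dots \cup B_{k-1}$ has the property that every infinite $g$-homogeneous set of color $j$ is a subset of $B_j$, and paragraph two leans on this to say the $g$-solution $\tilde H$ is a subset of some single $B_j$. For $n=2$ that is literally Shoenfield and is fine; for $n\geq 3$ a stable coloring of $n$-tuples has a $\Delta^0_2$ limit coloring on $(n-1)$-tuples, not a $\Delta^0_n$ limit coloring on singletons, so ``iterating Shoenfield'' does not produce a stable $n$-coloring whose homogeneous sets are subsets of the parts; the passage from $\mathsf{D}^n_k$ to $\srt^n_k$ is more indirect when $n\geq3$. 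You already notice this in your third paragraph and retreat to invoking the stated fact $\mathsf{D}^n_k \leq_c \srt^n_k$ as a black box, which is exactly what the paper does and is the correct move; the only adjustment needed is to replace ``$\tilde H$ is an infinite subset of $B_j$'' in paragraph two with ``$\tilde H$ together with the (computable) instance computes an infinite subset of some $B_j$,'' after which the contradiction with the theorem goes through unchanged. So the proof is right; the intermediate exposition for general $n$ just needs the wording tightened.
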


This answers in particular Question~7.1 of Dorais et al.~\cite{Dorais2016uniform}.
The following corollary answers positively Question~5.5.3 of Mileti~\cite{Mileti2004Partition}.

\begin{corollary}
There exists two stable computable functions~$f_1 : [\omega]^2 \to 2$
and~$f_2 : [\omega]^2 \to 2$ such that there is no computable~$g : [\omega]^2 \to 2$
with the property that every set~$H_g$ homogeneous for~$g$ computes
both a set~$H_{f_1}$ homogeneous for~$f_1$ and a set~$H_{f_2}$ homogeneous for~$f_2$.
\end{corollary}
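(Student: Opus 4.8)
The plan is to derive this immediately from Corollary~\ref{cor:rt2-general-colors} applied with $k = 4$ and $\ell = 2$. Index the four parts of the resulting low $4$-partition by the binary strings of length two, so that we have a low partition $(B_\sigma : \sigma \in 2^2)$ of $\omega$ with the property that every computable coloring $g : [\omega]^2 \to 2$ admits an infinite $g$-homogeneous set $H$ together with two \emph{distinct} strings $\sigma, \tau \in 2^2$ for which every infinite $H$-computable set meets both $B_\sigma$ and $B_\tau$. For each coordinate $b \in \{0,1\}$ I would form the $2$-partition $C^b_0 \cup C^b_1 = \omega$ with $C^b_i = \bigcup\{B_\sigma : \sigma(b) = i\}$; since the $B_\sigma$ are low, each such partition is $\Delta^0_2$, so by Schoenfield's limit lemma~\cite{Shoenfield1959degrees} there is a \emph{stable} computable coloring $f_{b+1} : [\omega]^2 \to 2$ naming it in the limit, i.e.\ $\lim_s f_{b+1}(x,s) = i$ iff $x \in C^b_i$. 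Exactly as in the proof of Corollary~\ref{cor:srt2-not-reduc-colors}, stability then forces every infinite $f_{b+1}$-homogeneous set of color $i$ to be an infinite subset of $C^b_i$. This yields the two stable computable functions $f_1, f_2$.

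For the main argument, suppose for contradiction that some computable $g : [\omega]^2 \to 2$ has the stated property: every infinite $g$-homogeneous set computes both an infinite $f_1$-homogeneous set and an infinite $f_2$-homogeneous set. Applying the property of the $B_\sigma$ to $g$ produces an infinite $g$-homogeneous $H$ and distinct $\sigma, \tau \in 2^2$ such that every infinite $H$-computable set meets both $B_\sigma$ and $B_\tau$. Choose $b \in \{0,1\}$ with $\sigma(b) \neq \tau(b)$. By hypothesis $H$ computes an infinite $f_{b+1}$-homogeneous set $H_{b+1}$, which by the previous paragraph lies entirely inside $C^b_i$ for a single $i$. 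Because $\sigma(b) \neq \tau(b)$ and the $B_\sigma$ are pairwise disjoint, exactly one of $B_\sigma, B_\tau$ is contained in $C^b_i$ and the other is disjoint from it; hence the infinite $H$-computable set $H_{b+1}$ misses one of $B_\sigma, B_\tau$, a contradiction. So no such $g$ exists.

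I do not expect a genuine obstacle here; the whole point is that the corollary has already done the hard work. The single step deserving care is the passage from ``$H_{b+1}$ is $f_{b+1}$-homogeneous'' to ``$H_{b+1} \subseteq C^b_i$'', which truly uses that $f_{b+1}$ is stable and is the same elementary observation about stable colorings recorded in Corollary~\ref{cor:srt2-not-reduc-colors}. The conceptual choice driving everything is to take $k = 2^2 = 4$, so that the four pieces carry two independent binary coordinates: an arbitrary computable $g$ furnishes a pair $\sigma \neq \tau$, these differ in some coordinate $b$, and $f_{b+1}$ is precisely the one $2$-coloring whose homogeneous sets provably cannot separate $B_\sigma$ from $B_\tau$.
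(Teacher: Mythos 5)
Your argument is correct and is essentially the paper's, up to a small variation in the choice of $k$. The paper applies Corollary~\ref{cor:rt2-general-colors} with $k = 3$, $\ell = 2$ to obtain a $\Delta^0_2$ disjoint $3$-partition $B_0 \cup B_1 \cup B_2 = \omega$, and lets $f_1$, $f_2$ be the stable colorings whose limits are (the characteristic functions of) $B_0$ and $B_1$ respectively. Since any pair $j_0 < j_1 < 3$ must have $j_0 \in \{0,1\}$, the coloring $f_{j_0+1}$ immediately gives the contradiction: an $f_{j_0+1}$-homogeneous set is contained in $B_{j_0}$ or its complement, and therefore misses one of $B_{j_0}$, $B_{j_1}$. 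Your choice $k = 2^2 = 4$, with parts indexed by binary strings and the two colorings obtained by projecting onto the two coordinates, is the same idea in a more symmetric packaging --- it mirrors Corollary~\ref{cor:dzhafarov-comb}, where the coordinate structure is genuinely needed to build many $R_i$'s. Here it is marginally less economical, since three parts already force $j_0 \leq 1$ and hence already single out one of the two target colorings, but nothing in your argument goes wrong, and the step you flagged (stability guaranteeing $H_{b+1} \subseteq C^b_i$) is handled exactly as in Corollary~\ref{cor:srt2-not-reduc-colors}.
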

\begin{proof}
By Corollary~\ref{cor:rt2-general-colors} with~$\ell = 2$ and~$k = 3$,
there exists a~$\Delta^0_2$ 3-partition~$B_0 \cup B_1 \cup B_2 = \omega$
such that each computable coloring $f: [\omega]^2 \to 2$
has an infinite $f$-homogeneous set~$H$ and a pair~$j_0 < j_1 < 3$
such that every infinite~$H$-computable set intersects both~$B_{j_0}$ and~$B_{j_1}$.
As in Corollary~\ref{cor:dzhafarov-comb}, we assume that the~$B$'s are disjoint.
By Schoenfield's limit lemma~\cite{Shoenfield1959degrees}, there exist
two stable computable colorings~$f_1$ and $f_2$ such that~$\lim_s f_1(\cdot, s) = B_0$ 
and $\lim_s f_2(\cdot, s) = B_1$.
If~$j_0 = 0$ (resp. $j_0 = 1$) then~$H$ does not compute an infinite set homogeneous for~$f_1$ (resp. $f_2$).
This completes the proof.
\end{proof}

\section{The weakness of free set and thin set theorems}

The combinatorics involved in our study of the free set and thin set theorems
differ deeply from our analysis of Ramsey's theorem in the previous sections.
Let~$\rt^n_{\ell,d}$ be the statement ``Every 
coloring~$f : [\omega]^n \to \ell$ has an infinite set~$H$ on which~$f$ uses
at most~$d$ colors.''
An analysis of the thin set theorems in the continuity of the previous sections
would consists in considering the computable
reductions between~$\rt^n_{\ell,d}$ and~$\rt^n_{k,d}$ whenever~$\ell < k$
for a fixed parameter~$d$. In this section, we consider the variation of the parameter~$d$,
and show that different~$d$'s lead to different subsystems of second-order arithmetic.

\subsection{Thin set theorem and strong reducibility}\label{sect:ts-strong-reduc}

We start our analysis with partitions of integers like we did with Ramsey's theorem. 
Every computable partition has an infinite 
computable set avoiding one of its parts. The natural
reducibility to consider is therefore strong computable reducibility.
In this section, we show that~$\ts^1_k \not \leq_{sc} \ts^1_{k+1}$.
We could have proven this separation using the notion of hyperimmunity 
as we did in the previous section (and this is indeed the approach chosen
by the author in~\cite{Patey2015Iterative}). However, we want to apply preservation
of definitions, emulating Wang's analysis of theorems in reverse mathematics in terms
of preservation of definitions.

\subsubsection{Preservation of non-c.e.\ definitions}

The notion of preservation of definitions was introduced by Wang in~\cite{Wang2014Definability}, in the context of a new analysis
of principles in reverse mathematics in terms of their definitional strength.
Wang defined a set~$X$ to preserve properly~$\Delta^0_2$ definitions
if every properly~$\Delta^0_2$ set (i.e.\ $\Delta^0_2$ but neither $\Sigma^0_1$ nor $\Pi^0_1$) is properly~$\Delta^{0,X}_2$. He deduced several
separation results, and in particular constructed an~$\omega$-model of the conjunction
of~$\coh$, $\wkl$, the Erd\H{o}s-Moser theorem ($\emo$), 
the rainbow Ramsey theorem for pairs ($\rrt^2_2$) and the~$\Pi^0_1$-genericity principle ($\pizog$) 
which is not a model of~$\ts^2$.
His analysis has been extended by the author in~\cite{Patey2016Controlling}.

\begin{definition}[Preservation of non-c.e.\ definitions]\ 
\begin{itemize}
	\item[1.]
A set $X$ \emph{preserves non-c.e.\ definitions} 
of some non-c.e.\ sets~$A_0, A_1, \dots$ 
if no $A_i$ is $X$-c.e.

	\item[2.]
A~$\Pi^1_2$ statement~$\Psf$ \emph{admits preservation of $k$ non-c.e.\ definitions} 
if for each $C$, each sequence of non-$C$-c.e.\ sets~$A_0, \dots, A_{k-1}$ and each $C$-computable $\Psf$-instance $X$, 
there exists a solution $Y$ to~$X$ such that $Y \oplus C$ preserves non-c.e.\ definitions of~$A_0, \dots, A_{k-1}$.
\end{itemize}
\end{definition}

Wang proved~\cite{Wang2014Definability} that
$\coh$, $\emo$, $\wkl$, $\rrt^2_2$ and~$\pizog$ admit preservation of~$k$ non-c.e.\ 
definitions for every~$k \in \omega$.
By a trivial adaptation of Proposition~2.4 from~\cite{Wang2014Definability},
if some statement~$\Psf$ admits preservation of $k$ non-c.e.\ definitions
and some other statement~$\Qsf$ does not, then there exists an~$\omega$-model of~$\Psf$
which is not a model of~$\Qsf$. We start with a trivial lemma showing that our preservation
proofs subsume Wang's analysis of cone avoidance of Ramsey-type theorems from~\cite{Wang2014Some}.

\begin{lemma}
If some statement~$\Psf$ admits preservation of 1 non-c.e.\ definition,
then it admits cone avoidance.
\end{lemma}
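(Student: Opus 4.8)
The plan is to deduce cone avoidance from preservation of one non-c.e.\ definition by replacing a non-computable set with an associated non-c.e.\ set. Recall that $\Psf$ \emph{admits cone avoidance} if for every set~$C$, every set~$A$ not computable from~$C$, and every $C$-computable $\Psf$-instance~$X$, there is a solution~$Y$ to~$X$ with $A \not\leq_T Y \oplus C$. So I would fix such a~$C$, such an~$A$, and such an~$X$, and look for the right auxiliary object to feed into the hypothesis.

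The key observation is that $\hat{A} = A \oplus \overline{A}$ is not $C$-c.e.: if it were, then both $A = \{n : 2n \in \hat{A}\}$ and $\overline{A} = \{n : 2n+1 \in \hat{A}\}$ would be $C$-c.e., whence $A$ would be $C$-computable, contradicting the choice of~$A$. This computation relativizes verbatim: for any oracle~$Z$, if $\hat{A}$ is $Z$-c.e.\ then $A$ is $Z$-computable; equivalently, if $A$ is not $Z$-computable then $\hat{A}$ is not $Z$-c.e. Applying the hypothesis to~$C$, to the non-$C$-c.e.\ set~$\hat{A}$, and to the $C$-computable $\Psf$-instance~$X$, preservation of~$1$ non-c.e.\ definition yields a solution~$Y$ to~$X$ such that $Y \oplus C$ preserves the non-c.e.\ definition of~$\hat{A}$, i.e.\ $\hat{A}$ is not $(Y \oplus C)$-c.e. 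By the contrapositive above with $Z = Y \oplus C$, we get $A \not\leq_T Y \oplus C$. Since~$C$, $A$ and~$X$ were arbitrary, $\Psf$ admits cone avoidance.

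There is essentially no genuine obstacle in this argument; it is a bookkeeping lemma whose only content is choosing the right auxiliary set. The set $A \oplus \overline{A}$ works precisely because it packages both the positive and the negative information about~$A$, so that being c.e.\ relative to an oracle collapses to being computable relative to it — the relativized form of the classical fact that $A \oplus \overline{A}$ is c.e.\ iff~$A$ is computable.
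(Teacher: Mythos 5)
Your proof is correct. The paper's own argument is very close but slightly different in bookkeeping: rather than forming the single set $A \oplus \overline{A}$, it observes that since $A \not\leq_T C$, at least one of $A$, $\overline{A}$ fails to be $C$-c.e., calls that set $B$, and applies preservation to $B$ alone; since $B \in \{A, \overline{A}\}$ remains non-$(Y \oplus C)$-c.e., $A$ cannot be $Y \oplus C$-computable. Both proofs hinge on the same classical fact (a set is computable relative to an oracle iff both it and its complement are c.e.\ relative to that oracle); your version packages the two halves into one canonical witness $A \oplus \overline{A}$ and so avoids the case distinction, at the trivial cost of forming a join. Either choice is fine, and each gives exactly what preservation of a single non-c.e.\ definition supplies.
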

\begin{proof}
Fix any set~$C$, any set $A \not \leq_T C$ and any $C$-computable~$\Psf$-instance~$X$.
As~$A \not \leq_T C$, either $A$ or~$\overline{A}$ is not $C$-c.e.
Call this set~$B$. As $\Psf$ admits preservation of 1 non-c.e.\ definition,
there exists a solution $Y$ of $X$
such that~$B$ is not $Y \oplus C$-c.e. 
In particular~$A$ is not~$Y \oplus C$-computable.
\end{proof}

\subsubsection{Negative preservation results}

The following theorem can be proven by a direct adaptation
of Theorem~4.3 proven by Wang~\cite{Wang2014Definability}. However, we provide a simpler proof.

\begin{theorem}\label{thm:ts1-non-preservation}
For every~$k \geq 2$, there exists a~$\Delta^0_2$ $k$-partition
$B_0 \cup \dots \cup B_{k-1} = \omega$ such that for each~$j < k$,
$\overline{B_j}$ is non-c.e.\ but is $H$-c.e.\ for every infinite set $H \subseteq \overline{B_j}$.
\end{theorem}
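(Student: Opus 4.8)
The plan is to build the $\Delta^0_2$ $k$-partition $B_0 \cup \dots \cup B_{k-1} = \omega$ by a finite injury (or direct $\emptyset'$-oracle) construction that simultaneously diagonalizes against all c.e.\ sets to guarantee each $\overline{B_j}$ is non-c.e., while arranging a strong "co-density" property of each $\overline{B_j}$ inside $\omega$ that forces any infinite $H \subseteq \overline{B_j}$ to enumerate $\overline{B_j}$. First I would isolate the mechanism that makes $\overline{B_j}$ be $H$-c.e.\ for every infinite $H \subseteq \overline{B_j}$: the natural device is to ensure that membership of $n$ in $\overline{B_j}$ is decided by the time any element of $\overline{B_j}$ exceeding some computable bound $g(n)$ appears, i.e.\ the approximation $B_{j,s}$ to $B_j$ stabilizes on $n$ before stage $g(n)$ relative to the enumeration of $\overline{B_j}$. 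Concretely, if $H = \{h_0 < h_1 < \dots\} \subseteq \overline{B_j}$ is infinite, then for each $n$ one searches $H$ for an element $h > n$; since $h \in \overline{B_j}$ one can use $h$ (and the fact that $\overline{B_j}$ is already "settled" below $h$ in the sense of having no further elements enter $B_j$ below the stage witnessing $h \notin B_j$) to decide $n \in \overline{B_j}$. The point is to make the construction so that once an element of $\overline{B_j}$ above $n$ is confirmed, no smaller element leaves $\overline{B_j}$ afterwards; this is exactly a $\Delta^0_2$ approximation with the property that $B_j$ is "increasing up to the current threshold."

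The key steps, in order, are: (1) fix an effective list of all c.e.\ sets $W_0, W_1, \dots$ and set up requirements $N_{j,e}$: "$\overline{B_j} \neq W_e$." (2) Run a movable-marker construction over the oracle $\emptyset'$ where at stage $s$ the approximation $B_{j,s}$ is a $k$-partition of $\omega$ into computable initial-segment-like pieces, and each $N_{j,e}$ acts by picking a fresh witness $x$ currently in $\overline{B_{j,s}}$ and waiting for $x$ to enter $W_e$; if it does, put $x$ into $B_j$ permanently (so $x \notin \overline{B_j} \neq W_e \ni x$), if it never does then $x \in \overline{B_j} \setminus W_e$. (3) Between actions, pad $\overline{B_j}$ with large "stable" elements — elements that are never used as witnesses and hence remain in $\overline{B_j}$ — chosen densely enough (one beyond every $g(n)$) so that any infinite subset $H \subseteq \overline{B_j}$ sees, for each $n$, an element of $\overline{B_j}$ confirmed above $n$ that pins down $B_j \uh (n+1)$. (4) Verify that once $n$'s membership in $B_j$ has stabilized, no later injury changes it: since each requirement acts at most once and its witness is chosen fresh (larger than all current thresholds), past decisions are never overturned below the witness — this gives the needed monotonicity. (5) Conclude: each $\overline{B_j}$ is $\Delta^0_2$ (the marker/witness bookkeeping is $\emptyset'$-computable), non-c.e.\ (by the $N_{j,e}$), and $H$-c.e.\ for infinite $H \subseteq \overline{B_j}$ (by the confirmation-plus-monotonicity mechanism in (3)–(4)).

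The main obstacle I expect is step (3)–(4): arranging the padding and the freshness discipline so that the confirmation of one element of $\overline{B_j}$ above $n$ genuinely finalizes $B_j$ below $n$, \emph{without} requiring $H$ to know the construction stage. This is a tension between the finite-injury bookkeeping (which naturally changes small values when a requirement with small index acts late) and the desired "look-above-and-decide" property. The resolution is to demand that whenever a requirement $N_{j,e}$ ever acts, its witness is chosen larger than the largest "confirmed stable padding element" already placed into $\overline{B_j}$; then any $h \in \overline{B_j}$ that is a confirmed padding element above $n$ was placed at a stage after which no requirement can touch any value $\le h$, hence $B_j \uh h$ is final. One must check this freshness scheduling is compatible with the non-c.e.\ requirements still getting their witnesses (standard: there are infinitely many large fresh numbers available). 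A secondary, routine point is to confirm that $\{B_0, \dots, B_{k-1}\}$ really is a $k$-partition and that the $\Delta^0_2$-ness survives — both follow because each $n$ is assigned to exactly one $B_j$ and its final assignment is $\emptyset'$-decidable via the limit of the marker positions.
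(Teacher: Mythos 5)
The paper proves this by building a stable computable coloring $f : [\omega]^2 \to k$ with no infinite computable $f$-thin set and satisfying the ``triangle'' monotonicity property: for all $i<k$ and $x<y<z$, if $f(x,y)\neq i$ and $f(y,z)\neq i$ then $f(x,z)\neq i$. Setting $B_i = \{x : \lim_s f(x,s)=i\}$, one then has $\overline{B_i} = \{x : (\exists y\in H)(y>x \wedge f(x,y)\neq i)\}$ for \emph{every} infinite $H\subseteq\overline{B_i}$, because the triangle property upgrades a single witness $y\in H$ with $f(x,y)\neq i$ into $\lim_s f(x,s)\neq i$. The crucial point is that this $H$-c.e.\ formula is correct for an arbitrary $H\subseteq\overline{B_i}$, with no requirement that $H$ contain any specially-marked elements. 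The triangle property is obtained precisely because the finite-injury strategy moves whole intervals $[m_{e,i},s]$ into $B_i$ when it acts, never isolated witnesses.

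Your plan has a genuine gap exactly at this point. Your $H$-c.e.\ procedure reads off $B_j\upharpoonright n$ at the ``confirmation stage'' of some $h\in H$ with $h>n$, and your freshness/padding discipline guarantees finality only for $h$ that is a \emph{confirmed padding element}. But $H$ is an arbitrary infinite subset of $\overline{B_j}$; it may consist entirely of non-padding elements (late-acting witnesses placed into some $B_{j'}$, $j'\neq j$, or eternal witnesses of $N_{j,e}$-requirements), for which there is no computable way to recover a stage at which $B_j\upharpoonright h$ is final. So the procedure is not well-defined on such $H$. (And making padding so dense that \emph{every} infinite $H\subseteq\overline{B_j}$ must hit it would make $B_j$ computable, contradicting the non-c.e.\ requirement.) The single-witness moves you describe also do not yield any analogue of the triangle property: if a requirement later inserts $n$ into $B_j$ without touching $h$, the approximation $B_{j,h}(n)$ you would have read off is simply wrong, and nothing in the construction forbids this. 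The fix in the paper is structural, not a scheduling trick: one must move intervals rather than single points, so that whenever $n$ changes status between times $y$ and $z$, the element $y$ changes with it; then the monotonicity needed for the $H$-c.e.\ formula holds unconditionally for every $h\in\overline{B_j}$. You correctly identified the tension but your proposed resolution does not close it.
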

\begin{proof}
It suffices to construct a stable computable function~$f : [\omega]^2 \to k$
with no infinite computable $f$-thin set, and
such that for each~$i < k$ and each~$x < y < z \in \omega$,
\[
f(x,y) \neq i \wedge f(y,z) \neq i \imp f(x,z) \neq i
\]

We first justify that those properties are sufficient for proving our theorem.
Let~$B_i = \{x : \lim_s f(x,s) = i \}$.
Every infinite subset of~$\overline{B_i}$ computes an infinite set thin for~$f$ with witness~$i$,
therefore no $\overline{B_i}$ is c.e.
Moreover, $\overline{B_i}$ is $H$-c.e.\ for every infinite set $H \subseteq \overline{B_i}$ since
\[
\overline{B_i} = \{ x \in \omega : (\exists y \in H)f(x, y) \neq i \}
\]
The construction of the function~$f$ is done by a finite injury priority argument
with a movable marker procedure. We want to satisfy the following scheme of requirements for each~$e \in \omega$ and~$i < k$:
\[
\Rcal_{e,i} : W_e \mbox{ infinite } \imp (\exists x \in W_e)\lim_s f(x,s) = i
\]

The requirements are given the usual priority ordering.
We proceed by stages, maintaining~$k$ sets~$B_0, \dots, B_{k-1}$
which represent the limit of the function~$f$.
At stage 0, $B_{i,0} = \emptyset$ for each~$i < k$
and $f$ is nowhere defined. Moreover, each requirement~$\Rcal_{e,i}$
is given a movable marker~$m_{e,i}$ initialized to~0.

A strategy for~$\Rcal_{e,i}$ \emph{requires attention at stage~$s+1$}
if $W_{e,s} \subset \overline{B_{i,s}}$ and $W_{e,s} \cap [m_{e,i},s] \neq \emptyset$.
The strategy sets~$B_{i,s+1} = B_{i,s} \cup [m_{e,i},s]$, and~$B_{j,s+1} = B_{j,s} \setminus [m_{e,i},s]$ for every~$j \neq i$.
Then it is declared~\emph{satisfied} until some strategy of higher priority changes its marker. 
Each marker~$m_{e',i'}$ of strategies of lower priorities is assigned the value~$s+1$.

At stage~$s+1$, assume that~$B_{0,s} \cup \dots \cup B_{k-1,s} = [0,s)$
and that~$f$ is defined for each pair over~$[0,s)$.
For each~$x \in [0,s)$, set~$f(x,s) = i$ for the unique~$i$ such that~$x \in B_{i,s}$.
If some strategy requires attention at stage~$s+1$, take the least one
and satisfy it.
If no such requirement is found, set~$B_{0,s+1} = B_{0,s} \cup \{s\}$
and~$B_{i,s+1} = B_{i,s}$ for $i > 0$.
Then go to the next stage. This ends the construction.

Each time a strategy acts, it changes the markers of strategies of lower priority, and is declared satisfied.
Once a strategy is satisfied, only a strategy of higher priority can injury it.
Therefore, each strategy acts finitely often and the markers stabilize.
It follows that the $B$'s also stabilize and that~$f$ is a stable function.

\begin{lemma}
For every~$i < k$ and every~$x < y < z$, $f(x,y) \neq i \wedge f(y,z) \neq i \imp f(x,z) \neq i$.
\end{lemma}
\begin{proof}
Suppose that $f(x,y) \neq i$ but $f(x,z) = i$
for some~$i < k$. Let~$s \leq z$ be the least stage such that~$f(x, t) = i$
for every~$t \in [s+1, z]$. At stage~$s+1$, some strategy~$\Rcal_{e,i}$
moved to~$B_i$ the whole interval~$[m_{e,i},s]$. Since $m_{e',i'} \leq m_{e,i}$
for every strategy~$\Rcal_{e',i'}$ of higher priority, none of the elements in~$[m_{e,i}, s]$ leave~$B_i$ before stage~$z+1$.
As $f(x,y) \neq i$, $y \not \in [s+1,z]$ so $y \in [m_{e,i},s]$.
Therefore $y \in B_{i,z}$ and thus $f(y, z) = i$.
\end{proof}

\begin{lemma}
For every~$e \in \omega$ and~$i < k$, $\Rcal_{e,i}$ is satisfied.
\end{lemma}
\begin{proof}
By induction over the priority order. Let~$s_0$ be a stage after which
no strategy of higher priority will ever act. By construction, $m_{e,i}$ will not change after stage~$s_0$.
If $W_e$ is infinite, it will eventually enumerate some element $u$ bigger than~$m_{e,i}$,
and therefore $\Rcal_{e,i}$ will require attention at some stage~$s \geq u$.
As no strategy of higher priority ever acts after stage~$s_0$, $\Rcal_{e,i}$
will receive attention, be satisfied and never be injured.
\end{proof}

Satisfying $\Rcal_{e,i}$ for every~$e \in \omega$ and $i < k$ guarantees that $f$
has no computable thin set. This last claim finishes the proof of Theorem~\ref{thm:ts1-non-preservation}.
\end{proof}

\begin{corollary}\label{cor:ts2-non-preservation}
For every~$k \geq 2$, $\sts^2_k$ does not admit preservation of~$k$ non-c.e.\ definitions.
\end{corollary}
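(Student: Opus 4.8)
The plan is to combine Theorem~\ref{thm:ts1-non-preservation} with the computable equivalence between $\sts^2_k$ and a $\Delta^0_2$-partition statement, in exactly the same spirit as Corollary~\ref{cor:srt2-not-reduc-colors} was deduced from Corollary~\ref{cor:d2-general-colors}. First I would invoke Theorem~\ref{thm:ts1-non-preservation} to fix a $\Delta^0_2$ $k$-partition $B_0 \cup \dots \cup B_{k-1} = \omega$ such that for each $j < k$, the set $\overline{B_j}$ is non-c.e.\ but is $H$-c.e.\ for every infinite $H \subseteq \overline{B_j}$. Taking $C = \emptyset$, these $k$ sets $\overline{B_0}, \dots, \overline{B_{k-1}}$ are the non-$C$-c.e.\ sets witnessing the failure of preservation. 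By Schoenfield's limit lemma~\cite{Shoenfield1959degrees}, the $\Delta^0_2$ approximation $g : [\omega]^2 \to k$ of this partition is a stable computable function, and hence a computable $\sts^2_k$-instance.

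The key observation is that a thin set for $g$ is exactly a subset of some $\overline{B_j}$: if $H$ is infinite and $g$ (equivalently, the partition $\vec B$) avoids color $j$ on $H$, then $H \subseteq \overline{B_j}$. Now suppose toward a contradiction that $\sts^2_k$ admits preservation of $k$ non-c.e.\ definitions. Applied to $C = \emptyset$, the non-c.e.\ sets $\overline{B_0}, \dots, \overline{B_{k-1}}$, and the computable instance $g$, this would produce an infinite $g$-thin set $H$ such that $H \oplus \emptyset = H$ preserves non-c.e.\ definitions of all the $\overline{B_j}$'s, i.e.\ no $\overline{B_j}$ is $H$-c.e. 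But $H \subseteq \overline{B_{j_0}}$ for the omitted color $j_0$, and by the property of the partition from Theorem~\ref{thm:ts1-non-preservation}, $\overline{B_{j_0}}$ \emph{is} $H$-c.e.\ (witnessed by the formula $\{x : (\exists y \in H)\, f(x,y) \neq j_0\}$). This contradiction establishes the corollary.

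The only subtlety — and it is minor — is making sure the bookkeeping of the preservation definition lines up: we need a single computable instance together with $k$ simultaneously-non-c.e.\ sets, and we need the preservation conclusion to fail on whichever color happens to be omitted. Since Theorem~\ref{thm:ts1-non-preservation} delivers the $H$-c.e.\ property for \emph{every} $j$ and every infinite $H \subseteq \overline{B_j}$ uniformly, no matter which color the putative solution omits we land in the contradiction. So the argument is essentially immediate; the real content was pushed entirely into Theorem~\ref{thm:ts1-non-preservation}, and this corollary is just the translation of that combinatorial construction into the language of preservation of non-c.e.\ definitions.
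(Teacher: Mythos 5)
Your proof is correct and takes exactly the approach the paper intends: translate the $\Delta^0_2$ $k$-partition from Theorem~\ref{thm:ts1-non-preservation} into a stable computable coloring via the limit lemma, note that any infinite $g$-thin set must lie inside some $\overline{B_{j_0}}$, and invoke the $H$-c.e.\ property of $\overline{B_{j_0}}$ to contradict preservation with $C=\emptyset$. (The only blemishes are cosmetic: ``exactly'' in ``a thin set for $g$ is exactly a subset of some $\overline{B_j}$'' should just be an inclusion, and $f$ in your displayed formula should read $g$, but neither affects the argument.)
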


\subsubsection{Strong preservation of non-c.e.\ definitions}

Because every computable instance of~$\ts^1_k$ having a computable solution,
$\ts^1_k$ admits preservation of $k$ non-c.e.\ definitions for every~$k$.
On the other hand, we would like to say that~$\ts^1_k$ does not \emph{combinatorially}
preserve $k$ non-c.e.\ definitions since Theorem~\ref{thm:ts1-non-preservation} shows
the existence of a non-effective instance of~$\ts^1_k$ whose solutions do not preserve $k$ non-c.e.\ definitions.
This combinatorial notion of preservation is called \emph{strong preservation}.

\begin{definition}[Strong preservation of non-c.e.\ definitions]\ 
A~$\Pi^1_2$ statement~$\Psf$ \emph{admits strong preservation of $k$ non-c.e.\ definitions} 
if for each set $C$, each sequence of non-$C$-c.e.\ sets $A_0, \dots, A_{k-1}$ and each (arbitrary) $\Psf$-instance $X$, 
there exists a solution $Y$ to~$X$ such that $Y \oplus C$ preserves non-c.e.\ definitions of~$A_0, \dots, A_{k-1}$.
\end{definition}

We have seen through Theorem~\ref{thm:ts1-non-preservation} that for every~$k \geq 2$,
$\ts^1_k$ does not admit strong preservation of~$k$ non-c.e.\ definitions.
The following theorem shows the optimality of Theorem~\ref{thm:ts1-non-preservation}.

\begin{theorem}\label{thm:ts1-strong-preservation}
For every~$k \geq 2$, $\ts^1_{k+1}$ admits strong preservation of~$k$ non-c.e.\ definitions.
\end{theorem}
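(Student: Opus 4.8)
The plan is to build, for an arbitrary $\ts^1_{k+1}$-instance — that is, an arbitrary $(k+1)$-partition $X_0 \cup \dots \cup X_k = \omega$ — an infinite set $H$ avoiding one of the parts, such that for a fixed set $C$ and fixed non-$C$-c.e.\ sets $A_0, \dots, A_{k-1}$, no $A_i$ becomes $(H \oplus C)$-c.e. I would do this by a forcing construction whose conditions are Mathias conditions $(F, Y)$ with $F$ finite, $Y$ infinite, and where we additionally require the reservoir $Y$ to be ``large'' in a suitable sense relative to the partition — concretely, that $Y$ is disjoint from one of the parts $X_j$ (so that any $H \subseteq Y$ already witnesses thinness), or more flexibly that we keep track of which part we intend to avoid. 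The requirements to meet are the infinitude requirements $\mathcal{Q}_p$ (forcing $|H| = \infty$) and, for each $i < k$ and each Turing functional $\Phi_e$, the non-c.e.\ requirement $\mathcal{R}_{e,i}$: ``$W_e^{H \oplus C} \neq A_i$.''

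The heart of the argument is the extension lemma for $\mathcal{R}_{e,i}$. Given a condition $(F,Y)$, I would ask whether there is a finite extension $F' \supseteq F$ with $F' \setminus F \subseteq Y$ and an integer $n$ such that $n \in W_e^{F' \oplus C}$ but $n \notin A_i$: if so, we extend to $(F', Y \setminus [0,\max F'])$ and we have forced $W_e^{H \oplus C} \neq A_i$. If no such extension exists, then for every way of extending inside $Y$ and every $n \in W_e^{F' \oplus C}$ we must have $n \in A_i$; this means $A_i \supseteq \bigcup \{ W_e^{F' \oplus C} : F' \text{ a finite subset of } Y \text{ above } F\}$, and the crucial point is that the right-hand side is a $(C \oplus Y)$-c.e.\ set. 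If moreover $Y$ were chosen so that this set equals $A_i$ — which is exactly the situation we must \emph{prevent} — then $A_i$ would be $(C\oplus Y)$-c.e. The way to prevent it is the classical ``large reservoir'' trick, mirroring Lemma~\ref{lem:colors-ramsey-coh-strong-reduc-force-R}: we pass to a $\Pi^{0, C \oplus Y}_1$ class of $(k+1)$-partitions of $Y$ refining $X$, apply a preservation basis theorem (a basis theorem preserving non-$C$-c.e.-ness of the $A_i$'s — this is where I'd invoke the machinery Wang developed, or prove the needed basis theorem directly), and replace $Y$ by an infinite part of such a sub-partition. The arithmetic counting — why $k$ non-c.e.\ sets can be handled with $k+1$ colors but not $k$ — enters exactly here: after splitting $Y$ into $k+1$ pieces we have $k+1$ candidate reservoirs, and by pigeonhole at least one of them fails to c.e.-enumerate \emph{any} of the $k$ sets $A_0, \dots, A_{k-1}$ in the bad way, so we can safely continue the construction along that reservoir while keeping all $k$ requirements alive.

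The construction then interleaves: we build a descending sequence of conditions $(\varepsilon, \omega) = c_0 \geq c_1 \geq \cdots$, at even stages forcing $\mathcal{Q}_p$ (trivial: add a new element of the reservoir, using that the reservoir is infinite and disjoint from the avoided part), at odd stages forcing the next $\mathcal{R}_{e,i}$ via the extension lemma above. The set $H = \bigcup_s F_s$ is then infinite, contained in $Y_s$ for all $s$ hence disjoint from some fixed part $X_j$ (so $H$ is a valid $\ts^1_{k+1}$-solution), and $H \oplus C$ preserves non-c.e.\ definitions of $A_0, \dots, A_{k-1}$. The main obstacle I anticipate is formulating and proving the correct ``preservation'' basis theorem for the $\Pi^{0, C\oplus Y}_1$ class of refining partitions — one needs that some infinite part of some partition in the class can be taken with $(C \oplus \text{that part})$ still not enumerating any $A_i$; this should follow by a nested forcing / finite-extension argument inside the basis theorem, analogous to how the hyperimmune-free basis theorem was used in Lemma~\ref{lem:colors-ramsey-coh-strong-reduc-force-R}, but the bookkeeping for $k$ simultaneous non-c.e.\ sets against $k+1$ branches is the delicate part and is precisely what makes the bound $k+1$ (rather than $k$) necessary.
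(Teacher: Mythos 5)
Your high-level framework --- Mathias forcing, preservation of non-c.e.-ness using $\wkl$'s preservation property, and the ``either a positive witness or a negative one'' dichotomy in the extension lemma --- matches the paper's proof of this theorem. But the combinatorial organization has a gap. You commit the forcing to a single output $H$ with a reservoir disjoint from one part of the partition; this is circular, since an infinite set disjoint from some part and satisfying the preservation invariant is essentially the theorem's conclusion. You then start the construction from $(\varepsilon,\omega)$ and merely assert at the end that $H$ is ``disjoint from some fixed part,'' but no step of your construction produces or commits to that part. The paper avoids this by \emph{never} committing: it builds one set $G$ meeting every complement $\overline{A_i}$ of a part infinitely often (so $G$ itself is not thin), maintains the preservation invariant only on the whole reservoir, and tracks the $k+1$ traces $G \cap \overline{A_0}, \dots, G \cap \overline{A_k}$ as simultaneous candidate outputs; the non-c.e.\ requirement is a \emph{disjunction} over these traces, indexed by a tuple of functionals rather than a single one.

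Your pigeonhole is also misplaced and unjustified. You claim that splitting $Y$ into $k+1$ pieces yields one piece that ``fails to c.e.-enumerate any of the $k$ sets,'' but there is no reason at most one piece per target set is bad, so that count does not close. The paper's Lemma~\ref{lem:ts2-reduc-step} instead works \emph{pairwise}: for each pair of trace indices $i_0 < i_1 \leq k$ and each non-c.e.\ set $B_j$, one forces at least one of the two traces to avoid $B_j$, using a $\Pi^{0,X \oplus C}_1$ class of arbitrary $2$-covers of the reservoir (not $(k+1)$-refinements of the instance, as you wrote) together with $\wkl$-preservation. Iterating over pairs (Lemma~\ref{lem:ts2-reduc-force-R}), for each $j$ at most one trace index is left unresolved; with $k$ values of $j$ and $k+1$ traces, pigeonhole gives one trace index resolved for all $j$ simultaneously. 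That is the step where $k+1$ beats $k$, and it is absent from your sketch. You also omit the paper's preliminary dispatch of the ``trivial'' case, which is what makes the infinitude requirements dense in the main construction.
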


The proof of Theorem~\ref{thm:ts1-strong-preservation} follows Corollary~\ref{cor:sts2k-not-leqc-sts2l}.
Putting Theorem~\ref{thm:ts1-non-preservation} and Theorem~\ref{thm:ts1-strong-preservation}
together, we obtain the desired separation over strong computable reducibility.

\begin{corollary}
For every~$\ell > k \geq 2$, $\ts^1_k \not \leq_{sc} \ts^1_\ell$
\end{corollary}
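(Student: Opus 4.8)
The plan is to derive this corollary immediately by combining Theorem~\ref{thm:ts1-non-preservation} and Theorem~\ref{thm:ts1-strong-preservation}, following the standard template by which a gap in a combinatorial preservation property between two $\Pi^1_2$ statements witnesses a strong computable non-reducibility.

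First I would record an easy monotonicity observation: if a statement admits strong preservation of $m$ non-c.e.\ definitions and $k \leq m$, then it admits strong preservation of $k$ non-c.e.\ definitions, since a sequence $A_0, \dots, A_{k-1}$ of non-$C$-c.e.\ sets can be padded with repetitions up to length $m$, and a solution preserving all $m$ definitions preserves the first $k$. Applying this to Theorem~\ref{thm:ts1-strong-preservation}: because $\ell > k$ gives $\ell - 1 \geq k$, the statement $\ts^1_\ell = \ts^1_{(\ell-1)+1}$ admits strong preservation of $k$ non-c.e.\ definitions.

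Next, suppose toward a contradiction that $\ts^1_k \leq_{sc} \ts^1_\ell$. Let $B_0 \cup \dots \cup B_{k-1} = \omega$ be the $\Delta^0_2$ $k$-partition provided by Theorem~\ref{thm:ts1-non-preservation}, viewed as a $\ts^1_k$-instance $X$, and set $A_j = \overline{B_j}$ for $j < k$; each $A_j$ is non-c.e.\ by that theorem. By strong computable reducibility, $X$ computes a $\ts^1_\ell$-instance $J$ such that every solution to $J$ computes a solution to $X$. Applying strong preservation of $k$ non-c.e.\ definitions for $\ts^1_\ell$ with $C = \emptyset$, the instance $J$, and the sets $A_0, \dots, A_{k-1}$, there is a solution $Y$ to $J$ such that no $\overline{B_j}$ is $Y$-c.e.

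Finally I would derive the contradiction: $Y$ computes a solution $H$ to $X$, i.e.\ an infinite set thin for the $k$-partition, so $H \subseteq \overline{B_j}$ for some $j < k$; by Theorem~\ref{thm:ts1-non-preservation}, $\overline{B_j}$ is $H$-c.e., hence $Y$-c.e., contradicting the choice of $Y$. Thus $\ts^1_k \not\leq_{sc} \ts^1_\ell$. There is essentially no obstacle here beyond the bookkeeping of the monotonicity step and keeping straight that $\ell - 1 \geq k$ colors suffice to preserve exactly the $k$ definitions needed; the genuine content lives entirely in the two cited theorems.
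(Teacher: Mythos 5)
Your proof is correct and is exactly the paper's intended argument: the paper states only that Theorems~\ref{thm:ts1-non-preservation} and~\ref{thm:ts1-strong-preservation} combine to give the corollary, and you have correctly supplied the details, including the monotonicity-by-padding step needed to pass from strong preservation of $\ell-1$ non-c.e.\ definitions to strong preservation of $k$ of them when $\ell - 1 > k$.
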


Using the computable equivalence between the problem of finding a infinite set thin
for an~$\Delta^0_2$ $\ell$-partition and~$\sts^2_\ell$, we deduce the following corollary.

\begin{corollary}\label{cor:sts2k-not-leqc-sts2l}
For every~$\ell > k \geq 2$, $\sts^2_k \not \leq_{c} \sts^2_\ell$
\end{corollary}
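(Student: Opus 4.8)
The plan is to derive a contradiction from $\sts^2_k \leq_c \sts^2_\ell$ by feeding the reduction the ``hard'' instance of $\sts^2_k$ built in Theorem~\ref{thm:ts1-non-preservation} and then using the strong preservation property of $\ts^1_\ell$ (Theorem~\ref{thm:ts1-strong-preservation}, via the computable equivalence with thin sets for $\Delta^0_2$ $\ell$-partitions) to manufacture a solution to the resulting $\sts^2_\ell$-instance that the reduction cannot handle.

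Fix $\ell > k \geq 2$ and suppose $\sts^2_k \leq_c \sts^2_\ell$. Let $f_1 : [\omega]^2 \to k$ be the computable stable coloring produced in the proof of Theorem~\ref{thm:ts1-non-preservation}, with associated $\Delta^0_2$ $k$-partition $B_0 \cup \dots \cup B_{k-1} = \omega$, so each $\overline{B_j}$ is non-c.e.\ but is $H$-c.e.\ for every infinite $H \subseteq \overline{B_j}$. The first step is the observation that every infinite $f_1$-thin set $H'$ makes some $\overline{B_i}$ be $H'$-c.e.: by stability, for $x \in H'$ and all sufficiently large $y \in H'$ one has $f_1(x,y) = i_x$, where $i_x < k$ is the part containing $x$; hence the set of parts met by $H'$ is contained in $f_1([H']^2)$, which has at most $k-1$ elements, so $H'$ avoids some $B_i$ entirely, and $\overline{B_i}$ is $H'$-c.e.\ by the choice of $f_1$.

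Next I would apply the reduction to the computable $\sts^2_k$-instance $f_1$: it produces an $\sts^2_\ell$-instance $g$ computable in $f_1$, hence computable, such that every infinite $g$-thin set $Y$ computes, via $Y \oplus f_1$ and thus via $Y$ alone since $f_1$ is computable, an infinite $f_1$-thin set. Using the computable equivalence between $\sts^2_\ell$ and the problem of finding a thin set for a $\Delta^0_2$ $\ell$-partition, pass from $g$ to the $\Delta^0_2$ $\ell$-partition $\widehat{g}$ with $\widehat{g}(x) = \lim_s g(x,s)$, which has the feature that any infinite set avoiding one of its parts uniformly computes an infinite $g$-thin set. Now $\widehat{g}$ is an arbitrary instance of $\ts^1_\ell$, and since $\ell - 1 \geq k$ and $\ts^1_\ell$ admits strong preservation of $\ell - 1$, hence of $k$ non-c.e.\ definitions (pad the list with repeated entries), by Theorem~\ref{thm:ts1-strong-preservation}, apply this with $C = \emptyset$ and the non-c.e.\ sets $\overline{B_0}, \dots, \overline{B_{k-1}}$ to obtain an infinite $\widehat{g}$-thin set $Z$ such that no $\overline{B_i}$ is $Z$-c.e. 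Pick an infinite $g$-thin set $Y \leq_T Z$; then $Y$ is a solution to $g$, so $Y$ computes an infinite $f_1$-thin set $H'$, and by the first step some $\overline{B_i}$ is $H'$-c.e., hence $Z$-c.e.\ (as $H' \leq_T Y \leq_T Z$), contradicting the choice of $Z$.

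The routine parts are the stability computation for $f_1$-thin sets, the monotonicity of strong preservation in the number of preserved sets, and the collapse of $Y \oplus f_1$ to $Y$. The point that will need the most care is the use of the computable equivalence with thin sets for $\Delta^0_2$ $\ell$-partitions: one must check that the passage $g \mapsto \widehat{g}$ both delivers an instance to which the strong preservation hypothesis for $\ts^1_\ell$ genuinely applies and transfers a $\widehat{g}$-thin solution back to a $g$-thin solution without raising its Turing degree above $Z$.
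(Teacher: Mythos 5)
Your proof is correct and takes essentially the same route as the paper: build the hard stable $\sts^2_k$-instance from Theorem~\ref{thm:ts1-non-preservation}, observe that any thin set for it drives some $\overline{B_i}$ to be c.e.\ in it, and then use Theorem~\ref{thm:ts1-strong-preservation} on the $\Delta^0_2$ $\ell$-partition coming from the reduced $\sts^2_\ell$-instance to produce a solution preserving the non-c.e.\ definitions of $\overline{B_0},\dots,\overline{B_{k-1}}$. The one place you are more explicit than the paper is the observation that strong preservation of $\ell-1$ non-c.e.\ definitions implies strong preservation of $k \leq \ell-1$ of them by padding, which the paper's proof uses implicitly when it applies Theorem~\ref{thm:ts1-strong-preservation} directly to an $\ell$-partition.
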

\begin{proof}
Fix~$\ell > k \geq 2$ and consider the~$\Delta^0_2$ $k$-partition~$B_0 \cup \dots \cup B_{k-1} = \omega$
of Theorem~\ref{thm:ts1-non-preservation}.
By Schoenfield's limit lemma~\cite{Shoenfield1959degrees}, there exists a stable computable function~$g : [\omega]^2 \to k$
such that~$B_j = \{ x : \lim_s g(x,s) = j \}$ for each~$j < k$. Every infinite set thin for~$g$
is thin for the~$B$'s. Fix any stable computable function~$f : [\omega]^2 \to \ell$
and let~$A_i = \{x : \lim_s f(x,s) = i\}$ for each~$i < m$. By Theorem~\ref{thm:ts1-strong-preservation},
there exists an infinite set~$H$ thin for the~$A$'s which does not compute an infinite set thin for the~$B$'s
(hence for~$g$). As~$H \oplus f$ computes an infinite set~$G$ thin for~$f$, $f$ has an infinite $f$-thin set
which does not compute an infinite set thin for~$g$.
\end{proof}

The remainder of this section is devoted to the proof of Theorem~\ref{thm:ts1-strong-preservation}.
Fix some set~$C$ preserving non-c.e.\ definitions of some sets~$B_0, \dots, B_{k-1}$
and fix some $(k+1)$-partition~$A_0 \cup \dots \cup A_k = \omega$.
We will construct a set~$G$ such that $G \cap \overline{A_i}$
is infinite for each~$i \leq k$ and none of the $B$'s are $(G \cap \overline{A_i}) \oplus C$-c.e.
for some~$i \leq k$.
Our forcing conditions are Mathias conditions~$(\sigma, X)$ where
$X$ is an infinite set of integers such that none of the $B$'s are $X \oplus C$-c.e.

\subsubsection{Forcing limitlessness}

We want to satisfy the following scheme of requirements to ensure that~$G \cap \overline{A_i}$
is infinite for each~$i \leq k$:

\[
\Qcal_p : (\exists m_0, \dots, m_k > p)[m_0 \in G \cap \overline{A_0} \wedge \dots \wedge m_k \in G \cap \overline{A_k}]
\]

We say that an~$(k+1)$-partition~$A_0 \cup \dots \cup A_k = \omega$ is \emph{non-trivial} 
if there exists no infinite set~$H$ homogeneous for the~$A$'s
such that none of the $B$'s are $H \oplus C$-c.e. Of course, every infinite set homogeneous for the~$A$'s
is thin for the~$A$'s, so if the partition~$A_0 \cup \dots \cup A_k = \omega$ is trivial, we succeed.
Therefore we will assume from now on that the partition is non-trivial.
A condition~$c = (\sigma, X)$ \emph{forces $\Qcal_p$}
if there exist some~$m_0, \dots, m_k > p$ such that~$m_i \in \sigma \cap \overline{A_i}$
for each~$i \leq k$. Therefore, if~$G$ satisfies~$c$ and~$c$ forces~$\Qcal_p$,
then~$G$ satisfies the requirement~$\Qcal_p$.
We now prove that the set of conditions forcing~$\Qcal_p$ is dense for each~$p \in \omega$.
Thus, every sufficiently generic filter will induce a set~$G$
such that $G \cap \overline{A_i}$ is infinite for each~$i \leq k$.

\begin{lemma}\label{lem:ts2-reduc-force-Q}
For every condition~$c$ and every~$p$, there is an extension forcing~$\Qcal_p$.
\end{lemma}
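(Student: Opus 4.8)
The plan is to exploit the non-triviality assumption to show that the reservoir $X$ of any condition meets each $\overline{A_i}$ infinitely often; the desired extension is then obtained by appending one witness from each $\overline{A_i}$ to the stem and shrinking the reservoir past the last witness.

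First I would prove the following claim: for every condition $c = (\sigma, X)$ and every $i \leq k$, the set $X \cap \overline{A_i}$ is infinite. Suppose not, so $X \cap \overline{A_i}$ is finite for some $i$. Then $X' = X \cap A_i$ is infinite, it is homogeneous for the $A$'s (every element lying in the single part $A_i$), and it differs from $X$ only finitely, so $X' \oplus C$ and $X \oplus C$ compute each other; in particular none of the $B$'s is $X' \oplus C$-c.e. This contradicts the non-triviality of $A_0 \cup \dots \cup A_k = \omega$. The same observation, that finitely-different sets compute each other, is what guarantees that shrinking a reservoir by finitely many elements leaves a valid condition, and I will use it freely below.

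Then, given $c = (\sigma, X)$ and $p \in \omega$, I would choose the witnesses one at a time. Put $X_{-1} = X$, and for $i = 0, \dots, k$ in turn apply the claim to the condition $(\sigma, X_{i-1})$ — legitimate since $X_{i-1}$ is cofinite in $X$, hence infinite with none of the $B$'s being $X_{i-1} \oplus C$-c.e. — to pick some $m_i \in X_{i-1} \cap \overline{A_i}$ with $m_i > p$ (and in particular above $\sigma$), and set $X_i = X_{i-1} \setminus [0, m_i]$. This yields $p < m_0 < \dots < m_k$ with $m_i \in X \cap \overline{A_i}$, and $d = (\sigma \cup \{m_0, \dots, m_k\}, X_k)$ is a condition extending $c$ that forces $\Qcal_p$, since $m_i \in (\sigma \cup \{m_0, \dots, m_k\}) \cap \overline{A_i}$ for every $i \leq k$.

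The only step with any real content — and the one I would flag as the main obstacle, though it is mild — is the claim: it is precisely the non-triviality hypothesis that forbids a reservoir from being almost contained in one of the parts $A_i$. Everything else is the standard Mathias bookkeeping of extending the stem and chopping off an initial segment of the reservoir.
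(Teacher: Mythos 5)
Your proof is correct and follows essentially the same approach as the paper's: use the non-triviality hypothesis to show $X \cap \overline{A_i}$ is infinite for each $i$, then append one witness per part and shrink the reservoir. The only cosmetic difference is that you trim the reservoir after each witness while the paper leaves it unchanged inside the inner step; both yield a valid Mathias extension.
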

\begin{proof}
Fix some~$p \in \omega$. It is sufficient to show that given a condition~
$c = (\sigma, X)$ and some~$i \leq k$,
there exist an extension~$d_0 = (\tau, Y)$ and some integer~$m_i > p$
that~$m_i \in \tau \cap \overline{A_i}$.
By iterating the process for each~$i \leq k$, we obtain an extension forcing~$\Qcal_p$.
Suppose for the sake of contradiction that~$X \cap \overline{A_i}$ is finite.
One can then $X$-compute an infinite set~$H \subseteq A_i$, 
contradicting non-triviality of the~$A$'s.
Therefore, there exists an~$m_i \in X \cap \overline{A_i}$ such that $m_i > max(\sigma, p)$.
The condition $d_0 = (\sigma^{\concat} m_i, X)$
is the desired extension.
\end{proof}

\subsubsection{Forcing preservation}

The second scheme of requirements consists in ensuring that
the sets~$B_0, \dots, B_{k-1}$ are all non-$(G \cap \overline{A_i}) \oplus C$-c.e.\ for some~$i \leq k$.
The requirements are of the following form for each tuple of indices~$\vec{e} = (e_i : i \leq k)$:

\[
\Rcal_{\vec{e}} : \bigwedge_{j < k} W^{(G \cap \overline{A_0}) \oplus C}_{e_0} \neq B_j
	\vee \dots \vee \bigwedge_{j < k} W^{(G \cap \overline{A_k}) \oplus C}_{e_k} \neq B_j
\]

A condition~\emph{forces $\Rcal_{\vec{e}}$} if every set~$G$ satisfying this condition also satisfies requirement~$\Rcal_{\vec{e}}$.
The following lemma is the core of the forcing argument.

\begin{lemma}\label{lem:ts2-reduc-step}
For every condition~$c = (\sigma, X)$, every~$i_0 < i_1 \leq k$,
every $j < k$ and every vector of indices~$\vec{e}$, there exists
an extension~$d$ forcing either~$W^{(G \cap \overline{A_{i_0}}) \oplus C}_{e_{i_0}} \neq B_j$
or $W^{(G \cap \overline{A_{i_1}}) \oplus C}_{e_{i_1}} \neq B_j$.
\end{lemma}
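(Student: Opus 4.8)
The plan is to carry out one step of the Mathias forcing described above. Fix the condition $c=(\sigma,X)$, the indices $i_0<i_1\le k$, the color $j<k$ and the tuple $\vec e$; write $\sigma_{i_0}=\sigma\cap\overline{A_{i_0}}$ and $\sigma_{i_1}=\sigma\cap\overline{A_{i_1}}$ for the (fixed, finite) ``true stems'' of the two oracles we must diagonalize against. The difficulty throughout is that the $(k{+}1)$-partition $A_0\cup\dots\cup A_k=\omega$ is completely arbitrary, so the set of $x$'s that we can legitimately force into $W^{(G\cap\overline{A_{i_\ell}})\oplus C}_{e_{i_\ell}}$ is not, a priori, enumerable relative to $X\oplus C$, and hence cannot directly be played off against the non-$(X\oplus C)$-c.e.-ness of $B_j$.

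To get around this I would introduce the set $V$ of those $x$ for which there is a finite $E\subseteq X$ with $\min E>\max\sigma$ such that for \emph{every} cover $E=D_0\cup D_1$ there exist $D_0'\subseteq D_0$ and $D_1'\subseteq D_1$ with $x\in W^{(\sigma_{i_0}\cup D_0')\oplus C}_{e_{i_0}}$ or $x\in W^{(\sigma_{i_1}\cup D_1')\oplus C}_{e_{i_1}}$. Since this quantifies only over the finitely many covers of a finite set and over their finitely many sub-covers, $V$ is $(X\oplus C)$-c.e.\ (the finite parameters $\sigma_{i_0},\sigma_{i_1}$ being harmless). The point of ranging over all covers is that it includes the genuine one, $D_0=E\cap\overline{A_{i_0}}$, $D_1=E\cap\overline{A_{i_1}}$ — and here it matters that there are $k{+}1\ge 3$ parts, so that this really is a cover and not a $2$-partition: if $x\in V$ via $E$, then applying the definition to the true cover yields, say, $D_0'\subseteq E\cap\overline{A_{i_0}}$ with $x\in W^{(\sigma_{i_0}\cup D_0')\oplus C}_{e_{i_0}}$, and then the extension $d=(\sigma\cup D_0',\,X\setminus[0,u])$, where $u$ bounds the use of that computation, forces $x\in W^{(G\cap\overline{A_{i_0}})\oplus C}_{e_{i_0}}$; $d$ is a legitimate condition because a finite change of the reservoir preserves non-$(X\oplus C)$-c.e.-ness of the $B$'s.

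Now split into two cases. If $V\not\subseteq B_j$, pick $x\in V\setminus B_j$ and take the extension just described: it forces $x\in W^{(G\cap\overline{A_{i_0}})\oplus C}_{e_{i_0}}$ while $x\notin B_j$, hence forces $W^{(G\cap\overline{A_{i_0}})\oplus C}_{e_{i_0}}\neq B_j$. Otherwise $V\subseteq B_j$; since $B_j$ is not $(X\oplus C)$-c.e.\ while $V$ is, $V\subsetneq B_j$, so fix $x\in B_j\setminus V$. Unwinding ``$x\notin V$'', every finite $E\subseteq X$ above $\sigma$ admits a cover $E=D_0\cup D_1$ such that no finite $D_0'\subseteq D_0$ satisfies $x\in W^{(\sigma_{i_0}\cup D_0')\oplus C}_{e_{i_0}}$ and no finite $D_1'\subseteq D_1$ satisfies $x\in W^{(\sigma_{i_1}\cup D_1')\oplus C}_{e_{i_1}}$. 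Hence the class $\mathcal{K}$ of covers $X=Y_0\cup Y_1$ for which no finite $D\subseteq Y_0$ has $x\in W^{(\sigma_{i_0}\cup D)\oplus C}_{e_{i_0}}$ and no finite $D\subseteq Y_1$ has $x\in W^{(\sigma_{i_1}\cup D)\oplus C}_{e_{i_1}}$ is a nonempty $\Pi^{0,X\oplus C}_1$ class — it is closed, and the sub-cover quantifier built into $V$ is exactly what makes it nonempty by compactness despite the fact that divergence is not monotone under oracle restriction. Since $\wkl$ admits preservation of $k$ non-c.e.\ definitions, relativized to $X\oplus C$, there is $(Y_0,Y_1)\in\mathcal{K}$ with none of the $B$'s being $Y_0\oplus Y_1\oplus X\oplus C$-c.e.; as $Y_0\cup Y_1=X$ is infinite, one of $Y_0,Y_1$ is infinite, say $Y_0$. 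Then $(\sigma,Y_0)$ is a valid condition, and since any convergent $W^{(G\cap\overline{A_{i_0}})\oplus C}_{e_{i_0}}(x)$ with $G\setminus\sigma\subseteq Y_0$ would use only a finite subset of $Y_0$ as its ``$G$-part'', $(\sigma,Y_0)$ forces $x\notin W^{(G\cap\overline{A_{i_0}})\oplus C}_{e_{i_0}}$, hence (as $x\in B_j$) forces $W^{(G\cap\overline{A_{i_0}})\oplus C}_{e_{i_0}}\neq B_j$. I expect the main obstacle to be precisely the design of $V$: it must be simultaneously $(X\oplus C)$-c.e.\ (independent of the arbitrary partition), exploitable in the ``force in'' direction via the true cover, and have a complement strong enough to feed a genuinely nonempty closed class in the ``force out'' direction; once $V$ is set up correctly, both cases and the verification that $d$ has the required properties are routine.
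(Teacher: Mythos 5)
Your proof is correct and follows essentially the same route as the paper's: the paper defines a set $W$ of elements $a$ such that for every $2$-cover of the \emph{infinite} reservoir $X$ one side permits forcing $a$ in (relying on an implicit compactness argument for $W$ being $(X\oplus C)$-c.e.), then splits on $a\in W\setminus B_j$ (extend the stem via the true cover $X\cap\overline{A_{i_0}}$, $X\cap\overline{A_{i_1}}$) versus $a\in B_j\setminus W$ (apply $\wkl$-preservation to the nonempty $\Pi^{0,X\oplus C}_1$ class of good covers and shrink the reservoir). Your $V$ is exactly the paper's $W$ with the compactness made explicit through the finite $E$, and the remaining steps match.
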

\begin{proof}
Let~$W$ be the set of all $a \in \omega$ such that for every $2$-cover~$Z_{i_0} \cup Z_{i_1} = X$, 
there is some $i \in \{i_0,i_1\}$ and some set~$G_i \subseteq Z_i$ 
such that~$a \in W_{e_i}^{(G_i/(\sigma \cap \overline{A_i})) \oplus C}$.
The set~$W$ is~$X \oplus C$-c.e. Therefore~$W \neq B_j$. Let~$a \in W \Delta B_j$. We have two cases:
\begin{itemize}
	\item Case 1: $a \in W \setminus B_j$.
	By definition of~$W$, taking in particular the sets $Z_{i_0} = X \cap \overline{A_{i_0}}$
	and~$Z_{i_1} = X \cap \overline{A_{i_1}}$, there is some~$i \in \{i_0,i_1\}$
	and some finite set~$G_i \subseteq Z_i$ such that~$a \in W_{e_i}^{(G_i/(\sigma \cap \overline{A_i}))\oplus C}$.
	The condition~$d = (G_i/\sigma, X)$
	is an extension forcing~$W_{e_i}^{(G \cap \overline{A_i}) \oplus C} \neq B_j$.

	\item Case 2: $a \in B_j \setminus W$.
	Let $\Ccal$ be the $\Pi^{0,X \oplus C}_1$ class of sets $Z_{i_0} \oplus Z_{i_1}$ such that
	$Z_{i_0} \cup Z_{i_1} = X$ and for every~$i \in \{i_0,i_1\}$
	and every set~$G_i \subseteq Z_i$ $a \not \in W_{e_i}^{(G_i/(\sigma \cap \overline{A_i})) \oplus C}$.
	By definition of~$W$, $\Ccal$ is non-empty. 
	As $\wkl$ admits preservation of $k$ non-c.e.\ definitions, there exists some~$Z_{i_0} \oplus Z_{i_1} \in \Ccal$
	such that none of the~$B$'s are~$Z_{i_0} \oplus Z_{i_1} \oplus X \oplus C$-c.e. 
	Let~$i \in \{i_0,i_1\}$ be such that $Z_i$ is infinite. The condition~$d = (\sigma, Z_i)$
	is an extension of~$c$ forcing~$W_{e_i}^{(G \cap \overline{A_i}) \oplus C} \neq B_j$.
\end{itemize}
\end{proof}

As usual, the following lemma iterates Lemma~\ref{lem:ts2-reduc-step}
and uses the fact that~$k+1 > k$ to satisfy the requirement~$\Rcal_{\vec{e}}$.

\begin{lemma}\label{lem:ts2-reduc-force-R}
For every condition~$c$,
and every indices~$\vec{e}$, there exists
an extension~$d$ forcing~$\Rcal_{\vec{e}}$.
\end{lemma}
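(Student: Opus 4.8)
The plan is to iterate Lemma~\ref{lem:ts2-reduc-step} as a tournament among the $k+1$ indices $0,1,\dots,k$, exploiting that there are more indices than the $k$ sets $B_0,\dots,B_{k-1}$ we must diagonalize against. Recall that $\Rcal_{\vec{e}}$ is satisfied as soon as, for some single $i \leq k$, the set $W^{(G \cap \overline{A_i}) \oplus C}_{e_i}$ differs from every $B_j$. A disagreement, once witnessed, is permanent --- in the proof of Lemma~\ref{lem:ts2-reduc-step}, case~1 produces a finite element of $W \setminus B_j$ and case~2 thins the reservoir so the relevant element can never be enumerated --- so ``forcing $W^{(G \cap \overline{A_i}) \oplus C}_{e_i} \neq B_j$'' is closed under extension of conditions. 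Hence it suffices to reach a condition that forces $W^{(G \cap \overline{A_{i^{*}}}) \oplus C}_{e_{i^{*}}} \neq B_j$ for every $j < k$, for one well-chosen index $i^{*}$.

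First I would fix $j < k$ and run one tournament. Maintaining the condition reached so far, say an index $i \leq k$ is \emph{eliminated for $B_j$} once some condition forces $W^{(G \cap \overline{A_i}) \oplus C}_{e_i} \neq B_j$. While at least two indices are not yet eliminated, pick two of them, $i_0 < i_1$, and apply Lemma~\ref{lem:ts2-reduc-step} with parameters $i_0,i_1,j,\vec{e}$; this is legitimate since every extension of a condition is again a condition, and it eliminates at least one of $i_0,i_1$. Each step drops the number of non-eliminated indices by at least one, so after at most $k$ steps at most one index $s^{(j)}$ remains non-eliminated, and the resulting condition forces $W^{(G \cap \overline{A_i}) \oplus C}_{e_i} \neq B_j$ for every $i \neq s^{(j)}$.

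Then I would perform these tournaments successively for $j = 0,1,\dots,k-1$, each time starting from the condition reached so far, and let $d$ be the final condition; by upward closure, $d$ still forces $W^{(G \cap \overline{A_i}) \oplus C}_{e_i} \neq B_j$ for every $j < k$ and every $i \neq s^{(j)}$. The decisive step is pigeonhole: $\{ s^{(j)} : j < k \}$ has at most $k$ members while there are $k+1$ indices, so there is some $i^{*} \leq k$ with $i^{*} \notin \{ s^{(j)} : j < k \}$. For every $j < k$ we have $i^{*} \neq s^{(j)}$, hence $d$ forces $W^{(G \cap \overline{A_{i^{*}}}) \oplus C}_{e_{i^{*}}} \neq B_j$; therefore $d$ forces $\bigwedge_{j < k} W^{(G \cap \overline{A_{i^{*}}}) \oplus C}_{e_{i^{*}}} \neq B_j$, and in particular $d$ forces $\Rcal_{\vec{e}}$.

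I expect no genuine obstacle here: the only points needing care are the verification that ``forcing $W^{(G \cap \overline{A_i}) \oplus C}_{e_i} \neq B_j$'' is preserved downward in the forcing order (immediate from the shape of the proof of Lemma~\ref{lem:ts2-reduc-step}) and the bookkeeping of which index survives each tournament, both routine. The structural heart is simply that $k+1$ indices cannot all be pinned to $k$ sets, which is exactly where the hypothesis $\ell = k+1 > k$ enters.
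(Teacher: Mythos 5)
Your proof is correct and follows essentially the same route as the paper's: iterate Lemma~\ref{lem:ts2-reduc-step} so that for each $j < k$ the disagreement with $B_j$ is forced at $k$ of the $k+1$ indices, then apply the pigeonhole principle to find a single $i^{*}$ working for all $j$. Your ``tournament'' description merely spells out explicitly the iteration (and the downward closure of forcing) that the paper states in one line.
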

\begin{proof}
Fix a condition~$c$, and iterate applications of Lemma~\ref{lem:ts2-reduc-step}
to obtain an extension~$d$ such that for each~$j < k$, 
$d$ forces~$W_{e_i}^{(G \cap \overline{A_i}) \oplus C} \neq B_j$ for $k$ different~$i$'s.
By the pigeonhole principle, there exists some~$i \leq k$
such that $d$ forces~$W_{e_i}^{(G \cap \overline{A_i}) \oplus C} \neq B_j$ for each~$j < k$.
Therefore, $d$ forces~$\Rcal_{\vec{e}}$.
\end{proof}

\subsubsection{Construction}

Thanks to Lemma~\ref{lem:ts2-reduc-force-Q} and Lemma~\ref{lem:ts2-reduc-force-R}, 
define an infinite descending sequence of conditions
$(\varepsilon, \omega) \geq c_0 \geq \dots$ such that for each~$s \in \omega$,
\begin{itemize}
	\item[(a)] $c_s$ forces~$\Qcal_s$
	\item[(b)] $c_s$ forces~$\Rcal_{\vec{e}}$ if~$s = \tuple{\vec{e}}$
\end{itemize}
where~$c_s = (\sigma_s, X_s)$. Let~$G = \bigcup_s \sigma_s$.
By (a), $G \cap \overline{A_i}$ is infinite for every~$i \leq k$ and by (b),
$G$ satisfies each requirement~$\Rcal_{\vec{e}}$.
This finishes the proof of Theorem~\ref{thm:ts1-strong-preservation}.

\subsection{Thin set theorem for pairs and reverse mathematics}\label{sect:preservation-ts2-omega-models}

There is a fundamental difference in the way we proved that~$\rt^1_k \not \leq_{sc} \rt^1_\ell$
and that~$\ts^1_\ell \not \leq_{sc} \ts^1_k$ whenever~$k > \ell$. In the former case,
we have built an instance~$I$ of~$\rt^1_k$ satisfying some hyperimmunity properties,
and used those properties to construct a solution~$X$ to each instance of~$\rt^1_\ell$
which does not compute a solution to~$I$. We did not ensure that those hyperimmunity properties
are preserved relative to the solution~$X$, which prevents us from iterating the construction.
As it happens, those properties are not preserved as multiple applications
of~$\rt^1_\ell$ are sufficient to compute a solution to~$I$.
In the latter case, we proved that~$\ts^1_\ell$ has an instance whose solutions do not preserve some definitional property,
whereas each instance of~$\ts^1_k$ has a solution preserving it. This preservation enables us to
iterate the applications of~$\ts^1_k$ and build $\omega$-structures whose
second-order part is made of sets preserving this property.
We will take advantage of those observations to obtain new separations in reverse mathematics.

In this section, we prove that~$\ts^2_{k+1}$ does not imply~$\ts^2_k$ over~$\rca$
for every~$k \geq 2$. In particular, we answer several questions
asked by Cholak, Giusto, Hirst and Jockusch~\cite{Cholak2001Free} and by Mont\'alban~\cite{Montalban2011Open}
about the relation between~$\rt^2_2$ and~$\ts^2$.
Dorais et al.~\cite{Dorais2016uniform} proved that~$\rca \vdash \ts^n_k \imp \aca$ for~$n \geq 3$ 
whenever~$k$ is not large enough. Therefore we cannot hope to obtain the same separation
result over~$\rca$ for arbitrary tuples. However, we shall see that
$\ts^n_k$ is not computably reducible to~$\ts^n_{k+1}$ for~$n, k \geq 2$.

\begin{theorem}\label{thm:ts2-omega-models}
For every~$k \geq 2$,
let~$\Phi$ be the conjunction of~$\coh$, $\wkl$, $\rrt^2_2$,
$\pizog$, $\emo$, $\ts^2_{k+1}$.
Over~$\rca$, $\Phi$ implies neither~$\sts^2_k$ nor~$\sads$.
\end{theorem}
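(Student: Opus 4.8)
The plan is to prove each of the two non-implications by building an $\omega$-model of $\Phi$ that fails the principle in question, via Wang's framework of preservation of non-c.e.\ definitions. Recall that $\coh$, $\wkl$, $\rrt^2_2$, $\pizog$ and $\emo$ each admit preservation of $k$ non-c.e.\ definitions for every $k$. If I can show in addition that $\ts^2_{k+1}$ admits preservation of $k$ non-c.e.\ definitions, then by the standard iteration — bookkeeping through all instances of all six principles and at each step replacing an instance by a solution preserving a fixed family of non-c.e.\ sets — for any sets $A_0,\dots,A_{k-1}$ that are non-c.e.\ I obtain a Turing ideal $\mathcal{I}$ with $\mathcal{I}\models\rca+\Phi$ and no member of $\mathcal{I}$ enumerating any $A_i$ (since $k\geq2$, fewer than $k$ sets are handled the same way). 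It then suffices to exhibit, for $\sts^2_k$ and for $\sads$ separately, a \emph{computable} instance all of whose solutions enumerate one of at most $k$ fixed non-c.e.\ sets; feeding those sets into the construction produces $\omega$-models of $\Phi$ that are respectively not models of $\sts^2_k$ and not models of $\sads$.

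The key lemma is that $\ts^2_{k+1}$ admits preservation of $k$ non-c.e.\ definitions, which I would prove by the usual cohesive decomposition, building on Theorem~\ref{thm:ts1-strong-preservation}. Given a base $C$ preserving non-c.e.\ definitions of $A_0,\dots,A_{k-1}$ and a $C$-computable coloring $f:[\omega]^2\to k+1$, first apply $\coh$-preservation to the $C$-computable sequence $(\{s:f(x,s)=c\})_{x,c}$ to get an $\vec R$-cohesive set $D$ with $D\oplus C$ still preserving the $A_i$'s; writing $D=\{d_0<d_1<\dots\}$, the map $\hat f(a)=\lim_b f(d_a,d_b)$ is a $(k+1)$-partition of $\omega$ that is $\Delta^0_2$ in $D\oplus C$. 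Apply Theorem~\ref{thm:ts1-strong-preservation} relativized to $D\oplus C$ to obtain an infinite $\hat f$-thin set $H_0$, with omitted color $i^{*}$, such that $H_0\oplus D\oplus C$ preserves the non-c.e.\ definitions of the $A_i$'s. Finally recover a genuine $f$-thin set by a greedy effective thinning performed \emph{inside} $D$: enumerate $\{d_a:a\in H_0\}$ and keep an element only when $f$ assigns it a color $\neq i^{*}$ against all previously kept elements. Since $\lim_b f(d_a,d_b)\neq i^{*}$ for each $a\in H_0$, each such search terminates, and the resulting set $G$ is $f$-thin and computable from $H_0\oplus D\oplus f$, so $G\oplus C$ still preserves the $A_i$'s. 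The delicate point is the relabeling along $D$: it is precisely what converts the tail information of $\hat f$ into a statement about pairs drawn from $G$.

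For the diagonalizations I would use Theorem~\ref{thm:ts1-non-preservation}. For $\sts^2_k$ it is immediate: the stable computable $g:[\omega]^2\to k$ produced there is a computable $\sts^2_k$-instance with limit partition $B_0,\dots,B_{k-1}$, and any infinite $g$-thin set $H$ with omitted color $i$ satisfies $H\subseteq^{*}\overline{B_i}$ (otherwise some $x\in H\cap B_i$ would eventually witness $g(x,y)=i$ for $y\in H$), whence $\overline{B_i}$ is $H$-c.e.\ by the property of $g$; taking $A_i=\overline{B_i}$, the resulting $\omega$-model of $\Phi$ contains $g$ but no $g$-thin set. For $\sads$ I would invoke Theorem~\ref{thm:ts1-non-preservation} with $k=2$ and observe that the transitivity law $f(x,y)\neq i\wedge f(y,z)\neq i\Rightarrow f(x,z)\neq i$ imposed there, for $i\in\{0,1\}$, says exactly that both color classes of $f$ are transitive, so the tournament defined by $x\prec y\Leftrightarrow(x<_{\mathbb N}y\wedge f(x,y)=1)\vee(y<_{\mathbb N}x\wedge f(y,x)=0)$ is a computable linear order, stable because $f$ is; since $\overline{B_0}$ and $\overline{B_1}$ are non-c.e.\ they are infinite, so $\prec$ has order type $\omega+\omega^{*}$ with the two infinite parts $B_1=\overline{B_0}$ and $B_0=\overline{B_1}$. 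An infinite ascending (resp.\ descending) sequence for $\prec$ is then almost contained in $\overline{B_0}$ (resp.\ $\overline{B_1}$) and therefore enumerates it; taking $A_0=\overline{B_0}$, $A_1=\overline{B_1}$ (and recalling $2\leq k$, so all six principles preserve these two definitions), the resulting $\omega$-model of $\Phi$ contains this linear order but no ascending or descending sequence for it, hence fails $\sads$.

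The main obstacle I anticipate is the $\ts^2_{k+1}$-preservation lemma: one must thread the strong-preservation hypothesis of Theorem~\ref{thm:ts1-strong-preservation} through the cohesive decomposition while keeping exact track of which oracles compute the final thin set, and in particular perform the closing thinning step inside the cohesive set rather than inside $\omega$. Once this lemma and Wang's preservation results are in hand, the $\omega$-model iteration is routine, and the only other mildly subtle point is recognising that the $k=2$ instance of Theorem~\ref{thm:ts1-non-preservation} simultaneously serves as a stable linear order and hence disposes of $\sads$.
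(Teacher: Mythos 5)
Your proposal is correct and follows the same overall strategy as the paper: establish that $\ts^2_{k+1}$ admits preservation of $k$ non-c.e.\ definitions (the paper's Corollary~\ref{cor:ts2-preservation}, obtained by feeding the strong preservation of Theorem~\ref{thm:ts1-strong-preservation} through the cohesive decomposition, exactly as you sketch), combine with Wang's preservation results for the other conjuncts, and then diagonalize via computable instances whose solutions necessarily enumerate a fixed family of non-c.e.\ sets, invoking the iterated-forcing-into-an-$\omega$-model machinery (Wang's Proposition~2.4).

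The one place you genuinely depart from the paper is in the $\sads$ diagonalization. The paper handles this by quoting Jockusch's observation (Theorem~\ref{thm:sads-non-preservation}) based on Tennenbaum's classical computable linear order of type $\omega+\omega^*$ with no computable ascending or descending sequence. You instead extract the linear order directly from the $k=2$ instance of Theorem~\ref{thm:ts1-non-preservation}: the transitivity constraint $f(x,y)\neq i\wedge f(y,z)\neq i\Rightarrow f(x,z)\neq i$ for both $i\in\{0,1\}$ makes your tournament $\prec$ a genuine linear order (I checked all six casings of $x,y,z$ and the claim holds), stability of $f$ places each element eventually to the left or right of all later elements, and non-c.e.-ness of $\overline{B_0}$ and $\overline{B_1}$ forces both parts to be infinite, giving type $\omega+\omega^*$. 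This is a pleasant unification — one construction serves both the $\sts^2_k$ and the $\sads$ diagonalizations — but it proves the same fact as the paper's citation, so the overall proof is essentially the same. One tiny point worth making explicit: preservation of $k$ non-c.e.\ definitions formally yields preservation of $j\leq k$ definitions by padding the list with repeats, which is what justifies running the $k$-preservation machinery against the two sets $\overline{B_0},\overline{B_1}$ in the $\sads$ case; you gesture at this with ``since $k\geq 2$'' but it is worth saying out loud.
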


The proof of Theorem~\ref{thm:ts2-omega-models} follows Corollary~\ref{cor:ts2-preservation}.
Cholak et al.~\cite{Cholak2001Free} and Mont\'alban~\cite{Montalban2011Open} asked whether~$\ts^2$ implies~$\rt^2_2$
over~$\rca$. Thanks to Theorem~\ref{thm:ts2-omega-models}, we answer negatively,
noticing that~$\ts^2_2$ is the statement~$\rt^2_2$
and~$\rca \vdash \ts^2_k \imp \ts^2$ for every~$k \geq 2$ (see Dorais et al.~\cite{Dorais2016uniform}).

\begin{corollary}
$\ts^2$ does not imply~$\rt^2_2$ over~$\rca$.
\end{corollary}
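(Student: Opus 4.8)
The plan is to obtain this corollary as a purely formal consequence of Theorem~\ref{thm:ts2-omega-models}, using only two elementary observations about the statements involved. First, $\ts^2_2$ is literally $\rt^2_2$: a set is thin for $f : [\omega]^2 \to 2$ exactly when $f$ uses at most one color on it, i.e.\ when it is $f$-homogeneous. Hence $\sts^2_2$ is $\srt^2_2$, the restriction of $\rt^2_2$ to stable colorings, and in particular $\rca \vdash \rt^2_2 \imp \sts^2_2$. Second, by Dorais et al.~\cite{Dorais2016uniform} one has $\rca \vdash \ts^2_k \imp \ts^2$ for every $k \geq 2$, so in particular $\rca \vdash \ts^2_3 \imp \ts^2$.

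I would then argue by contradiction. Suppose $\rca \vdash \ts^2 \imp \rt^2_2$, and apply Theorem~\ref{thm:ts2-omega-models} with $k = 2$: since the conjunction $\Phi$ of $\coh$, $\wkl$, $\rrt^2_2$, $\pizog$, $\emo$ and $\ts^2_3$ does not imply $\sts^2_2$ over $\rca$, there is a model $\Mcal \models \rca + \Phi$ with $\Mcal \not\models \sts^2_2$. Because $\Mcal \models \ts^2_3$, the first observation together with $\ts^2_3 \imp \ts^2$ yields $\Mcal \models \ts^2$; the assumed implication then gives $\Mcal \models \rt^2_2$; and $\rca \vdash \rt^2_2 \imp \sts^2_2$ forces $\Mcal \models \sts^2_2$, contradicting the choice of $\Mcal$. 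Therefore $\ts^2$ does not imply $\rt^2_2$ over $\rca$.

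All the genuine content lives in Theorem~\ref{thm:ts2-omega-models}; the corollary is a short chain of implications, and the only points requiring care are the directions $\rt^2_2 \equiv \ts^2_2$, $\ts^2_2 \imp \sts^2_2$ and $\ts^2_k \imp \ts^2$, none of which is an obstacle. If one prefers to avoid invoking completeness, one can instead note that the proof of Theorem~\ref{thm:ts2-omega-models} in fact produces an $\omega$-model of $\Phi$ omitting $\sts^2_2$ (and $\sads$), and run exactly the same deduction inside that $\omega$-model.
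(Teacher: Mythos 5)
Your proof is correct and takes essentially the same approach as the paper, which also derives the corollary from Theorem~\ref{thm:ts2-omega-models} with $k=2$ via the two observations that $\ts^2_2$ is $\rt^2_2$ (so $\rt^2_2\imp\sts^2_2$) and that $\rca\vdash\ts^2_3\imp\ts^2$. The paper states this more tersely, but the chain of implications you spell out is exactly the intended one.
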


Using the standard trick of prehomogeneous sets, we can generalize
from computable non-reducibility over pairs to arbitrary tuples.

\begin{corollary}\label{cor:ts-non-computable-reduction}
For every~$k, n \geq 2$ there exists a~$\Delta^0_n$ $k$-partition
$B_0 \cup \dots \cup B_{k-1} = \omega$ such that every computable
coloring $f : [\omega]^n \to k+1$ has an infinite $f$-thin set
computing no set thin for the~$B$'s. 
\end{corollary}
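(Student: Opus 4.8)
The plan is to reduce the case of arbitrary tuples to the already-established case $n=2$ (Theorem~\ref{thm:ts2-omega-models}, via its extraction of a $\Delta^0_2$ $k$-partition) using the standard prehomogeneity trick, exactly as in the proof that $\srt^n_k \not\leq_c \rt^n_\ell$ for arbitrary $n$. First I would prove the statement in relativized form and argue by induction on $n$. For the base case $n=2$, I would invoke the combinatorial content behind Theorem~\ref{thm:ts2-omega-models}: iterating the preservation-of-non-c.e.-definitions machinery produces a $\Delta^0_2$ $k$-partition $B_0 \cup \dots \cup B_{k-1} = \omega$ (coming from Theorem~\ref{thm:ts1-non-preservation}) whose complements $\overline{B_j}$ are non-c.e.\ but c.e.\ in any infinite subset, together with the fact that $\ts^2_{k+1}$ admits preservation of $k$ non-c.e.\ definitions (from Theorem~\ref{thm:ts1-strong-preservation} passed through cohesiveness, as in Theorem~\ref{thm:rt2-hyperimmunity}); combining these gives, for the associated stable computable $g$ approximating the $B$'s, that every computable $f:[\omega]^2\to k+1$ has an infinite $f$-thin set not computing any set thin for the $B$'s.

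For the inductive step, assume the relativized statement holds at level $n$ and fix a computable $f:[\omega]^{n+1}\to k+1$. Using the relativized low basis theorem, pick $P \gg \emptyset^{(n-1)}$ with $P' \leq_T \emptyset^{(n)}$. By Jockusch's Lemma~5.4 of~\cite{Jockusch1972Ramseys}, there is an infinite set $C \leq_T P$ prehomogeneous for $f$; define the $P$-computable coloring $\tilde f : [C]^n \to k+1$ by $\tilde f(\sigma) = f(\sigma, a)$ for any $a \in C$ with $a > \max(\sigma)$. Every set thin for $\tilde f$ is thin for $f$. Applying the induction hypothesis relative to $P$ yields a $\Delta^{0,P}_2$ (hence $\Delta^0_{n+1}$) $k$-partition $B_0 \cup \dots \cup B_{k-1} = \omega$ such that every $P$-computable coloring of $[\omega]^n$ into $k+1$ colors has an infinite thin set computing (even together with $P$) no set thin for the $B$'s. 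Feeding $\tilde f$ into this gives an infinite $\tilde f$-thin (hence $f$-thin) set $H$ with $H \oplus P$ computing no set thin for the $B$'s; in particular $H$ alone does not, which is what we need.

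The main obstacle I expect is getting the quantifier over solutions right through the prehomogeneity reduction: the set $C$ is only $P$-computable rather than computable, so the thin set $H$ we produce is not computed by $f$ directly — we must carry the oracle $P$ along in the induction hypothesis and check that the conclusion ``$H$ computes no set thin for the $B$'s'' survives (it does, since we arrange $H \oplus P$, not merely $H$, to fail to compute such a set, and $H \leq_T H \oplus P$). A secondary point is bookkeeping the complexity: each inductive step adds one jump, so the partition at level $n$ is $\Delta^0_n$ as claimed, and the base case must genuinely be $\Delta^0_2$, which it is because it arises as the limit of a stable computable function. No new combinatorics is needed beyond what Theorem~\ref{thm:ts2-omega-models} already supplies; this corollary is a purely structural consequence.
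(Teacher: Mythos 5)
Your proposal is correct and follows essentially the same route as the paper: a relativized induction on $n$ whose base case extracts the $\Delta^0_2$ $k$-partition with non-c.e.\ complements from Theorem~\ref{thm:ts1-non-preservation} and combines it with Corollary~\ref{cor:ts2-preservation} (which is Theorem~\ref{thm:ts1-strong-preservation} lifted through $\coh$ via Lemma~\ref{lem:ts-strong-to-weak}, the non-c.e.\ analogue of what you were gesturing at with Theorem~\ref{thm:rt2-hyperimmunity}), and whose inductive step uses the relativized low basis theorem and Jockusch's prehomogeneous-set lemma exactly as you describe. The point you flag---that $C$ is only $P$-computable, so the induction hypothesis must be carried relative to $P$ and must control $H \oplus P$ rather than $H$ alone---is indeed where the care is needed, and you handle it correctly.
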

\begin{proof}
This is proved in a relativized form by induction over~$n \geq 2$.
The case~$n = 2$ is obtained by relativizing the proof of Theorem~\ref{thm:ts2-omega-models},
which shows indeed the existence of a $\Delta^0_2$ $k$-partition $B_0 \cup \dots \cup B_{k-1} = \omega$
such that every computable coloring $f : [\omega]^2 \to k+1$ has an infinite $f$-thin set computing no set thin for the~$B$'s.
Now assume it holds for some~$n$ in order to prove it for~$n+1$.
By the relativized low basis theorem~\cite{Jockusch197201},
let~$P \gg \emptyset^{(n-1)}$ be such that~$P' \leq \emptyset^{(n)}$.
Applying the induction hypothesis to~$P$,
there is a~$\Delta^{0,P}_2$ (hence~$\Delta^0_{n+1}$) $k$-partition~$B_0 \cup \dots \cup B_{k-1} = \omega$
such that each $P$-computable coloring $f: [\omega]^n \to k+1$
has an infinite $f$-homogeneous set~$H$ such that~$H \oplus P$
does not compute an infinite set thin for the~$B$'s.

Let~$f : [\omega]^{n+1} \to k+1$ be a computable coloring.
By Jockusch~\cite[Lemma 5.4]{Jockusch1972Ramseys}, there exists an infinite set~$C$ pre-homogeneous for~$f$
such that~$C \leq_T P$.
Let~$\tilde{f} : [C]^n \to k+1$ be the~$P$-computable coloring defined for each~$\sigma \in [C]^n$
by~$\tilde{f}(\sigma) = f(\sigma, a)$, where~$a \in A$, $a > max(\sigma)$.
Every $\tilde{f}$-thin set is~$f$-thin.
By definition of~$B_0 \cup \dots \cup B_{k-1} = \omega$, 
there exists an infinite~$\tilde{f}$-thin (hence $f$-thin) set~$H$
such that $H \oplus P$ does not compute an infinite set thin for the~$B$'s.
\end{proof}

\begin{corollary}
For every~$k, n \geq 2$, $\sts^n_k \not \leq_c \ts^n_{k+1}$
\end{corollary}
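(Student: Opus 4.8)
The plan is to derive this immediately from Corollary~\ref{cor:ts-non-computable-reduction} by exhibiting, for each $n,k \geq 2$, a single stable computable $\sts^n_k$-instance that refutes the reduction. The idea is to realize the $\Delta^0_n$ $k$-partition furnished by that corollary as the iterated limit of a stable computable coloring of $n$-tuples, arranged so that every thin set for the coloring is already thin for the partition.

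Fix $n,k \geq 2$ and let $B_0 \cup \dots \cup B_{k-1} = \omega$ be the $\Delta^0_n$ $k$-partition given by Corollary~\ref{cor:ts-non-computable-reduction}, so that every computable coloring $f : [\omega]^n \to k+1$ has an infinite $f$-thin set computing no infinite set thin for the $B$'s. Let $p : \omega \to k$ be defined by $p(x) = j$ iff $x \in B_j$; then $p \leq_T \emptyset^{(n-1)}$, so by $n-1$ iterated applications of Schoenfield's limit lemma~\cite{Shoenfield1959degrees} there is a computable $g_0 : \omega^n \to k$ with $p(x) = \lim_{s_{n-1}} \cdots \lim_{s_1} g_0(x,s_1,\dots,s_{n-1})$. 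Define a computable coloring $g : [\omega]^n \to k$ by $g(x_1,\dots,x_n) = g_0(x_1,x_n,x_{n-1},\dots,x_2)$ on increasing tuples $x_1 < \dots < x_n$, i.e.\ putting the innermost limit variable $s_1$ on the largest coordinate. Then for every $(n-1)$-tuple $(x_1,\dots,x_{n-1})$ the limit $\lim_{x_n} g(x_1,\dots,x_n)$ exists, so $g$ is stable and hence a legitimate $\sts^n_k$-instance, and moreover $\lim_{x_2} \lim_{x_3} \cdots \lim_{x_n} g(x_1,\dots,x_n) = p(x_1)$.

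Next I would establish the transfer step: every infinite $g$-thin set $H$ satisfies $H \cap B_i = \emptyset$ for some $i < k$, so that $H$ is itself thin for the $B$'s. Pick $i < k$ with $i \notin g([H]^n)$ and suppose for contradiction that some $x_1 \in H$ lies in $B_i$, so the iterated limit displayed above equals $i$. Peeling the limits from the outside and repeatedly using that $H$ is infinite, choose successively $x_2 < x_3 < \dots < x_n$ in $H$, each above the relevant threshold depending only on the earlier choices, so that in the end $g(x_1,\dots,x_n) = i$ with $x_1 < \dots < x_n$ all in $H$ — contradicting $i \notin g([H]^n)$. Hence $H \cap B_i = \emptyset$.

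Finally, suppose $\sts^n_k \leq_c \ts^n_{k+1}$ and apply the reduction to the computable instance $g$: it produces a $\ts^n_{k+1}$-instance $J \leq_T g$, which is therefore a computable coloring $f : [\omega]^n \to k+1$, such that every infinite $f$-thin set $X$ has $X \oplus g$ — hence $X$ itself, since $g$ is computable — compute an infinite $g$-thin set. By Corollary~\ref{cor:ts-non-computable-reduction}, $f$ has an infinite $f$-thin set $H$ computing no infinite set thin for the $B$'s; yet $H$ computes an infinite $g$-thin set $H'$, which by the transfer step is thin for the $B$'s, a contradiction. The main obstacle is the bookkeeping in the construction of $g$ and in the peeling argument — arranging the coordinates of $g_0$ so that $g$ is simultaneously stable and transfers thinness to the partition — and this is where I would spend the most care; the rest is a routine unwinding of the definition of $\leq_c$.
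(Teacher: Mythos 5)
Your proof is correct and, as far as the paper's (unstated) argument can be inferred, it takes the same route: the paper clearly intends for this corollary to be read off from Corollary~\ref{cor:ts-non-computable-reduction} via the observation that a $\Delta^0_n$ $k$-partition can be coded by a stable computable $k$-coloring of $[\omega]^n$ whose thin sets are thin for the partition, exactly as it does explicitly for $n=2$ in the proof of Corollary~\ref{cor:sts2k-not-leqc-sts2l}. What the paper leaves entirely implicit — and what you supply carefully — is the bridge for $n > 2$: the iterated limit lemma gives a computable $g_0$, and placing the innermost limit variable on the last coordinate makes the resulting $g : [\omega]^n \to k$ genuinely stable (so it is a legal $\sts^n_k$-instance) while the peeling argument shows that an infinite $g$-thin set avoiding color $i$ must be disjoint from $B_i$. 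That transfer step is the nontrivial content here and your treatment of it is sound, including the point that the intermediate iterated limits exist pointwise by the way Schoenfield's lemma is iterated. The only cosmetic remark is that the final unwinding should note that the $g$-thin set $H'$ produced from $X$ via the reduction may be assumed infinite (finite sets are vacuously thin), which you in effect do by applying the transfer step to it.
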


We proved in section~\ref{sect:ts-strong-reduc} that $\sts^2_k$ does not admit
preservation of~$k$ non-c.e.\ definitions. Jockusch noticed (see Hirschfeldt and Shore~\cite{Hirschfeldt2007Combinatorial})
that $\sads$ does not admit preservation of 2 non-c.e.\ definitions. 
We give the proof for the sake of completeness.

\begin{theorem}\label{thm:sads-non-preservation}
$\sads$ does not admit preservation of~2 non-c.e.\ definitions.
\end{theorem}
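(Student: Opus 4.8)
The plan is to exhibit a single computable stable linear order $\preceq$ on $\omega$, of order type $\omega + \omega^*$, whose initial segment $L_0 = \{x : \{y : y \preceq x\} \text{ is finite}\}$ and final segment $L_1 = \{x : \{y : y \succeq x\} \text{ is finite}\}$ are both non-c.e., and then to witness the failure of preservation with $C = \emptyset$, the non-c.e.\ sets $A_0 = L_0$, $A_1 = L_1$, and the $\emptyset$-computable $\sads$-instance $\preceq$. It then suffices to show that \emph{every} $\sads$-solution $Y$ to $\preceq$ fails to preserve a non-c.e.\ definition of $A_0$ or of $A_1$: concretely, that an infinite $\preceq$-ascending $Y$ makes $L_0$ a $Y$-c.e.\ set, and an infinite $\preceq$-descending $Y$ makes $L_1$ a $Y$-c.e.\ set.

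The core argument I would give is the following. First record two structural facts about $\omega + \omega^*$: an infinite $\preceq$-ascending sequence cannot meet $L_1$ (a term in $L_1\cong\omega^*$ would force an infinite ascending tail inside $\omega^*$), so it is contained in $L_0$, and being an infinite subset of $L_0\cong\omega$ it is cofinal in $L_0$; dually every infinite descending sequence lies in $L_1$ and is cofinal downward in $L_1$. Now let $Y=\{a_0 \prec a_1 \prec \dots\}$ be an infinite ascending solution and put $W = \{z : (\exists i)(a_0 \preceq z \ \wedge\ z \preceq a_i)\}$. Then $W$ is c.e.\ relative to $Y$ (dovetail over pairs $(i,z)$, checking the computable relation $\preceq$). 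Since each $a_i \in L_0$, every $z \in W$ lies in $L_0$, because nothing in $L_1$ is $\preceq$ an element of $L_0$. And since $Y$ is cofinal in $L_0$, every $z \in L_0$ with $z \succeq a_0$ lies in $W$, so $L_0 \setminus W \subseteq \{z : z \prec a_0\}$, which is finite as $a_0 \in L_0$. Hence $L_0 = A_0$ is the union of a $Y$-c.e.\ set and a finite set, so it is $Y$-c.e. The descending case is symmetric, using $W' = \{z : (\exists i)(a_i \preceq z \ \wedge\ z \preceq a_0)\}$ and $L_1$. Thus no solution of $\preceq$ preserves non-c.e.\ definitions of $A_0, A_1$, which is exactly the assertion that $\sads$ does not admit preservation of $2$ non-c.e.\ definitions.

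The one genuinely technical ingredient — and the main obstacle — is the construction of the computable linear order $\preceq$ of type $\omega+\omega^*$ with $L_0$ and $L_1$ both non-c.e.\ (equivalently, with $L_0$ properly $\Delta^0_2$). This is a standard finite-injury priority argument: one builds $\preceq$ by inserting $0,1,2,\dots$ one at a time, maintaining a split of the elements inserted so far into a ``left part'' and a ``right part'' (the former destined for $L_0$, the latter for $L_1$), and for each $e$ meets the requirements $W_e \neq L_0$ and $W_e \neq L_1$ via a fresh witness placed at the current boundary on one side and, if later enumerated into $W_e$, ``flipped'' to the other side — the flip being realized not by moving elements (their $\preceq$-relations are already frozen) but by thereafter inserting all new elements on the appropriate side of the witness, so that it acquires, or stops acquiring, infinitely many predecessors; lower-priority witnesses are restrained away from the endangered region and re-chosen when injured, and a round-robin of ``filler'' insertions keeps $L_0$ and $L_1$ infinite. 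The bookkeeping ensuring that every element's side stabilizes and that the two parts have order types $\omega$ and $\omega^*$ is routine; alternatively one may simply invoke the classical fact that there is a computable copy of $\omega+\omega^*$ in which both the $\omega$-part and the $\omega^*$-part are immune (hence non-c.e.). With $\preceq$ in hand, the argument of the previous paragraph completes the proof.
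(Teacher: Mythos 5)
Your proposal is correct and takes essentially the same route as the paper. The paper simply cites Tennenbaum's construction (via Rosenstein) of a computable linear order of type $\omega+\omega^*$ with no computable infinite ascending or descending sequence, sets $B_0,B_1$ to be the $\omega$- and $\omega^*$-parts, observes that these are non-c.e.\ (since an infinite c.e.\ subset of either part would yield a computable monotone sequence), and notes that the $\omega$-part (resp.\ $\omega^*$-part) is c.e.\ in any infinite ascending (resp.\ descending) sequence — precisely the $W$-argument you spell out; your longer passage about building such an order by a finite-injury argument is subsumed by the citation, and your own ``alternatively, invoke the classical fact'' remark is exactly what the paper does.
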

\begin{proof}
Tennenbaum (see Rosenstein~\cite{Rosenstein1982Linear})
constructed a computable linear order of order type~$\omega+\omega^{*}$
with no computable infinite ascending or descending sequence.
Let~$B_0$ be the~$\omega$-part and~$B_1$ be the $\omega^{*}$ part of this linear order.
Every infinite subset of~$B_0$ (resp. $B_1$) computes an infinite ascending (resp. descending) sequence,
therefore~$B_0$ and~$B_1$ are non-c.e.
The~$\omega$ part (resp. $\omega^{*}$ part) is c.e.\ in every infinite ascending (resp. descending) sequence.
\end{proof}

By Schoenfield's limit lemma~\cite{Shoenfield1959degrees}, a stable computable coloring over $(n+1)$-tuples
can be considered as a non-effective coloring over $n$-tuples. This consideration establishes
a bridge between preservation properties for colorings over~$(n+1)$-tuples and strong preservation 
properties for colorings over $n$-tuples. In particular, it enables us
to prove preservation results by induction over~$n$.
The following lemma has been proven by the author in its full generality in~\cite{PateyCombinatorial}.
Nevertheless we reprove it in the context of preservation of non-c.e.\ definitions.

\begin{lemma}\label{lem:ts-strong-to-weak}
For every~$k,n \geq 1$ and~$\ell \geq 2$, if~$\ts^n_\ell$ admits strong preservation of~$k$ non-c.e.\ definitions,
then $\ts^{n+1}_\ell$ admits preservation of~$k$ non-c.e.\ definitions.
\end{lemma}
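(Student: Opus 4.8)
The plan is to reduce a coloring $f:[\omega]^{n+1}\to\ell$ to a coloring of $n$-tuples via cohesiveness, exactly as in the standard proof of Ramsey's theorem, and then feed the resulting $n$-tuple coloring into the strong preservation hypothesis. More precisely, fix a set $C$ preserving non-c.e.\ definitions of non-$C$-c.e.\ sets $A_0,\dots,A_{k-1}$, and fix a $C$-computable coloring $f:[\omega]^{n+1}\to\ell$. Following the template of Theorem~\ref{thm:rt2-hyperimmunity}, the first step is to apply cohesiveness to the uniformly $C$-computable sequence of sets $R_\sigma = \{ t : f(\sigma, t) = 1\}$ (suitably indexed so that $\sigma$ ranges over finite tuples of integers), obtaining an infinite $\vec R$-cohesive set $M$. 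Along $M$, the coloring $f$ stabilizes: the map $\tilde f : [M]^n \to \ell$ defined by $\tilde f(\sigma) = \lim_{t\in M} f(\sigma, t)$ is well defined and $M\oplus C$-computable, and every infinite $\tilde f$-thin set (viewed inside $M$) is $f$-thin.

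The key point is that we must arrange for the cohesive set $M$ not to destroy the non-c.e.\ definitions, so that we can still invoke strong preservation of $\ts^n_\ell$ over the base $M\oplus C$. For this I would appeal to the preservation results for $\coh$: the statement $\coh$ admits preservation of $k$ non-c.e.\ definitions (this is Wang's theorem, quoted in the excerpt), so the cohesive set $M$ can be chosen so that $M\oplus C$ still preserves non-c.e.\ definitions of $A_0,\dots,A_{k-1}$ — i.e.\ no $A_j$ is $M\oplus C$-c.e. Now $M\oplus C$ is a base over which $A_0,\dots,A_{k-1}$ are non-c.e., and $\tilde f$ is an $(M\oplus C)$-computable instance of $\ts^n_\ell$. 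By the hypothesis that $\ts^n_\ell$ admits strong preservation of $k$ non-c.e.\ definitions (applied with the set $M\oplus C$ in place of $C$, and noting that the instance $\tilde f$ is even computable in $M\oplus C$ so the weaker statement would already apply), there is an infinite $\tilde f$-thin set $H$ such that $H\oplus(M\oplus C)$ preserves non-c.e.\ definitions of $A_0,\dots,A_{k-1}$.

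Finally I would set $G = H$ (as a subset of $M$); then $G$ is an infinite $f$-thin set, and $G\oplus C$ is computable from $H\oplus M\oplus C$, hence no $A_j$ is $(G\oplus C)$-c.e. This witnesses preservation of $k$ non-c.e.\ definitions for the instance $f$ of $\ts^{n+1}_\ell$, and since $C$ and $f$ were arbitrary (with $f$ being $C$-computable), it shows $\ts^{n+1}_\ell$ admits preservation of $k$ non-c.e.\ definitions. The main obstacle is the middle step: ensuring that the passage from $f$ to $\tilde f$ through cohesiveness is carried out with a cohesive set that does not itself introduce a c.e.\ enumeration of one of the $A_j$'s. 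This is exactly why one needs the known preservation property of $\coh$ (rather than an arbitrary cohesive set), and why the statement is phrased in terms of strong preservation over $n$-tuples: the base set is enriched by $M\oplus C$, and strong preservation is what lets us ignore the fact that $\tilde f$ is not plainly computable but only $(M\oplus C)$-computable. A minor technical point to check is the bookkeeping of the indexing of $\vec R$ by $n$-tuples and the uniform $C$-computability of the sequence, and the verification that $\tilde f$-thinness inside $M$ transfers to $f$-thinness in $\omega$; both are routine and follow the corresponding verifications in the proof of Theorem~\ref{thm:rt2-hyperimmunity}.
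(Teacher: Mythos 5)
Your proposal follows exactly the paper's argument: apply Wang's preservation of non-c.e.\ definitions for $\coh$ to obtain a cohesive set that keeps the $A_j$'s non-c.e., take the limit coloring $\tilde f$ along it, then invoke strong preservation of $\ts^n_\ell$ over the enriched base. One small fix: when $\ell > 2$, indexing $\vec R$ by $R_\sigma = \{t : f(\sigma,t)=1\}$ alone does not make $\lim_{t \in M} f(\sigma,t)$ exist (the values could still oscillate among the colors $\neq 1$); you need the family $R_{\sigma,i} = \{t : f(\sigma,t)=i\}$ for each $\sigma \in [\omega]^n$ and each $i < \ell$, which is what the paper uses.
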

\begin{proof}
Fix any set~$C$, $k$ non-$C$-c.e.\ sets~$A_0, \dots, A_{k-1}$ and any $C$-computable
coloring $f : [\omega]^{n+1} \to \ell$.
Consider the uniformly~$C$-computable sequence of sets~$\vec{R}$ defined for each~$\sigma \in [\omega]^n$ and~$i < \ell$ by
\[
R_{\sigma,i} = \{s \in \omega : f(\sigma,s) = i\}
\]
As~$\coh$ admits preservation of~$k$ non-c.e.\ definitions, there exists
some~$\vec{R}$-cohesive set~$G$ such that $G \oplus C$ preserves non-c.e.\ definitions
of the~$A$'s. The cohesive set induces a $(G \oplus C)'$-computable coloring~$\tilde{f} : [\omega]^n \to \ell$ defined by:
\[
(\forall \sigma \in [\omega]^n) \tilde{f}(\sigma) = \lim_{s \in G} f(\sigma,s)
\]
As ~$\ts^n_\ell$ admits strong preservation of $k$ non-c.e.\ definitions,
there exists an infinite $\tilde{f}$-thin set~$H$ such that
$H \oplus G \oplus C$ preserves non-c.e.\ definitions
of the~$A$'s. $H \oplus G \oplus C$ computes an infinite $f$-thin set.
\end{proof}

Using Theorem~\ref{thm:ts1-strong-preservation} together with Lemma~\ref{lem:ts-strong-to-weak}, 
we deduce the following corollary.

\begin{corollary}\label{cor:ts2-preservation}
For every~$k \geq 2$, $\ts^2_{k+1}$ admits preservation of~$k$ non-c.e.\ definitions.
\end{corollary}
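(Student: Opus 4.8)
The plan is simply to combine the two preceding results: apply Lemma~\ref{lem:ts-strong-to-weak} with $n = 1$ and $\ell = k+1$. Since $k \geq 2$ we have $\ell = k+1 \geq 2$, so the hypotheses of the lemma are met, and by Theorem~\ref{thm:ts1-strong-preservation} the statement $\ts^1_{k+1}$ admits strong preservation of $k$ non-c.e.\ definitions. The lemma then immediately yields that $\ts^2_{k+1}$ admits preservation of $k$ non-c.e.\ definitions, which is exactly the claim.

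To see how this plays out at the level of the construction: given a set $C$ preserving non-c.e.\ definitions of non-$C$-c.e.\ sets $A_0, \dots, A_{k-1}$ and a $C$-computable coloring $f : [\omega]^2 \to k+1$, one first forms the uniformly $C$-computable sequence $\vec{R}$ with $R_{\sigma,i} = \{s \in \omega : f(\sigma,s) = i\}$ for $\sigma$ ranging over singletons and $i < k+1$. Because $\coh$ admits preservation of $k$ non-c.e.\ definitions (Wang~\cite{Wang2014Definability}), there is an $\vec{R}$-cohesive set $G$ with $G \oplus C$ still preserving non-c.e.\ definitions of the $A$'s; the cohesive set converts $f$ into a $(G \oplus C)'$-computable coloring $\tilde{f} : \omega \to k+1$ defined by $\tilde{f}(\sigma) = \lim_{s \in G} f(\sigma,s)$. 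Viewing $\tilde{f}$ as an arbitrary (non-effective) instance of $\ts^1_{k+1}$, the strong preservation furnished by Theorem~\ref{thm:ts1-strong-preservation} produces an infinite $\tilde{f}$-thin set $H$ such that $H \oplus G \oplus C$ preserves non-c.e.\ definitions of the $A$'s, and $H \oplus G \oplus C$ computes an infinite $f$-thin set.

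Since both ingredients are already established in the excerpt, there is essentially no obstacle here; the only delicate point is the bookkeeping that the preservation property survives each of the two stages (the cohesiveness step and the singleton thin-set step), which is precisely what the definitions of preservation and strong preservation are arranged to guarantee and what is verified in the proof of Lemma~\ref{lem:ts-strong-to-weak}.
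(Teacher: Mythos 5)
Your proposal is correct and follows exactly the paper's approach: the paper explicitly states that the corollary is obtained by combining Theorem~\ref{thm:ts1-strong-preservation} with Lemma~\ref{lem:ts-strong-to-weak}, which is precisely the instantiation $n=1$, $\ell = k+1$ that you use. The expanded walkthrough of the cohesiveness-then-singleton construction is also an accurate unfolding of the proof of Lemma~\ref{lem:ts-strong-to-weak} specialized to this case.
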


We are now ready to prove Theorem~\ref{thm:ts2-omega-models}.

\begin{proof}[Proof of Theorem~\ref{thm:ts2-omega-models}]
Fix some~$k \geq 2$.
Wang proved in~\cite{Wang2014Definability} that $\coh$, $\wkl$, $\rrt^2_2$,
$\pizog$ and $\emo$ admit preservation of $k$ non-c.e.\ definitions.
By Corollary~\ref{cor:ts2-preservation}, so does $\ts^2_{k+1}$.
By Corollary~\ref{cor:ts2-non-preservation} and Theorem~\ref{thm:sads-non-preservation},
neither~$\sts^2_k$ nor~$\sads$ admit preservation of~$k$ non-c.e.\ definitions.
The theorem follows by an application of Proposition 2.4 of Wang~\cite{Wang2014Definability}.
\end{proof}

\subsection{Thin set theorem for tuples and reverse mathematics}\label{sect:preservation-ts-omega-models}

In this section, we extend the preservation of non-c.e.\ definitions of the thin set theorem for pairs
to arbitrary tuples, using the same construction pattern as Wang~\cite{Wang2014Some}.
We deduce that $\ts^n_\ell$ does not imply~$\ts^n_k$ over~$\rca$ whenever~$\ell$ is large enough,
which is informally the strongest result we can obtain since Proposition~5.3 in Dorais et al.~\cite{Dorais2016uniform}
states that $\rca \vdash \aca \biimp \ts^n_k$ for~$n \geq 3$ whenever~$k$ is not large enough.

\begin{theorem}\label{thm:ts-arbitrary-preservation}
For every~$k,n \geq 1$, $\ts^n_\ell$ admits strong preservation of $k$ non-c.e.\ definitions 
for sufficiently large~$\ell$. 
\end{theorem}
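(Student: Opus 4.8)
The plan is to proceed by induction on $n$, with the base case $n=1$ being exactly Theorem~\ref{thm:ts1-strong-preservation} (which gives strong preservation of $k$ non-c.e.\ definitions for $\ts^1_{k+1}$, i.e.\ $\ell = k+1$ suffices at level $1$). For the inductive step, suppose $\ts^n_\ell$ admits strong preservation of $k$ non-c.e.\ definitions for all sufficiently large $\ell$, say $\ell \geq g(n,k)$; I want to find a bound $g(n+1,k)$ so that $\ts^{n+1}_\ell$ admits strong preservation of $k$ non-c.e.\ definitions for $\ell \geq g(n+1,k)$. The key point is that the induction must be carried through in relativized form, since the cohesiveness step introduces an auxiliary oracle.

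The main engine is a two-stage construction in the style of Wang~\cite{Wang2014Some}, combining cohesiveness with a thin-set argument at the lower level. Given an arbitrary coloring $f : [\omega]^{n+1} \to \ell$ and non-$C$-c.e.\ sets $A_0,\dots,A_{k-1}$, first form the uniformly $(f \oplus C)$-computable sequence $\vec{R}$ with $R_{\sigma,i} = \{s : f(\sigma,s) = i\}$ for $\sigma \in [\omega]^n$, $i < \ell$. By preservation of $k$ non-c.e.\ definitions for $\coh$ (Wang~\cite{Wang2014Definability}), there is an $\vec{R}$-cohesive set $G$ with $G \oplus C$ preserving non-c.e.\ definitions of the $A$'s; then $\tilde f(\sigma) = \lim_{s \in G} f(\sigma,s)$ is a $(G \oplus C)'$-computable coloring $[\omega]^n \to \ell$. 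The obstacle is that we cannot directly apply strong preservation at level $n$, because $\tilde f$ is only $\Delta^{0,(G\oplus C)'}_2$, not $(G\oplus C)$-computable, and because an $\tilde f$-thin set $H$ (with $|\tilde f([H]^n)| \leq \ell-1$) need not yield an $f$-thin set of the same weakness: thinness for $\tilde f$ only avoids one color at the level of $n$-tuples, so $[H]^{n+1}$ may still see all $\ell$ colors once we unfreeze the last coordinate. This is why a naive analogue of Lemma~\ref{lem:ts-strong-to-weak} does not close the induction, and why $\ell$ must grow with $n$.

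To handle this I would use the standard trick (as in Wang~\cite{Wang2014Some} and the author~\cite{PateyCombinatorial}) of passing through the stronger statement ``$\rt^{n+1}_{\ell,d}$''-type colorings or, more precisely, iterating the thin-set argument along the tuple hierarchy: one shows that a coloring $f : [\omega]^{n+1} \to \ell$ can, after restricting to a cohesive set and then to an $n$-level thin set $H_n$, be pushed down to a coloring on fewer colors, and that repeating this $n$ times costs a controlled blow-up in the number of colors — precisely, if $\ts^n_{\ell'}$ handles $k$ definitions then $\ts^{n+1}_\ell$ does so for $\ell$ large enough relative to $\ell'$ and the combinatorics of how many colors survive the limit operation. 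The inductive bound $g(n+1,k)$ will be something like a function of $g(n,k)$; the precise recursion comes out of tracking how the $\ell-1$ colors seen on $[H_n]^n$ interact with the color merging. Since $\coh$ and (by induction, relativized) $\ts^n_{\ell'}$ both preserve the non-c.e.-ness of the $A$'s relative to the appropriate oracles, composing the two constructions keeps $H_{n+1} \oplus G \oplus C$ preserving non-c.e.\ definitions of $A_0,\dots,A_{k-1}$, and $H_{n+1} \oplus G \oplus C$ computes an infinite $f$-thin set. The hard part will be pinning down the exact color-counting recursion that certifies ``sufficiently large $\ell$'' uniformly at each inductive step, which is essentially the combinatorial heart of why the thin set hierarchy is strictly decreasing.
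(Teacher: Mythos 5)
Your framing — induction on $n$, base case $\ts^1_{k+1}$ from Theorem~\ref{thm:ts1-strong-preservation}, colors growing with $n$, and a cohesiveness step to pass from level $n$ to level $n+1$ — matches the paper's skeleton, but your diagnosis of where the difficulty lies is wrong, and the step you leave as a sketch is precisely the one the paper spends an entire subsection on.

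Both obstacles you raise for the coh-then-thin route are in fact non-issues, and the paper handles them in one line each via Lemma~\ref{lem:ts-strong-to-weak}. First, the fact that $\tilde f(\sigma) = \lim_{s \in G} f(\sigma,s)$ is only $(G\oplus C)'$-computable is exactly what \emph{strong} preservation is designed to absorb: strong preservation at level $n$ quantifies over \emph{arbitrary} $\ts^n_\ell$-instances, so $\tilde f$ being non-$(G\oplus C)$-computable is not an obstruction. Second, an $\tilde f$-thin set $H$ does yield an $f$-thin set with the same number of colors: for each $\sigma \in [H]^n$ the limit along $G$ stabilizes, so $H\oplus G\oplus C$ computes a further thinning $H'$ on which $f$ restricted to $[H']^{n+1}$ uses only the colors of $\tilde f$ on $[H']^n$. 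There is no color blow-up in this step.

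The real gap — which your plan does not see — is that the coh-then-thin argument only yields \emph{weak} preservation for $\ts^{n+1}_\ell$: it needs the sequence $\vec R$ to be $C$-computable, which fails for a non-effective $f$. But the induction needs \emph{strong} preservation at level $n+1$, since at the next stage you will again face a non-effective limit coloring. Converting weak to strong preservation is the genuine content, and that is where the colors grow. The paper's step (A3) does this by a four-stage construction (S1)--(S4): a generalized cohesiveness argument (Theorem~\ref{thm:generalized-cohesivity-strong-avoidance}) builds a set $D$ on which the ``tail colors'' $I_\sigma$ of every partial tuple $\sigma$ are bounded; then thin-set applications at the lower levels bound the unions $I_t$; then a Mathias-style block construction produces $G = \bigcup_i \xi_i$ with $|f([\xi_i]^n)| \le d_{n-1}$ and controlled cross-block colors; and finally one more application of $\ts^1_{d_1+1}$ collapses the block types. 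The color accounting gives the recursion $d_n = d_1 d_{n-1} + \sum_{0<t<n} d_t d_{n-t}$, with contributions from \emph{all} lower levels, not (as your sketch suggests) just level $n$. Without something of this shape, the induction does not close, and ``sufficiently large $\ell$'' is not justified.
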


The proof of Theorem~\ref{thm:ts-arbitrary-preservation} begins below in section~\ref{subsect:ts-proof-structure}.
Using the fact that~$\rca \vdash \ts^n_\ell \imp \ts^n$ for every~$n, \ell \geq 2$,
we obtain the following preservation result for~$\ts$.

\begin{corollary}
For every~$k \geq 1$, $\ts$ admits strong preservation of $k$ non-c.e.\ definitions.
\end{corollary}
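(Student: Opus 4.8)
The plan is to reduce the $\omega$-colored thin set theorem to a finitely colored one with enough colors, exactly along the lines of the already-noted implication $\rca \vdash \ts^n_\ell \imp \ts^n$, and to check that this reduction transports strong preservation of non-c.e.\ definitions without enlarging the oracle. First I would fix a set $C$, non-$C$-c.e.\ sets $A_0, \dots, A_{k-1}$, and an arbitrary $\ts$-instance; since $\ts$ is $(\forall n)\ts^n$, such an instance amounts to a choice of $n \geq 1$ together with a (possibly non-computable) coloring $f : [\omega]^n \to \omega$. By Theorem~\ref{thm:ts-arbitrary-preservation}, I would then pick $\ell \geq 2$ large enough that $\ts^n_\ell$ admits strong preservation of $k$ non-c.e.\ definitions.

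Next I would pass from $f$ to the $f$-computable coloring $g : [\omega]^n \to \ell$ defined by $g(\sigma) = \min(f(\sigma), \ell-1)$, i.e.\ the colors $\ell-1, \ell, \ell+1, \dots$ of $f$ are merged into the single color $\ell-1$. The key observation is that every infinite $g$-thin set $H$ is already $f$-thin: if $H$ avoids a $g$-color $c \leq \ell-2$ then $g(\sigma)=c \Leftrightarrow f(\sigma)=c$, so $f$ avoids $c$ on $[H]^n$; and if $H$ avoids the $g$-color $\ell-1$ then $f([H]^n) \subseteq \{0,\dots,\ell-2\} \subsetneq \omega$. The point that makes this reduction compatible with preservation is that such an $H$ witnesses $f$-thinness without $f$ ever being used as an oracle.

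Then I would apply strong preservation of $k$ non-c.e.\ definitions for $\ts^n_\ell$ to the (arbitrary) instance $g$ with the \emph{same} context $C$ and the \emph{same} sets $A_0, \dots, A_{k-1}$: this yields an infinite $g$-thin set $H$ such that $H \oplus C$ preserves non-c.e.\ definitions of $A_0, \dots, A_{k-1}$. By the previous paragraph, $H$ is an $f$-thin set, hence a solution to the chosen $\ts$-instance, and $H \oplus C$ has the required preservation property. Since $C$, the $A_i$'s and the $\ts$-instance were arbitrary, $\ts$ admits strong preservation of $k$ non-c.e.\ definitions.

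I do not expect a real obstacle in this corollary: all the substantive work is packaged into Theorem~\ref{thm:ts-arbitrary-preservation}. The only step deserving care is ensuring that the color-merging map $f \mapsto g$ produces a $\ts^n_\ell$-instance all of whose thin sets are $f$-thin with no reference to $f$ in the oracle — which is immediate from the definition of $g$ — so that one may legitimately invoke strong preservation of $\ts^n_\ell$ with context $C$ rather than with the (potentially much stronger) context $C \oplus f$.
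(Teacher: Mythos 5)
Your proof is correct and essentially the same as the paper's. The paper's terse justification invokes $\rca \vdash \ts^n_\ell \imp \ts^n$, which unwinds to exactly the color-merging reduction you describe, and your observation that this reduction uses a single application of $\ts^n_\ell$ with no reference to $f$ in the oracle is precisely what lets strong preservation of $k$ non-c.e.\ definitions transport from $\ts^n_\ell$ (via Theorem~\ref{thm:ts-arbitrary-preservation}) to $\ts$.
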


Thanks to the existing preservations of non-c.e.\ definitions
and Proposition~2.4 from Wang~\cite{Wang2014Definability}, we deduce the following separations
over~$\omega$-models.

\begin{corollary}
For every~$k \geq 2$,
let~$\Phi$ be the conjunction of~$\coh$, $\wkl$, $\rrt^2_2$,
$\pizog$, $\emo$, $\ts^2_{k+1}$ and~$\ts$.
Over~$\rca$, $\Phi$ implies neither~$\sts^2_k$ nor~$\sads$.
\end{corollary}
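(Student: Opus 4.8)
The plan is to copy the proof of Theorem~\ref{thm:ts2-omega-models} almost verbatim, the only new ingredient being the preservation property of $\ts$. First I would collect the positive facts: Wang~\cite{Wang2014Definability} proved that $\coh$, $\wkl$, $\rrt^2_2$, $\pizog$ and $\emo$ admit preservation of $k$ non-c.e.\ definitions; Corollary~\ref{cor:ts2-preservation} gives the same for $\ts^2_{k+1}$; and the corollary immediately preceding this one, which follows from Theorem~\ref{thm:ts-arbitrary-preservation}, shows that $\ts$ admits \emph{strong} preservation of $k$ non-c.e.\ definitions, hence in particular preservation of $k$ non-c.e.\ definitions, since the computable $\ts$-instances form a subclass of all $\ts$-instances. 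Thus every conjunct of $\Phi$ admits preservation of $k$ non-c.e.\ definitions.

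Next I would recall the negative facts. By Corollary~\ref{cor:ts2-non-preservation}, $\sts^2_k$ does not admit preservation of $k$ non-c.e.\ definitions, as witnessed by the $\Delta^0_2$ $k$-partition $B_0 \cup \dots \cup B_{k-1} = \omega$ of Theorem~\ref{thm:ts1-non-preservation}, whose parts have non-c.e.\ complements that become c.e.\ relative to any infinite thin set. By Theorem~\ref{thm:sads-non-preservation}, $\sads$ does not admit preservation of $2$ non-c.e.\ definitions, hence not of $k$ non-c.e.\ definitions either (take $B_0, B_1$ to be the $\omega$- and $\omega^{*}$-parts of Tennenbaum's order and let $B_2, \dots, B_{k-1}$ be further non-c.e.\ sets, e.g.\ copies of $B_0$).

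Finally I would invoke Proposition~2.4 of Wang~\cite{Wang2014Definability}: since each principle in the finite list making up $\Phi$ admits preservation of $k$ non-c.e.\ definitions, one builds, by a standard dovetailing construction starting from $\emptyset$ together with the non-c.e.\ sets $B_0, \dots, B_{k-1}$, a countable Turing ideal that is an $\omega$-model of $\rca + \Phi$ in which none of the $B_j$ is c.e.\ relative to any member; such a model cannot contain a solution to the instance of $\sts^2_k$ (resp.\ $\sads$) fixed in the previous paragraph, so it satisfies neither. I expect no real obstacle here: the argument is a direct application of the preservation framework, and the one point that needs (routine) care is that finitely many separate preservation statements suffice to run a single simultaneous $\omega$-model construction, which is precisely what Wang's Proposition~2.4 packages. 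All the genuine content lies in Theorem~\ref{thm:ts-arbitrary-preservation}.
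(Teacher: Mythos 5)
Your proof matches the paper's argument exactly: the paper deduces this corollary directly from the preceding preservation results (Wang for $\coh$, $\wkl$, $\rrt^2_2$, $\pizog$, $\emo$; Corollary~\ref{cor:ts2-preservation} for $\ts^2_{k+1}$; the new corollary for $\ts$) together with the negative facts (Corollary~\ref{cor:ts2-non-preservation} and Theorem~\ref{thm:sads-non-preservation}) via Proposition~2.4 of Wang~\cite{Wang2014Definability}. Your parenthetical remark that failure of preservation of $2$ non-c.e.\ definitions yields failure of preservation of $k \geq 2$ by padding the list of $B$'s is a correct and useful explicitation of a step the paper leaves implicit.
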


The remainder of this section is devoted to the proof of Theorem~\ref{thm:ts-arbitrary-preservation}.

\subsubsection{Proof structure}\label{subsect:ts-proof-structure}

We shall follow the proof structure
of strong cone avoidance used by Wang~\cite{Wang2014Some}. Fix some~$k \geq 1$.
The induction works as follows:

\begin{itemize}
	\item[(A1)] In section~\ref{sect:preservation-ts2-omega-models}
	we proved that~$\ts^1_{k+1}$ admits strong preservation of $k$ non-c.e.\ definitions.
	This is the base case of our induction.
	\item[(A2)] Assuming that for each~$t \in (0,n)$, $\ts^t_{d_t+1}$ admits
	strong preservation of $k$ non-c.e.\ definitions, we prove
	that~$\ts^n_{d_{n-1}+1}$ admits preservation of $k$ non-c.e.\ definitions.
	This is done by Lemma~\ref{lem:ts-strong-to-weak}.
	\item[(A3)] Then we prove that~$\ts^n_{d_n+1}$ admits strong preservation of
	$k$ non-c.e.\ definitions where
	\[
	d_n = d_1 d_{n-1} + \sum_{0 < t < n} d_t d_{n-t}
	\]
\end{itemize}

Properties (A1) and (A2) are already proven. We now prove property (A3). It is again done in several steps.
Fix a coloring $f : [\omega]^n \to d_n+1$ and a set $C$ preserving non-c.e.\ definitions of $k$ sets~$A_0, \dots, A_{k-1}$.

\begin{itemize}
	\item[(S1)] First, we construct an infinite set $D \subseteq \omega$ such that $D \oplus C$ preserves non-c.e.\ definitions of the~$A$'s
	and a sequence $(I_\sigma : 0 < |\sigma| < n)$ such that for each $t \in (0,n)$ and each $\sigma \in [\omega]^t$
	\begin{itemize}
		\item[(a)] $I_\sigma$ is a subset of $\{0, \dots, d_n\}$ with at most $d_{n-t}$ many elements  
		\item[(b)] $(\exists b)(\forall \tau \in [D \cap (b,+\infty)]^{n-t}) f(\sigma,\tau) \in I_\sigma$
	\end{itemize}

	\item[(S2)]
	Second, we construct an infinite set~$E \subseteq D$ such that $E \oplus C$ preserves non-c.e.\ definitions of the~$A$'s
	and a sequence $(I_t : 0 < t < n)$ such that for each $t \in (0,n)$
	\begin{itemize}
		\item[(a)] $I_t$ is a subset of~$\{0,\dots, d_n\}$ of size at most $d_t d_{n-t}$
		\item[(b)] $(\forall \sigma \in [E]^t)(\exists b)(\forall \tau \in [E \cap (b,+\infty)]^{n-t}) f(\sigma,\tau) \in I_t$
	\end{itemize}

	\item[(S3)] Third, we construct a sequence~$(\xi_i \in [E]^{<\omega} : i < \omega)$ such that
	\begin{itemize}
		\item[(a)] The set $G = \bigcup_i \xi_i$ is infinite and $G \oplus C$ preserves non-c.e.\ definitions of the~$A$'s
		\item[(b)] $|f([\xi_i]^n)| \leq d_{n-1}$ and~$max(\xi_i) < min(\xi_{i+1})$ for each~$i < \omega$
		\item[(c)] For each~$t \in (0,n)$ and~$\sigma \in [\bigcup_{j < i} \xi_j]^t$,
		$f(\sigma,\tau) \in I_t$ for all~$\tau \in [\bigcup_{j \geq i} \xi_j]^{n-t}$
	\end{itemize}

	\item[(S4)] Finally, we build an infinite set~$H \subseteq G$
	such that~$H \oplus C$ preserves non-c.e.\ definitions of the~$A$'s
	and~$|f([H]^n)| \leq d_n$.
\end{itemize}

\subsubsection{Generalized cohesiveness}

Before proving that~$\ts^n_{d_n+1}$ admits strong preservation of~$k$ non-c.e.\ definitions,
we need to prove strong preservation for a generalized notion of cohesiveness
already used by the author in~\cite{PateyCombinatorial}.
Cohesiveness can be seen as the problem which takes as an input a coloring of pairs $f : [\omega]^2 \to \ell$
and fixes the first parameter to obtain an infinite sequence of colorings of integers $f_x : \omega \to \ell$ for each $x \in \omega$.
A solution to this problem is an infinite set~$G$ which is eventually homogeneous for each coloring $f_x$.

Going further in this approach, we can consider that cohesiveness is a degenerate
case of the problem which takes as an input a coloring of pairs $f : [\omega]^2 \to \omega$ using infinitely many colors,
and fixes again the first parameter to obtain an infinite sequence of colorings of integers $f_x : \omega \to \omega$.
A solution to this problem is an infinite set~$G$ such that for each color $i$, either eventually the color will be avoided
by $f_x$ over $G$, or $G$ will be eventually homogeneous for~$f_x$ with color $i$.

We can generalize the notion to colorings over tuples $f : [\omega]^n \to \omega$,
seeing $f$ as an infinite sequence of colorings over $t$-uples $f_{\sigma} : [\omega]^t \to \omega$ for each $\sigma \in [\omega]^{n - t}$. We will create a set~$G$ such that at most $d_t$ colors will appear for arbitrarily large pairs over~$G$ for each function $f_{\sigma}$.
This set will be constructed by applying $\ts^t_{d_t+1}$ to $f_{\sigma}$ for each~$\sigma$.

We do not need Theorem~\ref{thm:generalized-cohesivity-strong-avoidance} in its 
full generality to complete our step (S1). However, it
will be useful in a later section for proving that the free set theorem admits
preservation of $k$ non-c.e.\ definitions.

\begin{theorem}\label{thm:generalized-cohesivity-strong-avoidance}
Fix a coloring $f : [\omega]^n \to \omega$, some $t \leq n$ and
suppose that
$\ts^s_{d_s+1}$ admits strong preservation of~$k$ non-c.e.\ definitions for each $s \in (0, t]$. 
For every set $C$ preserving non-c.e.\ definitions of some sets~$A_0, \dots, A_{k-1}$,
there exists an infinite set $G$ such that $G \oplus C$ preserves non-c.e.\ definitions of the~$A$'s
and for every $\sigma \in [\omega]^{<\omega}$ such that $n-t \leq \card{\sigma} < n$, 
$$
\card{\set{x : (\forall b)(\exists \tau \in [G \cap (b, +\infty)]^{n-|\sigma|}) f(\sigma, \tau) = x}} \leq d_{n-|\sigma|}
$$
\end{theorem}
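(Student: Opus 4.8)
The plan is to argue by induction on $t$, with $f$, $n$, $C$ and the constants $d_s$ fixed throughout. The case $t = 0$ is vacuous: there is no $\sigma$ with $n - t \le |\sigma| < n$, so $G = \omega$ works. For the step from $t-1$ to $t$ one assumes that $\ts^s_{d_s+1}$ admits strong preservation of $k$ non-c.e.\ definitions for every $s \in (0,t]$. First apply the induction hypothesis (with parameter $t-1$) to $f$, obtaining an infinite $G_0$ with $G_0 \oplus C$ preserving non-c.e.\ definitions of the $A$'s and with the required bound for every $\sigma$ such that $n - t + 1 \le |\sigma| < n$. It then suffices to thin $G_0$ to an infinite $G$ so that $G \oplus C$ still preserves the definitions and the tuples $\sigma$ of length exactly $n - t$ are handled as well, i.e.\ the colorings $\tau \mapsto f(\sigma,\tau)$ of dimension $t$: the bounds already obtained at lower dimensions survive the thinning, since passing to an infinite subset can only shrink the set of colors occurring cofinally.

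The thinning is a Mathias construction over $G_0$, with conditions $(F, X)$ where $F$ is finite, $X \subseteq G_0$ is infinite, $F < X$, and $X \oplus C$ preserves non-c.e.\ definitions of the $A$'s, and $G$ is the union of the finite parts along a sufficiently generic descending sequence. Apart from the routine density requirements making $G$ infinite and forcing $W_e^{G \oplus C} \neq A_i$ for each $e$ and $i < k$ (the latter handled as in Wang's preservation arguments, using that the reservoir of a condition preserves the definitions), the essential requirement, one per $\sigma \in [\omega]^{n-t}$, asks that the coloring $f_\sigma$ defined by $f_\sigma(\tau) = f(\sigma,\tau)$ has at most $d_t$ colors cofinal over $G$. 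This reduces to a one-step lemma: from a condition $(F, X)$ and such a $\sigma$, produce an infinite $X' \subseteq X$ with $X' \oplus C$ preserving the definitions and at most $d_t$ colors cofinal for $f_\sigma$ over $X'$.

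The one-step lemma I would prove by a dichotomy on \emph{unavoidable} colors, a color $c$ being unavoidable over an infinite $Y$ if every infinite subset of $Y$ contains a $t$-tuple of color $c$ (unavoidability passes to infinite subsets). If some infinite $Y \subseteq X$ with $Y \oplus C$ preserving has at least $d_t$ unavoidable colors $c_1,\dots,c_{d_t}$, apply strong preservation of $\ts^t_{d_t+1}$ to the $(d_t+1)$-coloring of $[Y]^t$ that sends $\tau$ to $i$ when $f_\sigma(\tau) = c_i$ and to an extra symbol otherwise; a thin set $H$ for it is an infinite subset of $Y$, hence meets every $c_i$, so the omitted color is the extra symbol, $f_\sigma$ takes only the values $c_1,\dots,c_{d_t}$ on $[H]^t$, and $X' = H$ works. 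In the remaining case, no preserving infinite $Y \subseteq X$ has $d_t$ unavoidable colors, and $X'$ is built by a further, more delicate iteration of strong preservation of $\ts^t_{d_t+1}$ (plausibly interleaved with strong preservation of $\coh$, to secure convergence): at each step the current reservoir carries fewer than $d_t$ unavoidable colors, which receive dedicated symbols that no thin set can omit, while the remaining symbols isolate finitely many not-yet-decided colors together with a single catch-all, so that each application either confines $f_\sigma$ to at most $d_t$ colors on an infinite preserving subset (done) or removes a color from the reservoir; the bookkeeping is organized, using the known at most $d_{t-1}$ colors cofinal for $f(\sigma^{\concat}m,\cdot)$ over the reservoir for growing $m$ (finite by the induction hypothesis), so that every color remaining cofinal is eventually made non-cofinal or absorbed into the at most $d_t$ protected colors.

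The hard part is this second branch. Strong preservation of $\ts^t_{d_t+1}$ only produces an infinite set omitting \emph{some} one of $d_t+1$ colors, with no control over which, and since $\ts^t_2$ does not reduce to $\ts^t_{d_t+1}$ once $d_t \geq 2$, colors cannot be eliminated one at a time; the construction has to eliminate them in blocks, relying on the failure of the first branch to ensure each block-elimination genuinely makes progress, while keeping every reservoir a preservation of the $A_i$'s. Making the block sizes, the order in which colors are attacked, and the convergence of the iteration all fit together is the delicate point.
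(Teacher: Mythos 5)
The overall forcing framework is the right one — Mathias conditions $(F,X)$ with $X \oplus C$ preserving the non-c.e.\ definitions, genericity for Wang's $W_e^{G\oplus C}\neq A_i$ requirements, a density lemma driven by strong preservation of thin-set theorems — and the outer induction on $t$ is an acceptable (if redundant) way to organize it. But your density lemma is not the paper's, and the difference is exactly where your proof breaks.

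You ask your one-step lemma to do too much: given $(F,X)$ and $\sigma\in[\omega]^{n-t}$, directly produce a preserving infinite $X'\subseteq X$ over which $f_\sigma$ has at most $d_t$ cofinal colors. That packs the entire content of the theorem into a single density step, and as you yourself note, the Case~2 branch of your dichotomy on unavoidable colors is a genuine gap: $\ts^t_{d_t+1}$ only lets you drop one uncontrolled color out of $d_t+1$, possibly infinitely many colors occur on $[X]^t$, having fewer than $d_t$ colors unavoidable tells you nothing about how many are cofinal, and the proposed block-elimination with catch-all symbols is sketched at a level where convergence and preservation of the $A_i$'s are both unverified. This is a missing idea, not a missing calculation.

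The paper instead forces a much weaker dichotomy, one per pair $(\sigma,I)$ where $I\subseteq\omega$ is finite of size exactly $d_{n-|\sigma|}$: extend $(F,X)$ to $(F,\tilde X)$ so that either $f(\sigma,\cdot)$ lands in $I$ on all of $[\tilde X]^{n-|\sigma|}$, or some element of $I$ never appears on $[\tilde X]^{n-|\sigma|}$. This has a one-line proof: collapse $f(\sigma,\cdot)$ to the $(d_{n-|\sigma|}+1)$-coloring $g$ with values $I\cup\{\bot\}$ and apply strong preservation of $\ts^{n-|\sigma|}_{d_{n-|\sigma|}+1}$ to $g$; whichever of the $d_{n-|\sigma|}+1$ colors the thin set omits gives one horn of the dichotomy. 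No analysis of unavoidable versus avoidable colors, no iterated elimination, no bookkeeping. Then at the end one argues by contradiction: if $x_0,\dots,x_d$ were all cofinal for $f_\sigma$ over $G$ (with $d=d_{n-|\sigma|}$), the step handling $I=\{x_0,\dots,x_{d-1}\}$ contradicts either the cofinality of $x_d$ (first horn) or of some $x_i\in I$ (second horn). The point you should take away is that you never have to control \emph{which} colors survive at any finite stage; it suffices to rule out, one finite $I$ at a time, every candidate $(d+1)$-element set of cofinal colors, and each such requirement is dense by a single application of the thin set theorem.
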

\begin{proof}
Our forcing conditions are Mathias conditions $(F, X)$ where $X \oplus C$ preserves non-c.e.\ definitions
of the~$A$'s.
Lemma~3.16 in Wang~\cite{Wang2014Definability} states that for every set~$G$ which is sufficiently
generic for~$(F,X)$, $G \oplus C$ preserves $k$ non-c.e.\ definitions. It suffices therefore to prove the following lemma.

\begin{lemma}\label{lem:gen-coh-function}
For every condition $(F, X)$ and $\sigma \in [\omega]^{<\omega}$ such that $n-t \leq \card{\sigma} < n$,
for every finite set $I$ such that $\card{I} = d_{n-|\sigma|}$, 
there exists an extension $(F, \tilde{X})$ such that 
$$
\{f(\sigma, \tau) : \tau \in [\tilde{X}]^{n-|\sigma|}\} \subseteq I
\hspace{10pt} \mbox{ or } \hspace{10pt} 
I \not \subseteq \{f(\sigma, \tau) : \tau \in [\tilde{X}]^{n-|\sigma|}\}
$$
\end{lemma}
\begin{proof}
Define the function $g : [X]^{n - |\sigma|} \to I \cup \{\bot\}$
by $g(\tau) = f(\sigma, \tau)$ if $f(\sigma, \tau) \in I$
and $g(\tau) = \bot$ otherwise. By strong preservation of~$k$ non-c.e.\ definitions of $\ts^{n - |\sigma|}_{d_{n-|\sigma|}+1}$,
there exists an infinite subset $\tilde{X} \subseteq X$ such that $\tilde{X} \oplus C$
preserves non-c.e.\ definitions of the~$A$'s and $\card{\{g(\tau) : \tau \in [\tilde{X}]^{n-|\sigma|}\}} \leq d_{n-|\sigma|}$.
The condition $(F, \tilde{X})$ is the desired extension.
\end{proof}

Using Lemma~3.16 in~\cite{Wang2014Definability}
and Lemma~\ref{lem:gen-coh-function}, one can define an infinite
descending sequence of conditions $(\emptyset, \omega) \geq c_0 \geq c_1 \geq \dots$
such that for each $s \in \omega$
\begin{itemize}
  \item[1.] $c_s = (F_s, X_s)$ with $\card{F_s} \geq s$
  \item[2.] $c_s$ forces $W_e^{G \oplus C} \neq A_i$ if $s = \tuple{e,i}$
  \item[3.] $\{f(\sigma, \tau) : \tau \in [X_s]^{n-|\sigma|}\} \subseteq I$
  or $I \not \subseteq \{f(\sigma, \tau) : \tau \in [X_s]^{n-|\sigma|}\}$ if $s = \tuple{\sigma, I}$
  and $\card{I} = d_{n-|\sigma|}$.
\end{itemize}
The set $G = \bigcup_s F_s$ is an infinite set 
such that $G \oplus C$ preserves non-c.e.\ definitions of the~$A$'s. We claim that $G$ satisfies the desired properties.
Fix a $\sigma \in [\omega]^{<\omega}$ such that $n-t \leq \card{\sigma} < n$. Suppose that there exists $d_{n - |\sigma|} + 1$ elements 
$x_0, \dots, x_{d_{n-|\sigma|}}$ such that $(\forall b)(\exists \tau \in [G \cap (b, +\infty)]^{n-|\sigma|}) f(\sigma, \tau) = x_i$
for each $i \leq d_{n-|\sigma|}$.
Let $I = \{x_0, \dots, x_{d_{n-|\sigma|}-1}\}$.
By step $s = \tuple{\sigma, I}$, $G$ satisfies $(F_s, X_s)$ such that $\{f(\sigma, \tau) : \tau \in [X_s]^{n-|\sigma|}\} \subseteq I$
or $I \not \subseteq \{f(\sigma, \tau) : \tau \in [X_s]^{n-|\sigma|}\}$. In the first case it contradicts
the choice of $x_{d_{n-|\sigma|}}$ and in the second case it contradicts the choice of an element of $I$.
This finishes the proof of Theorem~\ref{thm:generalized-cohesivity-strong-avoidance}.
\end{proof}

\subsubsection{Step (S1) : Construction of the set~$D$}

We start with the construction of an infinite set~$D \subseteq \omega$ such that $D \oplus C$ preserves non-c.e.\ definitions of the~$A$'s
and a sequence $(I_\sigma : 0 < |\sigma| < n)$ such that for each $t \in (0,n)$ and each $\sigma \in [\omega]^t$
\begin{itemize}
	\item[(a)] $I_\sigma$ is a subset of $\{0, \dots, d_n\}$ with at most $d_{n-t}$ many elements  
	\item[(b)] $(\exists b)(\forall \tau \in [G \cap (b,+\infty)]^{n-t}) f(\sigma,\tau) \in I_\sigma$
\end{itemize}

Let $D$ be the set constructed in Theorem~\ref{thm:generalized-cohesivity-strong-avoidance}
for $t = n-1$. For each $\sigma \in [\omega]^{<\omega}$ such that $0 < \card{\sigma} < n$,
let
$$
I_\sigma = \{x \leq d_n :  (\forall b)(\exists \tau \in [G \cap (b, +\infty)]^{n-|\sigma|}) f(\sigma, \tau) = x\}
$$
By choice of $D$, the set $I_\sigma$ has at most $d_{n-|\sigma|}$ many elements.
Moreover, for each $y \leq d_n$ such that $y \not \in I_\sigma$, there exists a bound $b_y$
such that $(\forall \tau \in [D \cap (b_y, +\infty)]^{n-|\sigma|}) f(\sigma, \tau) \neq x$.
So taking $b = max(b_y : y \leq d_n \wedge y \not \in I_\sigma)$, we obtain
$$
(\forall \tau \in [D \cap (b,+\infty)]^{n-|\sigma|}) f(\sigma,\tau) \in I_\sigma
$$

\subsubsection{Step (S2) : Construction of the set~$E$}

We now construct an infinite set~$E \subseteq D$ such that $E \oplus C$ preserves non-c.e.\ definitions of the~$A$'s
and a sequence $(I_t : 0 < t < n)$ such that for each $t \in (0,n)$
\begin{itemize}
	\item[(a)] $I_t$ is a subset of~$\{0,\dots, d_n\}$ of size at most $d_t d_{n-t}$
	\item[(b)] $(\forall \sigma \in [E]^t)(\exists b)(\forall \tau \in [E \cap (b,+\infty)]^{n-t}) f(\sigma,\tau) \in I_t$
\end{itemize}

For each $t \in (0,n)$ and~$\sigma \in [\omega]^t$, let $F_t(\sigma) = I_\sigma$.
Using strong preservation of $k$ non-c.e.\ definitions
of $\ts^t_{d_t+1}$,
we build a finite sequence $D \supseteq E_1 \supseteq \dots \supseteq E_{n-1}$
such that for each $t \in (0,n)$
\begin{itemize}
	\item[1.] $E_t \oplus C$ preserves non-c.e.\ definitions of the~$A$'s
	\item[2.] $|F_t([E_t]^t)| \leq d_t$
\end{itemize}
Let $E = E_{n-1}$ and $I_t = \bigcup F_t([E]^t)$ for each $t \in (0,n)$.
As for each~$\sigma \in [E]^t$, $|F_t(\sigma)| = |I_\sigma| \leq d_{n-t}$, $|I_t| \leq d_t d_{n-t}$, so property (a) holds.
We now check that property (b) is satisfied.
Fix a $\sigma \in [E]^t$. By definition of~$I_t$, $F_t(\sigma) = I_\sigma \subseteq I_t$.
As $E \subseteq D$,
\[
(\exists b)(\forall \tau \in [E \cap (b,+\infty)]^{n-t}) f(\sigma,\tau) \in I_\sigma \subseteq I_t
\]

\subsubsection{Step (S3) : Construction of the set~$G$}

Given the set~$E$ and the sequence of sets of colors~$(I_t : 0 < t < n)$,
we will construct a sequence~$(\xi_i \in [E]^{<\omega} : i < \omega)$ such that
\begin{itemize}
	\item[(a)] The set $G = \bigcup_i \xi_i$ is infinite and $G \oplus C$ preserves non-c.e.\ definitions of the~$A$'s
	\item[(b)] $|f([\xi_i]^n)| \leq d_{n-1}$ and~$max(\xi_i) < min(\xi_{i+1})$ for each~$i < \omega$
	\item[(c)] For each~$t \in (0,n)$ and~$\sigma \in [\bigcup_{j < i} \xi_j]^t$,
	$f(\sigma,\tau) \in I_t$ for all~$\tau \in [\bigcup_{j \geq i} \xi_j]^{n-t}$
\end{itemize}

We construct our set~$G$ by Mathias forcing~$(\sigma, X)$ where
$X$ is an infinite subset of~$E$ such that $X \oplus C$ preserves non-c.e.\ definitions
of the~$A$'s. Using property~(b) of~$E$, we can easily
construct an infinite sequence $(\xi_i \in [E]^{<\omega} : i < \omega)$
satisfying properties~(b) and~(c) of step (S3). The following lemma
shows how to satisfy property~(a).

\begin{lemma}\label{lem:ts-lemma-set-g}
Fix a condition~$(\sigma, X)$, some~$e \in \omega$ and some~$j < k$.
There exists an extension~$(\sigma\xi, Y)$
with~$|f([\xi]^n)| \leq d_{n-1}$, forcing~$W_e^{G \oplus C} \neq A_j$.
\end{lemma}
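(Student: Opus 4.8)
The plan is to run the standard diagonalization for preservation of non-c.e.\ definitions against the requirement ``$W_e^{G \oplus C} \neq A_j$'', while keeping the extension compatible with the block structure of step~(S3). Recall that the forcing conditions here are Mathias conditions $(\sigma, X)$ with $X$ an infinite subset of $E$ such that $X \oplus C$ preserves non-c.e.\ definitions of $A_0, \dots, A_{k-1}$, and that steps~(S1)--(S2) together with Theorem~\ref{thm:generalized-cohesivity-strong-avoidance} let us take the reservoirs so that every finite subset of $X$ is a \emph{good block}, i.e.\ satisfies $|f([\cdot]^n)| \le d_{n-1}$, and so that appending such a block to $\sigma$ respects properties~(b) and~(c) of step~(S3). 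The only feature of the oracle that the diagonalization uses is that $A_j$ is not $(X \oplus C)$-c.e.

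First I would introduce the set
\[
W = \{\, a \in \omega : (\exists F \subseteq X \text{ finite})\,[\, \min F > \max\sigma \ \wedge\ a \in W_e^{(\sigma \concat F) \oplus C}\,]\,\},
\]
which is $(X \oplus C)$-c.e.; since $A_j$ is not $(X \oplus C)$-c.e.\ we get $W \neq A_j$, and I fix some $a \in W \Delta A_j$. If $a \in W \setminus A_j$, take a witnessing finite $\xi := F \subseteq X$, let $u$ bound the use of the computation $W_e^{(\sigma \concat \xi) \oplus C}(a)\halts$, and pass to the extension $(\sigma \concat \xi,\, X \cap (u, +\infty))$; every $G$ satisfying it agrees with $\sigma \concat \xi$ below $u$, so it forces $a \in W_e^{G \oplus C}$, while $a \notin A_j$, hence forces $W_e^{G \oplus C} \neq A_j$, and $\xi$ is a good block by the choice of $X$. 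If instead $a \in A_j \setminus W$, pick any good block $\xi \subseteq X$ with $\min\xi > \max\sigma$ and pass to $(\sigma \concat \xi,\, X \setminus [0, \max\xi])$; every $G$ satisfying it has each of its finite initial segments (extended past $\max\sigma$) of the form $\sigma \concat F$ for a finite $F \subseteq X$, so $a \notin W_e^{(G \uh m) \oplus C}$ for every $m$, that is $a \notin W_e^{G \oplus C}$; and $a \in A_j$, so again $W_e^{G \oplus C} \neq A_j$ is forced. In both cases the new reservoir is Turing below $X$, so its join with $C$ still preserves non-c.e.\ definitions of the $A$'s, the new block has $|f([\xi]^n)| \le d_{n-1}$, and $(\sigma\concat\xi, Y)$ is the desired extension.

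The step I expect to be the main obstacle is the interface between the $(X \oplus C)$-c.e.\ set $W$ and what the forcing can realize by adding a single block: the set $W$ must be coarse enough that its witnesses are themselves legal block extensions (needed in the first case) and yet fine enough that, when $a \notin W$, \emph{no} admissible generic ever enumerates $a$ along any initial segment (needed in the second case). Both hinge on the closure properties of the reservoirs set up in steps~(S1)--(S2)---that finite subsets of $X$ are good blocks and that appending them to $\sigma$ preserves~(b) and~(c)---which is exactly why those steps precede this one, and it is through that use of the generalized cohesiveness theorem that the induction hypothesis on the $\ts^t_{d_t+1}$ enters. Once this closure is in place, the verification above is routine.
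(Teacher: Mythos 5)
Your argument rests on a claim the paper never makes and that is in fact false: that steps~(S1)--(S2) yield a reservoir~$X$ every finite subset of which is a good block, i.e.\ satisfies $|f([\xi]^n)| \leq d_{n-1}$. Property~(b) of~$E$ in step~(S2) only controls which colors occur on $n$-tuples that mix an element already chosen with elements far out in~$E$; it places no bound at all on $|f([\xi]^n)|$ for an arbitrary finite $\xi \subseteq E$ of size $\geq n$, and since $f$ takes $d_n + 1 > d_{n-1}$ colors there is nothing stopping such a $\xi$ from realizing more than $d_{n-1}$ of them. This breaks your Case~1 outright: a witnessing $F$ to $a \in W$ need not be a good block, so $(\sigma\concat F, \cdot)$ is not a legal extension for this lemma. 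You also cannot fix it by restricting the existential in~$W$ to good blocks, because "good block" is defined via~$f$, and $f$ is an arbitrary non-effective instance in a \emph{strong} preservation argument; restricting that way destroys the $X \oplus C$-c.e.\ nature of~$W$, which is the one thing making $W \neq A_j$ available.

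The paper's set~$W$ is genuinely different and this is the heart of the lemma: $a \in W$ iff \emph{for every} coloring $g : [X]^n \to d_n+1$ there is a $\xi \subseteq X$ with $|g([\xi]^n)| \leq d_{n-1}$ and $a \in W_e^{\sigma\xi \oplus C}$. Quantifying universally over all colorings $g$ (rather than fixing $g = f$) keeps~$W$ $X \oplus C$-c.e.\ by compactness while decoupling it from the non-effective $f$. Case~1 then instantiates $g := f$ to get an $f$-good block. Case~2 is where the real work you are missing happens: compactness gives a non-empty $\Pi^{0,X\oplus C}_1$ class of colorings $g$ none of whose good blocks ever enumerate $a$; $\wkl$-preservation selects such a $g$ preserving the non-c.e.\ definitions; and preservation for $\ts^n_{d_{n-1}+1}$ thins $X$ to a reservoir $Y$ on which $g$ uses at most $d_{n-1}$ colors globally, so every later block is automatically $g$-good and hence never enumerates $a$. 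Your proof never invokes $\wkl$-preservation or $\ts^n_{d_{n-1}+1}$-preservation, and the induction hypothesis you gesture at is not used in (S1)--(S2) the way you suggest; it is used here, inside this lemma, precisely to handle the Case~2 reservoir shrinking.
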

\begin{proof}
Let~$W$ be the set of~$a \in \omega$ such that 
for every coloring $g : [X]^n \to d_n+1$, there is a set~$\xi \in [X]^{<\omega}$
such that~$|g([\xi]^n)| \leq d_{n-1}$ and~$a \in W_e^{\sigma\xi \oplus C}$.
The set~$W$ is~$X \oplus C$-c.e, therefore~$W \neq A_j$. Let~$a \in W \Delta A_j$.
We have two cases:
\begin{itemize}
	\item Case 1: $a \in W \setminus A_j$. In particular, taking~$g = f$,
	there exists a set~$\xi \in [X]^{<\omega}$
	such that~$|f([\xi]^n)| \leq d_{n-1}$ and~$a \in W_e^{\sigma\xi \oplus C}$.
	Take the condition $(\sigma\xi, X)$ as the extension.

	\item Case 2: $a \in A_j \setminus W$.
	By definition of~$W$, the collection~$\Ccal$ of colorings~$g : [X]^n \to d_n+1$
	such that for every set~$\xi \in [X]^{<\omega}$
	satisfying~$|g([\xi]^n)| \leq d_{n-1}$, $a \not \in W_e^{\sigma\xi \oplus C}$
	is a non-empty $\Pi^{0,X \oplus C}_1$ class.
	As $\wkl$ admits preservation of $k$ non-c.e.\ definitions,
	there is some coloring~$g \in \Ccal$ such that~$g \oplus X \oplus C$
	preserves non-c.e.\ definitions of the~$A$'s.
	By preservation of $k$ non-c.e.\ definitions of~$\ts^n_{d_{n-1}+1}$,
	there exists an infinite subset~$Y \subseteq X$ such that
	$Y \oplus C$ preserves non-c.e.\ definitions of the~$A$'s
	and~$|g([Y]^n)| \leq d_{n-1}$.
	The condition~$(\sigma, Y)$ forces~$a \not \in W_e^{G \oplus C}$.
\end{itemize}
\end{proof}

Using Lemma~\ref{lem:ts-lemma-set-g} and property (b) of the set~$E$,
we can construct an infinite descending sequence of conditions~$(\epsilon,E) \geq c_0 \geq \dots$
such that for each~$s \in \omega$
\begin{itemize}
	\item[(i)] $\sigma_{s+1} = \sigma_s\xi_s$ with $|\sigma_s| \geq s$ and ~$f([\xi_s]^n) \leq d_{n-1}$
	\item[(ii)] $f(\sigma,\tau) \in I_t$ for each~$t \in (0,n)$, $\sigma \in [\sigma_s]^t$ and~$\tau \in [X]^{n-t}$.
	\item[(iii)] $c_s$ forces~$W_e^{G \oplus C} \neq A_j$ if~$s = \tuple{e,j}$
\end{itemize}
where~$c_s = (\sigma_s, X_s)$. The set~$G = \bigcup_s \sigma_s$
satisfies the desired properties.

\subsubsection{Step (S4) : Construction of the set~$H$}

Finally, we build an infinite set~$H \subseteq G$
such that~$H \oplus C$ preserves non-c.e.\ definitions of the~$A$'s
and~$|f([H]^n)| \leq d_n$.

For each~$i < \omega$, let~$J_i = f([\xi_i]^n)$.
By property (b) of step (S3), $J_i$ is a subset of~$\{0, \dots, d_n\}$
such that $|J_i| \leq d_{n-1}$. For each subset~$J \subseteq \{0, \dots, d_n\}$
of size~$d_{n-1}$, define the set
\[
Z_J = \{ x \in G : (\exists i) x \in \xi_i \wedge f([\xi_i]^n) \subseteq J \}
\]
There exists finitely many such~$J$'s, and the~$Z$'s form a partition of~$G$.
Apply strong preservation of $k$ non-c.e.\ definitions of~$\ts^1_{d_1+1}$
to obtain a finite set~$S$ of $J$'s of such that~$|S| \leq d_1$
and an infinite set~$H \subseteq \bigcup_{J \in S} Z_J \subseteq G$
such that~$H \oplus G \oplus C$ preserves non-c.e.\ definitions of the~$A$'s.

\begin{lemma}
$|f([H]^n)| \leq d_n$
\end{lemma}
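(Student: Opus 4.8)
The plan is to bound the number of colours that $f$ can take on an $n$-tuple of $H$ by a case analysis on how that tuple is distributed among the blocks $\xi_i$. First I would fix an arbitrary $\bar{x} = (x_1 < \dots < x_n) \in [H]^n$. Since $H \subseteq G = \bigcup_i \xi_i$ and the blocks satisfy $max(\xi_i) < min(\xi_{i+1})$, each $x_p$ lies in a unique block $\xi_{i_p}$ and the block indices are weakly increasing, $i_1 \leq i_2 \leq \dots \leq i_n$. I would then split the argument according to whether $i_1 = i_n$ (the whole tuple lies in one block) or $i_1 < i_n$ (the tuple straddles at least two blocks).

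In the first case, all the $x_p$'s lie in a single block $\xi_i$, so $f(\bar{x}) \in f([\xi_i]^n)$. Since $x_1 \in H \subseteq \bigcup_{J \in S} Z_J$, the definition of $Z_J$ forces $f([\xi_i]^n) \subseteq J$ for some $J \in S$; as $|S| \leq d_1$ and each such $J$ has size at most $d_{n-1}$, all colours arising this way lie in the fixed set $\bigcup_{J \in S} J$ of size at most $d_1 d_{n-1}$. In the second case, I would let $t$ be the largest index with $i_1 = \dots = i_t$, so that $0 < t < n$ and $i_{t+1} \geq i_1 + 1$; writing $\sigma = (x_1, \dots, x_t)$ and $\tau = (x_{t+1}, \dots, x_n)$, the choice of $t$ gives $\sigma \in [\bigcup_{j < i_1+1} \xi_j]^t$ and $\tau \in [\bigcup_{j \geq i_1+1} \xi_j]^{n-t}$, so property~(c) of step (S3) yields $f(\bar{x}) = f(\sigma, \tau) \in I_t$. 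Ranging over the finitely many possible split points $t \in (0,n)$ and recalling $|I_t| \leq d_t d_{n-t}$ accounts for at most $\sum_{0<t<n} d_t d_{n-t}$ further colours.

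Combining the two cases, $f([H]^n)$ is contained in $\bigcup_{J \in S} J \;\cup\; \bigcup_{0 < t < n} I_t$, whence
\[
|f([H]^n)| \leq d_1 d_{n-1} + \sum_{0 < t < n} d_t d_{n-t} = d_n ,
\]
which is exactly the asserted bound. I do not expect any genuine obstacle here: the sets $S$ and $(I_t)_{0<t<n}$ have been built in the previous steps precisely so that this counting closes up. The only point that needs a line of care is checking that the decomposition $(\sigma,\tau)$ lands in the correct pair of index sets $\bigcup_{j<i}\xi_j$ and $\bigcup_{j\geq i}\xi_j$ so that property~(c) is applicable; this is immediate from the monotonicity $i_1 \leq \dots \leq i_n$ of the block indices, taking $i = i_1 + 1$.
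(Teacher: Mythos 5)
Your proof is correct and takes essentially the same route as the paper: you decompose an arbitrary tuple of $[H]^n$ at the boundary of the lowest block $\xi_{i_1}$ it meets, handle the case where the whole tuple lies in a single block via the choice of $S$ and the definition of $Z_J$, handle the straddling case via property (c) of step (S3), and then sum the bounds $d_1 d_{n-1} + \sum_{0<t<n} d_t d_{n-t} = d_n$. The paper's writeup is more compressed (writing the split as $\rho^\concat\tau$ with $\rho$ in a single block) but the decomposition, the two cases, and the counting are identical.
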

\begin{proof}
As~$H \subseteq G$, any~$\sigma \in [H]^n$ can be decomposed into $\rho^\concat \tau$ for some~$\rho \in [\xi_i]^{<\omega}$
and some~$\tau \in [\bigcup_{j \geq i} \xi_j]^{<\omega}$ with~$|\rho| > 0$.
If~$|\tau| = 0$ then $f(\sigma) \in \bigcup_{J \in S} J$  by definition of~$H$.
If~$|\tau| > 0$, then~$f(\sigma) \in I_{|\rho|}$ by property (c) of step (S3).
In any case
\[
f(\sigma) \in (\bigcup_{J \in S} J) \cup (\bigcup_{t \in (0,n)} I_t)
\]
Recall that $|S| \leq d_1$, $|J| = d_{n-1}$ for each~$J \in S$, and~$|I_t| \leq d_t d_{n-t}$ for each~$t \in (0,n)$.
Thus, applying the definition of~$d_n$ from (A3),
$|f([H]^n)| \leq d_1 d_{n-1} + \sum_{0 < t < n} d_t d_{n-t} = d_n$, as desired.
\end{proof}

This completes property (A3) and the proof of Theorem~\ref{thm:ts-arbitrary-preservation}.

\subsection{Free set theorem for tuples and non-c.e.\ definitions}\label{sect:fs-omega-models}

Cholak et al.~\cite{Cholak2001Free} studied the thin set theorem with infinitely many colors as a weakening
of the free set theorem. The forcing notions used by Wang in~\cite{Wang2014Definability}
and by the author in~\cite{PateyCombinatorial} for constructing solutions to free set instances
both involve the thin set theorem for a finite, but arbitrary number of colors.
These constructions may suggest some relation between~$\fs^n$ and~$\ts^n_k$
for arbitrarily large~$k$, but the exact nature of this relation is currently unclear.

In this section, we use the preservation of non-c.e.\ definitions of the thin set theorem
to deduce similar preservation results for the free set theorem,
and thereby separate~$\fs$ from~$\rt^2_2$ over~$\rca$.
More precisely, we prove the following preservation theorem.

\begin{theorem}\label{thm:fs2-preservation-definitions}
For every~$k \in \omega$,
$\fs$ admits strong preservation of $k$ non-c.e.\ definitions.
\end{theorem}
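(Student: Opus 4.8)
The plan is to reduce the free set theorem to the thin set theorem exactly as in the known proofs that $\fs^n$ follows from $\ts^n_k$ for sufficiently large $k$, but carrying the preservation property through each step. I will prove by induction on $n$ that $\fs^n$ admits strong preservation of $k$ non-c.e.\ definitions for every $k$; since $\fs = (\forall n)\fs^n$, and a conjunction/iteration of statements each admitting strong preservation of $k$ non-c.e.\ definitions again does (one builds an $\omega$-chain of Mathias-type conditions, as in Wang~\cite{Wang2014Definability}), this suffices. The base case $n=1$ is trivial: a free set for $f:[\omega]^1\to\omega$ is obtained by thinning, and in fact any infinite $H$ with $f(x)\notin H\setminus\{x\}$ for $x\in H$ can be found inside any infinite set by a simple greedy argument preserving non-c.e.\ definitions — alternatively one invokes Theorem~\ref{thm:ts-arbitrary-preservation} together with the observation (Theorem~3.2 of~\cite{Cholak2001Free}) relating thin and free sets for singletons.

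For the inductive step, fix $f:[\omega]^n\to\omega$ and a set $C$ preserving non-c.e.\ definitions of $A_0,\dots,A_{k-1}$. First I would apply Theorem~\ref{thm:generalized-cohesivity-strong-avoidance} (legitimately, since by induction hypothesis and Theorem~\ref{thm:ts-arbitrary-preservation} all the relevant $\ts^s_{d_s+1}$ admit strong preservation of $k$ non-c.e.\ definitions) with $t=n-1$ to obtain an infinite set $D$, with $D\oplus C$ still preserving non-c.e.\ definitions of the $A$'s, such that for every $\sigma$ with $0<|\sigma|<n$ the set of colors that $f(\sigma,\cdot)$ hits cofinally often on tuples from $D$ is finite, of size at most $d_{n-|\sigma|}$. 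Over $D$ the coloring $f$ therefore behaves, for each fixed ``front'' $\sigma$, like a coloring with boundedly many colors. Now one runs the standard free-set construction inside $D$: build $G=\bigcup_i\xi_i\subseteq D$ block by block, where each new block $\xi_i$ is chosen so that (i) $f$ restricted to $n$-tuples entirely inside $\xi_i$ uses few colors and, crucially, avoids putting values back into $\bigcup_{j<i}\xi_j$ when that would violate freeness; (ii) for $\sigma$ drawn from already-committed blocks and $\tau$ from future blocks, $f(\sigma,\tau)$ lies in the finite color set $I_\sigma$ identified above, and in particular — after a further thinning ensuring those finitely many colors are either all in $G$ or the ones in $G$ lie in $\sigma$ — freeness is respected across blocks. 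Requirements $W_e^{G\oplus C}\neq A_j$ are met by the now-routine forcing lemma in the style of Lemma~\ref{lem:ts-lemma-set-g}: one defines the $X\oplus C$-c.e.\ set $W$ of witnesses $a$ that can be forced into $W_e^{\sigma\xi\oplus C}$ over \emph{every} coloring $g$ on $[X]^n$ admitting a suitable low-color-count block, notes $W\neq A_j$, and splits into the two cases, the negative case using preservation of $k$ non-c.e.\ definitions for $\wkl$ (to pick a bad coloring $g$ preserving the definitions) followed by the preservation property for the thin set theorem applied to $g$.

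The final step is to pass from $G$ to a genuine free set $H\subseteq G$: the within-block color bounds let one partition $G$ according to which small color-set each block realizes, apply strong preservation for $\ts^1_{d_1+1}$ to collapse to $d_1$ of those classes, and then a last application of the thin-set preservation (or a direct greedy argument) prunes $G$ to an infinite $H$ on which $f$ never outputs a member of $H$ outside the argument tuple, all the while keeping $H\oplus C$ preserving non-c.e.\ definitions of the $A$'s. The main obstacle I expect is bookkeeping the freeness condition across blocks simultaneously with the preservation requirements: unlike the thin set theorem, freeness couples the value $f(\sigma)$ to the set $H$ itself, so one must be careful that the block-by-block commitments in step (S3)-analogue, together with the color-set $I_\sigma$ control from Theorem~\ref{thm:generalized-cohesivity-strong-avoidance}, really do guarantee $f(\sigma)\in H\Rightarrow f(\sigma)\in\sigma$ in the limit — this is precisely where one needs the extra thinning making each relevant $I_\sigma\cap G$ consist only of elements of $\sigma$, and verifying that this thinning can be done while preserving non-c.e.\ definitions is the technical heart of the argument. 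Everything else is an adaptation of Wang's~\cite{Wang2014Some,Wang2014Definability} and the author's~\cite{PateyCombinatorial} free-set constructions with ``cone avoidance'' replaced throughout by ``preservation of $k$ non-c.e.\ definitions''.
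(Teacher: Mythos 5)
Your high-level plan (induction over $n$, reduce to the thin-set preservation results, replace cone avoidance by preservation of non-c.e.\ definitions) is in the right spirit, and your closing remark that "everything else is an adaptation of Wang's and the author's free-set constructions" points at the right sources. But the actual argument you sketch for the inductive step has a genuine gap, and it is exactly at the place you flag as "the technical heart": the coupling of $f(\sigma)$ to $H$ itself.

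The structure you describe — build $D$, then blocks $\xi_i$, then prune by an $\ts^1$ application — is the (S1)--(S4) decomposition of the \emph{thin set} proof (Theorem~\ref{thm:ts-arbitrary-preservation}). It works there because the instance $f$ is $(d_n+1)$-valued, so $f([H]^n)$ is automatically a bounded finite set of colors and the only goal is to shrink its cardinality. For $\fs^n$ the instance is a map into $\omega$ with unbounded range, so there is no finite set of colors to manage, and the freeness requirement $f(\sigma)\in H\Rightarrow f(\sigma)\in\sigma$ is a genuinely different kind of constraint. Your proposed fix — "a further thinning ensuring the colors of $I_\sigma$ that lie in $G$ lie in $\sigma$" — cannot be carried out: the sets $I_\sigma$ vary with $\sigma$, there are infinitely many $\sigma\in[G]^{<n}$, and you have no a priori control relating the locations of the elements of $I_\sigma\setminus\sigma$ to $\sigma$. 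Thinning away $I_\sigma\setminus\sigma$ for all $\sigma$ simultaneously could delete everything.

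What the paper actually does to resolve this is to first reduce to \emph{left trapped} functions ($f(\sigma)\leq\sigma(n-1)$) via Lemma~\ref{lem:avoidance-fs-trapped-to-untrapped}, which itself routes through $\dnr$ (the right-trapped case is uniformly equivalent to $\dnr$, which is reducible to left-trapped $\fs^n$). Left-trappedness is exactly what makes freeness a tractable, "backward-looking" constraint: when a new largest element $x$ is added, tuples $\sigma$ not involving $x$ automatically satisfy $f(\sigma)<x$, so one never creates freeness violations retroactively. The preservation for the left-trapped case is then done with a Mathias forcing whose conditions carry four side conditions (a)--(d) (preservation, current freeness, a tail bound from Lemma~\ref{lem:fs-cohesivity-strong-preservation}, and a reservoir freeness condition from Lemma~\ref{lem:fs-left-trapped-preserves-smaller}); it is not a block construction at all. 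Your proposal omits the trapped-function reduction entirely and substitutes a block-and-thin approach that has no mechanism for handling the unbounded range; this is the missing idea. Also, a minor issue: the base case you suggest at $n=1$ via a greedy argument is not quite "trivial" in the strong-preservation setting, since the instance is arbitrary and a greedy solution may be as powerful as the instance; the paper instead sets the base case at $n=0$ and handles $n=1$ through the same forcing machinery.
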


The proof of Theorem~\ref{thm:fs2-preservation-definitions}
follows Corollary~\ref{cor:fs-not-rt22-omega}.
Cholak et al.~\cite{Cholak2001Free}
asked whether any of~$\fs^2$, $\fs^2+\coh$ and
$\fs^2+\wkl$ imply~$\rt^2_2$ and
Hirschfeldt~\cite{Hirschfeldt2015Slicing} asked whether~$\fs^2+\wkl$
implies any of~$\srt^2_2$, $\ads$ or~$\cac$.
We answer all of those questions negatively with the following corollary.

\begin{corollary}\label{cor:fs2-not-imply-sts2n}
For every~$k \geq 2$,
let~$\Phi$ be the conjunction of~$\coh$, $\wkl$, $\rrt^2_2$,
$\pizog$, $\emo$, $\ts^2_{k+1}$ and~$\fs$.
Over~$\rca$, $\Phi$ implies neither~$\sts^2_k$ nor~$\sads$.
\end{corollary}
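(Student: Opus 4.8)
The plan is to deduce this corollary from the preservation-of-non-c.e.-definitions machinery in exactly the same way as Theorem~\ref{thm:ts2-omega-models}, now with the free set theorem adjoined to the list of conjuncts. First I would assemble the positive preservation facts: Wang~\cite{Wang2014Definability} proved that $\coh$, $\wkl$, $\rrt^2_2$, $\pizog$ and $\emo$ each admit preservation of $k$ non-c.e.\ definitions; Corollary~\ref{cor:ts2-preservation} gives the same for $\ts^2_{k+1}$; and Theorem~\ref{thm:fs2-preservation-definitions} gives that $\fs$ admits \emph{strong} preservation of $k$ non-c.e.\ definitions, which in particular implies ordinary preservation of $k$ non-c.e.\ definitions. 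Hence every conjunct of $\Phi$ admits preservation of $k$ non-c.e.\ definitions, and so does $\Phi$ itself.

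Next I would invoke the negative side. Corollary~\ref{cor:ts2-non-preservation} says $\sts^2_k$ does not admit preservation of $k$ non-c.e.\ definitions, and Theorem~\ref{thm:sads-non-preservation} says $\sads$ does not admit preservation of $2$ non-c.e.\ definitions. Since $k \geq 2$, and since a statement admitting preservation of $k$ non-c.e.\ definitions trivially admits preservation of $2$ non-c.e.\ definitions (pad the length-$2$ list of non-c.e.\ sets up to length $k$ by repetition), $\sads$ fails preservation of $k$ non-c.e.\ definitions as well.

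Finally I would appeal to Proposition~2.4 of Wang~\cite{Wang2014Definability}: if a $\Pi^1_2$ statement $\Psf$ admits preservation of $k$ non-c.e.\ definitions while $\Qsf$ does not, then there is an $\omega$-model of $\Psf$ that is not a model of $\Qsf$ --- the model being obtained by iterating the preservation closure over all instances arising in the construction, so that some fixed non-c.e.\ witness sets $A_0, \dots, A_{k-1}$ never become c.e.\ relative to a member of the resulting Turing ideal. Taking $\Psf = \Phi$ and $\Qsf$ to be $\sts^2_k$ (resp.\ $\sads$) produces an $\omega$-model of $\Phi$ in which $\sts^2_k$ (resp.\ $\sads$) fails, and since $\rca$ holds in every $\omega$-model this yields $\rca + \Phi \not\vdash \sts^2_k$ and $\rca + \Phi \not\vdash \sads$.

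Since every ingredient is already in place, I do not expect a genuine obstacle here: the whole weight of the result sits in Theorem~\ref{thm:fs2-preservation-definitions}, proved separately, and what remains is the routine bookkeeping above. The only point needing a word of care is the monotonicity of ``preservation of $k$ non-c.e.\ definitions'' in $k$, used to transfer the $\sads$ non-preservation from $2$ to $k$; this is immediate.
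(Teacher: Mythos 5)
Your proposal is correct and matches the paper's (implicit) argument: the corollary is obtained from Wang's Proposition~2.4 exactly as in the proof of Theorem~\ref{thm:ts2-omega-models}, with the one additional input that $\fs$ admits preservation of $k$ non-c.e.\ definitions, supplied by Theorem~\ref{thm:fs2-preservation-definitions}. The extra remark about monotonicity in $k$ (used to transfer $\sads$'s failure from $2$ to $k$ non-c.e.\ definitions) is correct and is left implicit in the paper.
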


\begin{corollary}\label{cor:fs-not-rt22-omega}
$\fs$ does not imply~$\rt^2_2$ over~$\omega$-models.
\end{corollary}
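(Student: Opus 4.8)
The plan is to obtain Corollary~\ref{cor:fs-not-rt22-omega} as a routine consequence of the preservation statements already in hand, with no additional forcing. First I would invoke Theorem~\ref{thm:fs2-preservation-definitions}: for every $k$, $\fs$ admits strong preservation of $k$ non-c.e.\ definitions, hence in particular plain preservation of $2$ non-c.e.\ definitions. On the negative side I would invoke Corollary~\ref{cor:ts2-non-preservation} with $k = 2$, which says that $\sts^2_2$ does not admit preservation of $2$ non-c.e.\ definitions. Equivalently one can just quote Corollary~\ref{cor:fs2-not-imply-sts2n} with $k=2$, which already packages these two facts for the conjunction $\Phi$, of which $\fs$ is a conjunct.

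Next I would apply the trivial adaptation of Proposition~2.4 of Wang~\cite{Wang2014Definability} turning the pattern ``$\Psf$ admits preservation of $k$ non-c.e.\ definitions while $\Qsf$ does not'' into an $\omega$-model of $\Psf$ that is not a model of $\Qsf$. With $\Psf = \fs$ and $\Qsf = \sts^2_2$ this yields an $\omega$-model $\Mcal$ of $\fs$ with $\Mcal \not\models \sts^2_2$. It then remains to upgrade this to $\rt^2_2$: a $2$-coloring of $[\omega]^2$ is thin exactly when it is monochromatic, so $\sts^2_2$ is literally $\srt^2_2$, and $\rca \vdash \rt^2_2 \imp \srt^2_2$. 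Since $\Mcal$, being an $\omega$-model, is a model of $\rca$, were it a model of $\rt^2_2$ it would also be a model of $\srt^2_2 = \sts^2_2$, a contradiction. Hence $\Mcal$ witnesses that $\fs$ does not imply $\rt^2_2$ over $\omega$-models.

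I do not expect any genuine obstacle at the level of this corollary itself: all the difficulty sits upstream, in Theorem~\ref{thm:fs2-preservation-definitions} (which goes through the generalized cohesiveness of Theorem~\ref{thm:generalized-cohesivity-strong-avoidance} and the tower of thin-set preservations built over Theorem~\ref{thm:ts1-strong-preservation}) and on the negative side in Corollary~\ref{cor:ts2-non-preservation}, which rests on the finite-injury construction of Theorem~\ref{thm:ts1-non-preservation}. The only points needing care are the bookkeeping identifications $\ts^2_2 = \rt^2_2$ and $\sts^2_2 = \srt^2_2$, and the elementary observation that an $\omega$-model separating $\fs$ from the weaker principle $\srt^2_2$ automatically separates it from $\rt^2_2$.
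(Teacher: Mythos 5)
Your proof is correct and matches the paper's intended route: the paper also derives this corollary from Theorem~\ref{thm:fs2-preservation-definitions} (preservation of non-c.e.\ definitions for~$\fs$) together with the negative result for~$\sts^2_k$, packaged one step earlier as Corollary~\ref{cor:fs2-not-imply-sts2n}, and the same identification~$\sts^2_2 = \srt^2_2$ with the trivial implication~$\rt^2_2 \imp \srt^2_2$. Your version simply unwinds Corollary~\ref{cor:fs2-not-imply-sts2n} to its ingredients and applies Wang's Proposition~2.4 directly, which is the same argument stated more explicitly.
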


The remainder of this section is devoted to the proof of Theorem~\ref{thm:fs2-preservation-definitions}.
The proof is done by induction over the size of the tuples.
The base case of our induction states that $\fs^0$ admits strong preservation of $k$ non-c.e.\ definitions. 
Consider $\fs^0$ as a degenerate case of the free
set theorem, where an instance is a constant~$c$ and a solution to~$c$ is an infinite set~$H$
which does not contain~$c$. Indeed, a function~$f : [\omega]^0 \to \omega$ can be considered as a constant~$c$,
and a set~$H$ is $f$-free if for every~$\varepsilon \in [H]^0$, $f(\varepsilon) \in H \imp f(\varepsilon) \in \varepsilon$.
As~$f(\varepsilon) \not \in \varepsilon$, $f(\varepsilon) = c \not \in H$.
From now on, we will assume that~$\fs^t$ admits strong 
preservation of~$k$ non-c.e.\ definitions for every~$t \in [0,n)$.

We start with a lemma similar to Lemma~\ref{lem:ts-strong-to-weak}.

\begin{lemma}\label{lem:fs-strong-to-weak}
For every~$n \geq 1$ and~$k \geq 2$, if~$\fs^n$ admits strong preservation of~$k$ non-c.e.\ definitions,
then $\fs^{n+1}$ admits preservation of~$k$ non-c.e.\ definitions.
\end{lemma}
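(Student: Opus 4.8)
The plan is to mimic the proof of Lemma~\ref{lem:ts-strong-to-weak}, the only essential difference being that a coloring with infinitely many colors need not converge along a cohesive set. Fix a set~$C$ preserving non-c.e.\ definitions of some non-$C$-c.e.\ sets~$A_0, \dots, A_{k-1}$, and fix a $C$-computable coloring~$f : [\omega]^{n+1} \to \omega$. First I would form the uniformly $C$-computable sequence~$\vec{R}$ indexed by pairs~$(\sigma, y)$ with~$\sigma \in [\omega]^n$ and~$y \in \omega$, where~$R_{\sigma,y} = \{s : f(\sigma,s) = y\}$. Since~$\coh$ admits preservation of~$k$ non-c.e.\ definitions (Wang~\cite{Wang2014Definability}), there is an~$\vec{R}$-cohesive set~$D$ such that~$D \oplus C$ preserves non-c.e.\ definitions of the~$A$'s. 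For each~$\sigma \in [D]^n$ there are then two possibilities: either there is a (necessarily unique) color~$c_\sigma$ with~$f(\sigma,s) = c_\sigma$ for cofinitely many~$s \in D$ --- call~$\sigma$ \emph{stable} --- or for every~$y$ one has~$f(\sigma,s) \neq y$ for cofinitely many~$s \in D$ --- call~$\sigma$ \emph{escaping}, in which case~$f(\sigma,\cdot)$ tends to infinity along~$D$. I cannot decide which case occurs $C$-effectively, nor even $(D\oplus C)$-effectively; this has to be circumvented later.

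Next I would reduce to the free set theorem one level down. Let~$\pi : \omega \to D$ be the increasing enumeration of~$D$, and define~$g : [\omega]^n \to \omega$ by setting, for~$\sigma' \in [\omega]^n$ with image~$\sigma = \pi[\sigma']$: $g(\sigma') = \pi^{-1}(c_\sigma)$ if~$\sigma$ is stable with~$c_\sigma \in D$, and~$g(\sigma') = \min \sigma'$ otherwise (escaping, or~$c_\sigma \notin D$). Applying the hypothesis --- strong preservation of~$k$ non-c.e.\ definitions for~$\fs^n$ --- to the instance~$g$ with base~$D \oplus C$, I obtain an infinite $g$-free set~$H_0$ such that~$H_0 \oplus D \oplus C$ preserves non-c.e.\ definitions of the~$A$'s. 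Transporting back, put~$H_1 = \pi[H_0] \subseteq D$; then~$H_1 \oplus D \oplus C \leq_T H_0 \oplus D \oplus C$ preserves non-c.e.\ definitions of the~$A$'s, and unravelling~$g$-freeness yields the crucial property: for every stable~$\sigma \in [H_1]^n$ with~$c_\sigma \in D$, if~$c_\sigma \in H_1$ then~$c_\sigma \in \sigma$.

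Finally I would extract an infinite $f$-free set~$T = \{t_0 < t_1 < \dots\} \subseteq H_1$ by a greedy construction computable from~$H_1 \oplus D \oplus C$. Having chosen~$t_0, \dots, t_{j-1}$, pick~$t_j \in H_1$ with~$t_j > t_{j-1}$, with~$t_j$ larger than every value~$f(\rho, t_i)$ for~$i < j$ and~$\rho \in [\{t_0,\dots,t_i\}]^n$, and such that~$f(\sigma, t_j) \notin \{t_0, \dots, t_{j-1}\} \setminus \sigma$ for every~$\sigma \in [\{t_0,\dots,t_{j-1}\}]^n$ with~$\max \sigma < t_j$. Such a~$t_j$ exists and can be found by unbounded search, because for each relevant~$\sigma$ the finite set~$\{t_0,\dots,t_{j-1}\}\setminus\sigma$ is avoided by~$f(\sigma, s)$ for cofinitely many~$s \in D$: for escaping~$\sigma$ since~$f(\sigma, \cdot)$ tends to infinity along~$D$, and for stable~$\sigma$ since~$f(\sigma, s) = c_\sigma$ cofinitely and either~$c_\sigma \notin \{t_0,\dots,t_{j-1}\}$ or~$c_\sigma \in \sigma$ by the property of~$H_1$. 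Then~$T$ is $f$-free: given~$\rho = \sigma \concat t_j \in [T]^{n+1}$, the value~$f(\rho) = f(\sigma, t_j)$ satisfies~$f(\rho) < t_{j+1} \leq t_{j+2} \leq \dots$, so it lies outside~$\{t_{j+1}, t_{j+2}, \dots\}$; if it lies in~$\{t_0, \dots, t_{j-1}\}$ it lies in~$\sigma \subseteq \rho$; and if it equals~$t_j$ it lies in~$\rho$. Hence~$f(\rho) \in T$ implies~$f(\rho) \in \rho$. Since~$T \oplus C \leq_T H_1 \oplus D \oplus C$, the set~$T \oplus C$ preserves non-c.e.\ definitions of the~$A$'s, and~$T$ is the required solution.

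The main obstacle is the escaping case. One is tempted to demand~$f(\sigma, t) \notin T$ for~$t \in T$ past~$\sigma$, but~$T$ is still being built, so future elements cannot be avoided directly; the resolution is to build~$T$ sparse enough that~$f(\sigma, t_j)$ is dwarfed by~$t_{j+1}$, which disposes of all future elements at once, so that only the already-fixed initial segment must be avoided at step~$j$. A secondary subtlety --- and the reason the first step must be packaged in exactly this way --- is that the extraction of~$T$ must remain jump-free, i.e.\ computable from~$H_1 \oplus D \oplus C$ rather than from~$(D\oplus C)'$, so that preservation of non-c.e.\ definitions is inherited; this is why one searches for a good~$t_j$ instead of using the jump to decide stable versus escaping and to locate the convergence thresholds.
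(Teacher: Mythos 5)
Your proof is correct and takes the same route as the paper's: apply $\coh$ (with preservation) to stabilize the last coordinate, apply strong preservation of $\fs^n$ to the induced limit coloring on $n$-tuples, and then extract an $f$-free set from the resulting free set. You make explicit the re-indexing onto the cohesive set (via $\pi$) and the final greedy extraction with the sparseness trick that disposes of future elements — both of which the paper leaves implicit in the one-line closing claim that $H \oplus G \oplus C$ computes an infinite $f$-free set — but the essential argument is identical.
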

\begin{proof}
Fix any set~$C$, $k$ non-$C$-c.e.\ sets~$A_0, \dots, A_{k-1}$ and any $C$-computable
coloring $f : [\omega]^{n+1} \to \omega$.
Consider the uniformly~$C$-computable sequence of sets~$\vec{R}$ defined for each~$\sigma \in [\omega]^n$ and $y \in \omega$ by
\[
R_{\sigma,y} = \{s \in \omega : f(\sigma,s) = y\}
\]
As~$\coh$ admits preservation of~$k$ non-c.e.\ definitions, there exists
some~$\vec{R}$-cohesive set~$G$ such that $G \oplus C$ preserves non-c.e.\ definitions
of the~$A$'s. The cohesive set induces a coloring~$\tilde{f} : [\omega]^n \to \omega$ 
defined for each~$\sigma \in [\omega]^n$ by
\[
\tilde{f}(\sigma) = \cond{
	\lim_{s \in G} f(\sigma,s) & \mbox{ if it exists}\\
	0 & \mbox{ otherwise}\\
}
\]
As~$\fs^n$ admits strong preservation of $k$ non-c.e.\ definitions,
there exists an infinite $\tilde{f}$-free set~$H$ such that
$H \oplus G \oplus C$ preserves non-c.e.\ definitions
of the~$A$'s. In particular,
\[
(\forall \sigma \in [H]^n)(\forall y \in H \setminus \sigma)(\exists b)(\forall s > b)f(\sigma, s) \neq y
\]
$H \oplus G \oplus C$ computes an infinite $f$-free set.
\end{proof}

\subsubsection{Trapped functions}

Although the notion of free set can be defined for every coloring
over tuples of integers, we shall restrict ourselves to a particular kind
of colorings: left trapped functions. The notion of trapped function
was introduced by Wang in~\cite{Wang2014Some} to prove that~$\fs$ does not imply~$\aca$
over~$\omega$-models. It was later reused by the author in~\cite{PateyCombinatorial} to separate~$\fs$
from~$\wwkl$ over~$\omega$-models.

\begin{definition}
A function $f : [\omega]^n \to \omega$ is \emph{left (resp. right) trapped}
if for every $\sigma \in [\omega]^n$, $f(\sigma) \leq \sigma(n-1)$ (resp. $f(\sigma) > \sigma(n-1)$).
\end{definition}

The following lemma is a particular case of a more general statement
proven by the author in~\cite{PateyCombinatorial}.
It follows from the facts that $\fs^n$ for right trapped functions
is strongly uniformly reducible to the diagonally non-computable principle ($\dnr$),
which itself is computably reducible to~$\fs^n$ for left trapped functions.

\begin{lemma}[Patey in~\cite{PateyCombinatorial}]\label{lem:avoidance-fs-trapped-to-untrapped}
For each $k, n \geq 1$, 
if $\fs^n$ for left trapped functions admits (strong) 
preservation of $k$ non-c.e.\ definitions then so does $\fs^k$.
\end{lemma}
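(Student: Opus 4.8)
The plan is to reduce an arbitrary instance of the full free set theorem to the left trapped case, routing the part of the instance that looks right trapped through diagonal non-computability. Every coloring $f : [\omega]^n \to \omega$ splits according to whether $f(\sigma) > \max\sigma$ or $f(\sigma) \leq \max\sigma$, and the first thing I would do is exploit that $\fs^n$ for right trapped functions is strongly uniformly reducible to $\dnr$, which is in turn computably reducible to $\fs^n$ for left trapped functions. Preservation of non-c.e.\ definitions is inherited downwards along a strong uniform reduction --- a fixed Turing functional carries a solution of the target problem to a solution of the source, so from $Y \oplus C$ preserving non-c.e.\ definitions one gets the same for the image, which is computed by $Y \oplus C$ --- and likewise along a computable reduction, where in the strong case one feeds the $\dnr$-instance, computed by the original instance, to the strong preservation property of the left trapped principle, which applies to arbitrary instances. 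Hence the hypothesis yields that $\dnr$, and therefore $\fs^n$ for right trapped functions, also admits (strong) preservation of $k$ non-c.e.\ definitions.

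Next I would combine the two trapped cases. Fix $C$ preserving non-c.e.\ definitions of non-$C$-c.e.\ sets $A_0,\dots,A_{k-1}$ and an arbitrary coloring $f : [\omega]^n \to \omega$. Set $f_R(\sigma) = f(\sigma)$ when $f(\sigma) > \max\sigma$ and $f_R(\sigma) = \max\sigma + 1$ otherwise; this $f_R$ is right trapped, so there is an infinite $f_R$-free set $H_1$ with $H_1 \oplus C$ preserving non-c.e.\ definitions of the $A$'s. Since $f_R(\sigma) > \max\sigma$ is never in $\sigma$, $f_R$-freeness forces $f_R(\sigma) \notin H_1$ for every $\sigma \in [H_1]^n$; in particular $f(\sigma) \notin H_1$ whenever $\sigma \in [H_1]^n$ and $f(\sigma) > \max\sigma$. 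Now work over the oracle $H_1 \oplus C$, which still preserves non-c.e.\ definitions of the $A$'s, and apply the hypothesis to the left trapped coloring $g$ on $[H_1]^n$ defined by $g(\sigma) = f(\sigma)$ if $f(\sigma) \leq \max\sigma$ and $g(\sigma) = \max\sigma$ otherwise: this produces an infinite $g$-free set $H \subseteq H_1$ with $H \oplus C$ preserving non-c.e.\ definitions of the $A$'s. For $\sigma \in [H]^n$: if $f(\sigma) > \max\sigma$ then $f(\sigma) \notin H_1 \supseteq H$ and the freeness clause holds vacuously; otherwise $g(\sigma) = f(\sigma)$ and $g$-freeness gives $f(\sigma) \in H \Rightarrow f(\sigma) \in \sigma$. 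So $H$ is $f$-free, which shows $\fs^n$ admits (strong) preservation of $k$ non-c.e.\ definitions.

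Finally, to pass from the arity $n$ of the hypothesis to the arity $k$ of the conclusion --- when $k = n$ this is already done, and for $k < n$ I would argue as follows --- given an $\fs^k$-instance $f : [\omega]^k \to \omega$ and $C$ as above, let $\hat f(\tau) = f(\tau\uh k)$ for $\tau \in [\omega]^n$, where $\tau\uh k$ denotes the tuple of the $k$ least elements of $\tau$, and apply the preservation of the full $\fs^n$ just established to obtain an infinite $\hat f$-free set $H$ with $H \oplus C$ preserving non-c.e.\ definitions of the $A$'s. For $\sigma \in [H]^k$ with $f(\sigma) \in H$, pick any $\tau \in [H]^n$ whose $k$ least elements are exactly $\sigma$ and whose remaining $n - k$ elements all exceed $\max(\max\sigma, f(\sigma))$; then $\hat f(\tau) = f(\sigma) \in H$, so $\hat f$-freeness forces $f(\sigma) \in \tau$, and since $f(\sigma)$ lies below every added element it must already lie in $\sigma$. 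Hence $H$ is $f$-free. The hard part will be the bookkeeping of the first paragraph: checking carefully that each of the two reductions transfers the (strong) preservation property, not merely the existence of solutions --- in particular that the strong version survives $\dnr \leq_c \fs^n$ for left trapped functions --- and verifying that solving the right trapped part before the left trapped part genuinely yields a free set for the original $f$.
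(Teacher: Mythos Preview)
The conclusion ``$\fs^k$'' in the lemma is a typo for ``$\fs^n$'': look at how the lemma is used together with Theorem~\ref{thm:strong-preservation-fs-left-trapped} in the induction proving Theorem~\ref{thm:fs2-preservation-definitions}. The inductive step goes from strong preservation of $\fs^t$ for $t<n$ to strong preservation of left-trapped $\fs^n$ (Theorem~\ref{thm:strong-preservation-fs-left-trapped}), and then the present lemma is supposed to upgrade this to full $\fs^n$. Your third paragraph, which attempts to pass from arity $n$ to arity $k$, is therefore unnecessary; it is also incomplete, since you treat only $k\le n$, and for $k>n$ there is no reason the stated implication should hold.

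Your first two paragraphs match the paper's sketch exactly (the paper gives no further details, deferring to~\cite{PateyCombinatorial}): split $f$ into its left- and right-trapped parts, handle the right-trapped part via the chain right-trapped $\fs^n \leq_{su} \dnr \leq_c$ left-trapped $\fs^n$, and then combine. Two technical points deserve more care. First, when you ``apply the hypothesis to the left trapped coloring $g$ on $[H_1]^n$'' you must explain how, since the hypothesis is stated for colorings on~$[\omega]^n$; pulling back along the increasing bijection $\omega\to H_1$ does preserve left-trappedness, and the pulled-back coloring is $(H_1\oplus C)$-computable in the non-strong case, so this goes through with base $H_1\oplus C$. Second, and more seriously, you correctly flag that the hard part is checking that \emph{strong} preservation survives the computable reduction $\dnr \leq_c$ left-trapped $\fs^n$. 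Your one-line justification does not settle it: the $\dnr$-solution is computed from $Y\oplus X$, where $Y\oplus C$ preserves but $X$ is computed from the arbitrary instance~$f$, so $Y\oplus X\oplus C$ need not preserve. Resolving this is precisely the bookkeeping carried out in~\cite{PateyCombinatorial}; your outline is right but the verification you anticipate being ``hard'' is genuinely required and not supplied here.
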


It therefore suffices to prove strong preservation
of~$k$ non-c.e.\ definitions for left trapped functions.

\subsubsection{Case of left trapped functions}

In this part, we will prove the following theorem which,
together with Lemma~\ref{lem:avoidance-fs-trapped-to-untrapped}
is sufficient to prove Theorem~\ref{thm:fs2-preservation-definitions} by induction over~$n$.

\begin{theorem}\label{thm:strong-preservation-fs-left-trapped}
For each~$k, n \geq 1$, if~$\fs^t$ admits strong preservation of~$k$ non-c.e.\ definitions
for each~$t \in [0,n)$, then so does~$\fs^n$ for left trapped functions.
\end{theorem}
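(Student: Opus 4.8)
The plan is to run a forcing construction of the same shape as those in Section~\ref{sect:ts-strong-reduc} and in the proof of Theorem~\ref{thm:generalized-cohesivity-strong-avoidance}, proving the statement by induction on $n$; via Lemma~\ref{lem:avoidance-fs-trapped-to-untrapped} this yields Theorem~\ref{thm:fs2-preservation-definitions}. Fix a left trapped $f : [\omega]^n \to \omega$ and a set $C$ preserving non-c.e.\ definitions of non-$C$-c.e.\ sets $A_0, \dots, A_{k-1}$; assuming $\fs^t$ (for left trapped functions, hence by Lemma~\ref{lem:avoidance-fs-trapped-to-untrapped} in general) admits strong preservation of $k$ non-c.e.\ definitions for all $t \in [0,n)$, we build an infinite $f$-free set $G$ with $G \oplus C$ preserving non-c.e.\ definitions of the $A$'s. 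The conditions are Mathias conditions $(\sigma, X)$ with $X \oplus C$ preserving non-c.e.\ definitions of the $A$'s, so that, exactly as in the proof of Theorem~\ref{thm:generalized-cohesivity-strong-avoidance}, a sufficiently generic $G$ has $G \oplus C$ preserving non-c.e.\ definitions of the $A$'s and the scheme forcing $G$ infinite is dense; the requirements $W_e^{G \oplus C} \neq A_j$ are met by the verbatim analogue of Lemma~\ref{lem:ts2-reduc-step}, the nontrivial case passing to a new reservoir inside a non-empty $\Pi^{0,X \oplus C}_1$ class by preservation of $k$ non-c.e.\ definitions of $\wkl$. So the whole difficulty is to simultaneously force $f$-freeness of $G$.

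To force freeness I would first preprocess the reservoir. By Theorem~\ref{thm:ts-arbitrary-preservation}, $\ts^s_{d_s+1}$ admits strong preservation of $k$ non-c.e.\ definitions for all $s$, so Theorem~\ref{thm:generalized-cohesivity-strong-avoidance} applies with $t = n-1$: there is an infinite $D$ with $D \oplus C$ preserving non-c.e.\ definitions of the $A$'s such that for each $\sigma$ with $0 < |\sigma| < n$ the set $L_\sigma$ of colors realized cofinally often by $\tau \mapsto f(\sigma,\tau)$ over $D$ is finite, of size $\leq d_{n-|\sigma|}$. Since $f$ is left trapped, $f(\sigma,\tau) \leq \max\tau$, and I split $L_\sigma = L_\sigma^{\mathrm{lo}} \cup L_\sigma^{\mathrm{hi}}$ according to whether the color is $\leq \max\sigma$ or $> \max\sigma$. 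For each $t \in [1,n)$, the at most $d_{n-t}$ functions sending $\sigma \in [\omega]^t$ to the successive elements of $L_\sigma^{\mathrm{lo}}$ (padded by $\max\sigma$ where undefined) are themselves left trapped colorings on $[\omega]^t$; applying the induction hypothesis for $\fs^t$ finitely often, preserving non-c.e.\ definitions at each step, I thin $D$ to an infinite $D^* \subseteq D$ with $D^* \oplus C$ preserving non-c.e.\ definitions of the $A$'s and such that every low cofinal color of $f(\sigma,\cdot)$ over $D^*$ lies in $\sigma$, for all $\sigma \in [D^*]^{<n}$. Because only finitely many colors are $\leq \max\sigma$, there is moreover a bound $b_\sigma$ past which, over $D^*$, $f(\sigma,\tau) \leq \max\sigma$ already implies $f(\sigma,\tau) \in \sigma$.

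Then I build $G = \bigcup_s \sigma_s \subseteq D^*$ by the forcing, extending stems by reservoir elements in increasing order; when an element becomes the greatest element of a newly completed prefix $\sigma$ of length $<n$, I attach to the condition the side conditions that every later element exceeds $b_\sigma$ and avoids the finite set $L_\sigma^{\mathrm{hi}}$. Only finitely many such side conditions are ever active, the reservoir stays infinite and remains preserving non-c.e.\ definitions, so the density lemmas for the infinitude and $W_e^{G\oplus C}\neq A_j$ requirements go through unchanged. For the verification, given $\rho \in [G]^n$ write $\rho = \sigma \cup \{m\}$ with $m = \max\rho$; then $m > b_\sigma$ and $m$ entered $G$ after $\sigma$ was completed. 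If $f(\rho) \leq \max\sigma$ then $f(\rho) \in \sigma \subseteq \rho$ by the choice of $b_\sigma$ and $D^*$; if $f(\rho) \in L_\sigma^{\mathrm{hi}}$ then $f(\rho)$ differs from every element of $G$ entering after $\sigma$ and exceeds every element entering before, so $f(\rho) \notin G$.

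The remaining and hardest case is $f(\rho) > \max\sigma$ with $f(\rho)$ realized by $f(\sigma,\cdot)$ only finitely often (so $f(\rho) \notin L_\sigma$): such a value can lie strictly between $\max\sigma$ and $m$ and could a priori coincide with an intermediate element of $G$, and — unlike the finitely many colors $\leq \max\sigma$ — these "high non-cofinal" values are not confined by any single threshold. Ruling them out is precisely where the full induction hypothesis on \emph{all} $t<n$ is used: one decomposes $\rho$ along longer tails, recasts the corresponding restrictions of $f$, together with a position-type coloring recording where the output sits among the coordinates of $\rho$, either as left trapped colorings on strictly shorter tuples or as $\ts^n_\ell$-instances with $\ell$ large enough to fall under Theorem~\ref{thm:ts-arbitrary-preservation}, and absorbs them into further preprocessing of the reservoir by $\fs^{n-2},\dots,\fs^0$ and $\ts^n_\ell$, each preserving $k$ non-c.e.\ definitions; the $d_n$-style accounting of these finite color sets fixes the number of iterations. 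I expect this bookkeeping — routing every auxiliary coloring to a principle known to admit the relevant (strong) preservation while keeping the reservoir preserving non-c.e.\ definitions of the $A$'s throughout — to be the main technical obstacle. Once it is carried out the construction closes the induction on $n$, and with Lemma~\ref{lem:avoidance-fs-trapped-to-untrapped} it gives Theorem~\ref{thm:fs2-preservation-definitions}.
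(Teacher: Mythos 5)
Your forcing framework (Mathias conditions with reservoir preserving non-c.e.\ definitions, density of the $W_e^{G\oplus C}\neq A_j$ requirements via a non-empty $\Pi^{0,X\oplus C}_1$ class and $\wkl$-preservation) and your instinct to preprocess the reservoir via Theorem~\ref{thm:generalized-cohesivity-strong-avoidance} together with applications of $\fs^t$ for $t<n$ match the paper's proof. But the case you single out as ``the hardest'' --- colors $x=f(\rho)$ with $\max\sigma < x < m$ realized only finitely often by $f(\sigma,\cdot)$ --- is in fact the trivial one, and your plan to rule them out by recasting auxiliary colorings as $\ts^n_\ell$-instances with a $d_n$-style accounting is misdirected. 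The invariant the reservoir must carry (the paper's Lemma~\ref{lem:fs-cohesivity-strong-preservation}, reflected in property~(c) of the conditions) is: for every $\sigma\in[X]^{<n}$ and every $x\in X\setminus\sigma$, there is a bound $b$ with $f(\sigma,\tau)\neq x$ for all $\tau\in[X\cap(b,+\infty)]^{n-|\sigma|}$. When $x$ is realized cofinally this needs the freeness applications; when $x$ is \emph{not} realized cofinally the bound $b$ exists by the very definition of ``not cofinal,'' and no further principle is invoked. In the forcing one then only has to ensure that each new stem element exceeds $b_{\sigma,x}$ for the finitely many pairs $(\sigma,x)$ drawn from the current finite stem, which is exactly what the Mathias-style property~(c) arranges. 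There is nothing left to route through $\ts^n_\ell$.

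A second misstep compounds this. You restrict your uses of the induction hypothesis to \emph{left trapped} auxiliary colorings, which forces the split of $L_\sigma$ into $L_\sigma^{\mathrm{lo}}$ and $L_\sigma^{\mathrm{hi}}$ and the ad hoc side conditions for the high cofinal colors. But the hypothesis of Theorem~\ref{thm:strong-preservation-fs-left-trapped} is that $\fs^t$ \emph{for arbitrary colorings} admits strong preservation for each $t<n$ (Lemma~\ref{lem:avoidance-fs-trapped-to-untrapped} already upgrades left trapped to general at each lower level). So one applies $\fs^t$ directly to the function sending $\sigma$ to the $i$th cofinal color of $f(\sigma,\cdot)$, with no magnitude restriction, and both ``low'' and ``high'' cofinal colors are handled uniformly: a cofinal color $x=f_{t,i}(\sigma)$ satisfies $x\in D^*\Rightarrow x\in\sigma$ by freeness (for $x>\max\sigma$ this forces $x\notin D^*\supseteq G$). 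Finally, you invoke Theorem~\ref{thm:generalized-cohesivity-strong-avoidance} with $t=n-1$, which covers only $1\le|\sigma|<n$ and omits $\sigma=\varepsilon$; the paper takes $t=n$ so that the reservoir invariant also constrains $n$-tuples lying entirely in the tail, which is needed in the extension lemma when the newly added element plays the role of $x$ against $\sigma'\subseteq F$ possibly empty.
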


The proof of Theorem~\ref{thm:strong-preservation-fs-left-trapped}
begins after Lemma~\ref{lem:fs-cohesivity-strong-preservation} and ends after Lemma~\ref{lem:fs2-left-trapped-preserves-2}.
The two following lemmas will ensure that the reservoirs of our forcing conditions
will have good properties, so that the conditions will be extensible.

\begin{lemma}[Patey in~\cite{PateyCombinatorial}]\label{lem:fs-left-trapped-preserves-smaller}
Suppose that $\fs^t$ admits strong preservation of $k$ non-c.e.\ definitions for each $t \in (0,n)$ for some~$k \in \omega$.
Fix a set $C$, some non-$C$-c.e.\ sets~$A_0, \dots, A_{k-1}$, a finite set $F$ and an infinite set $X$ computable in $C$.
For every function $f : [X]^n \to \omega$ there exists an infinite
set $Y \subseteq X$ such that $Y \oplus C$ preserves non-c.e.\ definitions of the~$A$'s and 
$(\forall \sigma \in [F]^t)(\forall \tau \in [Y]^{n-t})f(\sigma, \tau) \not \in Y \setminus \tau$
for each $t \in (0,n)$.
\end{lemma}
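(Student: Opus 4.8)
The plan is to reduce the lemma to finitely many applications of the free set theorem in strictly smaller arities. First I would note that since $F$ is finite and the parameter $t$ ranges over the finite set $\{1,\dots,n-1\}$, there are only finitely many pairs $(t,\sigma)$ with $\sigma\in[F]^t$ (none at all if $F=\emptyset$, in which case $Y=X$ already works); fix an enumeration $(t_0,\sigma_0),\dots,(t_{m-1},\sigma_{m-1})$ of all of them. Assuming $F\neq\emptyset$, after replacing $X$ by $X\setminus[0,max(F)]$ — still infinite and computable in $C$, and whose infinite subsets still witness the conclusion — I may assume $max(F)<min(X)$, so that for each $i<m$ the ``section'' $g_i(\tau)=f(\sigma_i,\tau)$ is a well-defined coloring of $[X]^{n-t_i}$, with $n-t_i\in(0,n)$. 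The one elementary observation I need is that an infinite set $Y\subseteq X$ satisfies $(\forall\tau\in[Y]^{n-t_i})\,f(\sigma_i,\tau)\not\in Y\setminus\tau$ precisely when $Y$ is $g_i$-free, and that being $g_i$-free is inherited by every infinite subset.

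Next I would iterate strong preservation. Put $X_0=X$; since $X$ is $C$-computable, $X_0\oplus C$ preserves non-c.e.\ definitions of $A_0,\dots,A_{k-1}$ by the hypothesis on $C$. Given an infinite $X_i\subseteq X$ with $X_i\oplus C$ preserving non-c.e.\ definitions of the $A$'s, I apply the hypothesis that $\fs^{n-t_i}$ admits strong preservation of $k$ non-c.e.\ definitions to the instance given by the coloring $g_i$ on $[X_i]^{n-t_i}$, with parameters $C$ and $A_0,\dots,A_{k-1}$; this yields an infinite $g_i$-free set $X_{i+1}\subseteq X_i$ such that $X_{i+1}\oplus C$ preserves non-c.e.\ definitions of the $A$'s. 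After the $m$ steps I set $Y=X_m$: it is infinite, $Y\oplus C$ preserves non-c.e.\ definitions of the $A$'s, and for each $i<m$ we have $Y\subseteq X_{i+1}$, which is $g_i$-free, so $Y$ is $g_i$-free by downward closure — that is, $(\forall\tau\in[Y]^{n-t_i})\,f(\sigma_i,\tau)\not\in Y\setminus\tau$. Since $(t_0,\sigma_0),\dots,(t_{m-1},\sigma_{m-1})$ exhaust all pairs $(t,\sigma)$ with $t\in(0,n)$ and $\sigma\in[F]^t$, this gives exactly the conclusion.

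I do not expect a real obstacle here; the only point to handle with a little care is that each application of strong preservation is fed a coloring defined on $[X_i]^{n-t_i}$ rather than on all of $[\omega]^{n-t_i}$ and is required to return a free subset of $X_i$. This is the reading of ``strong preservation'' already used in the proof of Theorem~\ref{thm:generalized-cohesivity-strong-avoidance}, and it causes no difficulty: strong preservation imposes no effectivity condition on the instance and asks only that the solution preserve non-c.e.\ definitions relative to $C$ — not relative to $X_i\oplus C$ — so nothing about $X_i$ needs to be carried through the application, and the finitely many freeness requirements stay simultaneously realizable thanks to downward closure of freeness. When $n=1$ the set $(0,n)$ is empty and $Y=X$ already works, so the iteration is only run for $n\geq 2$.
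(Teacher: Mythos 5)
Your proposal is correct and follows essentially the same route as the paper's own proof: enumerate the finitely many pairs $(t,\sigma)$ with $\sigma\in[F]^t$, and iterate strong preservation of $\fs^{n-t}$ on the sections $g_i(\tau)=f(\sigma_i,\tau)$ to build a finite decreasing chain of reservoirs whose final term is the desired $Y$, using downward closure of freeness to see that the earlier requirements persist. The added care about $\max(F)<\min(X)$ and the degenerate cases $F=\emptyset$, $n=1$ is a small tidying-up that the paper leaves implicit, not a different argument.
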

\begin{proof}
Fix the finite enumeration $\sigma_0, \dots, \sigma_{m-1}$ of all elements of $[F]^t$ for all $t \in (0,n)$.
We define a finite decreasing sequence of sets~$X = Y_0 \supseteq Y_1 \supseteq \dots \supseteq Y_m$
such that for each~$s < m$
\begin{itemize}
	\item[(a)] none of the $A$'s are $Y_{s+1} \oplus C$-c.e.\
	\item[(b)] $\forall \tau \in [Y_{s+1}]^{n - |\sigma_s|}f(\sigma_s, \tau) \not \in Y_{s+1} \setminus \tau$
\end{itemize}
Given some stage~$s < m$ and some set~$Y_s$,
define the function $f_{\sigma_s} : [Y_s]^{n - |\sigma_s|} \to \omega$ 
by $f_{\sigma_s}(\tau) = f(\sigma_s, \tau)$.
By strong preservation of $k$ non-c.e.\ definitions of $\fs^{n - |\sigma_s|}$,
there exists an infinite set $Y_{s+1} \subseteq Y_s$ satisfying (a) and~(b).
We claim that~$Y_m$ satisfies the properties of the lemma.
Fix some~$\sigma \in [F]^t$ and some~$\tau \in [Y_m]^{n - t}$ for some~$t \in (0,n)$.
There is a stage~$s < m$ such that $\sigma = \sigma_s$. Moreover, $\tau \in [Y_{s+1}]^{n - |\sigma_s|}$, 
so by (b), $f(\sigma_s, \tau) \not \in Y_{s+1} \setminus \tau$, therefore~$f(\sigma, \tau) \not \in Y_m \setminus \tau$, completing the proof.
\end{proof}

\begin{lemma}\label{lem:fs-cohesivity-strong-preservation}
Suppose that $\ts^t_{d_t+1}$ admits strong preservation of~$k$ non-c.e.\ definitions for each $t \in (0,n]$
and $\fs^t$ admits strong preservation of~$k$ non-c.e.\ definitions for each $t \in [0,n)$.
For every function $f : [\omega]^n \to \omega$
and every set $C$ preserving non-c.e.\ definitions of some sets~$A_0, \dots, A_{k-1}$,
there exists an infinite set $X$ such that $X \oplus C$ preserves non-c.e.\ definitions of the~$A$'s 
and for every $\sigma \in [X]^{<\omega}$ such that $0 \leq \card{\sigma} < n$,
$$
(\forall x \in X \setminus \sigma)(\exists b)(\forall \tau \in [X \cap (b, +\infty)]^{n-|\sigma|}) 
  f(\sigma, \tau) \neq x
$$
\end{lemma}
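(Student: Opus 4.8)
The plan is to build $X$ by a variant of Mathias forcing whose conditions are pairs $(F, Y)$ with $F$ finite, $Y$ infinite, $\max F < \min Y$, and $Y \oplus C$ preserving non-c.e.\ definitions of the $A$'s, ordered by $(F', Y') \leq (F, Y)$ iff $F \subseteq F' \subseteq F \cup Y$, $Y' \subseteq Y$ and $F' \setminus F < Y'$. For a finite $\sigma$ with $\card{\sigma} < n$ and an infinite set $Z$, write
\[
L_\sigma^Z = \set{x : (\forall b)(\exists \tau \in [Z \cap (b, +\infty)]^{n - |\sigma|})\, f(\sigma, \tau) = x}
\]
for the set of colors of $f(\sigma, \cdot)$ persisting over $Z$; note $L_\sigma^{Z'} \subseteq L_\sigma^Z$ whenever $Z' \subseteq Z$. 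First apply Theorem~\ref{thm:generalized-cohesivity-strong-avoidance} to $f$ with $t = n$ to get an infinite set $Y_*$ such that $Y_* \oplus C$ preserves non-c.e.\ definitions of the $A$'s and $\card{L_\sigma^{Y_*}} \leq d_{n - |\sigma|}$ for every $\sigma$ with $\card{\sigma} < n$; the starting condition is $(\emptyset, Y_*)$. Throughout I maintain the \emph{invariant} that on each condition $(F, Y)$ one has $L_\sigma^Y \cap F \subseteq \sigma$ for every $\sigma \in [F]^{<\omega}$ with $\card{\sigma} < n$. This invariant is inherited by any extension that only shrinks the reservoir (the $L_\sigma$'s then only shrink), and it holds vacuously for $(\emptyset, Y_*)$.

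The core is a density lemma: a condition $(F, Y)$ satisfying the invariant has an extension $(F \cup \set{y}, Y')$ satisfying the invariant, with $\card{F \cup \set{y}} = \card{F} + 1$. The key combinatorial observation is that for any finite $\sigma_0$, color $x$ and infinite $Y$, if $\set{y \in Y : x \in L_{\sigma_0 \concat y}^Y}$ is infinite then $x \in L_{\sigma_0}^Y$: given $b$, pick $y > b$ in that set, pick $\tau$ witnessing persistence of $x$ for $f(\sigma_0 \concat y, \cdot)$ above $b$, and then $y \concat \tau$ witnesses persistence of $x$ for $f(\sigma_0, \cdot)$ above $b$. Now by the invariant, for each $\sigma_0 \in [F]^{<\omega}$ with $\card{\sigma_0} < n - 1$ and each $x \in F \setminus \sigma_0$ we have $x \notin L_{\sigma_0}^Y$, so by the observation $\set{y : x \in L_{\sigma_0 \concat y}^Y}$ is finite; and each $L_\sigma^Y$ ($\sigma \in [F]^{<\omega}$, $\card{\sigma} < n$) is finite by the choice of $Y_*$ and monotonicity. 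Their union being finite, choose $y \in Y$ with $y > \max F$ lying outside it, so $y \notin L_\sigma^Y$ for all $\sigma \in [F]^{<\omega}$ with $\card{\sigma}<n$, and $x \notin L_{\sigma_0 \concat y}^Y$ for all such $\sigma_0, x$. For this $y$ each of those persistence statements fails, so there is a single bound $B$ (the maximum of finitely many non-persistence witnesses) with: for $Y' = Y \cap (B, +\infty)$ one has $f(\sigma_0 \concat y, \tau') \neq x$ for all $\sigma_0 \in [F]^{<\omega}$ with $\card{\sigma_0} < n-1$, all $x \in F \setminus \sigma_0$, all $\tau' \in [Y']^{n - |\sigma_0| - 1}$. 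Then $(F \cup \set{y}, Y')$ is a condition (since $Y' \leq_T Y$, so $Y' \oplus C$ preserves non-c.e.\ definitions of the $A$'s), and it satisfies the invariant: for $\sigma \subseteq F$, $L_\sigma^{Y'} \cap (F \cup \set{y}) \subseteq (L_\sigma^Y \cap F) \cup (L_\sigma^{Y'} \cap \set{y}) \subseteq \sigma$ using $y \notin L_\sigma^Y \supseteq L_\sigma^{Y'}$; and for $\sigma = \sigma_0 \concat y$ with $\card{\sigma} < n$, the displayed inequalities show $L_{\sigma_0 \concat y}^{Y'}$ avoids $F \setminus \sigma_0$, hence $L_{\sigma_0 \concat y}^{Y'} \cap (F \cup \set{y}) \subseteq \sigma_0 \cup \set{y} = \sigma$.

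Finally, exactly as in the proof of Theorem~\ref{thm:generalized-cohesivity-strong-avoidance}, and using the fact (Lemma~3.16 of Wang~\cite{Wang2014Definability}) that sufficiently generic objects of this forcing preserve $k$ non-c.e.\ definitions, one builds an infinite descending sequence $(\emptyset, Y_*) = c_0 \geq c_1 \geq \dots$, $c_s = (F_s, Y_s)$, such that $\card{F_s} \geq s$, each $c_s$ satisfies the invariant (stages growing $F$ use the density lemma, while the preservation steps can be arranged only to shrink the reservoir, hence keep the invariant), and $c_s$ forces $W_e^{X \oplus C} \neq A_i$ whenever $s = \tuple{e, i}$. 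Put $X = \bigcup_s F_s$; it is infinite and $X \oplus C$ preserves non-c.e.\ definitions of the $A$'s. Given $\sigma \in [X]^{<\omega}$ with $\card{\sigma} < n$ and $x \in X \setminus \sigma$, pick $s$ with $\sigma \cup \set{x} \subseteq F_s$; by the invariant at stage $s$, $x \notin L_\sigma^{Y_s}$, which by definition yields a bound $b$ with $f(\sigma, \tau) \neq x$ for all $\tau \in [Y_s \cap (b, +\infty)]^{n - |\sigma|}$. Taking $b' = \max(b, \max F_s)$, every $\tau \in [X \cap (b', +\infty)]^{n - |\sigma|}$ lies in $[Y_s \cap (b, +\infty)]^{n - |\sigma|}$, so $f(\sigma, \tau) \neq x$, as required. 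The main obstacle is precisely the one the combinatorial observation addresses: adjoining a fresh $y$ in front of a tuple could make a color already in $F$ persistent for $f(\sigma_0 \concat y, \cdot)$, and one must show only finitely many choices of $y$ are dangerous — which is where the boundedness of the persistent-color sets (the reason for invoking Theorem~\ref{thm:generalized-cohesivity-strong-avoidance}) and the invariant $L_{\sigma_0}^Y \cap F \subseteq \sigma_0$ enter.
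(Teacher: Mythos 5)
Your combinatorial observation and density lemma are correct and genuinely clever: you are in effect reproving, inline, the combinatorial content of the free set theorem applied to the ``persistent-color selector'' functions. But there is a real gap in the claim, tucked into the final paragraph, that ``the preservation steps can be arranged only to shrink the reservoir, hence keep the invariant.'' This is false for the kind of preservation that Lemma~3.16 of Wang provides. Forcing $W_e^{G\oplus C}\neq A_j$ proceeds by a dichotomy: one takes $W = \set{a : (\exists E\subseteq Y \text{ finite})\, a\in W_e^{(F\cup E)\oplus C}}$, which is $Y\oplus C$-c.e., picks $a\in W\,\Delta\,A_j$, and either (Case 1) $a\in W\setminus A_j$, in which case one must \emph{extend the stem} by some witnessing $E\subseteq Y$, or (Case 2) $a\in A_j\setminus W$, in which case the current condition already forces $a\notin W_e^{G\oplus C}$. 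Case~1 does not shrink the reservoir --- it adjoins a finite set $E$ over which you have no control, and $E$ will in general contain elements of $L_\sigma^Y$ for various $\sigma\in[F]^{<\omega}$, or create new $\sigma_0\concat y$ with a persistent color already in $F$, destroying the invariant. Your density lemma adds one element at a time, chosen to avoid a finite ``dangerous'' set, but the $E$ handed to you by the enumeration of $W_e$ is not of that form and cannot be rearranged to be.

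Making the preservation step compatible with the invariant is precisely the nontrivial content of the free set theorem's strong preservation: one needs the $\Pi^{0,X\oplus C}_1$-class trick of Lemma~\ref{lem:fs2-left-trapped-preserves-2} (consider the class $\Ccal_a$ of ``bad'' colorings $g$ such that no $g$-free $E$ enumerates $a$; in Case~2 use $\wkl$-preservation to get a preserving $g\in\Ccal_a$, and then $\fs$-preservation to get a $g$-free preserving reservoir), and this in turn is built by induction on the arity. This is exactly why the lemma you are trying to prove carries the hypothesis ``$\fs^t$ admits strong preservation of $k$ non-c.e.\ definitions for each $t\in[0,n)$'' --- a hypothesis your proof never invokes, which is the telltale sign that something is missing. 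The paper's proof is short precisely because it is modular: it obtains the bounded persistent-color sets from Theorem~\ref{thm:generalized-cohesivity-strong-avoidance}, packages the finitely many ``$i$th persistent color of $f(\sigma,\cdot)$'' maps as explicit colorings $f_{s,i}:[X]^s\to\omega$ for $s<n$, $i<d_{n-s}$, and then applies strong preservation of $\fs^s$ as a black box to get a set $X_{n-1}$ that is simultaneously $f_{s,i}$-free for all these maps and preserving. Freeness then immediately gives the conclusion. To repair your argument you would have to replicate that $\Pi^0_1$-class machinery inside your forcing, at which point you would no longer be avoiding the $\fs^t$ hypotheses but reproving them.
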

\begin{proof}
Let $X$ be the infinite set constructed in Theorem~\ref{thm:generalized-cohesivity-strong-avoidance}
with $t = n$. For each $s < n$ and $i < d_{n-s}$, let $f_{s,i} : [X]^s \to \omega$ be the function
such that $f_{s,i}(\sigma)$ is the $i$th element of
$\set{x : (\forall b)(\exists \tau \in [X \cap (b, +\infty)]^{n-s}) f(\sigma, \tau) = x}$
if it exists, and 0 otherwise.
Define a finite sequence $X \supseteq X_0 \supseteq \dots \supseteq X_{n-1}$ such that for each $s  < n$
\begin{itemize}
  \item[1.] $X_s \oplus C$ preserves non-c.e.\ definitions of the~$A$'s 
  \item[2.] $X_s$ is $f_{s,i}$-free for each $i < d_{n-s}$
\end{itemize}
We claim that $X_{n-1}$ is the desired set. Fix $s < n$ and take any $\sigma \in [X_{n-1}]^s$
and any $x \in X_{n-1} \setminus \sigma$. 
If $(\forall b)(\exists \tau \in [X \cap (b, +\infty)]^{n - s})f(\sigma, \tau) = x$,
then by choice of $X$, there exists an $i < d_{n - s}$ such that
$f_{s, i}(\sigma) = x$, contradicting $f_{s,i}$-freeness of $X_{n-1}$.
So $(\exists b)(\forall \tau \in [X \cap (b, +\infty)]^{n - s})f(\sigma, \tau) \neq x$.
\end{proof}

\begin{proof}[Proof of Theorem~\ref{thm:strong-preservation-fs-left-trapped}]
Fix~$k \geq 2$, some set~$C$, some non-$C$-c.e.\ sets~$A_0, \dots, A_{k-1}$
and a left trapped coloring~$f : [\omega]^n \to \omega$.
We will construct an infinite $f$-free set~$H$ such that
none of the $A$'s is $H \oplus C$-c.e.
Our forcing conditions are Mathias conditions $(F, X)$ such that
\begin{itemize}
  \item[(a)] $X \oplus C$ preserves non-c.e.\ definitions of the~$A$'s
  \item[(b)] $(\forall \sigma \in [F \cup X]^n) f(\sigma) \not \in F \setminus \sigma$
  \item[(c)] $(\forall \sigma \in [F \cup X]^t)(\forall x \in (F \cup X) \setminus \sigma)
  (\exists b)(\forall \tau \in [(F \cup X) \cap (b, +\infty)]^{n-t})\\
  f(\sigma, \tau) \neq x$ for each $t \in [0, n)$.
  \item[(d)] $(\forall \sigma \in [F]^t)(\forall \tau \in [X]^{n-t})
  f(\sigma, \tau) \not \in X \setminus \tau$ for each $t \in (0,n)$
\end{itemize}
Properties (c) and (d) will be obtained by 
Lemma~\ref{lem:fs-cohesivity-strong-preservation} 
and Lemma~\ref{lem:fs-left-trapped-preserves-smaller} and are present to maintain the property (b)
over extensions. A set $G$ \emph{satisfies} a condition $(F, X)$ if it is $f$-free and
satisfies the Mathias condition $(F, X)$.
Our initial condition is $(\emptyset, Y)$ where $Y$ is obtained by Lemma~\ref{lem:fs-cohesivity-strong-preservation}.

\begin{lemma}\label{lem:fs2-left-trapped-preserves-1}
For every condition $(F, X)$ there exists an extension
$(H, Y)$ such that $|H| > |F|$.
\end{lemma}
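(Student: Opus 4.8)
The plan is to force the free set to grow by one element. Put $x_0 = \min X$ and $H = F \cup \{x_0\}$, and then thin $X$ to an infinite set $Y$ so that $(H,Y)$ is again a valid forcing condition (satisfies (a)--(d)) and extends $(F,X)$. Since $H \setminus F = \{x_0\} \subseteq X$, $Y \subseteq X$, and $\max H = x_0 < \min Y$ will all be arranged, this yields the extension, and $|H| = |F| + 1 > |F|$ is exactly what is claimed. So the entire content is in re-establishing (a)--(d) for $(H,Y)$, and the construction of $Y$ proceeds in two moves: a preliminary shrinking of $X$ to secure property (b) over the new finite part, followed by one application of a known preservation lemma.

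Properties (a) and (d) I would obtain in one stroke from Lemma~\ref{lem:fs-left-trapped-preserves-smaller}, relativised to $X \oplus C$: by property (a) for $(F,X)$ none of the $A$'s is $(X \oplus C)$-c.e., the hypothesis of Theorem~\ref{thm:strong-preservation-fs-left-trapped} supplies strong preservation of $k$ non-c.e.\ definitions for $\fs^t$ with $t \in (0,n)$, and for any infinite $(X \oplus C)$-computable set $Y_0 \subseteq X$ the lemma produces an infinite $Y \subseteq Y_0$ such that $Y \oplus X \oplus C$ (hence $Y \oplus C$, since it computes less) preserves non-c.e.\ definitions of the $A$'s, and $f(\sigma,\tau) \notin Y \setminus \tau$ for all $\sigma \in [H]^t$, $\tau \in [Y]^{n-t}$, $t \in (0,n)$ --- which is exactly property (d) for $(H,Y)$. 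Property (c) for $(H,Y)$ then transfers verbatim from (c) for $(F,X)$: every $\sigma \in [H \cup Y]^t$ and every $x \in (H \cup Y) \setminus \sigma$ already lie in $F \cup X$, and the bound witnessing (c) for $(F,X)$ also witnesses it when the final quantifier ranges over the smaller set $(H \cup Y) \cap (b, +\infty)$.

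The one property that genuinely requires preparation is (b): $f(\sigma) \notin H \setminus \sigma$ for all $\sigma \in [H \cup Y]^n$. If $x_0 \in \sigma$ then $H \setminus \sigma = F \setminus \sigma$ and $\sigma \in [F \cup X]^n$, so (b) for $(F,X)$ applies directly; if $x_0 \notin \sigma$, then (b) for $(F,X)$ again gives $f(\sigma) \notin F \setminus \sigma$, and it only remains to ensure $f(\sigma) \neq x_0$. Decompose $\sigma$ (increasingly) as its part $\rho \in [F]^t$ in $F$ followed by its part $\tau \in [Y]^{n-t}$ in $Y$. When $\tau$ is empty this is free, because $f$ is left trapped: $f(\sigma) = f(\rho) \leq \max \rho \leq \max F < x_0$. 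When $\tau$ is nonempty I would invoke property (c) of $(F,X)$ with the single forbidden value $x_0$ and each of the finitely many $\rho \in [F]^{<n}$, getting for each such $\rho$ a finite bound $b_\rho$ beyond which $f(\rho,\tau') \neq x_0$ for every $\tau'$ of length $n - |\rho|$ drawn from $F \cup X$. Taking $b^{*} = \max(\{b_\rho : \rho \in [F]^{<n}\} \cup \{x_0\})$ and feeding $Y_0 = X \cap (b^{*}, +\infty)$ into Lemma~\ref{lem:fs-left-trapped-preserves-smaller} then secures (b) for every $Y \subseteq Y_0$ (and also ensures $\min Y > x_0 = \max H$, giving the Mathias-extension requirement).

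I expect the only real subtlety to be the bookkeeping around (b): one must notice that property (c) controls a \emph{single} forbidden value $x_0$ against a \emph{fixed} finite prefix $\rho \subseteq F$, so the argument relies on $F$ being finite (finitely many $\rho$, hence a finite $b^{*}$) and on left-trappedness to dispose of the $\tau$-empty case, which no tail bound could handle. The remaining ingredients --- the descent of preservation of non-c.e.\ definitions from $Y \oplus X \oplus C$ to $Y \oplus C$, and the routine verification that $(H,Y)$ is a Mathias extension of $(F,X)$ satisfying all of (a)--(d) --- are standard, so I would state them briefly and conclude $(H,Y) \leq (F,X)$ with $|H| > |F|$.
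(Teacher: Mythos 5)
Your proposal is correct and follows essentially the same route as the paper's proof: extend $F$ by one new element of $X$, invoke property (c) of $(F,X)$ to push the reservoir far enough out that the new element cannot reappear as a value $f(\sigma,\tau)$, and then apply Lemma~\ref{lem:fs-left-trapped-preserves-smaller} to restore properties (a) and (d) for the new pair. The one small divergence is at the very first step: you take $x_0 = \min X$ and use left-trappedness to dispose of the $\tau$-empty case (since $f(\sigma) \leq \max\sigma \leq \max F < x_0$ for $\sigma \in [F]^n$), whereas the paper instead picks $x \in X$ explicitly avoiding the finite set $\{f(\sigma) : \sigma \in [F]^n\}$ and so does not need to invoke left-trappedness at that particular point. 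Both choices are valid and the rest of the verification (that (c) transfers to the smaller condition, that preservation passes from $Y \oplus X \oplus C$ down to $Y \oplus C$, and that (b) only needs the new value $x_0$ ruled out) matches the paper's argument.
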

\begin{proof}
Choose an $x \in X$ such that 
$(\forall \sigma \in [F]^n)f(\sigma) \neq x$ and set $H = F \cup \{x\}$.
By property (c) of $(F, X)$, there exists a $b$
such that
\[
(\forall \sigma \in [F]^{t})
(\forall \tau \in [X \cap (b, +\infty)]^{n-t})f(\sigma, \tau) \neq \{x\} \setminus \sigma
\]
for each $t \in [0, n]$.
By Lemma~\ref{lem:fs-left-trapped-preserves-smaller},
there exists an infinite set $Y \subseteq X \setminus [0, b]$ such that $Y \oplus C$ preserves
non-c.e.\ definitions of the~$A$'s and property (d) is satisfied for $(H, Y)$.
We claim that $(H, Y)$ is a valid condition.
Properties (a), (c) and (d) trivially hold. We now check property (b).
By property (b) of $(F, X)$, we only need to check that
$(\forall \sigma \in [F \cup Y]^k)f(\sigma) \neq x$.
This follows from our choice of~$b$.
\end{proof}

\begin{lemma}\label{lem:fs2-left-trapped-preserves-2}
For every condition $(F, X)$, every~$e \in \omega$ and~$j < k$,
there exists an extension $(H, Y)$
forcing $W_e^{G \oplus C} \neq A_j$. 
\end{lemma}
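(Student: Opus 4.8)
The plan is to run the standard diagonalization against $W_e$, exploiting left-trappedness to keep the relevant space of colorings compact. First I would introduce the following set: let $W$ be the collection of all $a \in \omega$ such that for \emph{every} left trapped coloring $g : [X]^n \to \omega$ for which $F$ is $g$-free, there is a finite set $\xi \subseteq X$ with $F \cup \xi$ $g$-free and $a \in W_e^{(F\cup\xi)\oplus C}$ (the computation converging with finite use). Since $f$ and each candidate $g$ are left trapped, the value $g(\tau)$ of such a coloring on a tuple $\tau \in [X]^n$ ranges over the finite set $\{0,\dots,\max\tau\}$, so the space of admissible $g$'s is compact; by König's lemma the quantifier ``$\forall g\,\exists\xi$'' can be verified by inspecting a finite tree of partial colorings, so $W$ is $(X\oplus C)$-c.e.\ (note that $f$ itself is \emph{not} needed in the oracle). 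Because $(\sigma,X)=(F,X)$ is a condition, $X\oplus C$ preserves non-c.e.\ definitions of $A_0,\dots,A_{k-1}$, hence $W \neq A_j$; fix $a \in W \mathbin{\Delta} A_j$ and split into two cases, exactly as in Lemma~\ref{lem:fs2-left-trapped-preserves-1} and the thin set arguments.

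In Case~1, $a \in W \setminus A_j$. Instantiate the definition of $W$ at $g = f$ (legitimate since $F$ is $f$-free by property~(b) of $(F,X)$ restricted to $[F]^n$): this yields a finite $\xi \subseteq X$ with $F\cup\xi$ $f$-free and $a \in W_e^{(F\cup\xi)\oplus C}$. Choose a bound $M$ above $\max(F\cup\xi)$, above the use of this computation, and above the finitely many thresholds supplied by property~(c) of $(F,X)$ that are needed to keep $f(\rho) \notin (F\cup\xi)\setminus\rho$ for tuples $\rho$ whose part inside $F\cup\xi$ has size $<n$ (here left-trappedness bounds $f(\rho)$ by $\max\rho$). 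Apply Lemma~\ref{lem:fs-left-trapped-preserves-smaller} to $F\cup\xi$ and $X\cap(M,+\infty)$ to obtain an infinite $Y\subseteq X\cap(M,+\infty)$ such that $Y\oplus C$ preserves non-c.e.\ definitions of the $A$'s and property~(d) holds for $(F\cup\xi,Y)$. Then $(F\cup\xi,Y)$ is a valid condition extending $(F,X)$: (a) and (c) are inherited since $F\cup\xi\cup Y \subseteq F\cup X$, (b) follows from $F\cup\xi$ being $f$-free together with the choice of $M$, and (d) is the conclusion of the lemma. This condition forces $a \in W_e^{G\oplus C}$, and since $a \notin A_j$ it forces $W_e^{G\oplus C}\neq A_j$.

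In Case~2, $a \in A_j \setminus W$. The collection $\Ccal$ of left trapped colorings $g:[X]^n\to\omega$ with $F$ $g$-free such that no finite $\xi\subseteq X$ with $F\cup\xi$ $g$-free has $a\in W_e^{(F\cup\xi)\oplus C}$ is a non-empty $\Pi^{0,X\oplus C}_1$ class (the constraint ``$F$ $g$-free'' and each constraint ``$F\cup\xi$ $g$-free'' are clopen in $g$, and the $W_e$-clause is co-c.e.\ in $C$). By preservation of $k$ non-c.e.\ definitions of $\wkl$, fix $g\in\Ccal$ with $g\oplus X\oplus C$ preserving non-c.e.\ definitions of the $A$'s. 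Now $\fs^n$ admits \emph{(weak) preservation} of $k$ non-c.e.\ definitions — apply Lemma~\ref{lem:fs-strong-to-weak} to the inductive hypothesis that $\fs^{n-1}$ admits strong preservation — so there is an infinite $g$-free set inside $X$ whose join with $C$ preserves the $A$'s; shrink it further, by Lemma~\ref{lem:fs-cohesivity-strong-preservation} and Lemma~\ref{lem:fs-left-trapped-preserves-smaller} applied to $g$ and the finite set $F$ (and by deleting the finitely many values $g$ takes on $[F]^n$), to an infinite $Y\subseteq X$ with $Y\oplus C$ preserving the $A$'s and $F\cup Y$ entirely $g$-free. Then $(F,Y)$ is a valid condition, since properties (a)--(d) are all inherited from $(F,X)$ as $Y\subseteq X$. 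Every finite $\xi\subseteq Y$ has $F\cup\xi$ $g$-free, so $g\in\Ccal$ gives $a\notin W_e^{(F\cup\xi)\oplus C}$; and since any $G$ satisfying $(F,Y)$ is $f$-free with $G\setminus F\subseteq Y$, every initial segment of $G$ agrees with $(F\cup\xi)$ for such a $\xi$, so $(F,Y)$ forces $a\notin W_e^{G\oplus C}$, and since $a\in A_j$ it forces $W_e^{G\oplus C}\neq A_j$. This completes the construction and, together with Lemma~\ref{lem:fs2-left-trapped-preserves-1}, the proof of Theorem~\ref{thm:strong-preservation-fs-left-trapped}.

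The main obstacle is calibrating the notion of ``admissible finite extension $\xi$'' so that three things hold at once: (i) left-trappedness makes the space of colorings compact, so that $W$ is genuinely $(X\oplus C)$-c.e.\ without $f$ in the oracle; (ii) the $\xi$ produced in Case~1 does extend to a valid condition after shrinking — this is precisely why conditions carry properties~(c) and~(d) and why Lemma~\ref{lem:fs-left-trapped-preserves-smaller} is needed; and (iii) in Case~2 the passage from the adversarial coloring $g$ to a reservoir $Y$ with $F\cup Y$ $g$-free simultaneously keeps $Y\subseteq X$ and preserves the non-c.e.\ definitions, which is exactly the content of Lemmas~\ref{lem:fs-cohesivity-strong-preservation} and~\ref{lem:fs-left-trapped-preserves-smaller}. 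A secondary subtlety is that Case~2 has to reach ``level $n$'' using only the level-$(<n)$ inductive hypotheses; this is bridged by weak preservation of $\fs^n$, which Lemma~\ref{lem:fs-strong-to-weak} extracts from strong preservation of $\fs^{n-1}$.
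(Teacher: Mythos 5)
Your overall plan matches the paper's: introduce a $\Sigma^{0,X\oplus C}_1$ set $W$ via compactness of the space of left trapped colorings on $[X]^n$, use preservation to get $a \in W \mathbin{\Delta} A_j$, and split into the ``$f$ itself witnesses'' case and the ``$\Pi^{0,X\oplus C}_1$ class is nonempty, apply $\wkl$ and weak preservation of $\fs^n$'' case. The paper's $W$ is $\{a : \Ccal_a = \emptyset\}$ where $\Ccal_a$ is the class of left trapped $g$ such that for every \emph{$g$-free} finite $E \subset X$, $a \notin W_e^{(F\cup E)\oplus C}$ — note that freeness is required only of $E$, not of $F \cup E$.

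That is precisely where your version develops a gap. You require $F \cup \xi$ to be $g$-free, which (for $g$ defined on $[X]^n$, $\xi \subseteq X$) unpacks to: $\xi$ is $g$-free \emph{and} $g(\sigma) \notin F$ for every $\sigma \in [\xi]^n$. In Case~2, to make $(F,Y)$ force $a \notin W_e^{G\oplus C}$ you therefore need an infinite $Y \subseteq X$ with $Y$ $g$-free and with $g([Y]^n) \cap F = \emptyset$. The second requirement is not delivered by Lemma~\ref{lem:fs-cohesivity-strong-preservation} or Lemma~\ref{lem:fs-left-trapped-preserves-smaller}: both control values of the coloring landing in the \emph{reservoir}, never in the fixed finite block $F$ sitting below it. And the requirement is not achievable in general — a left trapped $g$ may send \emph{every} $n$-tuple over $X$ to a fixed $a_0 \in F$, and such a $g$ still lies in your class $\Ccal$ (vacuously, as then no large $\xi$ has $F\cup\xi$ $g$-free), so the $\wkl$ application could hand you exactly this adversary, after which no thinning of $X$ helps. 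The parenthetical ``deleting the finitely many values $g$ takes on $[F]^n$'' does not repair this: $g$ is a coloring of $[X]^n$, not of $[F]^n$, and in any case removing elements from $Y$ does not stop $g(\sigma)$ for $\sigma \in [Y]^n$ from landing in $F$. The fix is to weaken the definition of $W$ exactly as the paper does: demand only that $E$ (not $F\cup E$) be $g$-free; then Case~2 only needs $Y$ $g$-free, which weak preservation of $\fs^n$ supplies, and the mixed-tuple and $[F]^n$ cases of property~(b) are discharged in Case~1 via property~(d) of the condition and the preliminary step of removing the finitely many elements of $X$ that lie in $f([F]^n)$ (a step you should add, as its role is subsumed in your approach by the very clause that causes the Case~2 problem).
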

\begin{proof}
By removing finitely many elements of $X$, we can assume that
$(\forall \sigma \in [F]^n)f(\sigma) \not \in X$.
For each~$a \in \omega$, let~$\Ccal_a$ be the~$\Pi^{0,X \oplus C}_1$ class
of left trapped functions~$g : [X]^n \to \omega$ such that for every $g$-free
set~$E \subset X$, $a \not \in W_e^{(F \cup E) \oplus C}$.
Also define $W = \{ a \in \omega : \Ccal_a = \emptyset \}$.
The set~$W$ is $X \oplus C$-c.e.\ but~$A_j$ is not $X \oplus C$-c.e., therefore $W \neq A_j$.
Let~$a \in W \Delta A_j$. We have two cases:
\begin{itemize}
	\item Case 1: $a \in W \setminus A_j$. As~$f \not \in \Ccal_a$, 
	there exists a finite $f$-free set~$E$ such that~$a \in W^{(F \cup E) \oplus C}_e$.
	Set~$H = F \cup E$.
	By property (c) of $(F, X)$, there exists a $b$ such that
	\[
	(\forall \sigma \in [H]^t)(\forall x \in H)
	(\forall \tau \in [X \cap (b, +\infty)]^{n-t}) f(\sigma, \tau) \neq \{x\} \setminus \sigma
	\]
	for each~$t \in [0, n)$.
	By Lemma~\ref{lem:fs-left-trapped-preserves-smaller},
	there exists an infinite set $Y \subseteq X \cap (b, +\infty)$ such that 
	$Y \oplus C$ preserves non-c.e.\ definitions of the~$A$'s
	and property (d) is satisfied for $(H, Y)$.
	We claim that $(H, Y)$ is a valid condition.

	Properties (a), (c) and (d) trivially hold. We now check property (b).
	By our choice of $b$, we only need to check that $(\forall \sigma \in [H]^n)
	f(\sigma) \not \in H \setminus \sigma$.
	By property (b) of $(F, X)$, it suffices to check that
	$(\forall \sigma \in [H]^n)f(\sigma) \not \in E \setminus \sigma$.
	By property (d) of $(F, X)$, and our initial assumption on~$X$, we only need to check that
	$(\forall \sigma \in [E]^n)f(\sigma) \not \in E \setminus \sigma$,
	which is exactly $f$-freeness of $E$.

	\item Case 2: $a \in A_j \setminus W$. By definition of~$W$, $\Ccal_a \neq \emptyset$.
	As~$\wkl$ admits preservation of~$k$ non-c.e.\ definitions,
	there exists a left trapped functions~$g \in \Ccal_a$
	such that $g \oplus X \oplus C$ preserves non-c.e.\ definitions of the~$A$'s.
	As~$\fs^n$ admits preservation of~$k$ non-c.e.\ definitions,
	there exists some infinite $g$-free set~$Y \subseteq X$ such that~$Y \oplus C$
	preserves non-c.e.\ definitions of the~$A$'s.
	The condition $(F, Y)$ forces $a \not \in W_e^{G \oplus C}$ and therefore~$W_e^{G \oplus C} \neq A_j$.
\end{itemize}
\end{proof}

Let~$Y$ be the set constructed in Lemma~\ref{lem:fs-cohesivity-strong-preservation}.
Using Lemma~\ref{lem:fs2-left-trapped-preserves-1}
and Lemma~\ref{lem:fs2-left-trapped-preserves-2}, 
we can define an infinite decreasing sequence of conditions~
$(\emptyset, Y) \geq c_0 \geq \dots$ such that for every~$s \in \omega$
\begin{itemize}
	\item[(i)] $|F_s| \geq s$
	\item[(ii)] $c_s$ forces~$W_e^{G \oplus C} \neq A_j$ if~$s = \tuple{e,j}$
\end{itemize}
where~$c_s = (F_s, X_s)$. Let~$G = \bigcup_s F_s$. By (i), $G$ is infinite
and by~(ii), none of the~$A$'s are~$G \oplus C$-c.e.
This completes the proof of Theorem~\ref{thm:strong-preservation-fs-left-trapped}.
\end{proof}

\section{Conclusion}

In this last section, we recall some existing open questions about the free set
and thin set theorems, and state some new ones.
Cholak et al.~\cite{Cholak2001Free} asked the following question
which remains open.

\begin{question}
Does~$\ts^2$ imply~$\fs^2$ over~$\rca$?
\end{question}

We ask a related question motivated by the fact that
the proof of cone avoidance of~$\fs$ by Wang~\cite{Wang2014Some}
and the preservation of $k$ non-c.e.\ definitions
of~$\fs^2$ in section~\ref{sect:fs-omega-models} both use~$\ts^2_k$ for any~$k$
to construct a solution to an instance of~$\fs^2$.
We know by Corollary~\ref{cor:fs2-not-imply-sts2n} that $\fs^2$ does not imply~$\ts^2_k$ over~$\rca$
for any~$k$, but the reverse implication is still open.

\begin{question}
Does $\ts^2_3$ imply~$\fs^2$ over~$\rca$?
\end{question}

Cholak et al.~\cite{Cholak2001Free} also asked whether~$\fs^2+\cac$ implies~$\rt^2_2$
over~$\rca$. Using the equivalence between~$\rt^2_2$ and~$\emo+\ads$ 
proven by Bovykin and Weiermann~\cite{Bovykin2005strength},
we ask the following related questions.

\begin{question}
Does any of~$\fs^2$, $\ts^2$ and $\ts^2_3$ imply~$\emo$ over~$\rca$?
\end{question}

\vspace{0.5cm}

\noindent \textbf{Acknowledgements}. 
The author is thankful to his PhD advisor Laurent Bienvenu 
and to Wei Wang for useful comments and discussions.
The author is funded by the John Templeton Foundation (`Structure and Randomness in the Theory of Computation' project). 
The opinions expressed in this publication are those of the author(s) and do not necessarily reflect the views of the John Templeton Foundation.

\vspace{0.5cm}

\end{document}